\newtheorem{theorem}{Theorem}[section]
\newtheorem{lemma}[theorem]{Lemma}
\newtheorem{proposition}[theorem]{Proposition}
\newtheorem{corollary}[theorem]{Corollary}
\newtheorem{assumption}{Assumption}
\newtheorem*{notation}{Notation}
\theoremstyle{definition}
\newtheorem{definition}[theorem]{Definition}
\newtheorem{remark}[theorem]{Remark}
\numberwithin{equation}{section}
\renewcommand{\Im}{{\ensuremath{\mathrm{Im\,}}}} 
\renewcommand{\Re}{{\ensuremath{\mathrm{Re\,}}}} 
\renewcommand{\div}{\mathrm{div}\,}    
\renewcommand{\O}{\mathcal{O}\,}
\newcommand{\PP}{\mathbb{P}}
\newcommand{\TT}{\boldsymbol{T}}
\newcommand\restr[2]{{
  \left.\kern-\nulldelimiterspace 
  #1 
  \vphantom{\big|} 
  \right|_{#2} 
  }}
\newcommand{\mk}[1]{{\color{black}{#1}}}
\title[Electromagnetic Scattering by a Dimer]{
Electromagnetic waves generated by\\ a hybrid dieletric-plasmonic dimer
}
\author[Cao, Ghandriche and Sini]{Xinlin Cao $^*$ Ahcene Ghandriche $^{**}$ and Mourad Sini$^{\ddag}$}
\thanks{$^*$ Department of Applied Mathematics, The Hong Kong Polytechnic University, Hong Kong SAR. Email: xinlin.cao@polyu.edu.hk} 
\thanks{$^{**}$ Nanjing Center for Applied Mathematics, Nanjing, 211135, People's Republic of China. Email: gh.hsen@njcam.org.cn}
\thanks{$^{\ddag}$ RICAM, Austrian Academy of Sciences, Altenbergerstrasse 69, A-4040, Linz, Austria. Email: mourad.sini@oeaw.ac.at. This author is partially supported by the Austrian Science Fund (FWF): P 30756-NBL and P 32660}
\begin{document}


\allowdisplaybreaks

\begin{abstract}

We know that the electric field generated by a plasmonic nano-particle (with negative permittivity) is given as a polarization of the incident electric field. Similarly, the electric field produced by a dielectric nano-particle (with positive but high permittivity) is given as a polarization of the incident magnetic field. In this work, we demonstrate that a hybrid dimer—composed of two closely coupled nano-particles, one plasmonic and the other dielectric—can polarize both the incident electric and magnetic fields. Consequently, such hybrid dimers have the potential to modify both the electric permittivity and magnetic permeability of the surrounding medium. However, this dual modification occurs only when the two nano-particles share common resonant frequencies. We derive the asymptotic expansion of the fields generated by these hybrid dimers in the subwavelength regime for incident frequencies near their shared resonant frequencies.

\end{abstract}

\subjclass[2010]{35R30, 35C20, 35Q60}
\keywords{Maxwell System, hybrid dimer, dielectric nano-particles, plasmonic nano-particle, dielectric resonance, plasmonic resonance.}

\maketitle

\tableofcontents

\section{Introduction and the main result}\label{SecI}

\subsection{Motivation}

The interaction of light with nanoscale materials has revolutionized the understanding and manipulation of electromagnetic fields at subwavelength scales, paving the way for transformative applications in photonics, sensing, and metamaterials, see \cite{J-J:2016, Bohren-Huffmann,FZS, N-H-Book}. Among the various nanostructures, hybrid dimers—composed of plasmonic and dielectric nano-particles represent a significant advancement due to their ability to interact with both the electric and magnetic components of incident electromagnetic fields. Indeed, plasmonic nano-particles, characterized by their negative permittivity at optical frequencies, generate intense localized electric fields through surface plasmon resonances, effectively polarizing the incident electric field, see \cite{CGS-Optic, maier2007plasmonics, CP, GS-OA, FZS, baffou2009, hao2004electromagnetic}. In contrast, dielectric nano-particles with high positive permittivity are known for their low-loss interaction with the magnetic component of light, driven by displacement currents that produce magnetic dipole resonances, see \cite{Ammari-Li-Zou-2, PD-L-K, K-M-B-K-L:2016, TS2018}. When combined in a hybrid dimer, these nano-particles exhibit coupled electromagnetic responses, allowing simultaneous polarization of both the electric and magnetic fields. This unique dual interaction makes hybrid dimers a promising platform for engineering media with tailored electric permittivity and magnetic permeability \cite{ammari2018mathematical, Kiselev-el-al, Wang-et-al}.
\bigskip

\noindent A critical feature of hybrid dimers is their ability to achieve resonant frequency alignment between the plasmonic and dielectric components. This resonance matching enhances their electromagnetic coupling, enabling the formation of hybrid modes with strong field localization and enhancement. Recent studies have demonstrated that these hybrid modes can lead to significant field amplification in the gap between nano-particles, often referred to as "hot spots," which are central to applications such as surface-enhanced Raman spectroscopy (SERS), nanoscale biosensing, and nonlinear optics \cite{ammari2018mathematical, Wang-et-al, Liu-et-al}.
\bigskip

\noindent In this work, we explore the theoretical and computational aspects of hybrid dimer interactions in the subwavelength regime. By deriving the asymptotic expansion of the fields generated near the common resonant frequencies of plasmonic and dielectric nano-particles, we aim to elucidate the underlying mechanisms driving the dual polarization effects. This approach provides a robust framework for understanding how hybrid dimers can modify the electromagnetic properties of the surrounding medium, offering new insights into their role in advanced photonic devices.
 Below, we list few advantages of using hybrid dimers over homogeneous dimers of the form plasmonic-plasmonic or dielectric-dielectric dimers.
\begin{enumerate}
\item[] 
\item \textit{Field Polarization Mechanisms by hybrid Dimers}.
These hybrid dimers combine the complementary nature of plasmonic (electric field-driven) and dielectric (magnetic field-driven) responses, creating dual polarization effects. The combination allows for both the electric permittivity $\epsilon$ and magnetic permeability $\mu$ modulation, which is unique to hybrid structures. In a next work, we will analyze with more details, the case when we have a cluster of such dimers, regularly arranged in a given bounded domain, and show how the generated effective medium is a modulation, by averaging, of both the permittivity and permeability offering a way how to design single or double negative electromagnetic media.  
\item[]
\item \textit{Field Localization and Enhancement}. Hybrid dimers exhibit enhanced field localization and polarization in the gap region, benefiting from the synergistic effects of plasmonic and dielectric components. Indeed, in the case of homogeneous dimers, as plasmonic-plasmonic, strong electric field enhancements are localized in the gap, but losses due to ohmic heating can dampen efficiency. Regarding dielectric-dielectric dimers, we have relatively moderate enhancement but with less ohmic loss in comparison to plasmonic counterparts. Therefore, using heterogeneous dimers might improve both the enhancement, as for plasmonics, and reduce the ohmic loss, as for dielectrics.
\item[]
\item \textit{Potential applications}. 
The ability of hybrid dimers to modulate both $\epsilon$ and $\mu$ enables applications in designing metamaterials with tunable refractive indices, broadband absorbers, and devices requiring simultaneous electric and magnetic field control. Homogeneous plasmonic-plasmonic dimers are widely used in SERS, photothermal therapy, and plasmonic sensing due to their strong electric field enhancements, while dielectric-dielectric dimers are ideal for applications requiring low loss, such as photonic waveguides and resonators with high-Q factors. Therefore, hybrid dimers can offers possibilities to be used in both the mentioned applications as they share the qualities and avoid their disadvantages, to some extent. 

\end{enumerate}

\subsection{Main results}

Let $D_1$ and $D_2$ be two bounded and $C^2$-regular domains in $\mathbb{R}^3$ and model $D:=D_1\cup D_2$  
 to stand for a dimer composed of two nano-particles $D_1$ and $D_2$. We assume that $D_1$ is a dielectric nano-particle, namely  its permittivity and permeability constants enjoy the following properties:
\begin{equation*}
\epsilon_r^{(1)} \, := \, \frac{\epsilon^{(1)}}{\epsilon_0} \quad \text{with} \quad Re\left( \epsilon^{(1)} \right) \gg 1 \quad \text{and} \quad \epsilon_r^{(1)} \,= \, 1 \quad \text{outside} \quad D_1,
\end{equation*}
 while 
 $\mu_{r}^{(1)} \, := \, \dfrac{\mu^{(1)}}{\mu_{0}} \, = \, 1$ in the whole space $\mathbb{R}^3$. The nano-particle $D_2$ is taken to be a plasmonic one, i.e. with a moderately contrasting relative permittivity $\epsilon_r^{(2)} \, := \, \dfrac{\epsilon^{(2)}}{\epsilon_0}$, enjoying negative real values,
\begin{equation*}
\Re(\epsilon_{r}^{(2)}) \, < \, 0 \quad \text{and} \quad  \epsilon_{r}^{(2)} \, \sim \, 1 \quad  \text{with} \quad \epsilon_{r}^{(2)} \, = \, 1 \quad \text{outside} \quad D_2,  
\end{equation*}
 and a  permeability satisfying  $\mu_r^{(2)} \, := \, \dfrac{\mu^{(2)}}{\mu_0}=1$ in the whole space $\mathbb{R}^3$. More details will be given later on the related quantities. The electromagnetic wave propagation, in the time-harmonic regime, with the presence of the dimer $D$ satisfies 

\begin{equation}\label{U}
\left \{
\begin{array}{llrr}
Curl(E) - i \, k \, \mu_r\,  H = 0, \, \quad & \mbox{in } \mathbb{R}^3, \\
&  \\
Curl(H) + i \, k \, \varepsilon_{r} \, E = 0, \, \quad & \mbox{in } \mathbb{R}^3,
\end{array} 
\right.
\end{equation}
\medskip
\newline 
where the total field $(E, H)$ is of the form $(E := E^{Inc} +E^s, H := H^{Inc} +H^s)$ and the incident plane wave $(E^{Inc},H^{Inc})$ is of the form
\begin{equation}\label{EincHinc}
E^{Inc}(x,\theta, \mathsf{p}) \, = \, \mathsf{p}  \, e^{i \, k \; \theta \cdot x } \quad \text{and} \quad
H^{Inc}(x,\theta, \mathsf{p}) \, = \, \left( \theta \times \mathsf{p} \right) \, e^{i \, k \; \theta \cdot x},
\end{equation}
with $\theta, \mathsf{p} \in \mathbb{S}^{2}$, $\mathbb{S}^{2}$ being the unit sphere, such that $\theta \cdot \mathsf{p} \, = \, 0$, as the direction of incidence and polarization respectively, and the scattered field $(E^s, H^s)$ satisfies the Silver-M\"{u}ller radiation condition (SMRC) at infinity
\begin{equation}\label{Radiation-conditions}
	\sqrt{\mu_0\epsilon_0^{-1}}H^{s}(x)\times \frac{x}{|x|}-E^s(x) \, = \, \mathcal{O}\left( \frac{1}{|x|^2} \right).
\end{equation}

This problem is well-posed in appropriate Sobolev spaces, see \cite{colton2019inverse} and \cite{Mitrea}, and we have the following behaviors
\begin{equation}\label{def-far}
	E^s(x) \, = \, \frac{e^{i k|x|}}{|x|}\left(E^\infty(\hat{x}) \, + \, \mathcal{O}\left( \frac{1}{|x|} \right)\right), \quad \mbox{as}\quad |x|\rightarrow \infty,
\end{equation}
and
\begin{equation}\notag
	H^s(x) \, = \, \frac{e^{i k|x|}}{|x|}\left(H^\infty(\hat{x}) \, + \, \mathcal{O}\left( \frac{1}{|x|} \right)\right), \quad \mbox{as}\quad |x|\rightarrow \infty,
\end{equation}
where $(E^\infty(\hat{x}), H^\infty(\hat{x}))$ is the corresponding electromagnetic far-field pattern of \eqref{U} in the propagation direction $\hat{x}:=\frac{x}{|x|}$.
\bigskip
\newline 
Next, we present the necessary assumptions on the model \eqref{U} to derive the main results.
\begin{assumption}\label{Assumption-I-II-III-IV}
\begin{enumerate}
    \item[]
    \item[] 
    \item \textit{Assumption on the dimer}. Suppose that each component $D_m$ of $D$ can be represented by $D_m=a {B}_m+{z}_m$ with the parameter $a>0$ and the location ${z}_m$, for $m=1,2$. 
Denote 
\begin{equation*}
	a:=\max\{\mathrm{diam}(D_1), \mathrm{diam}(D_2)\} \quad \text{and} \quad d:= \mathrm{dist}(D_1, D_2).
\end{equation*}
We take 
	\begin{equation}\label{d-a}
d \, = \, \alpha_0 \, a^{t},
	\end{equation} 
	with $t$ such that $0 \, < \, t \, < \, 1$, and  $\alpha_0$ is a positive constant independent on $a$. 
   \item[]
   \item \textit{Assumptions on the permittivity and permeability of each particle.}
Regarding the permittivity, we assume that
\begin{equation}\label{def-eta}
	\eta(x) \, := \, \begin{cases}
			\eta_{1} \, := \, \epsilon_{r}^{(1)} \, - \, 1 \, = \, \eta_0 \; a^{-2} & \text{if $x \in D_{1}$}\\
              & \\
                \eta_{2} \, := \, \epsilon_{r}^{(2)} \, - \, 1 \, \sim 1 \quad \text{with} \quad \Re \left( \epsilon_{r}^{(2)} \right) \, < \, 0 & \text{if $x \in D_{2}$}
		 \end{cases},
\end{equation}
where $\eta_0$ is a constant in the complex plane independent on the parameter $a$, such that $\Re\left( \eta_0 \right) \, \in \, \mathbb{R}^{+}$. Moreover, regarding the permeability $\mu_r^{(m)}$, we assume that  $\mu_r^{(m)} \, = \, 1,$ for $ m = 1, 2$. 
   \item[] 
   \item \textit{Assumption on the shape of $B_m$.}
   \begin{enumerate}
       \item[]
       \item Regarding the shape $B_{1}$.
   Since
   \begin{equation}\label{Eq0404}
	\mathbb{H}_{0}\left( \div = 0 \right)(B_{1}) \equiv Curl \left( \mathbb{H}_{0}\left( Curl \right) \cap \mathbb{H}\left( \div = 0 \right) \right)(B_{1}),
 \end{equation}
 where
   \begin{eqnarray*}
        \mathbb{H}_{0}\left( Curl \right) \cap \mathbb{H}\left( \div = 0 \right)(B_{1}) \, &:=& \,  \Bigg\{ E \in \mathbb{L}^{2}(B_{1}), \;\text{such that} \; Curl\left( E \right) \in \mathbb{L}^{2}(B_{1}), \\&& \qquad \div\left( E \right) \, = \, 0, \text{in} \, B_{1}, \; \text{and} \; \nu \times E \, = \, 0 \; \text{on} \, \partial B_{1} \Bigg\}, 
   \end{eqnarray*}
	see for instance \cite[Theorem 3.17]{amrouche1998vector}, \mk{then there exists $\phi_{n,m,B_{1}} \in  \mathbb{H}_{0}\left( Curl \right) \cap \mathbb{H}\left( \div = 0 \right)(B_{1})$, such that }  
\begin{equation}\label{pre-cond}
e_{n,m, B_{1}}^{(1)} \, = \, \nabla \times \phi_{n,m, B_{1}}  \,\, \mbox{ with } \,\, \nu\times \phi_{n,m, B_{1}} \, = \, 0 \,\, \text{and} \,\, \nabla \cdot \phi_{n,m, B_{1}} \, = \, 0,
\end{equation}	
where $e_{n,m, B_{1}}^{(1)} \in \mathbb{H}_{0}\left( \div = 0 \right)(B_{1})$ is an eigenfunction, corresponding to the eigenvalue $\lambda_{n}^{(1)}(B_{1})$, related to the Newtonian operator $N_{B_{1}}(\cdot)$ defined, from $\mathbb{L}^{2}(B_{1})$ to $\mathbb{L}^{2}(B_{1})$, by\footnote{The Newtonian operator $N_{B_{1}}(\cdot)$ is bounded from $\mathbb{L}^{2}(B_{1})$ to $\mathbb{H}^{2}(B_{1})$.} 
\begin{equation*}
    N_{B_{1}}\left( E \right)(x) \, := \, \int_{B_{1}} \frac{1}{4 \, \pi} \, \frac{1}{\left\vert x - y \right\vert} \, E(y) \, dy, 
\end{equation*}
i.e., 
\begin{equation*}
    N_{B_{1}}\left( e_{n,m, B_{1}}^{(1)} \right) \; = \; \lambda_{n}^{(1)}(B_{1}) \; e_{n,m, B_{1}}^{(1)}, \quad \text{in} \; B_{1}.
\end{equation*}
We assume\footnote{To reduce the length of the notation in the sequel, we eliminate the need to depend on the vector $\phi_{n,m, B_{1}}$ with respect to multiplicity index $m$.} that for $B_1$, 
\begin{equation}\label{def-phi-n0}
\int_{B_1} \phi_{n_{0}, B_{1}}(y) \, dy  \, \otimes \, \int_{B_1} \phi_{n_{0}, B_{1}}(y) \, dy \, \neq \, 0 \quad \text{for certain} \; n_{0} \in \mathbb{N},
\end{equation}
and 
\begin{equation*}
    \sum_{n} \, \frac{1}{\lambda_{n}^{(3)}(B_{1})} \, \int_{B_{1}} e_{n,B_{1}}^{(3)}(x) \, dx \, \otimes \, \int_{B_{1}} e_{n,B_{1}}^{(3)}(x) \, dx \, \neq \, 0, 
\end{equation*}
where $e_{n,B_{1}}^{(3)} \in \nabla \mathcal{H}armonic(B_{1})$ is an eigenfunction, corresponding to the eigenvalue $\lambda_{n}^{(3)}(B_{1})$, related to the Magnetization operator $\nabla M_{B_{1}}(\cdot)$ defined, from $\mathbb{L}^{2}(B_{1})$ to $\mathbb{L}^{2}(B_{1})$, by 
\begin{equation*}
    \nabla M_{B_{1}}(E)(x) \, := \, \underset{x}{\nabla} \int_{B_{1}} \underset{y}{\nabla}\left( \frac{1}{4 \, \pi \, \left\vert x \, - \, y \right\vert} \right) \cdot E(y) \, dy, 
\end{equation*}
i.e., 
\begin{equation}\label{B1EigFct}
    \nabla M_{B_{1}}\left( e_{n}^{(3)} \right) \, = \, \lambda_{n}^{(3)}(B_{1}) \, e_{n}^{(3)}, \quad \text{in} \,\; B_{1}. 
\end{equation}
\item[]
       \item Regarding the shape $B_{2}$.
       For $B_2$, we assume that 
\begin{equation*}
	\sum_n \int_{B_2}  \phi_{n,B_{2}}(y) \, dy \otimes  \int_{B_2} \phi_{n,B_{2}}(y) \, dy \neq 0,
\end{equation*}
where $\phi_{n,B_{2}}(\cdot)$ satisfy $(\ref{Eq0404})$ in $B_{2}$, and 
\begin{equation*}
     \int_{B_{2}} e_{n_{\star},B_{2}}^{(3)}(x) \, dx \, \otimes \, \int_{B_{2}} e_{n_{\star},B_{2}}^{(3)}(x) \, dx \, \neq \, 0, \quad \text{for certain} \; n_{\star} \in \mathbb{N}, 
\end{equation*}
where $e_{n,B_{2}}^{(3)}(\cdot) \in \nabla \mathcal{H}armonic(B_{2})$ is an eigenfunction, corresponding to the eigenvalue $\lambda_{n}^{(3)}(B_{2})$, related to the Magnetization operator $\nabla M_{B_{2}}\left( \cdot \right)$. 
   \end{enumerate}
   \medskip
   To gain more information about the used spaces $\mathbb{H}_{0}\left( div \, = \, 0\right)(B_{m}), \, \nabla \mathcal{H}armonic(B_{m})$, and the eigensystems that relate to the Newtonian operator $N_{B_{m}}(\cdot)$ and the Magnetization operator $\nabla M_{B_{m}}(\cdot)$, with $m = 1, 2$, it is recommended that the readers refer to Remark \ref{Remark23}.
\item[]
\item \textit{Assumption on the used incident frequency $k$.} Define the vector Magnetization operator $\nabla {M}^0_{B_{2}}(\cdot)$ as 
\eqref{N-M opera}. Under the Helmholtz decomposition of $\mathbb{L}^2$-space given by \eqref{L2-decomposition},
denote $(\lambda_{n}^{(3)}(B_{2}), e_n^{(3)})$ as the corresponding eigen-system of $\nabla M^0_{B_{2}}(\cdot)$ over the subspace $\nabla \mathcal{H}armonic$. There exist complex constants $c_0$ and $ d_0$, with $ Re\left( c_0 \right), Re\left( d_0 \right) \in \, \mathbb{R}^{+}$, such that
\begin{equation}\label{condition-on-k}
			1 \, - \, k^2 \, \eta_1 \, a^2 \, \lambda_{n_{0}}^{(1)}(B_{1}) \, = \, \pm \; c_0\; a^h \quad \text{and} \quad 1 \, + \, \eta_2 \, \lambda_{n_*}^{(3)}(B_{2}) \, = \; \pm \; d_0 \; a^h,\; ~~ a \ll 1,
		\end{equation}
		where $\lambda_{n_0}^{(1)}(B_{1})$ is the eigenvalue corresponding to $e_{n_0}^{(1)}$ in $B_{1}$, related to the Newtonian operator $N_{B_{1}}(\cdot)$, and $\lambda_{n_*}^{(3)}(B_{2})$ is the eigenvalue to $e_{n_*}^{(3)}$ in $B_{2}$, related to the Magnetization operator $\nabla M_{B_{2}}(\cdot)$.
  \item[] 
\end{enumerate}

\noindent The conditions $(\ref{def-eta})$ and $(\ref{condition-on-k})$ can be derived from the Lorentz model by choosing appropriate incident frequency $k$. Indeed, recall the Lorentz model for the relative permittivity that
\begin{equation}\label{Lorentz model}
\epsilon_r=1+\dfrac{k_\mathrm{p}^2}{k_0^2-k^2-ik\xi},
\end{equation}
where $k_\mathrm{p}$ is the plasmonic \mk{frequency}, $k_0$ is the undamped \mk{frequency} resonance of the background and $\xi$ is the damping frequency with $\xi \, \ll \, 1$. The details are given in Section \ref{Appendix}.
\end{assumption}

\noindent Based on the above conditions, we are now in a position to state our main result.

\begin{theorem}\label{main-1}
Let \textbf{Assumption \ref{Assumption-I-II-III-IV}}, on the problem \eqref{U}-\eqref{EincHinc}-\eqref{Radiation-conditions}, which is generated  by the dimer $D$, be satisfied. Let $x$ be away from $D$, then for $t, h \in (0, 1)$ such that  
\begin{equation*}\label{conditions-t-h}
 4 \, - \, h \, - \, 4t \, > \, 0, 
\end{equation*} 
the scattered wave admits the following expression
\begin{equation}\label{general-scattered-field}
		E^{s}(x) \, =   \, k^2  \, \sum_{m=1}^{2}  \, \left[  \Upsilon_{k}(x,z_{m}) \cdot  \tilde{R}_{m} \, - \,  \underset{y}{\nabla} \Phi_{k}(x,z_{m}) \times \tilde{Q}_{m}  \right] \, + \, \mathcal{O}\left( a^{\min(3; 7-2h-3t; 10-2h-7t)} \right),
\end{equation}
and its far-field admits the following expansion 
\begin{equation}\label{approximation-E}
E^\infty(\hat{x}) \, = \, \frac{k^{2}}{4 \, \pi} \, \sum_{m=1}^{2} e^{-i k \hat{x}\cdot z_m} \, \left[ \left( I \, - \, \hat{x} \otimes \hat{x} \right) \cdot   \tilde{R}_{m}  \, + \,  i \, k \, \hat{x} \times  \, \tilde{Q}_{m}  \right] \, + \, \mathcal{O}\left( a^{\min(3; 7-2h-3t; 10-2h-7t)} \right).
\end{equation}
 Here, 
		$\left( \tilde{Q}_{1}, \tilde{R}_{1}, \tilde{Q}_{2}, \tilde{R}_{2} \right)$ is the vector solution to the following algebraic system
        \begin{equation}\label{eq-al-D1} 
    \left[ \begin{pmatrix}
    I_{3} & 0 & 0 & 0 \\
    0 & I_{3} & 0  & 0 \\
    0 & 0 & I_{3} & 0 \\  
    0 & 0 & 0 & I_{3}
    \end{pmatrix} \, - \, 
    \begin{pmatrix}
    0 & 0 & \mathcal{B}_{13} & \mathcal{B}_{14} \\
    0 & 0 & \mathcal{B}_{23}  & \mathcal{B}_{24} \\
    \mathcal{B}_{31} & \mathcal{B}_{32} & 0 & 0 \\  
    \mathcal{B}_{41} & \mathcal{B}_{42} & 0 & 0
    \end{pmatrix} \right]
    \cdot 
    \begin{pmatrix}
     \tilde{Q}_{1} \\
     \tilde{R}_{1} \\
     \tilde{Q}_{2} \\
     \tilde{R}_{2}
    \end{pmatrix}
    =
    \begin{pmatrix}
     \frac{i \, k \, \eta_{0}}{\pm \, c_{0}} \, a^{3-h} \, {\bf P}_{0, 1}^{(1)} \cdot H^{Inc}(z_{1}) \\
     a^{3} \, {\bf P}_{0, 1}^{(2)} \cdot E^{Inc}(z_{1}) \\
     i \, k \, \eta_{2} \, a^{5} \, {\bf P}_{0, 2}^{(1)} \cdot H^{Inc}(z_{2}) \\
     \frac{\eta_{2}}{\pm \, d_{0}} \, a^{3-h} \, {\bf P}_{0, 2}^{(2)} \cdot E^{Inc}(z_{2})
    \end{pmatrix},
\end{equation}
with 
\begin{minipage}[t]{0.45\textwidth}
\begin{eqnarray*}
    \mathcal{B}_{13} \, &:=&  \, \frac{k^{4} \, \eta_{0}}{\pm \, c_{0}} \, a^{3-h} \, {\bf P}_{0, 1}^{(1)} \cdot  \Upsilon_{k}(z_1, z_2) \\
     \mathcal{B}_{23} \, &:=&  \, k^{2} \, a^{3} \, {\bf P}_{0, 1}^{(2)} \cdot \nabla \Phi_{k}(z_1, z_2) \times \\
    \mathcal{B}_{31} \, &:=&  \, k^{4} \, \eta_{2} \, a^{5} \, {\bf P}_{0, 2}^{(1)} \cdot  \Upsilon_{k}(z_2, z_1)  \\
    \mathcal{B}_{41} \, &:=& \,  \, \frac{k^{2} \, \eta_{2}}{\pm \, d_{0}} \, a^{3-h} \, {\bf P}_{0, 2}^{(2)} \cdot \nabla \Phi_{k}(z_2, z_1) \times \\
\end{eqnarray*}
\end{minipage}
\begin{minipage}[t]{0.45\textwidth}
\begin{eqnarray*}
    \mathcal{B}_{14} \, &:=&  \, \frac{k^{2} \, \eta_{0}}{\pm \, c_{0}} \, a^{3-h} \, {\bf P}_{0, 1}^{(1)} \cdot \nabla \Phi_{k}(z_1, z_2) \times \\
    \mathcal{B}_{24} \, &:=&  \, k^{2} \, a^{3} \, {\bf P}_{0, 1}^{(2)} \cdot \Upsilon_{k}(z_1, z_2)  \\
    \mathcal{B}_{32} \, &:=& \,  \, k^{2} \, \eta_{2} \, a^{5} \, {\bf P}_{0, 2}^{(1)} \cdot  \nabla \Phi_{k}(z_2, z_1)\times \\
    \mathcal{B}_{42} \, &:=&  \, \frac{k^{2} \, \eta_{2}}{\pm \, d_{0}} \, a^{3-h} \, {\bf P}_{0, 2}^{(2)} \cdot \Upsilon_{k}(z_2, z_1) 
\end{eqnarray*}
\end{minipage}
\newline
%
where $\Upsilon_k(\cdot, \cdot)$ is the Dyadic Green's kernel given by
  \begin{equation}\label{dyadicG}
  \Upsilon_{k}(\cdot, \cdot) \, := \, \frac{1}{k^{2}} \, \nabla \nabla \Phi_{k}(\cdot, \cdot) \, + \, \Phi_{k}(\cdot, \cdot) \, I_{3}, 
  \end{equation}
  with $\Phi_{k}(\cdot, \cdot)$ being the fundamental solution of the Helmholtz equation given by 
  \begin{equation}\label{Helmholtz-kernel}
      \Phi_{k}(x, y) \, := \, \frac{e^{i \, k \, \left\vert x \, - \, y \right\vert}}{4 \, \pi \, \left\vert x \, - \, y \right\vert}, \quad x \neq y,
  \end{equation}
  $I_3$ is the identity matrix, and ${\bf{P}}_{0, i}^{(j)}$, for $i,j = 1,2$, are the polarization tensors defined by
		\begin{eqnarray*}\label{def-all-tensor}
		{\bf P}_{0, 1}^{(1)} \, &=& \, \int_{B_1}\phi_{n_{0},B_{1}}(x)\, dx\otimes  \int_{B_1}\phi_{n_{0},B_{1}}(x)\, dx,\quad 
		{\bf P}_{0, 1}^{(2)} \, = \, \sum_n \frac{1}{ \lambda_{n}^{(3)}(B_{1})} \, \int_{B_1} e_{n,B_{1}}^{(3)}(x)\,dx\otimes  \int_{B_1} e_{n,B_{1}}^{(3)}(x)\,dx,\notag\\
		{\bf P}_{0, 2}^{(1)} \, &=& \, \sum_n \int_{B_2}\phi_{n,B_{2}}(x)\,dx\otimes\int_{B_2} \phi_{n,B_{2}}(x)\, dx,\, \quad 
		{\bf P}_{0, 2}^{(2)} \, = \,  \int_{B_2} e_{n_{*},B_{2}}^{(3)}(x)\,dx\otimes \int_{B_2} e_{n_{*},B_{2}}^{(3)}(x)\,dx,
		\end{eqnarray*}
with $\phi_{n}(\cdot)$ satisfying $(\ref{pre-cond})$ and $e_n^{(3)}$ fulfilling $\nabla M_{B_{2}}\left(e^{(3)}_n\right) \, = \,  \lambda_{n}^{(3)}(B_{2}) \, e_n^{(3)}$.
\end{theorem}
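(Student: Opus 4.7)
The plan is to convert the Maxwell system \eqref{U} into a volume Lippmann-Schwinger equation for the interior electric field, extract the dominant moments by spectral projection onto the resonant eigenmodes of the Newtonian and Magnetization operators on each particle, and then substitute those moments back into the exterior integral representation to read off \eqref{general-scattered-field} and \eqref{approximation-E}.

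First I would start from the representation
\begin{equation*}
E(x) \, - \, k^2 \int_D \eta(y)\, \Upsilon_k(x,y)\cdot E(y)\, dy \, = \, E^{Inc}(x), \qquad x\in\mathbb{R}^3,
\end{equation*}
and rescale its restriction onto $B_m$. Splitting $\Upsilon_k=\Phi_k\, I_3 + k^{-2}\nabla\nabla\Phi_k$ separates the action of the Newtonian operator $N_{B_m}$, which is nontrivial on $\mathbb{H}_0(\mathrm{div}=0)(B_m)$, from that of the Magnetization operator $\nabla M_{B_m}$, which is nontrivial on $\nabla\mathcal{H}armonic(B_m)$. The scaling $\eta_1=\eta_0 a^{-2}$ is exactly calibrated so that $k^2\eta_1 N_{D_1}$ transfers to the $a$-independent operator $k^2\eta_0 N_{B_1}$ on $B_1$, while $\eta_2\sim 1$ leaves $\eta_2\nabla M_{D_2}$ essentially $a$-independent. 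Projecting the restricted equation onto the Helmholtz decomposition $\mathbb{L}^2(B_m)=\mathbb{H}_0(\mathrm{div}=0)\oplus \nabla\mathcal{H}armonic\oplus\nabla H_0^1$ then decouples modes within each particle.

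Next, invoking the resonance conditions \eqref{condition-on-k}, on $D_1$ the operator $I-k^2\eta_1 a^2 N_{B_1}$ has an eigenvalue of size $\pm c_0 a^h$ along the mode $e^{(1)}_{n_0,B_1}=\nabla\times\phi_{n_0,B_1}$, producing upon inversion the resonant magnetic-type moment $\tilde Q_1$ with polarization tensor ${\bf P}^{(1)}_{0,1}$; symmetrically, on $D_2$ the operator $I+\eta_2\nabla M_{B_2}$ has an eigenvalue of size $\pm d_0 a^h$ along $e^{(3)}_{n_*,B_2}$, producing the resonant electric-type moment $\tilde R_2$ with tensor ${\bf P}^{(2)}_{0,2}$. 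The non-resonant subspaces yield the smaller moments $\tilde R_1$ (with ${\bf P}^{(2)}_{0,1}$) and $\tilde Q_2$ (with ${\bf P}^{(1)}_{0,2}$) by Neumann-series inversion. The coupling between the two particles emerges from Taylor-expanding $\Upsilon_k(x,y)$, for $x\in D_1$ and $y\in D_2$ (and vice versa), around $(z_1,z_2)$, together with the identity
\begin{equation*}
k^2\int_{D_m}\Upsilon_k(x,y)\cdot(\nabla\times\phi)(y)\, dy \, = \, -\,k^2\int_{D_m}\nabla_y\Phi_k(x,y)\times \phi(y)\, dy,
\end{equation*}
valid since $\nu\times\phi=0$ on $\partial D_m$ forces both the $\Phi_k$-boundary term and the normal trace $(\nabla\times\phi)\cdot\nu=\mathrm{curl}_S(\phi_T)$ to vanish. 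Matching coefficients modewise produces the block off-diagonal operators $\mathcal{B}_{ij}$ and the right-hand side of \eqref{eq-al-D1}, where $E^{Inc}$ is evaluated at leading order as $E^{Inc}(z_m)$ and its curl as $ik H^{Inc}(z_m)$.

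For $x$ away from $D$, substituting back into $E^s(x)=k^2\int_D\eta(y)\Upsilon_k(x,y)\cdot E(y)\, dy$, Taylor-expanding around each $z_m$, and applying the same integration by parts converts the divergence-free contribution into $-k^2\nabla_y\Phi_k(x,z_m)\times\tilde Q_m$ and the gradient contribution into $k^2\Upsilon_k(x,z_m)\cdot\tilde R_m$, producing \eqref{general-scattered-field}. The far-field \eqref{approximation-E} then follows from the standard asymptotics $\Upsilon_k(x,z)\sim (4\pi|x|)^{-1}e^{ik|x|}e^{-ik\hat x\cdot z}(I_3-\hat x\otimes\hat x)$ and $\nabla_y\Phi_k(x,z)\sim -ik\hat x(4\pi|x|)^{-1}e^{ik|x|}e^{-ik\hat x\cdot z}$. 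The main obstacle will be the careful bookkeeping of powers of $a$: each resonant inversion contributes a factor $a^{-h}$, each derivative of the coupling kernel contributes an additional $a^{-t}$ via the separation $d=\alpha_0 a^t$, and each volume integral contributes $a^3$. Controlling simultaneously the non-resonant remainders in the Helmholtz projection, the first-order Taylor error in $\Upsilon_k$, and the iterated corrections from double scattering between $D_1$ and $D_2$ produces the composite remainder $\mathcal{O}(a^{\min(3;\,7-2h-3t;\,10-2h-7t)})$; the hypothesis $4-h-4t>0$ is exactly what guarantees this remainder is strictly smaller than every term retained in the expansion.
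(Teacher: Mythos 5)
Your proposal follows essentially the same route as the paper's own proof: the Lippmann--Schwinger volume equation, the $\mathbb{L}^2$ Helmholtz decomposition together with the spectral theory of $N_{B_m}$ and $\nabla M_{B_m}$, resonant inversion along $e^{(1)}_{n_0,B_1}$ and $e^{(3)}_{n_*,B_2}$ producing the $a^{-h}$ amplification, the vector-potential integration by parts (with $\nu\times\phi=0$ killing the boundary terms) that makes the moments of $\phi_n$ --- rather than of $e^{(1)}_n$, whose averages vanish --- carry the tensors ${\bf P}_{0,m}^{(1)}$, Taylor expansion of $\Phi_k$ and $\Upsilon_k$ for the inter-particle coupling, and Born/Neumann inversion of the resulting $12\times 12$ system, which is precisely the content of the paper's Sections 3--6. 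The one slight misstatement concerns the role of $4-h-4t>0$: in the paper this condition guarantees invertibility (convergence of the Born series) of the coupled algebraic system, i.e.\ that the off-diagonal blocks $\mathcal{B}_{ij}$ are dominated by the identity, rather than forcing the remainder $\mathcal{O}\left(a^{\min(3;\,7-2h-3t;\,10-2h-7t)}\right)$ below every retained term (it can in fact dominate the retained $\mathcal{O}(a^{5})$ contribution of $\tilde{Q}_2$).
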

\medskip
\noindent Moreover, if we further extract the very  dominant term $\left(Q_1,  R_2 \right)$, in $D$, from the solution to the algebraic system $(\ref{eq-al-D1})$, it leads to the following corollary. 
\begin{corollary}\label{corollary-la}
	Let $x$ be away from $D$, then for $t,h \in (0,1)$ fulfilling the condition 
	\begin{equation*}
		 4 \, - \, h \, - \, 4t \, > \, 0, 
	\end{equation*} 
    there holds the following expansion for the scattered field 
\begin{eqnarray}\label{CoroSF}
\nonumber
		E^{s}(x) \, &=& \,  \pm  k^2  a^{3-h}  \left[ \frac{\eta_{2}}{d_{0}}
        \Upsilon_{k}(x,z_{0}) \cdot {\bf P}_{0, 2}^{(2)} \cdot E^{Inc}(z_{0}) \, - \, \frac{i \, k}{c_{0}} \, \eta_{0}
        \underset{y}{\nabla} \Phi_{k}(x,z_{0}) \times {\bf P}_{0, 1}^{(1)} \cdot H^{Inc}(z_{0}) \right] \\ &+&  \mathcal{O}\left( a^{\min(3-h+t;\; 3;\; 10-2h-7t;\; 9-3h-5t)} \right),
\end{eqnarray} 
and the following expansion for the far-field,
\begin{eqnarray}\label{CoroFF}
\nonumber
E^\infty(\hat{x}) \, &=& \, \frac{k^{2}}{\pm 4  \pi}  e^{-i k \hat{x}\cdot  z_0} \, a^{3-h} \left[  \frac{\eta_2}{d_0} \left( I  -  \hat{x}\otimes\hat{x} \right) \cdot  {\bf P}_{0, 2}^{(2)} \cdot E^{Inc}(z_0)  -  \frac{\eta_0 k^{2}}{c_0}  \hat{x}\times {\bf P}_{0, 1}^{(1)} \cdot H^{Inc}(z_0) \right] \\ &+&  \mathcal{O}\left( a^{\min(3-h+t;\; 3;\; 10-2h-7t;\; 9-3h-5t)} \right), 
\end{eqnarray}
where $z_0$ denotes the intermediate point between $z_1$ and $z_2$, and ${\bf P}_{0, 1}^{(1)}$ and ${\bf P}_{0, 2}^{(2)}$ are given in Theorem \ref{main-1}. 
\end{corollary}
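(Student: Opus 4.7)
The plan is to start from the representation \eqref{general-scattered-field} of Theorem \ref{main-1} and invert the block system \eqref{eq-al-D1} approximately via a Neumann-series argument, isolating the two \emph{resonant} unknowns $\tilde Q_1$ and $\tilde R_2$ -- which carry the small denominators $\pm c_0$ and $\pm d_0$ enforced by the resonance conditions \eqref{condition-on-k} -- and discarding the non-resonant $\tilde R_1,\tilde Q_2$ at leading order. The basic scalings that drive the argument are the right-hand sides $F_1,F_4\sim a^{3-h}$ versus $F_2\sim a^3$ and $F_3\sim a^5$, together with the coupling entries $\mathcal{B}_{14},\mathcal{B}_{41}\sim a^{3-h-2t}$, $\mathcal{B}_{13},\mathcal{B}_{42}\sim a^{3-h-3t}$, $\mathcal{B}_{23}\sim a^{3-2t}$, $\mathcal{B}_{24}\sim a^{3-3t}$, $\mathcal{B}_{31}\sim a^{5-3t}$ and $\mathcal{B}_{32}\sim a^{5-2t}$, which follow from $|\nabla\Phi_k(z_1,z_2)|\sim a^{-2t}$ and $|\Upsilon_k(z_1,z_2)|\sim a^{-3t}$. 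Because $\mathcal{B}$ is block-antidiagonal with respect to the $D_1$/$D_2$ splitting, I would write $(I-\mathcal{B})^{-1}=(I+\mathcal{B})(I-\mathcal{B}^2)^{-1}$; the assumption $4-h-4t>0$ together with $h,t\in(0,1)$ guarantees that each entry of $\mathcal{B}^2$ carries a strictly positive $a$-exponent, so the Neumann expansion converges for small $a$.

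Next I would substitute the leading terms $\tilde Q_1=F_1+(\mathcal{B}F)_1+(\mathcal{B}^2F)_1+\cdots$ and $\tilde R_2=F_4+(\mathcal{B}F)_4+(\mathcal{B}^2F)_4+\cdots$ into \eqref{general-scattered-field} and track every remainder against the kernels $\Upsilon_k(x,z_m)$ and $\nabla_y\Phi_k(x,z_m)$, which are $O(1)$ for $x$ away from $D$. The non-resonant contributions $k^2\Upsilon_k(x,z_1)\tilde R_1$ and $k^2\nabla_y\Phi_k(x,z_2)\times\tilde Q_2$ are of sizes $a^3$ and $a^5$ respectively; the first-iteration corrections to $\tilde Q_1,\tilde R_2$ produce at worst $a^{6-h-3t}$ and $a^{6-2h-2t}$ which are subsumed by the smaller exponents below under $h+4t<4$; and the second-iteration corrections (from $\mathcal{B}^2F$) generate the $a^{9-3h-5t}$-order residual arising from cross-couplings like $\mathcal{B}_{14}\mathcal{B}_{41}$ and $\mathcal{B}_{42}\mathcal{B}_{24}$ that mix the two resonant modes through the near-field interaction. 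Finally, replacing $z_1$ and $z_2$ by a common intermediate point $z_0$, and $E^{Inc}(z_m),H^{Inc}(z_m)$ by their values at $z_0$, uses $|z_m-z_0|\le d=\alpha_0 a^t$ and, via Taylor expansion of $\Upsilon_k$, $\nabla_y\Phi_k$ and of the plane wave, produces a recentering error of order $a^t\cdot a^{3-h}=a^{3-h+t}$. Collecting all remainders and noting that the theorem's residual exponent $7-2h-3t$ is absorbed by $3-h+t$ precisely under the assumption $4-h-4t>0$ (since $(7-2h-3t)-(3-h+t)=4-h-4t>0$) yields \eqref{CoroSF}.

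The far-field expansion \eqref{CoroFF} is then obtained by inserting into \eqref{CoroSF} the large-$|x|$ asymptotics
\begin{equation*}
\Phi_k(x,z)=\tfrac{e^{ik|x|}}{4\pi|x|}\bigl(e^{-ik\hat x\cdot z}+O(|x|^{-1})\bigr),\qquad \nabla_y\Phi_k(x,z)=\tfrac{ik\,e^{ik|x|}}{4\pi|x|}\bigl(\hat x\,e^{-ik\hat x\cdot z}+O(|x|^{-1})\bigr),
\end{equation*}
together with the induced expansion $\Upsilon_k(x,z)=\frac{e^{ik|x|}}{4\pi|x|}\bigl((I-\hat x\otimes\hat x)e^{-ik\hat x\cdot z}+O(|x|^{-1})\bigr)$, and reading off the coefficient of $e^{ik|x|}/|x|$ as prescribed by \eqref{def-far}. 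The main obstacle lies in the careful bookkeeping of the second step: among all second- and third-order products of the $\mathcal{B}_{ij}$ acting on $F$, one must verify that no residual beats the four claimed exponents $\{3-h+t,\,3,\,10-2h-7t,\,9-3h-5t\}$, a finite combinatorial minimization under the constraint $h+4t<4$ whose role is precisely to ensure that the two resonant modes $\tilde Q_1,\tilde R_2$ dominate the purely geometric near-field coupling between the plasmonic and dielectric components of the dimer.
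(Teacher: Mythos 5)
Your treatment of the scattered-field expansion \eqref{CoroSF} is essentially the paper's own argument: invert the unperturbed system \eqref{unPerAS} by a Born/Neumann series, observe that the resonant unknowns $\tilde Q_1,\tilde R_2\sim a^{3-h}$ dominate the non-resonant ones $\tilde R_1\sim a^3$, $\tilde Q_2\sim a^5$ (whose contributions to \eqref{general-scattered-field} are $\mathcal{O}(a^3)$ and $\mathcal{O}(a^5)$ and go into the remainder), and recenter at $z_0$ at the cost of $\mathcal{O}(a^{t})\cdot a^{3-h}=\mathcal{O}(a^{3-h+t})$. Two of your touches are actually cleaner than the paper's write-up: the factorization $(I-\mathcal{B})^{-1}=(I+\mathcal{B})(I-\mathcal{B}^2)^{-1}$ justifies convergence even though single entries of $\mathcal{B}$ (e.g. $\mathcal{B}_{13}\sim a^{3-h-3t}$) need not be small, only the entries of $\mathcal{B}^2$ being $o(1)$ under $4-h-4t>0$; and your sizes $a^{6-2h-2t}$, $a^{6-h-3t}$ for the first Born corrections to the resonant unknowns are more faithful than the remainders recorded in \eqref{final-QR}, while still being compatible with the corollary's error exponents (absorbed by $3-h+t$ when $h+3t\le 3$ and by $9-3h-5t$ otherwise). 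Your remark that $(7-2h-3t)-(3-h+t)=4-h-4t>0$ is exactly why the theorem's exponent $7-2h-3t$ disappears from the corollary.

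The far-field step, however, has a genuine gap. You propose to obtain \eqref{CoroFF} by inserting the large-$|x|$ asymptotics of $\Phi_k$, $\nabla_y\Phi_k$ and $\Upsilon_k$ into \eqref{CoroSF} and reading off the coefficient of $e^{ik|x|}/|x|$. But the remainder in \eqref{CoroSF} is only $\mathcal{O}\left(a^{\min(\cdots)}\right)$ uniformly for $x$ away from $D$, with no stated decay in $|x|$; since $E^\infty(\hat x)$ is, by \eqref{def-far}, obtained from $|x|\,e^{-ik|x|}E^s(x)$ as $|x|\to\infty$, you would have to multiply that remainder by $|x|$, and $|x|\cdot\mathcal{O}(a^{\alpha})$ is uncontrolled. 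A near-field expansion with an $x$-independent additive error does not determine the far-field pattern; one needs to know that the error is itself a radiating field whose far field is $\mathcal{O}(a^{\alpha})$. That information is precisely what \eqref{approximation-E} of Theorem \ref{main-1} supplies, and this is why the paper reruns the dominance-plus-recentering argument directly on \eqref{approximation-E} instead of passing through \eqref{CoroSF}; your proof is repaired by doing the same. A separate small slip: the correct asymptotics is $\nabla_y\Phi_k(x,z)=-\,i\,k\,\hat x\,\frac{e^{ik|x|}}{4\pi|x|}\left(e^{-ik\hat x\cdot z}+\mathcal{O}(|x|^{-1})\right)$, with a minus sign your formula omits; as written, your bookkeeping would flip the sign of the $\hat x\times {\bf P}_{0, 1}^{(1)}\cdot H^{Inc}(z_0)$ term in \eqref{CoroFF} relative to the correct statement.
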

\medskip
\noindent We conclude this subsection by mentioning the following compact formulas for rewriting the tensors ${\bf P}_{0, i}^{(j)}$, with $1 \leq i , j \leq 2$, and their values for the particular case where the nano-particles are balls. 

\begin{enumerate}
    \item[]
    \item The tensor 
    \begin{equation*}
    {\bf P}_{0, 1}^{(1)} \, := \, \int_{B_{1}} \phi_{n_{0},B_{1}}(x) \, dx \, \otimes \, \int_{B_{1}} \phi_{n_{0},B_{1}}(x) \, dx.
    \end{equation*}
    reduces, under the particular case of $B_{1}$ being the unit ball, i.e., $B_{1} \equiv B(0,1)$, and $n_{0} \, = \, 1$, to
    \begin{equation}\label{SL1}
    {\bf P}_{0, 1}^{(1)} \, = \, \frac{12}{\pi^{3}} \, I_{3}. 
    \end{equation}
    \item[]
    \item We have the compact form of the tensor
     \begin{equation*}\label{SL2}
    {\bf P}_{0, 2}^{(1)} \, := \, \sum_{n} \int_{B_{2}} \phi_{n,B_{2}}(x) \, dx \, \otimes \, \int_{B_{2}} \phi_{n,B_{2}}(x) \, dx \, =  \,  \int_{B_{2}} Q(x) \cdot \overset{1}{\mathbb{P}}\left(Q\right)(x) \, dx,
    \end{equation*}
    where $Q$ is the matrix given by $(\ref{DefQ(x)})$.
    \item[]
    \item We rewrite the tensor ${\bf P}_{0, 1}^{(2)}$ as
    \begin{equation*}
    {\bf P}_{0, 1}^{(2)} \, := \, \sum_{n} \frac{1}{\lambda_{n}^{(3)}(B_{1})} \, \int_{B_{1}} e_{n,B_{1}}^{(3)}(x) \, dx \, \otimes \, \int_{B_{1}} e_{n,B_{1}}^{(3)}(x) \, dx \, =  \,  \int_{B_1} \, \nabla M_{B_{1}}^{-1}\left( I_{3} \, \chi_{B_{1}} \right)(x)\,dx.
    \end{equation*}
    In addition, under the particular case of $B_{1}$ being the unit ball, i.e., $B_{1} \equiv B(0,1)$, we obtain  
    \begin{equation*}\label{SL3}
    {\bf P}_{0, 1}^{(2)} \, = \, 4 \, \pi \, I_{3}.
    \end{equation*}
    \item[]
    \item Finally the tensor
    \begin{equation*}
 {\bf P}_{0, 2}^{(2)} \, := \, \int_{B_{2}} e^{(3)}_{n_{\star},B_{2}}(x) \, dx \, \otimes \, \int_{B_{2}} e^{(3)}_{n_{\star},B_{2}}(x) \, dx.
\end{equation*} 
reduces, under the particular case of $B_{2}$ being the unit ball, i.e., $B_{2} \equiv B(0,1)$, and $n_{\star} \, = \, 1$, to
\begin{equation}\label{SL4}
 {\bf P}_{0, 2}^{(2)} \, = \, \frac{4 \, \pi}{27} \, I_{3}.
\end{equation} 
\end{enumerate}
The computation details can be found in Subsection \ref{AppRemark}. We observe that for the case of balls, the main tensors ${\bf P}_{0, 1}^{(1)}$ and ${\bf P}_{0, 2}^{(2)}$ are not vanishing and are proportional to the identity matrix.

\subsection{Discussion about the results}~
\bigskip

\noindent The estimation of the electromagnetic fields generated by a single type of nano-particles is already known in the literature, see \cite{Ammari-Li-Zou-2, CGS} for dielectric nano-particles and \cite{GS, CGS-Optic} for plasmonic nano-particles. The related results correspond to those derived here by keeping only the block matrix given by $(\mathcal{B}_{13}, \mathcal{B}_{23}, \mathcal{B}_{31}, \mathcal{B}_{41})$  or $(\mathcal{B}_{14}, \mathcal{B}_{24}, \mathcal{B}_{32}, \mathcal{B}_{42})$, respectively in \ref{eq-al-D1}. The originality here is to have derived the fields generated by such hybrid dimers having not only different shape but also different contrasting materials. We call such hybrid dimer heterogeneous dimers while those dimers with similar, or same, scales are called homogeneous dimers.
\bigskip

\noindent The approximations of the electric fields in \ref{general-scattered-field} and \ref{approximation-E} are modeled by the vectors $Q_1, Q_2, R_1$ and $R_2$ which are solutions of the algebraic system \ref{eq-al-D1}. Precisely, $Q_j$ and $R_j$ model the magnetic and electric poles of the nano-particles $D_j$, $j=1, 2$, respectively. Inverting this algebraic system, using the Born series expansions, provides with a cascade of fields approximations where the most dominant field is described in Corollary \ref{CoroFF}. In this corollary, we see that the generated electromagnetic field by the dimer is a combination of the electric pole generated by the plasmonic nano-particle and magnetic pole generated by the dielectric nano-particle. This shows how the dimer plays a role of dipole to generate the electromagnetic field.  The higher order terms in the Born series describe the two types of contributions that worth mentioning.
\begin{enumerate}
\item[] 
\item The first class of contributions consist in the higher order terms modeling multipoles for each nano-particle, taken in isolation. Such higher order terms are also seen when deriving the expansion for single nano-particle.  
\item[]
\item The second class of contributions consist in the mutual interaction between the two nano-particles. These terms model the multiple scattering between the two nano-particles. 
\item[]
\end{enumerate}

\noindent Based on this classification of the contributions, the mutual interaction between the two nano-particles, forming the heterogeneous dimer, is richer, as compared to single nano-particles or homogeneous dimers, as these contributions enter into game as combinations of higher order modes, for each nano-particle, with mutual interactions between the two nano-particles.
\bigskip

\noindent Such a mutual interaction between the two nano-particles is possible only because they are tuned to resonate at a common incident frequencies. Otherwise, we can also excite the dimer with frequencies away from the common resonances, but eventually near to resonances of one of the nano-particles, then the dimer will predominately behave as a single nano-particle. 
\bigskip

\noindent As a plasmonic-dielectric dimer has the potential of generating both electric and magnetic polarizations, then we expect to use a cluster of such dimers to generate both effective electric permittivity and magnetic permeability. As we excite such systems with nearly resonating incident frequencies, we expect to be able to generate both single negative (permittivity or permeability) or eventually double negative (permittivity and permeability). Such investigation will be reported in a forthcoming work. Let us mention that the use of cluster of nano-particles (single nano-particle) to generate single negative permeability of single negative permittivity is already confirmed in \cite{CGS-Negative-permeability} and \cite{CMS} respectively.
\bigskip

\noindent The rest of the paper is organized as follows. In Section \ref{sec-pre}, we introduce some preliminaries including the $\mathbb{L}^2$-Helmholtz decomposition  and the Lippmann Schwinger system of equations for the solution to \eqref{U}. Based on the Lippmann Schwinger system, we present the a-priori estimates first for single nano-particle and then for the dimer of nano-particles. The estimations for the related scattering coefficients, i.e. corresponding to the induced polarization tensors, are analyzed as well. In Section \ref{sec-la-system}, the precise form of the linear algebraic system is investigated. Section \ref{sec-proof-main} is devoted to prove Theorem \ref{main-1}  and Corollary \ref{corollary-la}, on the basis of the outcomes in Section \ref{sec-pre} and Section \ref{sec-la-system}. In Section \ref{sec-proof-prior}, we provide the proof of all the a-priori estimates presented in Section \ref{sec-pre}. In Section \ref{sec-proof-la}, detailed analysis are given  for the construction of the linear algebraic system presented in Section \ref{sec-la-system}. Section \ref{Appendix}, which will be given as an appendix, will be devoted to the justification of some mentioned results in Section \ref{SecI}.

\section{Some preliminaries and a-priori estimates}\label{sec-pre}

In this section, we present some necessary preliminaries and significant a-prior estimates. For the preliminaries, we cite some key points here for the completeness of the paper, please refer to \cite{CGS} for more details.

\subsection{$\mathbb{L}^2(B)$-Helmholtz decomposition.}
The following direct sum provides a useful decomposition of $\mathbb{L}^{2}(B)$-space, see \cite[Chapter IX, Table I, Page 314]{Dautry-Lions}, 
\begin{equation}\label{L2-decomposition}
\mathbb{L}^{2}(B) \, = \, \mathbb{H}_{0}\left(\div=0 \right)(B) \overset{\perp}{\oplus} \mathbb{H}_{0}\left(Curl=0 \right)(B) \overset{\perp}{\oplus} \nabla \mathcal{H}armonic(B),
\end{equation}   
where 
\begin{eqnarray*}
	\mathbb{H}_{0}\left(\div=0 \right)(B) &:=& \left\lbrace E \in \mathbb{L}^{2}(B), \, \div E = 0, \; \text{in} \, B, \;\; \nu \cdot E = 0 \, \; \text{on} \;\, \partial B \right\rbrace ,\\
	\mathbb{H}_{0}\left(Curl =0 \right)(B) &:=& \left\lbrace E \in \mathbb{L}^{2}(B), \, Curl \, E = 0, \; \text{in} \, B, \, \nu \times E = 0 \, \; \text{on} \;\, \partial B \right\rbrace,
\end{eqnarray*}
and 
\begin{equation*}
\nabla \mathcal{H}armonic(B) \, := \, \left\lbrace E: \; E = \nabla \psi, \, \psi \in \mathbb{H}^{1}(B), \, \Delta\psi \, = \, 0, \; \text{in} \; B \right\rbrace.
\end{equation*}
From the decomposition \eqref{L2-decomposition}, 
we define $\overset{1}{\mathbb{P}}, \overset{2}{\mathbb{P}}$ and $\overset{3}{\mathbb{P}}$ to be the natural projectors as follows
\begin{equation*}
\resizebox{.975\hsize}{!}{$
	\overset{1}{\mathbb{P}} := \mathbb{L}^{2}(B) \longrightarrow  \mathbb{H}_{0}\left(\div=0 \right)(B), \qquad
	\overset{2}{\mathbb{P}} := \mathbb{L}^{2}(B) \longrightarrow  \mathbb{H}_{0}\left(Curl = 0 \right)(B) \quad \text{and} \quad 	\overset{3}{\mathbb{P}} := \mathbb{L}^{2}(B) \longrightarrow  \nabla \mathcal{H}armonic(B).$}
\end{equation*} 

\subsection{Lippmann-Schwinger integral formulation of the solution.}

For any vector field $F$, we define the Newtonian operator $N^{k}_{D}(\cdot)$ and the Magnetization operator $\nabla M^{k}_{D}(\cdot)$ as follows
\begin{equation}\label{N-M opera}
N^{k}_{D}(F)(x):=\int_{D} \Phi_{k}(x,y)F(y)\,dy \quad \text{and} \quad \nabla M^{k}_{D}(F)(x):=\underset{x}{\nabla}\int_{D}\underset{y}{\nabla}\Phi_{k}(x,y)\cdot F(y)\,dy.
\end{equation}
where $\Phi_{k}(\cdot,\cdot)$ is the fundamental solution of the Helmholtz equation given by $(\ref{Helmholtz-kernel})$. 
The solution to \eqref{U} of the integro-differential form can be formulated as the following proposition.

\begin{proposition}
	The solution to the problem $(\ref{U})$  satisfies
	\begin{equation}\label{LS eq2}
		E(x) \, + \, \nabla M^{k}_{D}(\eta \, E)(x) - k^2  \, N^{k}_{D}(\eta \, E)(x) \, = \, E^{Inc}(x),\quad x \in \mathbb{R}^{3},
	\end{equation}
	where $\eta(\cdot)$ is defined by \eqref{def-eta}.
\end{proposition}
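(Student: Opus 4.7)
The plan is to eliminate the magnetic field from \eqref{U}, reduce the Maxwell system to a vectorial Helmholtz-type equation for $E$, and then invert the operator $(\Delta + k^{2})$ using the outgoing fundamental solution $\Phi_{k}$ together with the radiation condition \eqref{Radiation-conditions}. Since $\mu_{r}\equiv 1$ and $\varepsilon_{r}=1+\eta\,\chi_{D}$, substituting $H=\frac{1}{ik}\,\mathrm{Curl}(E)$ from the first equation into the second gives
\begin{equation*}
\mathrm{Curl}\,\mathrm{Curl}(E) \, - \, k^{2}\,E \; = \; k^{2}\,\eta\,\chi_{D}\,E, \qquad \text{in }\mathbb{R}^{3}.
\end{equation*}
Using the identity $\mathrm{Curl}\,\mathrm{Curl}=\nabla\,\mathrm{div}-\Delta$ and the same computation for the incident field (which satisfies $(\Delta+k^{2})E^{Inc}=0$ and $\mathrm{div}(E^{Inc})=0$), one obtains
\begin{equation*}
(\Delta+k^{2})E^{s} \; = \; \nabla\,\mathrm{div}(E^{s}) \, - \, k^{2}\,\eta\,\chi_{D}\,E.
\end{equation*}

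Next, I would exploit the constraint $\mathrm{div}(\varepsilon_{r}E)=0$, which follows from the second Maxwell equation because the divergence of a curl vanishes. Written distributionally in $\mathbb{R}^{3}$, this is $\mathrm{div}(E)=-\mathrm{div}(\eta\,\chi_{D}E)$, and the same holds for $\mathrm{div}(E^{s})$ since $\mathrm{div}(E^{Inc})=0$. Inserting this into the previous equation yields the scalar-sourced vector Helmholtz problem
\begin{equation*}
(\Delta+k^{2})E^{s}(x) \; = \; -\,\nabla\,\mathrm{div}\!\left(\eta\,\chi_{D}\,E\right)\!(x) \, - \, k^{2}\,\eta\,\chi_{D}(x)\,E(x), \qquad x\in\mathbb{R}^{3},
\end{equation*}
with $E^{s}$ satisfying \eqref{Radiation-conditions}. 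Since $\Phi_{k}$ satisfies $(\Delta+k^{2})\Phi_{k}=-\delta$ and is outgoing, the unique radiating solution is
\begin{equation*}
E^{s}(x) \; = \; \int_{\mathbb{R}^{3}}\Phi_{k}(x,y)\,\nabla_{y}\,\mathrm{div}\!\left(\eta\,\chi_{D}\,E\right)\!(y)\,dy \, + \, k^{2}\,N_{D}^{k}(\eta\,E)(x).
\end{equation*}

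The main technical step is to rewrite the first integral as $-\nabla M_{D}^{k}(\eta\,E)(x)$. Here $\eta\,\chi_{D}E$ jumps across $\partial D$, so the two integrations by parts have to be carried out in the sense of distributions rather than classically. Using $\partial_{y_{i}}\Phi_{k}(x,y)=-\partial_{x_{i}}\Phi_{k}(x,y)$ and pairing the distribution $\nabla_{y}\mathrm{div}(\eta\,\chi_{D}E)$ against the smooth test function $\Phi_{k}(x,\cdot)$ (for $x\notin\mathrm{supp}(\eta\,\chi_{D}E)$, and then extended by the mapping properties of $N_{D}^{k}$ and $\nabla M_{D}^{k}$), one obtains componentwise
\begin{equation*}
\int_{\mathbb{R}^{3}}\Phi_{k}(x,y)\,\partial_{y_{i}}\mathrm{div}(\eta\,\chi_{D}E)(y)\,dy \; = \; \partial_{x_{i}}\int_{D}\nabla_{x}\Phi_{k}(x,y)\cdot\eta(y)\,E(y)\,dy,
\end{equation*}
and recombining into a vector gives exactly $-\nabla M_{D}^{k}(\eta\,E)(x)$ by definition \eqref{N-M opera} together with $\nabla_{x}\Phi_{k}=-\nabla_{y}\Phi_{k}$.

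Combining the two contributions produces $E^{s}(x)=-\nabla M_{D}^{k}(\eta\,E)(x)+k^{2}N_{D}^{k}(\eta\,E)(x)$, and adding $E^{Inc}$ on both sides yields \eqref{LS eq2}. The obstacle I expect to spend the most care on is the distributional integration by parts: one must justify that the boundary contributions on $\partial D$ cancel once the two gradients are transferred onto $\Phi_{k}$, which is standard once $E\in\mathbb{L}^{2}(D)$ is combined with the $\mathbb{H}^{2}$-mapping property of the Newtonian potential $N_{D}^{k}$ (footnoted in the excerpt) and the $\mathbb{L}^{2}\to\mathbb{L}^{2}$ boundedness of $\nabla M_{D}^{k}$.
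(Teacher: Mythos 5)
Your proposal is correct, but it follows a genuinely different route from the paper. The paper's own proof is a one-liner: it invokes the Stratton--Chu representation formula, citing \cite[Theorem 6.1]{colton2019inverse}, which directly produces the volume-potential representation of the total field and handles the transmission behaviour across $\partial D$ automatically. You instead re-derive the Lippmann--Schwinger equation from first principles: eliminate $H$ to get $\mathrm{Curl}\,\mathrm{Curl}(E)-k^{2}E=k^{2}\eta\,\chi_{D}E$, use the charge-free constraint $\mathrm{div}(\varepsilon_{r}E)=0$ (a consequence of the second equation in \eqref{U}) to replace $\nabla\,\mathrm{div}(E^{s})$ by $-\nabla\,\mathrm{div}(\eta\,\chi_{D}E)$, invert $(\Delta+k^{2})$ with the outgoing fundamental solution, and identify the longitudinal term with $-\nabla M^{k}_{D}(\eta E)$. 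Your computation is sound, and the sign bookkeeping works out: indeed $\mathrm{div}_{x}N^{k}_{D}(F)(x)=\int_{D}\nabla_{x}\Phi_{k}(x,y)\cdot F(y)\,dy=-\int_{D}\nabla_{y}\Phi_{k}(x,y)\cdot F(y)\,dy$, so $\nabla\,\mathrm{div}\,N^{k}_{D}(F)=-\nabla M^{k}_{D}(F)$ by \eqref{N-M opera}, which is exactly your first integral. What your approach buys is a self-contained, elementary derivation that makes transparent both the role of the divergence constraint and the origin of the magnetization operator (it is precisely $-\nabla\,\mathrm{div}$ of the vector Newtonian potential). What the paper's citation buys is brevity and the avoidance of two technical points that your argument should state explicitly: (i) to invoke uniqueness of the radiating solution componentwise, you need that the Silver--M\"{u}ller condition \eqref{Radiation-conditions} implies the Sommerfeld radiation condition for each Cartesian component of $E^{s}$ (standard for radiating Maxwell solutions, see \cite{colton2019inverse}); and (ii) the distributional integration by parts is most cleanly packaged not as an ``extension by mapping properties'' but as the fact that convolution with a compactly supported distribution commutes with differentiation, giving
\begin{equation*}
\Phi_{k}\ast\nabla\,\mathrm{div}\!\left(\eta\,\chi_{D}E\right)\;=\;\nabla\,\mathrm{div}\!\left(\Phi_{k}\ast\eta\,\chi_{D}E\right)\;=\;\nabla\,\mathrm{div}\,N^{k}_{D}(\eta E)\;=\;-\,\nabla M^{k}_{D}(\eta E),
\end{equation*}
which is valid on all of $\mathbb{R}^{3}$ (not only for $x\notin\overline{D}$) since $\eta\,\chi_{D}E\in\mathbb{L}^{2}$ has compact support and $N^{k}_{D}$ maps into $\mathbb{H}^{2}_{loc}$. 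With these two points made explicit, your argument is complete and constitutes a valid alternative proof of \eqref{LS eq2}.
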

\begin{proof}
The proposition can be proved by utilizing the Stratton-Chu formula directly, see \cite[Theorem 6.1]{colton2019inverse} for more detailed discussions.     
\end{proof}
Motivated by the study of the L.S.E given by $(\ref{LS eq2})$, on the sub-spaces involved in the $\mathbb{L}^{2}$-space decomposition, see for instance $(\ref{L2-decomposition})$, using spectral theory techniques, it is crucial for us to discuss the Magnetization operator with vanishing frequency, i.e. $\nabla M(\cdot) \, := \, \nabla M^{0}(\cdot)$, and the Newtonian operator with vanishing frequency, i.e. $N(\cdot) \, := \, N^{0}(\cdot)$, on the L.H.S of $(\ref{LS eq2})$. This idea is clarified by the following remark.

\begin{remark}
The case for a domain $D$ being small leads to the expansions of the Magnetization operator $\nabla M^{k}_{D}(\cdot)$ and the Newtonian operator $N^{k}_{D}(\cdot)$, defined in $(\ref{N-M opera})$ as
	\begin{eqnarray}\label{expansion-gradMk}
	\nonumber
	\nabla M^{k}_{D}(F)(x)  &=& \nabla M_{D}(F)(x) +  \frac{k^{2}}{2} \, N_{D}(F)(x) + \frac{i k^{3}}{12 \pi} \int_{D} F(y) dy - \frac{k^{2}}{2} \int_{D} \Phi_{0}(x,y) \frac{A(x,y)\cdot F(y)}{\vert x -y \vert^{2}}  dy \\ 
	&-& \frac{1}{4 \pi} \; \sum_{n \geq 3} \frac{(ik)^{n+1}}{(n+1)!} \; \int_{D} \;   \underset{y}{\nabla}\underset{y}{\nabla}\left\vert x - y \right\vert^{n}  \cdot F(y) \, dy, \quad x \in D, 
	\end{eqnarray}
	and 
	\begin{equation}\label{expansion-Nk}
	N^{k}_{D}(F)(x) =  N_{D}(F)(x) + \frac{ik}{4\pi} \int_{D} F(y) dy + \frac{1}{4 \pi} \; \sum_{n \geq 1} \frac{(ik)^{n+1}}{(n+1)!} \; \int_{D} \; \left\vert x - y \right\vert^{n}  F(y) \, dy, \quad x \in D,
	\end{equation}
 where $A(\cdot, \cdot)$ is the matrix given by $A(x,y) := \left( x - y \right) \otimes \left( x - y \right)$. For more details on the derivation of $(\ref{expansion-gradMk})$ and $(\ref{expansion-Nk})$, we refer the readers to \cite[Section 2.2]{CGS}.
\end{remark}
 In addition to the above remark, the following behaviors of the Magnetization operator and the Newtonian operator on the sub-spaces involved in the $\mathbb{L}^{2}$-space decomposition, given by $(\ref{L2-decomposition})$, hold. 
 \begin{remark}\label{Remark23}
Two points are in order. 
\begin{enumerate}
    \item[]
    \item The Newtonian operator $N_{B}(\cdot)$ projected onto  the subspace $\mathbb{H}_{0}(\div = 0)(B)$ \big(respectively, $\mathbb{H}_{0}(Curl = 0)(B)$\big) admits an eigensystem that we denote by $\left( \lambda^{(1)}_{n}(B);e^{(1)}_{n, B} \right)$ \Big( respectively, $\left( \lambda^{(2)}_{n}(B);e^{(2)}_{n, B} \right)$ \Big). Besides, we have 
    \begin{equation}\label{RefNeeded1}
     \int_{B} e^{(j)}_{n,B}(y) \, dy = 0, \; \forall \; n \, \in 
      \mathbb{N} \; \text{and} \; j = 1, 2.
   \end{equation}
    \item[]
    \item For the Magnetization operator, the following relations hold. 
    \begin{enumerate}
        \item[$\ast$] $\nabla M^{k}_{B}(\cdot)$ projected onto  the subspace $\mathbb{H}_{0}(\div = 0)(B)$ is a vanishing operator, i.e.
        \begin{equation}\label{grad-M-1st}
 	\forall \; E \in \mathbb{H}_{0}\left(\div=0 \right)(B) \; \text{we have} \; \nabla M_{B}^{k}(E) = 0.
 	\end{equation}
        \item[$\ast$] $\nabla M^{k}_{B}(\cdot)$ projected onto  the subspace $\mathbb{H}_{0}(Curl = 0)(B)$ satisfy 
        \begin{equation}\label{grad-M-2nd}
 	\forall \; E \in \mathbb{H}_{0}\left(Curl = 0 \right)(B) \; \text{we have} \; \nabla M_{B}^{k}(E) \, = \, k^{2} \, N^{k}_{B}(E) \, + \, E \, \chi_{B},
      \end{equation}
      where $\chi_{B}(\cdot)$ is the characteristic function set. 
       \item[$\ast$]  $\nabla M_{B}(\cdot)$ projected onto the subspace $\nabla \mathcal{H}armonic$ admits an eigensystem that we denote by $\left( \lambda^{(3)}_{n}(B);e^{(3)}_{n,B} \right)$. 
    \end{enumerate}
\end{enumerate}
For the existence and the construction of $\left( \lambda^{(j)}_{n}(B);e^{(j)}_{n,B} \right)_{n \in \mathbb{N}},  j=1,2,3$, we refer to \cite[Section 5]{GS}. More properties for the Magnetization operator, such as the self-adjointness, positivity, spectrum, boundedness, etc., can be found in \cite{AhnDyaRae99, Dyakin-Rayevskii, friedman1980mathematical, friedman1981mathematical, 10.2307/2008286} and \cite{Raevskii1994}. Besides, $(\ref{RefNeeded1})$ can be proved by using $(\ref{L2-decomposition})$ and knowing that $I_{3} \in \nabla \mathcal{H}armonic$.
 \end{remark}

\subsection{A-priori Estimates.} Based on the decomposition \eqref{L2-decomposition}, we present here some necessary a-prior estimates related to the electric total field $E$, derived from the Lippmann Schwinger equation \eqref{LS eq2}, and some scattering coefficients, which play an important role in the proof of our main results. In order to achieve this, we will require an intermediate result that will clarify the total field estimates that can be derived from a single nano-particle, whether it's dielectric or plasmonic. This is the subject of the following lemma.  
\begin{lemma}[Estimate for just one nano-particle]\label{es-oneP}
	Under \textbf{Assumption \ref{Assumption-I-II-III-IV}}, we consider the problem \eqref{U} with only one distributed nano-particle. Let $k$ fulfill
	\begin{equation*}\label{choice-k-1st-regime}
		k^2 \, := \, \frac{1 \mp c_{0} \, a^h}{\eta_{1} \, a^2 \, \lambda_{n_{0}}^{(1)}(B_{1})} \, \sim 1.
	\end{equation*}
        Then, for $h<2$, there holds
        \begin{enumerate}
            \item[]
            \item the electric field generated by a single dielectric nano-particle
            \begin{equation*}
		\lVert \tilde{E} \rVert_{\mathbb{L}^2(B_1)} \, = \, \mathcal{O}\left(a^{1-h}\right),
	\end{equation*}
            \item[]
            \item the electric field generated by a single Plasmonic nano-particle
            \begin{equation*}
		\lVert\tilde{E} \rVert_{\mathbb{L}^2(B_2)} \, = \, \mathcal{O}\left(a^{-h}\right),
	\end{equation*}
        \end{enumerate}
	 and, regardless on the used nano-particle, we have
	\begin{equation}\label{*add1}
	 \overset{2}{\mathbb{P}}\left( \tilde{E} \right) \, = \, 0.
	\end{equation}
\end{lemma}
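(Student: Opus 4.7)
The plan is to combine the $\mathbb{L}^2(B_m)$-Helmholtz decomposition \eqref{L2-decomposition} with a spectral (resonance) analysis of the Lippmann--Schwinger equation \eqref{LS eq2}. After rescaling to the reference ball via $\tilde E(\xi) := E(a\xi + z_m)$, which turns \eqref{LS eq2} into $\tilde E + \eta\,\nabla M^{ka}_{B_m}(\tilde E) - k^2 a^2\eta\, N^{ka}_{B_m}(\tilde E) = \tilde E^{Inc}$ on $B_m$, the key preliminary step is to establish \eqref{*add1}. For this I would exploit the identity $\nabla M^{ka}_{B_m}(V) - (ka)^2 N^{ka}_{B_m}(V) = V - \nabla\times\nabla\times\, N^{ka}_{B_m}(V)$ in $B_m$, which follows from $\nabla\,\mathrm{div} = \Delta + \nabla\times\nabla\times$ together with $-\Delta N^{ka}_{B_m}(V) = V + (ka)^2 N^{ka}_{B_m}(V)$ in $B_m$. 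Substituting into the scaled equation yields $(1+\eta)\,\tilde E = \tilde E^{Inc} + \eta\,\nabla\times\nabla\times\, N^{ka}_{B_m}(\tilde E)$ on $B_m$, whose right-hand side is divergence-free ($\tilde E^{Inc}$ because $\theta\cdot \mathsf{p} = 0$, and a double curl always). Since any divergence-free field lies in $\mathbb{H}_0(\mathrm{div}=0)(B_m)\overset{\perp}{\oplus}\nabla\mathcal{H}armonic(B_m)$, its $\overset{2}{\mathbb{P}}$-projection vanishes, so $(1+\eta)\,\overset{2}{\mathbb{P}}(\tilde E)=0$. Then $1+\eta_1 = 1+\eta_0 a^{-2}\neq 0$ and $1+\eta_2 = \epsilon_r^{(2)}\neq 0$ (since $\Re\epsilon_r^{(2)}<0$) give \eqref{*add1}.

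For the dielectric estimate I would project the scaled equation onto $\mathbb{H}_0(\mathrm{div}=0)(B_1)$, use \eqref{grad-M-1st} to kill $\nabla M^{ka}_{B_1}(\tilde E^{(1)})$, expand $\tilde E^{(1)} = \sum_n c_n\, e^{(1)}_{n,B_1}$ in the eigenbasis of $N_{B_1}|_{\mathbb{H}_0(\mathrm{div}=0)}$, and extract the $n_0$-mode:
\begin{equation*}
\bigl(1 - k^2 a^2\eta_1\lambda^{(1)}_{n_0}(B_1) + O(a^2)\bigr)\,c_{n_0} + (\text{coupling from }\tilde E^{(3)}) = \bigl(\tilde E^{Inc},\,e^{(1)}_{n_0,B_1}\bigr)_{\mathbb{L}^2(B_1)}.
\end{equation*}
The $O(a^2)$ (rather than $O(a)$) correction to the denominator is crucial: in the expansion \eqref{expansion-Nk}, the first-order Born term $\tfrac{ika}{4\pi}\int_{B_1} e^{(1)}_{n_0,B_1}\,dy$ vanishes by \eqref{RefNeeded1}. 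Combined with the resonance condition, this makes the denominator $\pm c_0 a^h + O(a^2)$, which is dominated by $a^h$ precisely for $h<2$. Taylor-expanding $\tilde E^{Inc}(\xi) = \mathsf{p}\,e^{ikz_1\cdot\theta}(1 + ika\,\xi\cdot\theta + \ldots)$ and invoking \eqref{RefNeeded1} once more kills the constant term, so the right-hand side is $O(a)$. Hence $|c_{n_0}| = O(a^{1-h})$, and a parallel check on the non-resonant modes and the $\tilde E^{(3)}$-coupling shows they are strictly subdominant, giving $\|\tilde E\|_{\mathbb{L}^2(B_1)} = O(a^{1-h})$.

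The plasmonic estimate follows the same template, but now on $\nabla\mathcal{H}armonic(B_2)$, where the Magnetization operator $\nabla M_{B_2}$ (instead of $N_{B_2}$) provides the resonant spectrum: the condition $1+\eta_2\,\lambda^{(3)}_{n_*}(B_2) = \pm d_0 a^h$ selects the $n_*$-mode of $\nabla M_{B_2}|_{\nabla\mathcal{H}armonic}$. In contrast to the dielectric case, $\int_{B_2} e^{(3)}_{n_*,B_2}\,d\xi \neq 0$ by hypothesis (consistent with $I_3\in\nabla\mathcal{H}armonic$), so the right-hand side is $O(1)$ rather than $O(a)$, producing the stronger blow-up $\|\tilde E\|_{\mathbb{L}^2(B_2)} = O(a^{-h})$. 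The main technical obstacle, in both cases, is controlling the off-diagonal coupling between the three subspaces of \eqref{L2-decomposition} induced by $N^{ka}_{B_m}$ and $\nabla M^{ka}_{B_m}$ at orders $(ka)^2$ and higher; this is especially delicate in the dielectric case because the prefactor $\eta_1 = \eta_0 a^{-2}$ could in principle amplify those couplings. The saving observation is that the amplified combination $k^2 a^2\eta_1 = k^2\eta_0$ is of order one, so the amplification is absorbed and the a-priori bounds close by a bootstrap argument throughout the stated range $h<2$.
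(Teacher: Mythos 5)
Your proposal is correct and follows essentially the same route as the paper: rescale the Lippmann--Schwinger equation, split it along the three-component Helmholtz decomposition \eqref{L2-decomposition}, exploit the resonance conditions (the Newtonian eigenvalue $\lambda^{(1)}_{n_0}(B_1)$ for the dielectric particle, the magnetization eigenvalue $\lambda^{(3)}_{n_*}(B_2)$ for the plasmonic one) together with the mean-zero property \eqref{RefNeeded1}, and close with an absorption/bootstrap argument valid precisely for $h<2$. Your derivation of \eqref{*add1} via the curl-curl identity $\nabla M^{ka}_{B_m}(V) - (ka)^2 N^{ka}_{B_m}(V) = V - \nabla\times\nabla\times N^{ka}_{B_m}(V)$ is merely an operator-level reformulation of the paper's mode-by-mode computation using \eqref{grad-M-2nd} and the orthogonality of divergence-free fields to $\mathbb{H}_{0}(Curl=0)(B_m)$, so the content coincides.
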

\begin{proof}
    See Subsection \ref{subsec-51}. 
\end{proof}
    \medskip
Furthermore, in the sequel, we will frequently use certain notations that require clarification.
\begin{notation} In the presence of a Dimer, we denote by $E_{m}(\cdot)$ the restriction of $E(\cdot)$ onto the nano-particle $D_{m}$, for $m=1,2$, i.e. $E_{m}(\cdot) := E|_{{D_{m}}}(\cdot)$. Besides, we denote by $\tilde{F}(\cdot)$ the vector field that we obtain by scaling $F(\cdot)$, from $D$ to $B$, i.e.,
\begin{equation*}
    \tilde{F}\left( \eta \right) \, = \, F(z \, + \, a \, \eta), \qquad \eta \in B. 
\end{equation*}
\end{notation}
\medskip
Based on the estimates of the total electric field generated by the presence of a single nano-particle, whether it's dielectric or plasmonic, see Lemma \ref{es-oneP}, we develop the following proposition to estimate the total electric field generated by the presence of a dimer.   
\begin{proposition}[Estimation for the dimer]\label{lem-es-multi}
	Under \textbf{Assumption \ref{Assumption-I-II-III-IV}}, we consider the problem \eqref{U} for the dimer $D$. Let $k$ fulfill
    \begin{equation}\label{choice-k-1st-regime-Prop26}
		k^2 :=  \frac{1 \mp c_{0} \, a^h}{\eta_{1} \, a^2 \, \lambda_{n_{0}}^{(1)}(B_{1})} \, \sim 1.
	\end{equation}
 Then for $t, h\in(0,1)$, there holds the estimation 
\begin{equation}\label{max-P1P3}
\begin{cases}
			\left\lVert \overset{1}{\mathbb{P}}\left(\tilde{E}_1 \right)\right\rVert_{\mathbb{L}^2(B_1)} \, = \, \mathcal{O}\left(a^{1-h}\right) \qquad & \text{,} \qquad \left\lVert \overset{1}{\mathbb{P}}\left(\tilde{E}_2 \right)\right\rVert_{\mathbb{L}^2(B_2)} \, = \, \mathcal{O}\left(a^{\min(1; 4-h-3t)} \right)\\
               \\
             \left\lVert\overset{3}{\mathbb{P}}\left(\tilde{E}_1 \right)\right\rVert_{\mathbb{L}^2(B_1)}\, =  \, \mathcal{O}\left( a^2\right)  \qquad  & \text{,} \qquad \left\lVert\overset{3}{\mathbb{P}}\left(\tilde{E}_2 \right)\right\rVert_{\mathbb{L}^2(B_2)} \,  =  \, \mathcal{O}\left( a^{-h} \right)
		 \end{cases}
\end{equation}
In addition, we have 
\begin{equation}\label{Equa1108}
    \overset{2}{\mathbb{P}}\left(\tilde{E}_1 \right) \; = \; \overset{2}{\mathbb{P}}\left(\tilde{E}_2 \right) \; = \; 0.
\end{equation}
   \end{proposition}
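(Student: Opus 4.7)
The plan is to restrict the Lippmann--Schwinger equation \eqref{LS eq2} to each nano-particle $D_m$, rescale to the reference domain $B_m$ via $x=z_m+a\eta$, and decouple the resulting system by projecting onto the three Helmholtz summands in \eqref{L2-decomposition}. Splitting the integrals at $D_1$ and $D_2$ and using that $\eta$ is piecewise constant yields, for $x\in D_m$,
\begin{equation*}
E_m(x) + \eta_m\bigl[\nabla M^{k}_{D_m} - k^{2}N^{k}_{D_m}\bigr](E_m)(x) = E^{Inc}(x) - \eta_{3-m}\bigl[\nabla M^{k}_{D_{3-m}} - k^{2}N^{k}_{D_{3-m}}\bigr](E_{3-m})(x),
\end{equation*}
where the cross term is smooth on $D_m$ because $|x-y|\geq d=\alpha_{0}a^{t}$, with kernel bounds $|\nabla^{\ell}\Phi_{k}(x,y)|\leq C\,a^{-(\ell+1)t}$. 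The identity \eqref{Equa1108} comes directly from Maxwell: the relation $\mathrm{div}(\varepsilon_{r}E)=0$ combined with the constancy of $\varepsilon_{r}$ inside $D_m$ forces $\mathrm{div}(E)=0$ in $D_m$, which is exactly the orthogonality to $\mathbb{H}_{0}(\mathrm{Curl}=0)(D_m)$.

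Next I would extract the two resonant components by the same spectral-gap trick used in Lemma~\ref{es-oneP}. Applying $\overset{1}{\mathbb{P}}$ to the equation on $B_1$ and invoking \eqref{grad-M-1st} (which kills $\nabla M_{B_1}$ on $\mathbb{H}_{0}(\mathrm{div}=0)(B_1)$) leaves
\begin{equation*}
\bigl(\mathrm{Id}-k^{2}\eta_{1}a^{2}\,\overset{1}{\mathbb{P}}N_{B_1}\overset{1}{\mathbb{P}}\bigr)\overset{1}{\mathbb{P}}\tilde E_1 = \overset{1}{\mathbb{P}}\tilde E^{Inc}_1 + \text{(corrections)} + \text{(cross from }D_2\text{)},
\end{equation*}
whose spectral gap at the mode $e_{n_{0}}^{(1)}$ equals $\pm c_{0}a^{h}$ by \eqref{condition-on-k}, so its inversion costs the factor $a^{-h}$; combined with $\overset{1}{\mathbb{P}}\tilde E^{Inc}_{1}=\mathcal{O}(a)$ (the constant Taylor piece of $E^{Inc}$ lies in $\nabla\mathcal{H}armonic$) this yields $\|\overset{1}{\mathbb{P}}\tilde E_1\|=\mathcal{O}(a^{1-h})$. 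The symmetric argument on $B_2$, using the gap $\pm d_{0}a^{h}$ for $\mathrm{Id}+\eta_{2}\nabla M_{B_2}$ at $e_{n_{\star}}^{(3)}$ and the $\mathcal{O}(1)$ size of $\overset{3}{\mathbb{P}}\tilde E^{Inc}_{2}$, gives $\|\overset{3}{\mathbb{P}}\tilde E_2\|=\mathcal{O}(a^{-h})$.

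For the two non-resonant components I would use a Neumann inversion since the intra-particle operator is far from resonance. On $B_1$ the operator $\mathrm{Id}+\eta_{1}\overset{3}{\mathbb{P}}\nabla M_{B_1}\overset{3}{\mathbb{P}}$ has no tuned eigenvalue (the Magnetization eigenvalues of $B_{1}$ are not matched by \eqref{condition-on-k}) and its inverse contributes the factor $\eta_{1}^{-1}=\mathcal{O}(a^{2})$; combined with the $\mathcal{O}(a^{2})$ Taylor-quadratic driving from $E^{Inc}$ this gives $\|\overset{3}{\mathbb{P}}\tilde E_1\|=\mathcal{O}(a^{2})$. On $B_2$ the operator $\mathrm{Id}-k^{2}a^{2}\eta_{2}\overset{1}{\mathbb{P}}N_{B_2}\overset{1}{\mathbb{P}}$ is an $\mathcal{O}(a^{2})$-perturbation of identity and is directly invertible, so $\|\overset{1}{\mathbb{P}}\tilde E_2\|$ is controlled by the $\mathcal{O}(a)$ Taylor-linear driving from $E^{Inc}$ together with the back-reaction of $D_1$. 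Crucially, the gradient cross-term $\nabla M^{k}_{D_1}(\eta_{1}E_{1})|_{D_2}$ is curl-free and vanishes under $\overset{1}{\mathbb{P}}$, leaving only $k^{2}N^{k}_{D_1}(\eta_{1}E_{1})|_{D_2}$; Taylor-expanding this integral at $z_{1}$ and tracking the kernel factor $|\nabla^{2}\Phi_{k}|\sim a^{-3t}$, the volume $a^{3}$, the coupling $\eta_{1}=\mathcal{O}(a^{-2})$ and the resonant size $\|\overset{1}{\mathbb{P}}\tilde E_1\|=\mathcal{O}(a^{1-h})$ produces the extra contribution $\mathcal{O}(a^{4-h-3t})$, explaining the $\min$.

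The main obstacle is closing this bootstrap jointly for all four unknowns: the cross-term couplings form a $4\times 4$ linear system whose off-diagonal entries mix the resonance amplifications $a^{-h}$ with the separation losses $a^{-t}$, and one must verify that under $0<t,h<1$ each off-diagonal entry is strictly smaller than the corresponding diagonal scale so that the resulting Neumann iteration over the coupled system converges to a unique fixed point at the announced orders. The full power-counting, including the effects of the $k$-expansions \eqref{expansion-gradMk}--\eqref{expansion-Nk} of $\nabla M^{k}$ and $N^{k}$ acting on each subspace, is deferred to Section~\ref{sec-proof-prior}.
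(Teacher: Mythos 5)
Your overall route is the same as the paper's (Subsections 5.1--5.2): restrict the Lippmann--Schwinger equation to each particle, rescale, split along \eqref{L2-decomposition}, invert the resonant modes through the spectral gaps $\pm c_{0}a^{h}$ and $\pm d_{0}a^{h}$ of \eqref{condition-on-k}, invert the non-resonant components directly, and close a coupled system of four estimates. Your derivation of \eqref{Equa1108} from $\div(\epsilon_{r}E)=0$ and the constancy of $\epsilon_{r}$ in each $D_{m}$ is correct, and it is in fact cleaner than the paper's argument via the operator identity \eqref{grad-M-2nd}. However, two of your quantitative counts are internally inconsistent, and one of them, taken literally, fails to prove the stated estimate.

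First, for $\overset{3}{\mathbb{P}}(\tilde E_{1})$ you combine the inversion gain $\eta_{1}^{-1}=\mathcal{O}(a^{2})$ with an ``$\mathcal{O}(a^{2})$ Taylor-quadratic driving''; that product is $\mathcal{O}(a^{4})$, not the claimed $\mathcal{O}(a^{2})$. The driving is actually $\mathcal{O}(1)$: the constant Taylor piece of $E^{Inc}$ lies in $\nabla\mathcal{H}armonic$ --- exactly as you note one paragraph earlier when arguing $\overset{1}{\mathbb{P}}(\tilde E^{Inc}_{1})=\mathcal{O}(a)$ --- so it survives under $\overset{3}{\mathbb{P}}$, and the $a^{2}$ in \eqref{max-P1P3} comes solely from the uniform size $\vert 1+\eta_{1}\lambda_{n}^{(3)}(B_{1})\vert\sim a^{-2}$ of the non-resonant denominators, cf.\ \eqref{eq-LS-P31}--\eqref{es-P31-1}. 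Second, your cross-term count for $\overset{1}{\mathbb{P}}(\tilde E_{2})$, namely kernel $a^{-3t}$ times volume $a^{3}$ times coupling $a^{-2}$ times field $a^{1-h}$, equals $a^{2-h-3t}$, not the claimed $a^{4-h-3t}$; since $a^{2-h-3t}\gg a^{\min(1;\,4-h-3t)}$ for admissible $t,h$, this count does not yield the proposition. Reaching $a^{4-h-3t}$ (the paper's \eqref{es-P12}, \eqref{Err-P11-2}) requires two moment cancellations, each worth a factor $a$: the zero mean of $\overset{1}{\mathbb{P}}(E_{1})$ in the $y$-integral, and the zero mean of the test fields $e^{(1)}_{n,B_{2}}$ in the $x$-variable, which together force the kernel to appear as $\nabla_{x}\nabla_{y}\Phi_{k}\sim d^{-3}$ multiplied by $\vert x-z_{2}\vert\,\vert y-z_{1}\vert\sim a^{2}$. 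Finally, the closure of the coupled four-unknown bootstrap --- in the paper, the derivation of \eqref{es-P11}, \eqref{es-P12}, \eqref{es-P31}, \eqref{es-P32} and their mutual substitution under $t,h<1$ --- is precisely the content you defer to Section \ref{sec-proof-prior}, i.e.\ to the proof you were asked to supply; as written, the proposal is a correct plan whose decisive quantitative step is missing.
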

\begin{proof}
    See Subsection \ref{subsec-52}. 
\end{proof}
\medskip
In addition to estimating the electric total field generated by a dimer, which can be found in Proposition \ref{lem-es-multi}, it's necessary to estimate the scattering coefficients related to the problem  $(\ref{U})$. The purpose of the following definition is to define the scattering coefficients related to the problem $(\ref{U})$, associated with the used dimer, which will be utilized to justify our derived results.   \begin{definition}\label{def-scattering coeff}
    	We define $W_{1}, W_{2}, V_{1}$ and $V_{2}$ to be solutions of,  
    	\begin{eqnarray}\label{AI1}
    		\left(I + \eta_1 \, \nabla M^{-k}_{D_{1}} \, - \, k^{2} \, \eta_1 \, N^{-k}_{D_{1}} \right)\left( W_1 \right)(x) &=& \mathcal{P}(x,z_1), \quad \; \, \; \quad x \in D_1, \label{DefW1}  \\
    		\left(I + \eta_2 \, \nabla M^{-k}_{D_{2}} \, - \, k^{2} \, \eta_2 \, N^{-k}_{D_{2}} \right)\left( W_2 \right)(x) &=& \overset{1}{\PP}\left(\mathcal{P}(x,z_2)\right), \quad x \in D_2,   \label{DefW2} \\
            \left(I + \eta_m \, \nabla M^{-k}_{D_{m}} \, - \, k^{2} \, \eta_m \, N^{-k}_{D_{m}} \right)\left( V_m \right)(x) &=& I_{3}, \quad \;\;  \qquad \qquad x \in D_m, \quad m = 1, 2, \label{def-Vm}
    	\end{eqnarray}
    	where, for $m=1,2$, the operators $\nabla M^{-k}_{D_{m}}(\cdot)$ and $N^{-k}_{D_{m}}(\cdot)$ are the adjoint operators to $\nabla M^k_{D_{m}}(\cdot)$ and $N^k_{D_{m}}(\cdot)$, introduced in \eqref{N-M opera}, and $\mathcal{P}(x, z)$ is the matrix expressed by
    	\begin{equation*}
    	\mathcal{P}(x, z) \, := \, \left(\begin{array}{c}
    	(x-z)_1 \, I_{3}\\ 
    	(x-z)_2 \, I_{3}\\ 
    	(x-z)_3 \, I_{3}
    	\end{array} \right).
    	\end{equation*} 
    \end{definition}
    \medskip
    Based on the estimates given in Proposition $\ref{lem-es-multi}$, and using the notations introduced by Definition $\ref{def-scattering coeff}$, we provide in the following proposition the estimates related to the problem $(\ref{U})$'s scattering coefficients. 
\begin{proposition}[Estimation of the scattering coefficients]\label{prop-scoeff}
        For $h \in (0,2)$, under assumption \eqref{condition-on-k}, the following estimations hold:
        \begin{enumerate}
            \item Regarding the scattering parameter $W_{1}$, defined by \eqref{AI1}, we have  \begin{equation}\label{*add0}
		\left\lVert \overset{1}{\mathbb{P}}\left(\tilde{W}_1\right)\right\rVert_{\mathbb{L}^2(B_1)}=\mathcal{O}(a^{1-h}) \quad \text{and} \quad
		\left\lVert \overset{j}{\mathbb{P}}\left(\tilde{W}_1\right)\right\rVert_{\mathbb{L}^2(B_1)}=\mathcal{O}(a^{3}), \quad \text{for} \quad j = 2, 3.	
		\end{equation}
            \item[]
            \item Regarding the scattering parameter $W_{2}$, defined by $(\ref{DefW2})$, we have  
        \begin{equation}\label{prop-es-W}
			\left\lVert \overset{1}{\mathbb{P}}\left(\tilde{W}_2\right)\right\rVert_{\mathbb{L}^2(B_2)}=\mathcal{O}(a),\quad 
			 \overset{2}{\mathbb{P}}\left(\tilde{W}_2\right) \, = \, 0 \quad \text{and} \quad
			\left\lVert\overset{3}{\mathbb{P}}\left(\tilde{W}_2\right)\right\rVert_{\mathbb{L}^2(B_2)}=\mathcal{O}(a^{5-h}).
		\end{equation}
        \item[] 
        \item Regarding the scattering parameter $V_{m}$, defined by \eqref{def-Vm}, for $m = 1, 2$, we have
        \begin{equation}\label{es-V12}
			\left\lVert \tilde{V}_1\right\rVert_{\mathbb{L}^2(B_1)}=\mathcal{O}\left(a^2\right)\quad\mbox{and}\quad\left\lVert \tilde{V}_2\right\rVert_{\mathbb{L}^2(B_2)}=\mathcal{O}\left(a^{-h}\right).
		\end{equation}
        \item[] 
        \end{enumerate}
	\end{proposition}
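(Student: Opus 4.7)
The plan is to mimic, for the three auxiliary systems \eqref{AI1}--\eqref{def-Vm}, the scheme set up for Lemma~\ref{es-oneP} and Proposition~\ref{lem-es-multi}: scale to the reference ball, project on the three blocks of the Helmholtz decomposition \eqref{L2-decomposition}, and invert the resulting block operator by spectral calculus, isolating the single mode that resonates. After the change of variables $x=z_m+a\eta$, the scaling relations $N^{-k}_{D_m}(F)(z_m+a\cdot)=a^2 N^{-ak}_{B_m}(\tilde F)$ and $\nabla M^{-k}_{D_m}(F)(z_m+a\cdot)=\nabla M^{-ak}_{B_m}(\tilde F)$ bring each equation to the uniform shape
\begin{equation*}
\bigl(I+\eta_m\,\nabla M^{-ak}_{B_m}-a^2 k^2\eta_m\,N^{-ak}_{B_m}\bigr)\,\tilde U \;=\; \text{(scaled source)},
\end{equation*}
with scaled sources $a\,\mathcal{P}(\eta,0)$, $a\,\overset{1}{\mathbb{P}}\mathcal{P}(\eta,0)$ and $I_3$, of $\mathbb{L}^2(B_m)$-size $O(a)$, $O(a)$, $O(1)$ respectively. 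For $m=1$ the coefficient $\eta_1=\eta_0 a^{-2}$ is large while the Newtonian block is $O(1)$; for $m=2$ both coefficients are $O(1)$. In each case the resonance condition \eqref{condition-on-k} places exactly one eigenvalue at distance $\pm c_0 a^h$ (resp.\ $\pm d_0 a^h$) from the relevant pole, so inversion produces an amplification factor of order $a^{-h}$ on a single mode.

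For $W_1$, on $\mathbb{H}_0(\div=0)(B_1)$ the term $\eta_1\nabla M$ vanishes by \eqref{grad-M-1st}, so the projected equation reduces to $(I-k^2\eta_0\,N_{B_1})\,\overset{1}{\mathbb{P}}\tilde W_1=a\,\overset{1}{\mathbb{P}}\mathcal{P}(\cdot,0)+\text{(small $k$-corrections)}$; expanding in the basis $(e^{(1)}_{n,B_1})$ and using $1-k^2\eta_0\lambda_{n_0}^{(1)}(B_1)=\pm c_0 a^h$ yields $\|\overset{1}{\mathbb{P}}\tilde W_1\|_{\mathbb{L}^2(B_1)}=O(a^{1-h})$. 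On $\mathbb{H}_0(Curl=0)(B_1)$ the identity \eqref{grad-M-2nd} collapses the left-hand side to $(1+\eta_1)\,\overset{2}{\mathbb{P}}\tilde W_1$, and $(1+\eta_1)^{-1}=O(a^2)$ converts the $O(a)$ source into $O(a^3)$; on $\nabla\mathcal{H}armonic(B_1)$ the spectrum of $\eta_1\nabla M$ is of order $a^{-2}$, giving again $O(a^3)$. The same template handles $V_1$ (source $I_3\in\nabla\mathcal{H}armonic$, $O(a^{-2})$ denominator, hence $O(a^2)$) and $V_2$ (source $I_3$, plasmonic resonance $1+\eta_2\lambda_{n_*}^{(3)}(B_2)=\pm d_0 a^h$ producing $O(a^{-h})$ on the resonant $\overset{3}{\mathbb{P}}$-mode, with the remaining modes giving $O(1)$).

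For $W_2$ the $\overset{1}{\mathbb{P}}$ projection is non-resonant since $a^2 k^2\eta_2\lambda_n^{(1)}(B_2)=O(a^2)$, so $\|\overset{1}{\mathbb{P}}\tilde W_2\|_{\mathbb{L}^2(B_2)}=O(a)$; the $\overset{2}{\mathbb{P}}$ projection of the source vanishes by orthogonality, and the corresponding block reduces to multiplication by $(1+\eta_2)$ via \eqref{grad-M-2nd}, hence $\overset{2}{\mathbb{P}}\tilde W_2=0$. The delicate point is the $\overset{3}{\mathbb{P}}$ component, where the raw source also vanishes: the only forcing arises from mixing terms between $\mathbb{H}_0(\div=0)$ and $\nabla\mathcal{H}armonic$ generated by the $k$-expansions \eqref{expansion-gradMk}--\eqref{expansion-Nk} of $\nabla M^{-ak}$ and $N^{-ak}$ applied to $\overset{1}{\mathbb{P}}\tilde W_2$. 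The static parts $\nabla M$ and $N$ preserve the decomposition; the $\tfrac{(ak)^2}{2}N$ correction stays inside $\mathbb{H}_0(\div=0)$; and the constant-type terms $\int_{B_2}F\,dy$ vanish on $\overset{1}{\mathbb{P}}\tilde W_2$ thanks to the vanishing moment \eqref{RefNeeded1}. Hence the first nonzero coupling enters only at order $(ak)^4$, and when applied to $\overset{1}{\mathbb{P}}\tilde W_2=O(a)$ produces a forcing of size $O(a^5)$; dividing by the resonant denominator $\pm d_0 a^h$ yields $\|\overset{3}{\mathbb{P}}\tilde W_2\|_{\mathbb{L}^2(B_2)}=O(a^{5-h})$.

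The main obstacle is precisely this last step: proving the cancellations that push the $\overset{1}{\mathbb{P}}\to\overset{3}{\mathbb{P}}$ coupling down to $O(a^4)$ in operator norm, so that the naive $O(a^{3-h})$ does not occur and the sharper $O(a^{5-h})$ is attained. This requires systematic use of \eqref{expansion-gradMk}--\eqref{expansion-Nk} together with \eqref{RefNeeded1} and the spectral preservation of the Helmholtz decomposition by $\nabla M$ and $N$; once it is established, the estimates \eqref{*add0}, \eqref{prop-es-W}, \eqref{es-V12} follow from the standard spectral inversion already used in the proofs of Lemma~\ref{es-oneP} and Proposition~\ref{lem-es-multi}.
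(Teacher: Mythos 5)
Your proposal is correct and follows essentially the same route as the paper: scale to $B_m$, project on the Helmholtz decomposition, and invert spectrally, with the key point for $\overset{3}{\mathbb{P}}(\tilde{W}_2)$ being exactly the cancellation the paper exploits (the static $\nabla M$ and $N$ preserve the decomposition, the mean of $\overset{1}{\mathbb{P}}(\tilde{W}_2)$ vanishes by \eqref{RefNeeded1}, so the first $\overset{1}{\mathbb{P}}\to\overset{3}{\mathbb{P}}$ coupling appears at order $k^4a^4$, giving $O(a^5)$ forcing and $O(a^{5-h})$ after division by $\pm d_0a^h$). The only cosmetic difference is that for $W_1$ the paper simply cites \cite[Proposition 2.4]{CGS}, whereas you sketch the same spectral argument directly.
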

 \begin{proof}
     See Subsection \ref{subsec:scattering parameter}.
 \end{proof}
\section{Linear algebraic system of the dimer}\label{sec-la-system}
In this section, we shall present the linear algebraic system  derived from the L.S.E $(\ref{LS eq2})$ by projecting the solution $E$ onto the two sub-spaces $\mathbb{H}_0(\div=0)$ and $\nabla \mathcal{H}armonic$. We start by using $(\ref{Eq0404})$ to derive, for $m=1,2$, the following expression
\begin{equation}\label{def-F_j}
\overset{1}{\mathbb{P}}(E_m) \, = \, Curl \left( F_m \right), \; \text{in} \; D_{m}, \quad \text{with} \quad \nu \times F_m \,= \, 0, \; \text{on} \; \partial D_{m}, \quad \text{and} \quad \div(F_m) \, = \, 0, \; \text{in} \; D_{m}.
\end{equation}
Set
\begin{equation}\label{Qj1}
Q_m:=\eta_m\int_{D_{m}}F_m(y)\,dy\quad\mbox{and}\quad R_m:=\eta_m\int_{D_{m}} \overset{3}{\mathbb{P}}\left(E_m\right)(y)\,dy,
\end{equation} 
where $\eta_m$ is defined in \eqref{def-eta}. 

\begin{proposition}\label{prop-la-1}
	Under \textbf{Assumption \ref{Assumption-I-II-III-IV}}, for $t, h \in (0,1)$, such that 
    \begin{equation*}
        4 \, - \, h \, - \, 4 \, t \, > \, 0,
    \end{equation*}
    there holds the coming linear algebraic systems related to the problem $(\ref{U})$,  
        \begin{equation}\label{al-D1}
    \begin{pmatrix}
    I_{3} & 0 & - \, \mathcal{B}_{13} & - \, \mathcal{B}_{14} \\
    0 & I_{3} & - \, \mathcal{B}_{23}  & - \, \mathcal{B}_{24} \\
    - \, \mathcal{B}_{31} & - \, \mathcal{B}_{32} & I_{3} & 0 \\  
    - \, \mathcal{B}_{41} & - \, \mathcal{B}_{42} & 0 & I_{3}
    \end{pmatrix} 
    \cdot 
    \begin{pmatrix}
     Q_{1} \\
     R_{1} \\
     Q_{2} \\
     R_{2}
    \end{pmatrix}
    =
    \begin{pmatrix}
     \frac{i \, k \, \eta_{0}}{\pm \, c_{0}} \, a^{3-h} \, {\bf P}_{0, 1}^{(1)} \cdot H^{Inc}(z_{1}) \\
     a^{3} \, {\bf P}_{0, 1}^{(2)} \cdot E^{Inc}(z_{1}) \\
     i \, k \, \eta_{2} \, a^{5} \, {\bf P}_{0, 2}^{(1)} \cdot H^{Inc}(z_{2}) \\
     \frac{\eta_{2}}{\pm \, d_{0}} \, a^{3-h} \, {\bf P}_{0, 2}^{(2)} \cdot E^{Inc}(z_{2})
    \end{pmatrix} 
    + \begin{pmatrix}
       Error_1^{(1)} \\ 
       Error_1^{(2)} \\
       Error_2^{(1)} \\
       Error_2^{(2)}
    \end{pmatrix}
\end{equation}
with 
\begin{minipage}[t]{0.45\textwidth}
\begin{eqnarray*}
    \mathcal{B}_{13} \, &:=&  \, \frac{k^{4} \, \eta_{0}}{\pm \, c_{0}} \, a^{3-h} \, {\bf P}_{0, 1}^{(1)} \cdot  \Upsilon_{k}(z_1, z_2) \\
     \mathcal{B}_{23} \, &:=&  \, k^{2} \, a^{3} \, {\bf P}_{0, 1}^{(2)} \cdot \nabla \Phi_{k}(z_1, z_2) \times \\
    \mathcal{B}_{31} \, &:=&  \, k^{4} \, \eta_{2} \, a^{5} \, {\bf P}_{0, 2}^{(1)} \cdot  \Upsilon_{k}(z_2, z_1)  \\
    \mathcal{B}_{41} \, &:=& \, \frac{k^{2} \, \eta_{2}}{\pm \, d_{0}} \, a^{3-h} \, {\bf P}_{0, 2}^{(2)} \cdot \nabla \Phi_{k}(z_2, z_1) \times \\
\end{eqnarray*}
\end{minipage}
\begin{minipage}[t]{0.45\textwidth}
\begin{eqnarray*}
    \mathcal{B}_{14} \, &:=&  \, \frac{k^{2} \, \eta_{0}}{\pm \, c_{0}} \, a^{3-h} \, {\bf P}_{0, 1}^{(1)} \cdot \nabla \Phi_{k}(z_1, z_2) \times \\
    \mathcal{B}_{24} \, &:=&  \, k^{2} \, a^{3} \, {\bf P}_{0, 1}^{(2)} \cdot \Upsilon_{k}(z_1, z_2)  \\
    \mathcal{B}_{32} \, &:=&  \, k^{2} \, \eta_{2} \, a^{5} \, {\bf P}_{0, 2}^{(1)} \cdot  \nabla \Phi_{k}(z_2, z_1)\times \\
    \mathcal{B}_{42} \, &:=&  \, \frac{k^{2} \, \eta_{2}}{\pm \, d_{0}} \, a^{3-h} \, {\bf P}_{0, 2}^{(2)} \cdot \Upsilon_{k}(z_2, z_1) 
\end{eqnarray*}
\end{minipage}

	where
        \begin{eqnarray*}
           {\bf P}_{0, 1}^{(1)} \, &:=& \, \int_{B_1}\phi_{n_{0}, B_{1}}(x)\, dx\otimes  \int_{B_1}\phi_{n_{0}, B_{1}}(x)\, dx \\
		{\bf P}_{0, 1}^{(2)} \, &:=& \, \sum_n \frac{1}{\lambda_{n}^{(3)}(B_{1})}\int_{B_1} e_{n, B_{1}}^{(3)}(x)\,dx\otimes  \int_{B_1} e_{n, B_{1}}^{(3)}(x)\,dx \\
           {\bf P}_{0, 2}^{(1)} \, &=& \, \sum_n \int_{B_2}\phi_{n, B_{2}}(x)\,dx\otimes\int_{B_2} \phi_{n, B_{2}}(x)\, dx \\ 
           {\bf P}_{0, 2}^{(2)} \, &=& \, \int_{B_2} e_{n_{*}, B_{2}}^{(3)}(x)\,dx\otimes \int_{B_2} e_{n_{*}, B_{2}}^{(3)}(x)\,dx,
        \end{eqnarray*}
	with $\phi_{n_{0}, B_{1}}(\cdot)$ satisfying $(\ref{def-phi-n0})$ and $\phi_{n, B_{2}}(\cdot)$ is given by $(\ref{pre-cond})$. Besides, 
        \begin{eqnarray*}\label{las-error}
        &Error_1^{(1)}  = \, \mathcal{O}\left( a^{\min(3; 7-2h-3t)} \right),
        &Error_1^{(2)}  = \, \mathcal{O}\left( a^{\min(4;7-h-4t)} \right), \notag\\
        &Error_2^{(1)} = \mathcal{O}(a^{\min(6; 9-h-4t)}),  
	&Error_2^{(2)} = \mathcal{O}\left( a^{\min(4-h; 7-h-4t; 6-3t; 7-2h-3t)} \right). 
        \end{eqnarray*}
\end{proposition}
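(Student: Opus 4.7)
The plan is to derive the system by restricting the Lippmann--Schwinger equation (\ref{LS eq2}) to each nano-particle $D_m$ and then projecting onto the two active subspaces $\mathbb{H}_0(\div=0)(D_m)$ and $\nabla\mathcal{H}armonic(D_m)$ of the Helmholtz decomposition (\ref{L2-decomposition}); the $\mathbb{H}_0(Curl=0)(D_m)$ component is already known to vanish by (\ref{Equa1108}), so it plays no role. For $x\in D_m$ the equation reads
\begin{equation*}
(I+\eta_m\nabla M^{k}_{D_m}-k^2\eta_m N^{k}_{D_m})(E_m)(x)=E^{Inc}(x)+k^2 N^{k}_{D_{3-m}}(\eta_{3-m}E_{3-m})(x)-\nabla M^{k}_{D_{3-m}}(\eta_{3-m}E_{3-m})(x),
\end{equation*}
where the last two terms encode the mutual scattering. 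Because $\mathrm{dist}(D_1,D_2)=d\sim a^t$, the kernels of these cross-particle operators are smooth on the opposite particle and can be Taylor expanded around $z_m$; to leading order they contribute the dyadic Green kernel $\Upsilon_k(z_m,z_{3-m})$ acting on $R_{3-m}$ and a curl-type kernel $\nabla\Phi_k(z_m,z_{3-m})\times$ acting on $Q_{3-m}$, which is the source of the off-diagonal blocks $\mathcal{B}_{ij}$.

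To obtain the equations for the resonant poles $Q_1$ and $R_2$, I would project the $D_1$ equation onto $\mathbb{H}_0(\div=0)(D_1)$ and the $D_2$ equation onto $\nabla\mathcal{H}armonic(D_2)$. On the divergence-free subspace, (\ref{grad-M-1st}) eliminates the magnetization operator at leading order, and the problem reduces after scaling $y=z_1+a\eta$ to a spectral equation for $N_{B_1}$; the resonance condition (\ref{condition-on-k}) forces the $n_0$-th eigenmode to dominate with amplification $1/(\pm c_0 a^h)$. Integrating the resulting expansion and invoking the curl-potential representation (\ref{pre-cond}) together with $Curl(E^{Inc})(z_1)=ikH^{Inc}(z_1)$ yields the first line of (\ref{al-D1}), with the tensor ${\bf P}_{0,1}^{(1)}=\int_{B_1}\phi_{n_0,B_1}\otimes\int_{B_1}\phi_{n_0,B_1}$ emerging naturally. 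The symmetric computation on $\nabla\mathcal{H}armonic(D_2)$ uses the eigendecomposition of $\nabla M_{B_2}$, the resonance $1+\eta_2\lambda^{(3)}_{n_*}(B_2)=\pm d_0 a^h$, and produces the fourth line with driving $E^{Inc}(z_2)$ and tensor ${\bf P}_{0,2}^{(2)}$.

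The non-resonant moments $R_1$ and $Q_2$ are recovered by duality against the scattering coefficients from Definition \ref{def-scattering coeff}. Pairing the $D_m$ equation against $V_m$, respectively $W_m$, and using the adjointness of $N^k_{D_m}$ and $\nabla M^k_{D_m}$, turns $\int_{D_m}\overset{3}{\mathbb{P}}(E_m)$, respectively $\int_{D_m}F_m$, into an expression containing $E^{Inc}(z_m)$, $H^{Inc}(z_m)$, and the Taylor cross-interactions with $D_{3-m}$; the tensors ${\bf P}_{0,1}^{(2)}$ and ${\bf P}_{0,2}^{(1)}$ appear as the leading moments of $V_1$ and $W_2$ under the scaling, with sizes controlled by (\ref{es-V12}) and (\ref{prop-es-W}). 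The four error vectors $Error_m^{(j)}$ collect (i) the higher-order Taylor remainders from the cross kernels, controlled by the a-priori bounds (\ref{max-P1P3}); (ii) the subdominant non-resonant eigenmodes in the spectral expansions of $E_m$; and (iii) the residuals from the scattering-coefficient duality, controlled by (\ref{*add0})--(\ref{es-V12}).

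The main technical obstacle will be the careful bookkeeping of orders in $a$: each cross-term carries an explicit factor $d^{-\ell}\sim a^{-\ell t}$ from differentiating $\Phi_k$, which must be balanced against the $a^{-h}$ amplification coming from the resonance, the volume factor $a^3$, and the $a^2$ appearing in the dielectric contrast. The quoted exponents $7-2h-3t$, $7-h-4t$, $9-h-4t$ and $\min(4-h,\,7-h-4t,\,6-3t,\,7-2h-3t)$ arise from combinations of these three mechanisms, and the hypothesis $4-h-4t>0$ is precisely the condition that keeps the leading $a^{3-h}$ diagonal entries dominant over the off-diagonal interaction blocks, ensuring that the system (\ref{al-D1}) is meaningfully invertible by a Born series.
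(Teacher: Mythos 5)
Your proposal is correct in outline and shares the paper's architecture: restrict the Lippmann--Schwinger equation to each particle, discard the $\overset{2}{\mathbb{P}}$ component, Taylor-expand the cross-particle kernels about the centres $z_1,z_2$ to generate the $\Upsilon_{k}(z_m,z_{3-m})$ and $\nabla\Phi_{k}(z_m,z_{3-m})\times$ blocks, and control all remainders with the a-priori estimates of Propositions \ref{lem-es-multi} and \ref{prop-scoeff}. Where you genuinely deviate is in how the two \emph{resonant} equations are extracted. The paper routes all four equations through the scattering coefficients of Definition \ref{def-scattering coeff}: it tests the inverted equation against $\mathcal{P}(x,z_m)$, transferring $\boldsymbol{T}_{k,m}^{-1}$ by adjointness onto $W_m$, for the $Q_m$-equations, and integrates against $V_m$ for the $R_m$-equations; the resonant amplifications $1/(\pm c_0 a^h)$ and $1/(\pm d_0 a^h)$ are then carried by the moment tensors $\bm{\mathcal{A}_1^{(1)}}$ and $\bm{\mathcal{A}_2^{(2)}}$ (the first imported from \cite{CGS}), and the passage from $\int_{D_m}\mathcal{P}\cdot\overset{1}{\mathbb{P}}(E_m)\,dx$ to the magnetic moment $\int_{D_m}F_m\,dx$ goes through the constant tensor $\mathcal{K}=Curl\left(\mathcal{P}\right)$, integration by parts and the Friedrichs inequality. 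You instead derive the $Q_1$- and $R_2$-equations by direct spectral expansion of the field in the eigenbases of $N_{B_1}$ and $\nabla M_{B_2}$, reserving the duality with $V_1$ and $W_2$ for the non-resonant moments $R_1$ and $Q_2$. This hybrid route is legitimate and produces the same tensors and exponents, since the paper's tensors $\bm{\mathcal{A}_m^{(j)}}$ are themselves evaluated by exactly such spectral expansions of $W_m$ and $V_m$; what the paper's uniform scattering-coefficient formulation buys is that the resonance bookkeeping is done once inside the auxiliary functions (and can partly be cited from earlier work), while your inlined version must redo it, including the non-resonant mode sums that give the $\mathcal{O}(a^3)$-type corrections to $\bm{\mathcal{A}_2^{(2)}}$. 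One small precision: within this proposition the hypothesis $4-h-4t>0$ is not about Born-series invertibility of \eqref{al-D1} (that comes later, in the proof of Theorem \ref{main-1}); here it is what keeps each error term below its data term, e.g. it is exactly $9-h-4t>5$, so that $Error_2^{(1)}=\mathcal{O}\left(a^{\min(9-h-4t;6)}\right)$ does not swamp the driving term $i\,k\,\eta_2\,a^5\,{\bf P}_{0,2}^{(1)}\cdot H^{Inc}(z_2)$.
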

\begin{proof}
See Section \ref{sec-proof-la}.
\end{proof}
\section{Proof of the main result}\label{sec-proof-main}
In this section, the proof of Theorem \ref{main-1} is presented as the following four steps with all the necessary propositions given in the previous sections. 
\medskip
\newline 
\noindent (I). Derivation of the scattered wave $E^s(x)$.

\bigskip

Thanks to the L.S.E given by $(\ref{LS eq2})$ and the fact that $E \, = \, E^{Inc} \, + \, E^{s}$, we deduce that
	\begin{equation*}
		E^{s}(x) \, = \, - \, \nabla M^{k}_{D}(\eta \, E)(x) \, + \, k^2  \, N^{k}_{D}(\eta \, E)(x),
	\end{equation*}
    which, by letting $x$ outside $D$ and using $(\ref{dyadicG})$, can be rewritten as
        \begin{equation*}
		E^{s}(x) \, = \, k^2  \, \int_{D} \Upsilon_{k}(x,y) \cdot \eta(y) \, E(y) \, dy \, = \, k^2  \, \sum_{m=1}^{2} \eta_{m} \, \int_{D_{m}} \Upsilon_{k}(x,y) \cdot  E_{m}(y) \, dy.
	\end{equation*}
Besides, by splitting $E_{m}$ as $E_{m} = \overset{1}{\mathbb{P}}\left(E_{m}\right) \, + \, \overset{3}{\mathbb{P}}\left(E_{m}\right)$, see $(\ref{Equa1108})$, and using the fact that $\nabla M^{k}(\cdot)|_{\mathbb{H}_{0}\left( \div = 0 \right)}$ is a vanishing operator we get 
\begin{eqnarray*}
		E^{s}(x) \, &=&  \, k^2  \, \sum_{m=1}^{2} \eta_{m} \, \int_{D_{m}} \Phi_{k}(x,y) \,  \overset{1}{\mathbb{P}}\left(E_{m}\right)(y) \, dy + k^2  \, \sum_{m=1}^{2} \eta_{m} \, \int_{D_{m}} \Upsilon_{k}(x,y) \cdot   \overset{3}{\mathbb{P}}\left(E_{m}\right)(y) \, dy \\
        &\overset{(\ref{def-F_j})}{=}& \, - \, k^2  \, \sum_{m=1}^{2} \eta_{m} \, \int_{D_{m}} \underset{y}{\nabla} \Phi_{k}(x,y) \times F_{m}(y) \, dy + k^2  \, \sum_{m=1}^{2} \eta_{m} \, \int_{D_{m}} \Upsilon_{k}(x,y) \cdot   \overset{3}{\mathbb{P}}\left(E_{m}\right)(y) \, dy.
	\end{eqnarray*}
On the R.H.S of the above expression, by expanding $\nabla \Phi_{k}(x,\cdot)$ and $\Upsilon_{k}(x,\cdot)$, near the center $z_{m}$, with $m=1,2$, and using $(\ref{Qj1})$ we obtain  
\begin{equation}\label{SMEq0758}
		E^{s}(x) \, =   \, - \, k^2  \, \sum_{m=1}^{2}  \, \left[  \underset{y}{\nabla} \Phi_{k}(x,z_{m}) \times Q_{m} \, -  \,  \Upsilon_{k}(x,z_{m}) \cdot  R_{m} \right] \, + \, Remainder,
\end{equation}
where $Remainder$ is given by 
\begin{eqnarray*}
    Remainder \, &:=& \, \, - \, k^2  \, \sum_{m=1}^{2} \eta_{m} \, \int_{D_{m}} \int_{0}^{1} \underset{y}{\nabla} \underset{y}{\nabla} \Phi_{k}(x,z_{m}+t(y-z_{m})) \cdot (y - z_{m}) \, dt \times F_{m}(y) \, dy \\ &+& k^2  \, \sum_{m=1}^{2} \eta_{m} \, \int_{D_{m}} \int_{0}^{1} \underset{y}{\nabla} \Upsilon_{k}(x,z_{m}+t(y-z_{m})) \cdot \mathcal{P}\left(y, z_{m} \right) dt \cdot   \overset{3}{\mathbb{P}}\left(E_{m}\right)(y) \, dy. 
\end{eqnarray*}
Next, we estimate the term $Remainder$.
\begin{eqnarray*}
    \left\vert Remainder \right\vert \, & \lesssim & \, \, \sum_{m=1}^{2} \left\vert \eta_{m} \right\vert \, \left\Vert \int_{0}^{1} \underset{y}{\nabla} \underset{y}{\nabla} \Phi_{k}(x,z_{m}+t(\cdot-z_{m})) \cdot (\cdot - z_{m}) \, dt \right\Vert_{\mathbb{L}^{2}(D_{m})} \; \left\Vert F_{m} \right\Vert_{\mathbb{L}^{2}(D_{m})} \\ &+&  \, \sum_{m=1}^{2} \left\vert \eta_{m} \right\vert \, \left\Vert \int_{0}^{1} \underset{y}{\nabla} \Upsilon_{k}(x,z_{m}+t(\cdot-z_{m})) \cdot \mathcal{P}\left(\cdot, z_{m} \right) dt \right\Vert_{\mathbb{L}^{2}(D_{m})} \, \left\Vert   \overset{3}{\mathbb{P}}\left(E_{m}\right) \right\Vert_{\mathbb{L}^{2}(D_{m})} \\
    & \lesssim & \, a^{4} \, \sum_{m=1}^{2} \left\vert \eta_{m} \right\vert \, \left[   \left\Vert \tilde{F}_{m} \right\Vert_{\mathbb{L}^{2}(B_{m})} \, + \, \left\Vert   \overset{3}{\mathbb{P}}\left(\tilde{E}_{m}\right) \right\Vert_{\mathbb{L}^{2}(B_{m})} \right]. 
\end{eqnarray*}
Thanks to $(\ref{max-P1P3}), (\ref{def-eta})$, and the formula $(\ref{F2aP1E2})$, we deduce that 
\begin{equation*}
    Remainder \, = \, \mathcal{O}\left( a^{4-h} \right).
\end{equation*}
Hence, $(\ref{SMEq0758})$ becomes, 
\begin{equation}\label{Eq1132}
		E^{s}(x) \, =   \, - \, k^2  \, \sum_{m=1}^{2}  \, \left[  \underset{y}{\nabla} \Phi_{k}(x,z_{m}) \times Q_{m} \, -  \,  \Upsilon_{k}(x,z_{m}) \cdot  R_{m} \right] \, + \, \mathcal{O}\left( a^{4-h} \right),
\end{equation}
where $\left(Q_{1}, R_{1}, Q_{2}, R_{2} \right)$ is solution of $(\ref{al-D1})$.   
\bigskip
\newline 
\noindent (II). Derivation of the far-field $E^\infty(\hat{x})$.

\bigskip

To estimate the far-field, we use the fact that $E \, = \, E^{Inc} \, + \, E^{s}$, the formula $(\ref{N-M opera})$
and formula $(\ref{def-far})$, to obtain
	\begin{eqnarray*}
		E^{\infty}(\hat{x}) \, &=& \, \frac{k^{2}}{4 \, \pi} \, \left( I - \hat{x} \otimes \hat{x} \right) \cdot \int_{D} e^{- \, i \, k \, \hat{x} \cdot y} \; \eta(y) \; E(y) \; dy, \quad D =  D_{1}\cup D_2. \notag\\
		& \overset{(\ref{def-eta})}{=} & \, \frac{k^{2}}{4 \, \pi} \, \eta_1 \, \left( I - \hat{x} \otimes \hat{x} \right) \cdot \int_{D_{1}} e^{- \, i \, k \, \hat{x} \cdot y} \; E_{1}(y) \; dy \; + \; \, \frac{k^{2}}{4 \, \pi} \, \eta_2 \, \left( I - \hat{x} \otimes \hat{x} \right) \cdot \int_{D_{2}} e^{-i \, k \, \hat{x} \cdot y} \; E_{2}(y) \; dy.
	\end{eqnarray*}
By expanding the function $y \rightarrow e^{- \, i \, k \, \hat{x} \cdot y}$ at $y=z_m$, with $m = 1, 2$, we obtain 
	\begin{equation}\label{eq-far2}
		E^{\infty}(\hat{x}) \, = \, \frac{k^{2}}{4 \, \pi} \, \sum_{m=1}^{2} \, \eta_{m} \,  \left( I - \hat{x} \otimes \hat{x} \right) \cdot    \int_{D_{m}} \left[  e^{-i  k  \hat{x} \cdot z_{m}} - i  k  e^{- i  k  \hat{x} \cdot z_{m}}  (y-z_{m}) \cdot \hat{x}  \right]  E_{m}(y) dy \, + \, \sum_{m=1}^2 R^{(1)}_{m},  
	\end{equation}
where
\begin{equation*}
	R^{(1)}_{m} \, := \, \frac{k^{4}}{8 \, \pi} \, \eta_{m} \, (I-\hat{x}\otimes\hat{x}) \cdot \int_{D_{m}} \left((y-z_{m})\cdot\hat{x}\right)^2\int_{0}^1 (1-t)e^{-i k \hat{x}\cdot (z_1+t(y-z_{m}))}\,dt E_{m}(y)\,dy, \quad m = 1, 2.
\end{equation*}
It is direct to get from $(\ref{def-eta})$ that, there holds
\begin{equation}\label{es-R11}
 R_1^{(1)}  \, = \, \mathcal{O}\left( a^{\frac{3}{2}}\left\lVert E_1\right\rVert_{\mathbb{L}^2(D_1)}\right)
\quad 
\text{and} \quad 
 R_2^{(1)}  \, = \, \mathcal{O}\left( a^{\frac{7}{2}} \left\lVert E_2\right\rVert_{\mathbb{L}^2(D_2)} \right).
\end{equation}
Based on Proposition \ref{es-oneP}, formula $(\ref{*add1})$, by splitting $E_{m}$ as $E_{m} = \overset{1}{\mathbb{P}}\left(E_{m}\right) \, + \, \overset{3}{\mathbb{P}}\left(E_{m}\right)$, we use 
$(\ref{RefNeeded1})$, and the estimations derived in $(\ref{es-R11})$, to rewrite $(\ref{eq-far2})$ as,  
	\begin{eqnarray}\label{Before-estimating-the number of particles-N}
 \nonumber
		E^{\infty}(\hat{x}) & = & \frac{k^{2}}{4 \, \pi} \, \sum_{m=1}^{2} \, \eta_{m} \, \left( I - \hat{x} \otimes \hat{x} \right) \cdot e^{- i  k  \hat{x} \cdot z_{m}}  \, \int_{D_{m}} \overset{3}{\mathbb{P}}\left(E_{m}\right)(y)  dy \\ \nonumber &-& i  \, \frac{k^{3}}{4 \, \pi} \, \sum_{m=1}^{2} \eta_{m}  \left( I - \hat{x} \otimes \hat{x} \right) \cdot e^{- i  k  \hat{x} \cdot z_{m}} \, \int_{D_{m}} \hat{x} \cdot (y-z_{m}) \; \overset{1}{\mathbb{P}}\left(E_{m}\right)(y)  dy   \\
		& + & R_{1}^{(2)} \, + \, R_{2}^{(2)} \, + \,   \mathcal{O}\left( a^{\frac{3}{2}} \left\lVert E_1 \right\rVert_{\mathbb{L}^2(D_1)}\right) + \O\left( a^{\frac{7}{2}} \left\lVert E_2 \right\rVert_{\mathbb{L}^2(D_2)}\right),
	\end{eqnarray}
	where 
	\begin{equation*}
	R_{m}^{(2)} \; := \; - i \, \frac{k^3}{4\pi} \, \eta_{m} \left( I \, - \, \hat{x} \otimes \hat{x} \right) e^{- i k \hat{x}\cdot z_{m}} \cdot \int_{D_{m}} \hat{x}\cdot (y-z_m)\overset{3}{\PP}(E_m)(y)\, dy, \quad m = 1, 2.
	\end{equation*}
 Then, using $(\ref{def-eta})$, the following estimation holds, 
 \begin{equation*}
      R_1^{(2)} \, = \, \mathcal{O}\left(  a^{\frac{1}{2}} \, \left\lVert \overset{3}{\PP}(E_1)\right\rVert_{\mathbb{L}^2(D_1)} \right) \quad \text{and} \quad    R_2^{(2)}  \, = \, \mathcal{O}\left(  a^{\frac{5}{2}} \, \left\lVert \overset{3}{\PP}(E_2)\right\rVert_{\mathbb{L}^2(D_2)} \right). 
 \end{equation*}
Thus, we can deduce from \eqref{Before-estimating-the number of particles-N} that,
      	\begin{eqnarray*}\label{...particles-N}
 \nonumber
		E^{\infty}(\hat{x}) & = & \frac{k^{2}}{4 \, \pi} \, \sum_{m=1}^{2} \, \eta_{m} \, \left( I - \hat{x} \otimes \hat{x} \right) \cdot e^{- i  k  \hat{x} \cdot z_{m}}  \, \int_{D_{m}} \overset{3}{\mathbb{P}}\left(E_{m}\right)(y)  dy \\ \nonumber &-& i \, \frac{k^{3}}{4 \, \pi} \,  \, \sum_{m=1}^{2} \eta_{m}  \left( I - \hat{x} \otimes \hat{x} \right) \cdot e^{- i  k  \hat{x} \cdot z_{m}} \, \int_{D_{m}} \hat{x} \cdot (y-z_{m}) \; \overset{1}{\mathbb{P}}\left(E_{m}\right)(y)  dy   \\ \nonumber
		& + & \mathcal{O}\left( a^{\frac{1}{2}} \, \left\lVert \overset{3}{\PP}(E_1)\right\rVert_{\mathbb{L}^2(D_1)} \right) \, + \, \mathcal{O}\left(a^{\frac{5}{2}} \, \left\lVert \overset{3}{\PP}(E_2)\right\rVert_{\mathbb{L}^2(D_2)} \right) \\ &+&\,  \mathcal{O}\left( a^{\frac{3}{2}} \left\lVert E_1 \right\rVert_{\mathbb{L}^2(D_1)}\right) + \, \O\left(  a^{\frac{7}{2}} \left\lVert E_2 \right\rVert_{\mathbb{L}^2(D_2)}\right),
	\end{eqnarray*}
which, by using \eqref{def-F_j} and the notations given by \eqref{Qj1}, indicates 
	\begin{equation}\label{IA1}
		E^{\infty}(\hat{x})  \, = \, \frac{k^{2}}{4 \, \pi} \, \sum_{m=1}^{2}  \, e^{- \, i  k  \hat{x} \cdot z_{m}}  \, \left[ \left( I - \hat{x} \otimes \hat{x} \right) \cdot  R_{m} \, + \, i \, k \,  \,  \hat{x} \times Q_{m} \right] \, + \,  Error^{(1)},
	\end{equation}
	where the vectors $\left(Q_m, R_m \right)_{m=1, 2}$ are the solutions of the linear algebraic system given by \eqref{eq-al-D1}, and 
	\begin{equation*}
    \resizebox{.99\hsize}{!}{$
		Error^{(1)} \, := \,   \mathcal{O}\left( \, a^{\frac{1}{2}} \, \left\lVert \overset{3}{\PP}(E_1)\right\rVert_{\mathbb{L}^2(D_1)} \right) \, + \, \mathcal{O}\left( \, a^{\frac{5}{2}} \, \left\lVert \overset{3}{\PP}(E_2)\right\rVert_{\mathbb{L}^2(D_2)} \right)+\,   \mathcal{O}\left(a^{\frac{3}{2}} \left\lVert E_1 \right\rVert_{\mathbb{L}^2(D_1)}\right) + \, \O\left(  a^{\frac{7}{2}} \left\lVert E_2 \right\rVert_{\mathbb{L}^2(D_2)}\right)$},
	\end{equation*}
 which can be estimated, by using Proposition \ref{es-oneP} and Proposition \ref{lem-es-multi}, as $\mathcal{O}\left( a^{4-h} \right)$. Hence, $(\ref{IA1})$ becomes,  
 	\begin{equation}\label{Eq0254}
		E^{\infty}(\hat{x})  \, = \, \frac{k^{2}}{4 \, \pi} \, \sum_{m=1}^{2}  \, e^{- \, i  k  \hat{x} \cdot z_{m}}  \, \left[ \left( I - \hat{x} \otimes \hat{x} \right) \cdot  R_{m} \, + \, i \, k \,  \,  \hat{x} \times Q_{m} \right] \, + \, \mathcal{O}\left( a^{4-h} \right),
	\end{equation}
 where $\left(Q_{1}, R_{1}, Q_{2}, R_{2} \right)$ is solution of $(\ref{al-D1})$. 
 \bigskip
 \newline 
 We have seen from the estimation of the scattered field $E^{s}(\cdot)$, given by $(\ref{Eq1132})$, and the estimation of the far field $E^{\infty}(\cdot)$, given by $(\ref{Eq0254})$, that the determination of $\left(Q_{1}, R_{1}, Q_{2}, R_{2} \right)$ is of great importance to achieve the estimation of both $E^{s}(\cdot)$ and $E^{\infty}(\cdot)$.
The goal of the next step is to determine the dominant part related to $\left(Q_{1}, R_{1}, Q_{2}, R_{2} \right)$. 
 \bigskip
 \newline 
\noindent (III). Invertibility of the algebraic system \eqref{al-D1}.
\bigskip

We recall that $\left(Q_{1}, R_{1}, Q_{2}, R_{2} \right)$ is the solution to 
\begin{equation}\label{PerAS}
    \begin{pmatrix}
    I_{3} & 0 & - \, \mathcal{B}_{13} & - \, \mathcal{B}_{14} \\
    0 & I_{3} & - \, \mathcal{B}_{23}  & - \, \mathcal{B}_{24} \\
    - \, \mathcal{B}_{31} & - \, \mathcal{B}_{32} & I_{3} & 0 \\  
    - \, \mathcal{B}_{41} & - \, \mathcal{B}_{42} & 0 & I_{3}
    \end{pmatrix} 
    \cdot 
    \begin{pmatrix}
     Q_{1} \\
     R_{1} \\
     Q_{2} \\
     R_{2}
    \end{pmatrix}
    =
    \begin{pmatrix}
     \frac{i \, k \, \eta_{0}}{\pm \, c_{0}} \, a^{3-h} \, {\bf P}_{0, 1}^{(1)} \cdot H^{Inc}(z_{1}) \\
     a^{3} \, {\bf P}_{0, 1}^{(2)} \cdot E^{Inc}(z_{1}) \\
     i \, k \, \eta_{2} \, a^{5} \, {\bf P}_{0, 2}^{(1)} \cdot H^{Inc}(z_{2}) \\
     \frac{\eta_{2}}{\pm \, d_{0}} \, a^{3-h} \, {\bf P}_{0, 2}^{(2)} \cdot E^{Inc}(z_{2})
    \end{pmatrix} \, + \, \begin{pmatrix}
    Error_1^{(1)} \\
     Error_1^{(2)} \\
     Error_2^{(1)} \\
     Error_2^{(2)}
    \end{pmatrix}.
\end{equation}
Notice that the R.H.S of $(\ref{PerAS})$ is perturbed by the presence of an additive vector error term. 
Next, we investigate the associated unperturbed algebraic system given by 
\begin{equation}\label{unPerAS}
    \begin{pmatrix}
    I_{3} & 0 & - \, \mathcal{B}_{13} & - \, \mathcal{B}_{14} \\
    0 & I_{3} & - \, \mathcal{B}_{23}  & - \, \mathcal{B}_{24} \\
    - \, \mathcal{B}_{31} & - \, \mathcal{B}_{32} & I_{3} & 0 \\  
    - \, \mathcal{B}_{41} & - \, \mathcal{B}_{42} & 0 & I_{3}
    \end{pmatrix} 
    \cdot 
    \begin{pmatrix}
     \tilde{Q}_{1} \\
     \tilde{R}_{1} \\
     \tilde{Q}_{2} \\
     \tilde{R}_{2}
    \end{pmatrix}
    =
    \begin{pmatrix}
     \frac{i \, k \, \eta_{0}}{\pm \, c_{0}} \, a^{3-h} \, {\bf P}_{0, 1}^{(1)} \cdot H^{Inc}(z_{1}) \\
     a^{3} \, {\bf P}_{0, 1}^{(2)} \cdot E^{Inc}(z_{1}) \\
     i \, k \, \eta_{2} \, a^{5} \, {\bf P}_{0, 2}^{(1)} \cdot H^{Inc}(z_{2}) \\
     \frac{\eta_{2}}{\pm \, d_{0}} \, a^{3-h} \, {\bf P}_{0, 2}^{(2)} \cdot E^{Inc}(z_{2})
    \end{pmatrix}.
\end{equation}
It is clear that the difference between the solution to $(\ref{unPerAS})$ and the solution to $(\ref{PerAS})$ satisfies the following algebraic system,
\begin{equation*}\label{PerAS-unPerAS}
    \begin{pmatrix}
    I_{3} & 0 & - \, \mathcal{B}_{13} & - \, \mathcal{B}_{14} \\
    0 & I_{3} & - \, \mathcal{B}_{23}  & - \, \mathcal{B}_{24} \\
    - \, \mathcal{B}_{31} & - \, \mathcal{B}_{32} & I_{3} & 0 \\  
    - \, \mathcal{B}_{41} & - \, \mathcal{B}_{42} & 0 & I_{3}
    \end{pmatrix} 
    \cdot 
    \begin{pmatrix}
     Q_{1} \, - \, \tilde{Q}_{1} \\
     R_{1} \, - \, \tilde{R}_{1}\\
     Q_{2} \, - \, \tilde{Q}_{2} \\
     R_{2} \, - \, \tilde{R}_{2}
    \end{pmatrix}
    \, = \, \begin{pmatrix}
    Error_1^{(1)} \\
     Error_1^{(2)} \\
     Error_2^{(1)} \\
     Error_2^{(2)}
    \end{pmatrix},
\end{equation*}
which is invertible under the condition $4 \, - \, h \, - \, 4 \, t \, > \, 0$. By inverting the above system using Born series, we obtain 
\begin{equation}\label{QRKErr}
     \begin{pmatrix}
     Q_{1} \, - \, \tilde{Q}_{1} \\
     R_{1} \, - \, \tilde{R}_{1}\\
     Q_{2} \, - \, \tilde{Q}_{2} \\
     R_{2} \, - \, \tilde{R}_{2}
    \end{pmatrix} \, = \, \sum_{n \geq 0} \mathbb{K}_{n} \cdot \begin{pmatrix}
    Error_1^{(1)} \\
     Error_1^{(2)} \\
     Error_2^{(1)} \\
     Error_2^{(2)}
    \end{pmatrix},
\end{equation}
where the matrix $\mathbb{K}_{n}$ is given by  
\begin{equation}\label{DefKn}
    \mathbb{K}_{n} \, := \, \begin{pmatrix}
    0 & 0 &  \mathcal{B}_{13} &  \mathcal{B}_{14} \\
    0 & 0 &  \mathcal{B}_{23}  &  \mathcal{B}_{24} \\
     \mathcal{B}_{31} &  \mathcal{B}_{32} & 0 & 0 \\  
     \mathcal{B}_{41} &  \mathcal{B}_{42} & 0 & 0
    \end{pmatrix}^{n}. 
\end{equation}
In order to evaluate the L.H.S of $(\ref{QRKErr})$, we keep only the zeroth order term and the first order term on the R.H.S of the Born series to derive the following estimations
\begin{eqnarray}\label{es-error-correction}
    |Q_1-\tilde{Q}_1| \, & \lesssim & \,  Error_1^{(1)}  \, + \, a^{3-h} \, d^{-3} \, Error_2^{(1)} \, + \, a^{3-h} \, d^{-2} \,  Error_2^{(2)} \; = \; \mathcal{O}\left(
    a^{\min(3; 7-2h-3t)} \right), \notag \\
    |R_1-\tilde{R}_1| & \lesssim & \, Error_1^{(2)} \, + \, \left( a^3 \, d^{-2} \, + \, a^{9-2h} \, d^{-8} \right) \,  Error_2^{(1)} \, + \, a^3 \, d^{-3} Error_2^{(2)} \; = \; \mathcal{O}\left( a^{\min(4; 7-h-4t)}\right), \notag\\ \nonumber
    |Q_2- \tilde{Q}_2|&\lesssim& Error_2^{(1)} \, + \, a^5 \, d^{-3} \, Error_1^{(1)} \, 
    + \, \left(a^5 d^{-2} + a^{11-2h} d^{-8}\right) \, Error_1^{(2)} \\ \nonumber &=& \; \mathcal{O}\left( a^{\min(6; 9-h-4t; 8-3t)}\right), \notag\\ \nonumber
    |R_2-\tilde{R}_2| \, &\lesssim& \, Error_2^{(2)} \, + \, a^{3-h} \, d^{-2} \,  Error_1^{(1)} \, + \, a^{3-h} \, d^{-3} \,  Error_1^{(2)} \\ &=& \; \mathcal{O}\left( a^{\min(4-h; 7-h-4t; 6-3t; 7-2h-3t; 10-2h-7t)}\right).
\end{eqnarray}

\medskip
 
\noindent (IV).  The corresponding revised Foldy-Lax approximation. 

\bigskip

We recall first the scattered field expansion given by $(\ref{Eq1132})$, 
\begin{equation*}
		E^{s}(x) \, =   \, - \, k^2  \, \sum_{m=1}^{2}  \, \left[  \underset{y}{\nabla} \Phi_{k}(x,z_{m}) \times Q_{m} \, -  \,  \Upsilon_{k}(x,z_{m}) \cdot  R_{m} \right] \, + \, \mathcal{O}\left( a^{4-h} \right),
\end{equation*}
where $\left(Q_{1}, R_{1}, Q_{2}, R_{2} \right)$ is solution to $(\ref{al-D1})$. Now, by using the estimates derived in $(\ref{es-error-correction})$, under the condition $4-h-4t>0$, and the fact that $x$ is away from $D$, we obtain 
\begin{equation*}
		E^{s}(x) \, =   \, - \, k^2  \, \sum_{m=1}^{2}  \, \left[  \underset{y}{\nabla} \Phi_{k}(x,z_{m}) \times \tilde{Q}_{m} \, -  \,  \Upsilon_{k}(x,z_{m}) \cdot  \tilde{R}_{m} \right] \, + \, \mathcal{O}\left( a^{\min(3; \, 7-2h-3t; \, 10-2h-7t)} \right),
\end{equation*}
where $\left(\tilde{Q}_{1}, \tilde{R}_{1}, \tilde{Q}_{2}, \tilde{R}_{2} \right)$ is solution to the unperturbed algebraic system $(\ref{unPerAS})$. 
This justify $(\ref{general-scattered-field})$. Similarly, by recalling the far-field expression given by $(\ref{Eq0254})$ and using the estimates derived in $(\ref{es-error-correction})$, with the condition $4-h-4t>0$, and the fact that $x$ is away from $D$, we can obtain 
\begin{equation*}
    E^\infty(\hat{x})=\frac{k^2}{4\pi}\sum_{m=1}^2 \, e^{- i k \hat{x}\cdot z_m}\left[ (I-\hat{x}\otimes\hat{x})\tilde{R}_m\, +\, i\, k\, \hat{x}\,\otimes\, \tilde{Q}_m\right] \, +\, \mathcal{O}(a^{\min(3; 7-2h-3t; 10-2h-7t)}),
\end{equation*}
    where $\left(\tilde{Q}_{1}, \tilde{R}_{1}, \tilde{Q}_{2}, \tilde{R}_{2} \right)$ is solution to $(\ref{unPerAS})$. This justify $(\ref{approximation-E})$, and ends the proof of Theorem \ref{main-1}. 
 \subsection{Proof of Corollary \ref{corollary-la}}
 
 We start by recalling, from $(\ref{unPerAS})$, that $\left(\tilde{Q}_{1}, \tilde{R}_{1}, \tilde{Q}_{2}, \tilde{R}_{2} \right)$ is a vector solution of the following algebraic system 
\begin{equation*}
    \begin{pmatrix}
    I_{3} & 0 & - \, \mathcal{B}_{13} & - \, \mathcal{B}_{14} \\
    0 & I_{3} & - \, \mathcal{B}_{23}  & - \, \mathcal{B}_{24} \\
    - \, \mathcal{B}_{31} & - \, \mathcal{B}_{32} & I_{3} & 0 \\  
    - \, \mathcal{B}_{41} & - \, \mathcal{B}_{42} & 0 & I_{3}
    \end{pmatrix} 
    \cdot 
    \begin{pmatrix}
     \tilde{Q}_{1} \\
     \tilde{R}_{1} \\
     \tilde{Q}_{2} \\
     \tilde{R}_{2}
    \end{pmatrix}
    =
    \begin{pmatrix}
     \frac{i \, k \, \eta_{0}}{\pm \, c_{0}} \, a^{3-h} \, {\bf P}_{0, 1}^{(1)} \cdot H^{Inc}(z_{1}) \\
     a^{3} \, {\bf P}_{0, 1}^{(2)} \cdot E^{Inc}(z_{1}) \\
     i \, k \, \eta_{2} \, a^{5} \, {\bf P}_{0, 2}^{(1)} \cdot H^{Inc}(z_{2}) \\
     \frac{\eta_{2}}{\pm \, d_{0}} \, a^{3-h} \, {\bf P}_{0, 2}^{(2)} \cdot E^{Inc}(z_{2})
    \end{pmatrix},
\end{equation*}
which is invertible under the condition $4 \, - \, h \, - \, 4 \, t \, > \, 0$. By inverting the above system using Born series, we obtain 
\begin{equation}\label{Sol=S+Err}
    \begin{pmatrix}
     \tilde{Q}_{1} \\
     \tilde{R}_{1} \\
     \tilde{Q}_{2} \\
     \tilde{R}_{2}
    \end{pmatrix}
    \, = \,
    \begin{pmatrix}
     \frac{i \, k \, \eta_{0}}{\pm \, c_{0}} \, a^{3-h} \, {\bf P}_{0, 1}^{(1)} \cdot H^{Inc}(z_{1}) \\
     a^{3} \, {\bf P}_{0, 1}^{(2)} \cdot E^{Inc}(z_{1}) \\
     i \, k \, \eta_{2} \, a^{5} \, {\bf P}_{0, 2}^{(1)} \cdot H^{Inc}(z_{2}) \\
     \frac{\eta_{2}}{\pm \, d_{0}} \, a^{3-h} \, {\bf P}_{0, 2}^{(2)} \cdot E^{Inc}(z_{2})
    \end{pmatrix} \, + \, \sum_{n \geq 1} \mathbb{K}_{n} \cdot \begin{pmatrix}
     \frac{i \, k \, \eta_{0}}{\pm \, c_{0}} \, a^{3-h} \, {\bf P}_{0, 1}^{(1)} \cdot H^{Inc}(z_{1}) \\
     a^{3} \, {\bf P}_{0, 1}^{(2)} \cdot E^{Inc}(z_{1}) \\
     i \, k \, \eta_{2} \, a^{5} \, {\bf P}_{0, 2}^{(1)} \cdot H^{Inc}(z_{2}) \\
     \frac{\eta_{2}}{\pm \, d_{0}} \, a^{3-h} \, {\bf P}_{0, 2}^{(2)} \cdot E^{Inc}(z_{2})
    \end{pmatrix},
\end{equation}
where the matrix $\mathbb{K}_{n}$ is given by $(\ref{DefKn})$.  
By keeping the dominant part of $\sum_{n \geq 1} \mathbb{K}_{n}$, which is $\mathbb{K}_{1}$, the second term on the R.H.S of the above equation will be reduced to   
\begin{equation*}
\mathbb{K}_{1} \cdot \begin{pmatrix}
     \frac{i \, k \, \eta_{0}}{\pm \, c_{0}} \, a^{3-h} \, {\bf P}_{0, 1}^{(1)} \cdot H^{Inc}(z_{1}) \\
     a^{3} \, {\bf P}_{0, 1}^{(2)} \cdot E^{Inc}(z_{1}) \\
     i \, k \, \eta_{2} \, a^{5} \, {\bf P}_{0, 2}^{(1)} \cdot H^{Inc}(z_{2}) \\
     \frac{\eta_{2}}{\pm \, d_{0}} \, a^{3-h} \, {\bf P}_{0, 2}^{(2)} \cdot E^{Inc}(z_{2})
    \end{pmatrix} \, \simeq
   \begin{pmatrix}
        i \, k \, \eta_{2} \, a^{5} \, \mathcal{B}_{13} \cdot {\bf P}_{0, 2}^{(1)} \cdot H^{Inc}(z_{2}) \, + \,
     \frac{\eta_{2}}{\pm \, d_{0}} \, a^{3-h} \, \mathcal{B}_{23} \cdot {\bf P}_{0, 2}^{(2)} \cdot E^{Inc}(z_{2}) \\ 
        i \, k \, \eta_{2} \, a^{5} \, \mathcal{B}_{23} \cdot {\bf P}_{0, 2}^{(1)} \cdot H^{Inc}(z_{2}) \, + \,
     \frac{\eta_{2}}{\pm \, d_{0}} \, a^{3-h} \, \mathcal{B}_{24} \cdot {\bf P}_{0, 2}^{(2)} \cdot E^{Inc}(z_{2}) \\
        \frac{i \, k \, \eta_{0}}{\pm \, c_{0}} \, a^{3-h} \, \mathcal{B}_{31} \cdot {\bf P}_{0, 1}^{(1)} \cdot H^{Inc}(z_{1}) \, + \, 
     a^{3} \, \mathcal{B}_{32} \cdot {\bf P}_{0, 1}^{(2)} \cdot E^{Inc}(z_{1}) \\
     \frac{i \, k \, \eta_{0}}{\pm \, c_{0}} \, a^{3-h} \, \mathcal{B}_{41} \cdot {\bf P}_{0, 1}^{(1)} \cdot H^{Inc}(z_{1}) \, + \, 
     a^{3} \, \mathcal{B}_{42} \cdot {\bf P}_{0, 1}^{(2)} \cdot E^{Inc}(z_{1})
    \end{pmatrix}. 
\end{equation*}
By estimating component by component the above vector, using the expression of the matrix  $\mathcal{B}_{ij}$, given in Proposition \ref{prop-la-1}, and plugging the obtained result into $(\ref{Sol=S+Err})$ we can obtain the following estimations 
\begin{eqnarray}\label{final-QR}
    \tilde{Q}_1 \, &=& \, \frac{i \, \eta_0 \, k}{\pm \, c_0} \, a^{3-h}\, {\bf P}_{0, 1}^{(1)} \cdot H_1^{Inc}(z_1) \, + \, \mathcal{O}\left( a^{9-3h-5t} \right), \notag\\
    \tilde{R}_1 \, &=& \, a^{3}\, {\bf P}_{0, 1}^{(2)} \cdot E_1^{Inc}(z_1) \, + \, \frac{k^2 \eta_0}{\pm d_0}\, a^{6-h}\, {\bf P}_{0, 1}^{(2)} \cdot \Upsilon_k(z_1, z_2)\cdot{\bf P}_{0, 2}^{(2)} \cdot E^{Inc}(z_2)\, + \, \mathcal{O}(a^{9-2h-5t}), \notag\\
    \tilde{Q}_2 \, &=& \, i \, \eta_2 \, k \,  a^{5}\, {\bf P}_{0, 2}^{(1)} \cdot H_2^{Inc}(z_2) \, + \, \frac{i k^5 \eta_0 \eta_2}{\pm c_0}\, a^{8-h}\, {\bf P}_{0, 2}^{(1)} \cdot \Upsilon_k(z_2, z_1) \cdot {\bf P}_{0, 1}^{(1)} \cdot H^{Inc}(z_1)\, +\, \mathcal{O}(a^{11-2h} d^{-5}), \notag\\
    \tilde{R}_2 \, &=&  \, \frac{\eta_2}{\pm \, d_{0}}  \,  a^{3-h}\, {\bf P}_{0, 2}^{(2)} \cdot E_2^{Inc}(z_2) \, + \, \mathcal{O}\left( a^{\min(11-2h-5t; 14-3h-8t)} \right). \notag\\
    &&
\end{eqnarray}
It is direct to observe from the above estimations that $\left( \tilde{Q}_{1}, \tilde{R}_{2}\right)$ dominates $\left( \tilde{R}_{1}, \tilde{Q}_{2}\right)$. Consequently, by returning back to \eqref{general-scattered-field} and using the above observation we can get that  
\begin{eqnarray*}
		E^{s}(x) \, &=&    \, k^2    \, \left[ \Upsilon_{k}(x,z_{2}) \cdot  \tilde{R}_{2} \, - \,  \underset{y}{\nabla} \Phi_{k}(x,z_{1}) \times \tilde{Q}_{1}    \right] \\
        &+&    \, k^2    \, \left[ \Upsilon_{k}(x,z_{1}) \cdot  \tilde{R}_{1} \, - \,  \underset{y}{\nabla} \Phi_{k}(x,z_{2}) \times \tilde{Q}_{2}    \right] + \, \mathcal{O}\left( a^{\min(3; 7-2h-3t; 10-2h-7t)} \right).
\end{eqnarray*}
Since $x$ is away from $D$ for the scattered wave, by utilizing the estimation of $\left(\tilde{R}_{1}, \tilde{Q}_{2} \right)$, we can know that the second term on the R.H.S is estimated of order  $\mathcal{O}\left(a^{3} \right)$. Hence, with \eqref{final-QR},
\begin{eqnarray*}
		E^{s}(x) \, &=&    \, k^2    \, \left[ \Upsilon_{k}(x,z_{2}) \cdot  \tilde{R}_{2} \, - \,  \underset{y}{\nabla} \Phi_{k}(x,z_{1}) \times \tilde{Q}_{1}    \right] \, + \, \mathcal{O}\left( a^{\min(3; 7-2h-3t; 10-2h-7t)} \right) \\
        &=& \, \pm   \, k^2  \, a^{3-h}  \, \left[ \frac{\eta_{2}}{d_{0}} \, 
        \Upsilon_{k}(x,z_{2}) \cdot  {\bf P}_{0, 2}^{(2)} \cdot E_2^{Inc}(z_2) \, - \, \frac{i \, k \, \eta_{0}}{c_{0}} \, \underset{y}{\nabla} \Phi_{k}(x,z_{1}) \times {\bf P}_{0, 1}^{(1)} \cdot H_1^{Inc}(z_1)    \right] \\ &+& \, \mathcal{O}\left( a^{\min(3; 7-2h-3t; 10-2h-7t; 9-3h-5t) } \right),
\end{eqnarray*}
which, by taking Taylor expansion of $H^{Inc}(\cdot)$ and $E^{Inc}(\cdot)$ at the intermediate point, between $z_{1}$ and $z_{2}$ with $\left\vert z_{1} \, - \, z_{2}\right\vert \, \lesssim \, a^{t}$, i.e.,
\begin{eqnarray*}
     	H_1^{Inc}(z_1)&=& H^{Inc}(z_0) + i k \int_0^1 e^{i k \theta\cdot(z_0+t(z_1-z_0))}(\theta^\perp\times\theta)\left\langle \theta, (z_1-z_0) \right\rangle\,dt \, = \, H^{Inc}(z_0) \, + \,  \mathcal{O}\left( a^{t} \right), \\
     	E_2^{Inc}(z_2)&=& E^{Inc}(z_0) + i k \int_0^1 e^{i k \theta\cdot(z_0+t(z_2-z_0))} (\theta^\perp\times\theta) \left\langle \theta, (z_2-z_0) \right\rangle\,dt\, = \, E^{Inc}(z_0) \, + \, \mathcal{O}\left( a^{t} \right), 
     \end{eqnarray*}
gives us under the condition $4-h-4t>0$ that,  
\begin{eqnarray}\label{Es}
        E^{s}(x) \, &=& \, \pm   \, k^2  \, a^{3-h}  \, \left[ \frac{\eta_{2}}{d_{0}} \, 
        \Upsilon_{k}(x,z_{2}) \cdot  {\bf P}_{0, 2}^{(2)} \cdot E^{Inc}(z_0) \, - \, \frac{i \, k \, \eta_{0}}{c_{0}} \, \underset{y}{\nabla} \Phi_{k}(x,z_{1}) \times {\bf P}_{0, 1}^{(1)} \cdot H^{Inc}(z_0)    \right] \notag\\ &+& \, \mathcal{O}\left( a^{\min(3-h+t;\; 3; \; 10-2h-7t;\; 9-3h-5t) } \right).
\end{eqnarray}
Based on \eqref{Es}, the last step consists in taking the Taylor expansion near the intermediate point $z_{0}$ for the functions $\Upsilon_{k}(x,\cdot)$ and $\nabla \Phi_{k}(x,\cdot)$, i.e.,   
\begin{equation*}
    \Upsilon_{k}(x,z_{2}) \, = \, \Upsilon_{k}(x,z_{0}) \, + \, \mathcal{O}\left( a^{t} \right) \quad \text{and} \quad  \nabla \Phi_{k}(x,z_{1}) \, = \, \nabla \Phi_{k}(x,z_{0}) \, + \, \mathcal{O}\left( a^{t} \right),
\end{equation*}
where we can use the fact that $x$ is away from $D$ to derive,
\begin{eqnarray*}
        E^{s}(x) \, &=& \, \pm   \, k^2  \, a^{3-h}  \, \left[ \frac{\eta_{2}}{d_{0}} \, 
        \Upsilon_{k}(x,z_{0}) \cdot  {\bf P}_{0, 2}^{(2)} \cdot E^{Inc}(z_0) \, - \, \frac{i \, k \, \eta_{0}}{c_{0}} \, \underset{y}{\nabla} \Phi_{k}(x,z_{0}) \times {\bf P}_{0, 1}^{(1)} \cdot H^{Inc}(z_0)    \right] \\ &+& \, \mathcal{O}\left( a^{\min(3-h+t;\; 3; \; 10-2h-7t;\; 9-3h-5t) } \right),
\end{eqnarray*}
which justifies $(\ref{CoroSF})$. Furthermore, by returning back to \eqref{approximation-E} we obtain  \begin{eqnarray*}
		E^{\infty}(\hat{x})  \, &=& \, \frac{k^{2}}{4 \, \pi} \left[   \, e^{- \, i  k  \hat{x} \cdot z_{2}}  \,  \left( I - \hat{x} \otimes \hat{x} \right) \cdot  \tilde{R}_{2} \, + \, i \, k \, e^{- \, i  k  \hat{x} \cdot z_{1}} \,  \hat{x} \times \tilde{Q}_{1} \right] \\
        &+& \, \frac{k^{2}}{4 \, \pi} \left[   \, e^{- \, i  k  \hat{x} \cdot z_{1}}  \,  \left( I - \hat{x} \otimes \hat{x} \right) \cdot  \tilde{R}_{1} \, + \, i \, k \,  \, e^{- \, i  k  \hat{x} \cdot z_{2}} \, \hat{x} \times \tilde{Q}_{2} \right] \, + \, \mathcal{O}\left( a^{\min(3; 7-2h-3t; 10-2h-7t)} \right),
\end{eqnarray*}
which, by using the expression of $\left(\tilde{R}_{1}, \tilde{Q}_{2} \right)$ in \eqref{final-QR}, and the fact that the second term on the R.H.S of the above equation is of order $\mathcal{O}\left(a^{3} \right)$, can be further simplified as 
\begin{equation*}
		E^{\infty}(\hat{x})  \, = \, \frac{k^{2}}{4 \, \pi} \left[   \, e^{- \, i  k  \hat{x} \cdot z_{2}}  \,  \left( I - \hat{x} \otimes \hat{x} \right) \cdot  \tilde{R}_{2} \, + \, i \, k \, e^{- \, i  k  \hat{x} \cdot z_{1}} \,  \hat{x} \times \tilde{Q}_{1} \right]  \, + \, \mathcal{O}\left( a^{\min(3; 7-2h-3t; 10-2h-7t)} \right).
\end{equation*}
Now, taking Taylor expansion of $z \rightarrow e^{- \, i \, k \hat{x} \cdot z}$ at the intermediate point $z_0$, between $z_{1}$ and $z_{2}$ with $\left\vert z_{1} - z_{2} \right\vert \, \lesssim \, a^{t}$, we obtain
\begin{eqnarray*}
		E^{\infty}(\hat{x})  & = & \frac{k^{2}}{4 \, \pi}\, e^{- \, i  k  \hat{x} \cdot z_{0}} \, \left[     \,  \left( I - \hat{x} \otimes \hat{x} \right) \cdot  \tilde{R}_{2} \, + \, i \, k  \,  \hat{x} \times \tilde{Q}_{1} \right] \notag\\
        &+ & \mathcal{O}\left(a^{t} \; \left(\left\vert \tilde{R}_{2} \right\vert + \left\vert \tilde{Q}_{1} \right\vert \right) \right)  \, + \, \mathcal{O}\left( a^{\min(3; 7-2h-3t; 10-2h-7t)} \right).
\end{eqnarray*}
Now, using the expression of $\left(\tilde{R}_{2}, \, \tilde{Q}_{1} \right)$ in \eqref{final-QR}, we can derive the following estimation 
\begin{equation*}
    \left\vert a^{t} \; \left(\left\vert R_{2} \right\vert + \left\vert Q_{1} \right\vert \right) \right\vert \; \lesssim \;  a^{3-h+t}, 
\end{equation*}
and the following expression for $E^{\infty}(\cdot)$ under the condition $4-h-4t>0$ that, 
\begin{eqnarray*}
		E^{\infty}(\hat{x})  \, &=& \, \pm \, \frac{k^{2}}{4 \, \pi}\, a^{3-h} \, e^{- \, i  k  \hat{x} \cdot z_{0}} \, \left[ \frac{\eta_{2}}{d_{0}} 
        \,  \left( I - \hat{x} \otimes \hat{x} \right) \cdot  {\bf P}_{0, 2}^{(2)} \cdot E_2^{Inc}(z_0) \, - \, k^{2} \frac{\eta_{0}}{c_{0}} 
    \,  \hat{x} \times {\bf P}_{0, 1}^{(1)} \cdot H_1^{Inc}(z_0) \right]  \\ &+& \,\mathcal{O}\left( a^{\min(3-h+t;\; 3; \; 10-2h-7t;\; 9-3h-5t) } \right),
\end{eqnarray*}
where the last step consists in taking Taylor expansion for the incident fields, similar to \eqref{Es}, to justify $(\ref{CoroFF})$.  The proof of Corollary \ref{corollary-la} is now complete. 
\section{Proof of the a-priori estimates}\label{sec-proof-prior}
There will be three subsections in this section, in which we respectively prove Lemma \ref{es-oneP}, Proposition \ref{lem-es-multi} and Proposition \ref{prop-scoeff}.

\subsection{Proof of Lemma \ref{es-oneP}}{(Estimate for just one nano-particle)}\label{subsec-51}
To justify the estimation related to the dielectric nano-particle, i.e. estimation of  $\lVert \tilde{E} \rVert_{\mathbb{L}^2(B_1)}$, we refer the readers to \cite[Subsection 5.1, Proof of Proposition 2.2]{CGS} for a detailed proof on the estimation of the $\mathbb{L}^{2}$-norm of the electric field generated by a single dielectric nano-particle embedded into a background made up of a vacuum.  In this subsection, by following a similar arguments used in \cite[Subsection 5.1]{CGS} with some necessary slight changes, we investigate only the estimation of $\lVert \tilde{E} \rVert_{\mathbb{L}^2(B_2)}$, i.e. we estimate the $\mathbb{L}^{2}$-norm of the electric field generated by a single plasmonic nano-particle embedded into a background of a vacuum. To do this, we start by recalling the L.S.E given by $(\ref{LS eq2})$,
	\begin{equation*}
	E \, + \, \eta_2 \, \nabla M^{k}_{D_{2}}(E) \, - \,  k^{2}  \, \eta_2 \, N^{k}_{D_{2}}(E) \, = \, E^{Inc}, \quad \text{in} \;\; D_{2},
	\end{equation*}
	which after scaling to $B_2$ yields
	\begin{equation}\label{Equa1104}
	\tilde{E} \, + \, \eta_2 \, \nabla M^{ka}_{B_{2}}(\tilde{E}) \, - \, k^{2}  \, \eta_2 \, a^{2} \, N^{ka}_{B_{2}}(\tilde{E}) \, = \, \tilde{E}^{Inc}, \quad \text{in} \;\; B_2.
	\end{equation}
 The next step consists in projecting $\tilde{E}$ onto the three sub-spaces introduced in \eqref{L2-decomposition}. We only note the main results. The reader should refer to \cite[Subsection 5.1]{CGS} for the calculations details.
 \begin{enumerate}
     \item[]
     \item Projecting onto $\mathbb{H}_0(Curl=0)(B_{2})$. By using the fact that 
     \begin{equation*}
     \tilde{E}^{Inc} \in \mathbb{H}\left( \div = 0 \right)(B_{2}) \, = \, \left( \mathbb{H}_{0}\left( \div = 0 \right) \oplus \nabla \mathcal{H}armonic \right)(B_{2}) \perp \mathbb{H}_{0}\left( Curl = 0 \right)(B_{2}),
     \end{equation*}
     and the relation $(\ref{grad-M-2nd})$, in $(\ref{Equa1104})$, we can derive that $\left\langle \tilde{E} , e_{n, B_{2}}^{(2)} \right\rangle_{\mathbb{L}^{2}(B_{2})} \, = \, 0$, hence 
     \begin{equation}\label{Exact2ndproj}
               \overset{2}{\mathbb{P}}\left( \tilde{E} \right)   \; = \; 0,
     \end{equation}
     		which justifies \eqref{*add1}.
     \item[]
     \item Projecting onto $\mathbb{H}_0(\div=0)(B_{2})$. In $(\ref{Equa1104})$, by taking the inner product with respect to $e^{(1)}_{n, B_{2}}(\cdot)$, using $(\ref{grad-M-1st})$ and $(\ref{Exact2ndproj})$, summing up with respect to the index $n$, we end up with the following estimation
  \begin{equation}\label{12120111}
     \left\Vert \overset{1}{\mathbb{P}}\left( \tilde{E} \right) \right\Vert^{2}_{\mathbb{L}^{2}(B_{2})}  \; \lesssim \; \left\lVert \overset{1}{\PP}\left(\tilde{E}^{inc}\right)\right\rVert_{\mathbb{L}^2(B_2)}^2    + a^{8}  \, \left\Vert \overset{3}{\mathbb{P}}\left( \tilde{E} \right) \right\Vert^{2}_{\mathbb{L}^{2}(B_2)}.
  \end{equation}
     \item[]
     \item Projecting onto $\nabla\mathcal{H}armonic(B_{2})$. In $(\ref{Equa1104})$, by taking the inner product with respect to $e^{(3)}_{n, B_{2}}(\cdot)$, using $(\ref{expansion-gradMk})$ and $(\ref{expansion-Nk})$, and keeping only the dominant terms we can end up with the following approximation
     \begin{equation*}
         \left\langle \tilde{E}, e^{(3)}_{n, B_{2}} \right\rangle_{\mathbb{L}^{2}(B_{2})}  \, \simeq \, \frac{\left\langle \tilde{E}^{Inc},e^{(3)}_{n, B_{2}} \right\rangle_{\mathbb{L}^{2}(B_{2})}}{\left( 1 + \eta_2  \, \lambda^{(3)}_{n}(B_{2}) \right)} + \frac{\eta_2}{\left( 1 + \eta_2  \, \lambda^{(3)}_{n}(B_{2}) \right)}  \; \frac{\left( ka \right)^{2}}{2} \, \left\langle N_{B_{2}}\left( \tilde{E}\right) ,  e^{(3)}_{n, B_{2}} \right\rangle_{\mathbb{L}^{2}(B_{2})}. 
     \end{equation*}
     Taking the square modulus on the both sides of the above equation, summing up with respect to the index $n$ and using $(\ref{def-eta})$ and \eqref{condition-on-k}, we deduce 
   \begin{equation}\label{Hsah}
	\left\Vert \overset{3}{\mathbb{P}}\left( \tilde{E} \right) \right\Vert^{2}_{\mathbb{L}^{2}(B_2)} \lesssim a^{-2h} \; \left| \left\langle \tilde{E}^{inc}, e_{n_{*}, B_{2}}^{(3)}\right\rangle_{\mathbb{L}^{2}(B_2)} \right|^2 +\sum_{n\neq n_{*}} \frac{\left| \left\langle \tilde{E}^{inc}, e_{n, B_{2}}^{(3)} \right\rangle\right|^2}{\left|1+\eta_2 \lambda_{n}^{(3)}(B_{2})\right|^2}+ a^{4-2h} \left\Vert  \tilde{E} \right\Vert^{2}_{\mathbb{L}^{2}(B_2)}.
\end{equation}
  \end{enumerate}
	Now, by adding $(\ref{12120111})$, $(\ref{Exact2ndproj})$ and  $(\ref{Hsah})$, under the condition $h \, < \, 2$, we deduce  
	\begin{equation}\notag
	\left\Vert \tilde{E} \right\Vert_{\mathbb{L}^{2}(B_2)} \; = \; \mathcal{O}\left( a^{-h} \right). 
	\end{equation}
This ends the proof of Lemma \ref{es-oneP}.
\subsection{Proof of Proposition \ref{lem-es-multi}}{(Estimation for the dimer).}\label{subsec-52}
For shortness notation, we introduce the following integro-differential operator  
\begin{equation}\label{def-Tkm}
\boldsymbol{T}_{k, m} \, := \, \left(I \, + \, \eta_m \, \nabla M^{k}_{D_{m}} \, - \, k^{2} \, \eta_m \, N^{k}_{D_{m}} \right), \quad\mbox{for}\quad m=1, \, 2.
\end{equation}
Since the operators $\nabla M^{-k}_{D_{m}}(\cdot)$ and $N^{-k}_{D_{m}}(\cdot)$ are the adjoint operators of $\nabla M^{k}_{D_{m}}(\cdot)$ and $N^{k}_{D_{m}}(\cdot)$, respectively, we deduce that $\boldsymbol{T}_{- k, m}(\cdot)$ is the adjoint operator of $\boldsymbol{T}_{k, m}(\cdot)$. To justify the estimations given by $(\ref{max-P1P3})$, we start by recalling the L.S.E, given by \eqref{LS eq2}, 
\begin{eqnarray}\label{LS-es}
\nonumber
	E(x) \, &+& \,  \left(\eta_1 \,  \underset{x}{\nabla}M^{k}_{D_{1}}(E_{1})(x) \, + \, \eta_2 \,  \underset{x}{\nabla}M^{k}_{D_{2}}(E_{2})(x)\right) \\ &-& k^{2}  \, \left(\eta_1 \, N^{k}_{D_{1}}(E_{1})(x) \, + \, \eta_2 \, N^k_{D_{2}}(E_2)(x)\right) \, = \,  E^{Inc}(x), \quad x \in D.
\end{eqnarray}
Now, we estimate the following $\mathbb{L}^{2}$-norm,
\begin{equation*}\label{MLXC}
   \left\Vert \overset{j}{\mathbb{P}}\left( \tilde{E}_{m} \right) \right\Vert_{\mathbb{L}^{2}(B_{m})}, \quad  m=1, 2 \quad \text{and} \quad j=1,3.
\end{equation*}
\begin{enumerate}
        \item[] 
	\item Estimation of $ \left\Vert  \overset{1}{\mathbb{P}}\left(\tilde{E}_{1} \right) \right\Vert^{2}_{\mathbb{L}^2(B_1)}$. \\
 Let $x \in D_{1}$ and rewrite $(\ref{LS-es})$ as\footnote{As $x$ and $y$ are in two disjoint domains, the integral appearing on the left hand side of $(\ref{*add3})$ is well defined.} 
	\begin{equation}\label{*add3}
	\TT_{k ,1}(E_1)(x) \, - \,  \eta_2 \, \left( -\nabla M^{k}_{D_{2}} \, + \, k^2 \, N^{k}_{D_{2}} \right) \, \left(E_2\right) (x)\, = \, E^{Inc}_1(x), \quad x \in D_{1}. 
	\end{equation} 
     Then, by using $(\ref{grad-M-1st})$, taking the inverse operator of $\boldsymbol{T}_{k, 1}$ on the both sides of $(\ref{*add3})$ and using the fact that, for $m = 1, 2$, $E_{m} \, = \, \overset{1}{\mathbb{P}}\left(E_{m}\right) \, + \, \overset{3}{\mathbb{P}}\left(E_{m}\right)$, we get 
	\begin{eqnarray}\label{es-ls1}
		\overset{1}{\mathbb{P}}\left(E_{1}\right)(x)  &-& \, k^2 \, \eta_2 \, \boldsymbol{T}_{k, 1}^{-1}  \int_{D_2} \Phi_k(x,y) \cdot \overset{1}{\mathbb{P}}\left(E_{2}\right)(y) \, dy  \notag \\ 
		&=&\boldsymbol{T}_{k, 1}^{-1}\left(E^{Inc}_{1}\right)(x)-\overset{3}{\mathbb{P}}\left(E_{1}\right)(x) + \, \eta_2 \, \boldsymbol{T}_{k, 1}^{-1}  \left( -\nabla M_{D_2}^k +k^2 N_{D_2}^k\right) \left(\overset{3}{\mathbb{P}}\left(E_{2}\right)\right)(x), \quad x \in D_{1}.  
	\end{eqnarray} 
 Besides, we know that
 \begin{eqnarray*}\label{LS-split1}
     \left(-\nabla M_{D_2}^k +k^2 N_{D_2}^k\right)\left(\overset{3}{\PP}(E_2)\right)(x) \, 
     & = & \, I_{2}^{0}(x) \, + \, \int_{D_2}\underset{y}{\nabla} \, \underset{y}{\nabla} \, \left( \Phi_k \, - \, \Phi_0 \right)(x, y) \cdot \overset{3}{\PP}(E_2)(y)\,dy \\
      &+&  k^2 \int_{D_2} \Phi_k (x, y)\overset{3}{\PP}(E_2)(y)\,dy,
 \end{eqnarray*}
 where
 \begin{equation}\label{def-I-0-2}
     I_{2}^0(x) \, := \, \int_{D_2}\underset{y}{\nabla} \, \underset{y}{\nabla} \, \Phi_0(x, y) \cdot \overset{3}{\PP}(E_2)(y)\,dy \, = \, - \, \nabla M_{D_{2}}\left( \overset{3}{\PP}(E_2) \right)(x), \qquad x \in D_{1}.
 \end{equation}
 Then we can reformulate $(\ref{es-ls1})$ as
 \begin{eqnarray}\label{LS-split2}
     \overset{1}{\PP}(E_1)(x) & - & \eta_2 k^2 \boldsymbol{T}_{k, 1}^{-1} \int_{D_2} \Phi_k(x, y)\overset{1}{\PP}(E_2)(y)\,dy\notag\\
     & = &\boldsymbol{T}_{k, 1}^{-1} (E_1^{Inc})(x)- \overset{3}{\PP}(E_1)(x)+\eta_2 \boldsymbol{T}_{k, 1}^{-1}  (I_2^0)\notag\\
     & + &\eta_2 \boldsymbol{T}_{k, 1}^{-1} \int_{D_2} \underset{y}{\nabla} \underset{y}{\nabla}\left(\Phi_k-\Phi_0\right)(x, y) \cdot \overset{3}{\PP}(E_2)(y)\,dy \, + \, \eta_2 k^2 \boldsymbol{T}_{k, 1}^{-1}\int_{D_2} \Phi_k (x, y)\overset{3}{\PP}(E_2)(y)\,dy.
 \end{eqnarray}
   In \eqref{LS-split2}, by taking the Taylor expansion of $\Phi_k(\cdot,\cdot)$ and $\underset{y}{\nabla}\underset{y}{\nabla}(\Phi_k-\Phi_0)(\cdot,\cdot)$ respectively, we can get
	\begin{eqnarray}\label{LS-P11-taylor}
    \nonumber
		&& \overset{1}{\mathbb{P}}\left(E_{1} \right)(x) \\ \nonumber
   &-& \, \eta_2 \, k^{2} \, \boldsymbol{T}_{k ,1}^{-1}\underset{y}{\nabla}\left({\Phi_k}I\right)(z_1, z_2) \cdot \int_{D_2}\mathcal{P}(y, z_2)\cdot \overset{1}{\PP}(E_2)(y)\,dy \\ \nonumber
		&-& \eta_2 \,k^{2} \, \boldsymbol{T}_{k ,1}^{-1}    \int_{D_2} \int_{0}^{1} (1-t)(y-z_{2})^{\perp} \cdot \underset{y}{\nabla} \underset{y}{\nabla}\left( \Phi_{k} \right)(z_{1},z_{2}+t(y-z_{2}))\cdot (y-z_{2}) \,dt  \cdot   \overset{1}{\mathbb{P}}\left(E_{2}\right)(y) \; dy  \\ \nonumber
		&+& \eta_2 \,k^{2} \, \boldsymbol{T}_{k ,1}^{-1}    \int_{D_2}  \int_0^1  \underset{y}{\nabla} \underset{y}{\nabla} \left( \Phi_{k} \right)(z_{1}+t(\cdot - z_1),z_{2})\cdot (\cdot -z_{1}) \,dt  \cdot  \mathcal{P}(y, z_2) \, \cdot \overset{1}{\mathbb{P}}\left(E_{2}\right)(y) \; dy  \\ \nonumber
  &-& \eta_2\, k^2 \, \boldsymbol{T}_{k, 1}^{-1}\int_{D_2}\int_0^1 \int_0^1 (1-t) (y- z_2)^\perp \cdot \underset{x}{\nabla} (\underset{y}{\nabla} \underset{y}{\nabla}\Phi_k)(z_1+s(\cdot-z_1), z_2+t(y-z_2)) \cdot (y-z_2)\,dt \cdot \\ && \qquad \qquad \qquad \qquad \qquad \qquad \cdot
  \mathcal{P}(\cdot, z_1)\, ds\cdot \overset{1}{\PP}(E_2)(y)\,dy \notag\\ 
		&=&  \boldsymbol{T}_{k, 1}^{-1}\left(E^{Inc}_{1}\right)(x)-\overset{3}{\mathbb{P}}\left(E_{1}\right)(x) +  \, \eta_2 \, \boldsymbol{T}_{k, 1}^{-1}(I_2^0)(x) \, +\,  \eta_2 \, \boldsymbol{T}_{k, 1}^{-1}    \underset{y}{\nabla} \underset{y}{\nabla}(\Phi_k- \Phi_0)(z_1, z_2) \cdot \int_{D_{2}}  \overset{3}{\mathbb{P}}\left(E_{2}\right)(y) \, dy \notag\\ \nonumber
  & + &  \eta_2 \, \boldsymbol{T}_{k, 1}^{-1} \int_{D_2}\int_0^1\underset{x}{\nabla} \left(\underset{y}{\nabla} \underset{y}{\nabla}(\Phi_k- \Phi_0)\right)(z_1+t(\cdot-z_1), z_2) \cdot \mathcal{P}(\cdot, z_1)\,dt\cdot\overset{3}{\PP}(E_2)(y)\,dy\\
  & + & \eta_2 \, \boldsymbol{T}_{k, 1}^{-1} \int_{D_2}\int_0^1\underset{y}{\nabla} \left(\underset{y}{\nabla}\underset{y}{\nabla}(\Phi_k- \Phi_0)\right)(z_1, z_2+t(y-z_2)) \cdot \mathcal{P}(y, z_2)\,dt\cdot\overset{3}{\PP}(E_2)(y)\,dy\notag\\ \nonumber
		&+& \eta_2 \, k^2  \, \boldsymbol{T}_{k, 1}^{-1} (\Phi_k I)(z_1, z_2) \cdot \int_{D_{2}} \overset{3}{\mathbb{P}}\left(E_{2}\right)(y) \, dy \\ &+& \, \eta_2 \, k^2\, \boldsymbol{T}_{k, 1}^{-1} \int_{D_2}\int_{0}^1 \underset{x}{\nabla}(\Phi_k I)(z_1+t(\cdot - z_1), z_2) \cdot \mathcal{P}(\cdot, z_1)\,dt\cdot\overset{3}{\PP}(E_2)(y)\,dy\notag\\
  && \qquad \qquad + \, \eta_2 \, k^2\, \boldsymbol{T}_{k, 1}^{-1} \int_{D_2}\int_{0}^1 \underset{y}{\nabla}(\Phi_k I)(z_1, z_2+t(y-z_2))\mathcal{P}(y, z_2)\,dt\cdot\overset{3}{\PP}(E_2)(y)\,dy.
	\end{eqnarray} 
	By scaling the above equation, from $D_{m}$ to $B_{m}$, with $m=1,2$, taking the $\mathbb{L}^{2}(B_{1})$-inner product with respect to $e_{n, B_{1}}^{(1)}(\cdot)$ and with the help of the adjoint operator of $\boldsymbol{T}^{-1}_{k  a, 1}$, that $\left( \boldsymbol{T}^{-1}_{k  a, 1} \right)^{\star} = \boldsymbol{T}^{-1}_{- \, k  a, 1}$, we further have
	\begin{eqnarray}\label{Sys-Equa-HA}
	\nonumber
&& \left\langle \tilde{E}_{1}, e_{n, B_{1}}^{(1)} \right\rangle_{\mathbb{L}^{2}(B_{1})} \\ \nonumber
 &-& \, \eta_2 \,k^{2} \,a^{4} \left\langle \underset{y}{\nabla}(\Phi_k I)(z_1, z_2) \cdot \int_{B_2}\mathcal{P}(y, 0)\cdot \overset{1}{\PP}(\tilde{E}_2)(y)\,dy,\, \boldsymbol{T}_{-ka, 1}^{-1}\left(e_{n, B_{1}}^{(1)}\right)\right\rangle_{\mathbb{L}^{2}(B_{1})} \\ \nonumber
	&-& \eta_2 \,k^{2} \,a^{5}  \, \left\langle   \int_{B_2} \int_{0}^{1} (1-t) y^{\perp} \cdot \underset{y}{\nabla}\underset{y}{\nabla}\left( \Phi_{k} \right)(z_{1},z_{2}+t a  y) \cdot y \,  dt \cdot   \overset{1}{\mathbb{P}}\left(\tilde{E}_{2}\right)(y) \; dy, \boldsymbol{T}_{- \, k  a, 1}^{-1}\left( e_{n, B_{1}}^{(1)} \right) \right\rangle_{\mathbb{L}^{2}(B_{1})} \\ \nonumber
	&+& \eta_2 \,k^{2} \,a^{5}  \, \left\langle   \int_{B_2} \int_{0}^{1}   \underset{y}{\nabla} \underset{y}{\nabla}\left( \Phi_{k} \right)(z_{1}+ ta\cdot, z_{2}) \cdot \cdot \,  dt \cdot \mathcal{P}(y, 0) \cdot  \overset{1}{\mathbb{P}}\left(\tilde{E}_{2}\right)(y) \; dy, \boldsymbol{T}_{- \, k  a, 1}^{-1}\left( e_{n, B_{1}}^{(1)} \right) \right\rangle_{\mathbb{L}^{2}(B_{1})} \\ \nonumber
 &-&   \eta_2 \, k^2 \, a^6 \, \Big\langle \int_{B_2}\int_0^1\int_0^1 (1-t) y^\perp \cdot \underset{x}{\nabla}(\underset{y}{\nabla} \underset{y}{\nabla}\Phi_k)(z_1+sa\cdot, z_2+tay) \cdot y\, dt\cdot\mathcal{P}(\cdot, 0)\,ds\cdot \overset{1}{\PP}(\tilde{E}_2)(y)\,dy, \\ && \nonumber \qquad \qquad \qquad \qquad \qquad \qquad \boldsymbol{T}_{- \, k  a, 1}^{-1}\left( e_{n, B_{1}}^{(1)} \right) \Big\rangle_{\mathbb{L}^{2}(B_{1})} \notag\\ \nonumber
	&=&\left\langle \tilde{E}^{Inc}_{1},\boldsymbol{T}_{- \, k  a, 1}^{-1}\left( e_{n}^{(1)} \right) \right\rangle_{\mathbb{L}^{2}(B_{1})}+\left\langle \eta_2 \left(\tilde{I_2^0}\right), \boldsymbol{T}_{-ka, 1}^{-1}(e_{n, B_{1}}^{(1)}) \right\rangle_{\mathbb{L}^{2}(B_{1})} \notag\\
 &+& \eta_2\,  a^3\,  \left\langle \underset{y}{\nabla} \underset{y}{\nabla}(\Phi_k-\Phi_0)(z_1, z_2) \cdot \int_{B_2}\overset{3}{\PP}\left(\tilde{E}_2\right)(y)\,dy, \, \boldsymbol{T}_{-ka, 1}^{-1} (e_{n, B_{1}}^{(1)})\right\rangle_{\mathbb{L}^{2}(B_{1})} \notag\\
 & + & \eta_2 \, a^4\, \left\langle \int_{B_2}\int_0^1 \underset{x}{\nabla}\left(\underset{y}{\nabla} \underset{y}{\nabla}(\Phi_k-\Phi_0)\right)(z_1+ta\cdot, z_2)\cdot\mathcal{P}(\cdot, 0)\,dt\cdot\overset{3}{\PP}\left(\tilde{E}_2\right)(y)\,dy,\, \boldsymbol{T}_{-ka, 1}^{-1}(e_{n, B_{1}}^{(1)}) \right\rangle_{\mathbb{L}^{2}(B_{1})} \notag\\
  & + & \eta_2 \, a^4\, \left\langle \int_{B_2}\int_0^1 \underset{y}{\nabla}\left(\underset{y}{\nabla}\underset{y}{\nabla}(\Phi_k-\Phi_0)\right)(z_1, z_2+tay)\cdot\mathcal{P}(y, 0)\,dt\cdot\overset{3}{\PP}\left(\tilde{E}_2\right)(y)\,dy,\, \boldsymbol{T}_{-ka, 1}^{-1}(e_{n, B_{1}}^{(1)}) \right\rangle_{\mathbb{L}^{2}(B_{1})} \notag\notag\\
  & + & \eta_2 \, k^2\, a^3\, \left\langle (\Phi_k I) (z_1, z_2) \cdot \int_{B_2}\overset{3}{\PP}\left(\tilde{E}_2\right)(y)\,dy, \, \boldsymbol{T}_{-ka, 1}^{-1}(e_{n, B_{1}}^{(1)}) \right\rangle_{\mathbb{L}^{2}(B_{1})} \notag\\
  &+&  \eta_2 \, k^2\, a^4\, \left\langle \int_{B_2}\int_0^1 \underset{x}{\nabla}(\Phi_k I)(z_1+ta\cdot, z_2)\cdot\mathcal{P}(\cdot, 0)\,dt\cdot\overset{3}{\PP}\left(\tilde{E}_2\right)(y)\,dy,\, \boldsymbol{T}_{-ka, 1}^{-1}(e_{n, B_{1}}^{(1)}) \right\rangle_{\mathbb{L}^{2}(B_{1})} \notag\\
  & & \qquad + \, \eta_2 \, k^2\, a^4\, \left\langle \int_{B_2}\int_0^1 \underset{y}{\nabla}(\Phi_k I)(z_1, z_2+tay)\cdot\mathcal{P}(y, 0)\,dt\cdot\overset{3}{\PP}\left(\tilde{E}_2\right)(y)\,dy,\, \boldsymbol{T}_{-ka, 1}^{-1}(e_{n, B_{1}}^{(1)}) \right\rangle_{\mathbb{L}^{2}(B_{1})}.
	\end{eqnarray}  
 Thanks to \cite[Lemma 5.1]{CGS}, we know that 
		\begin{equation}\label{Tka1}
		\TT_{-ka, 1}^{-1}(e_{n, B_{1}}^{(1)}) \, = \, \frac{e_{n, B_{1}}^{(1)}}{1-k^2 a^2 \eta_1 \lambda_{n}^{(1)}(B_{1})} \, + \, R_{n, 1},
	\end{equation}
	where
	\begin{equation}\label{def-Rn1}
		R_{n, 1}:=\frac{1\mp c_0a^h}{4\pi \lambda_{n_{0}}^{(1)}(B_{1}) \left( 1 \, - \, k^2 \, a^2 \, \eta_1 \, \lambda_{n}^{(1)}(B_{1})\right)}\sum_{\ell \geq 1} \frac{(- i k a)^{\ell+1}}{(\ell+1)!}\TT_{-ka, 1}^{-1}\int_{B_1}| \cdot-y|^\ell e_{n, B_{1}}^{(1)}(y)\,dy.
	\end{equation}
Based on \eqref{Tka1} and \eqref{def-Rn1}, by taking the square modulus on the both sides of \eqref{Sys-Equa-HA}, summing up with respect to the index $n$, due to the fact that for any vector function $F$, there holds
\begin{equation}\label{sum-Rn1}
    \sum_n\left| \left\langle F, \, R_{n,1}\right\rangle_{\mathbb{L}^{2}(B_{1})} \right|^2=
    \begin{cases}
        0\quad &\mbox{if } F \mbox{ is a constant},\\
       \mathcal{O}\left( a^{4-4h} \, \left|\left\langle F, e_{n_{0}, B_{1}}^{(1)} \right\rangle_{\mathbb{L}^{2}(B_{1})} \right|^2 \right) \quad&\mbox{otherwise},
    \end{cases}
\end{equation}
seeing \cite[Section 5.2, Proof of Proposition 2.3]{CGS} for the detailed calculations, and knowing $(\ref{choice-k-1st-regime-Prop26})$, we can obtain that
\begin{eqnarray}\label{es-P11-1}
\nonumber
\left\lVert\overset{1}{\PP}\left(\tilde{E_1}\right)\right\lVert_{\mathbb{L}^2(B_1)}^2 \,
& \lesssim & 
\sum_n\frac{\left| \langle \tilde{E}_1^{Inc}, e_{n, B_{1}}^{(1)} \rangle_{\mathbb{L}^2(B_1)} \right|^2}{\left|1-k^2 a^2 \eta_1 \lambda_n^{(1)}(B_{1})\right|^2} + a^{4-4h}\left|\left\langle \tilde{E}_1^{Inc}, e_{n_{0}, B_{1}}^{(1)}\right\rangle_{\mathbb{L}^2(B_1)} \right|^2 \\ &+& \, \sum_n\frac{\left| \langle \eta_2 \left(\tilde{I_2^0}\right), e_{n, B_{1}}^{(1)} \rangle_{\mathbb{L}^2(B_1)} \right|^2}{\left|1-k^2 a^2 \eta_1 \lambda_n^{(1)}(B_{1})\right|^2} \, + \,  \sum_n\left| \left\langle \eta_2\left(\tilde{I_2^0}\right), R_{n,1}\right\rangle_{\mathbb{L}^2(B_1)} \right|^2 \, + \, \mathrm{T}_{4, 4}, 
\end{eqnarray}
where we have used the fact that $h<2$, and we have denoted by $\mathrm{T}_{4,4}$ the following term, 
\begin{eqnarray}\label{TermT4}
\nonumber
&& \mathrm{T}_{4, 4} \, := \, a^{8-2h} \, \Bigg[  \\ \nonumber && \,  a^{2} \left\lVert \int_{B_2}\int_0^1 \underset{y}{\nabla} \underset{y}{\nabla}\Phi_k(z_1+ta\cdot, z_2)\cdot \cdot \,dt\cdot \mathcal{P}(y, 0)\cdot\overset{1}{\PP}\left(\tilde{E}_2\right)(y)\,dy\right\rVert_{\mathbb{L}^2(B_1)}^2\notag\\
&+& a^{4} \left\lVert \int_{B_2}\int_0^1\int_0^1 (1-t) y^\perp \cdot \underset{x}{\nabla}\left(\underset{y}{\nabla}\underset{y}{\nabla}\Phi_k\right)(z_1+sa\cdot, z_2+tay)\cdot y\,dt\cdot \mathcal{P}(x, 0)\,ds\cdot\overset{1}{\PP}\left(\tilde{E}_2\right)(y)\,dy\right\rVert_{\mathbb{L}^2(B_1)}^2\notag\\
&+& \left\lVert \int_{B_2}\int_0^1 \underset{x}{\nabla} \left(\underset{y}{\nabla}\underset{y}{\nabla}\left(\Phi_k-\Phi_0\right)\right)(z_1+ta\cdot, z_2)\cdot\mathcal{P}(x, 0)\,dt\cdot\overset{3}{\PP}\left(\tilde{E}_2\right)(y)\,dy\right\rVert_{\mathbb{L}^2(B_1)}^2\notag\\
&& \qquad \qquad \qquad + \, \left\lVert \int_{B_2}\int_0^1 \underset{x}{\nabla}\left(\Phi_k I\right)(z_1+ta\cdot, z_2)\cdot\mathcal{P}(\cdot, 0)\,dt\cdot\overset{3}{\PP}\left(\tilde{E}_2\right)(y)\,dy\right\rVert_{\mathbb{L}^2(B_1)}^2\Bigg].
\end{eqnarray}
Then, we evaluate the R.H.S. of \eqref{es-P11-1} term by term as follows.
\begin{enumerate}
    \item[]
    \item Estimation of
    \begin{equation*}
    \mathrm{T}_{4, 1} \, := \, \sum_n  \frac{\left|\left\langle \tilde{E}_1^{Inc}, e_{n, B_{1}}^{(1)}\right\rangle_{\mathbb{L}^2(B_1)} \right|^2}{\left|1-k^2 a^2 \eta_1 \lambda_n^{(1)}(B_{1})\right|^2} \, + \, a^{4-4h} \, \left| \left\langle \tilde{E}_1^{Inc}, e_{n_{0}, B_{1}}^{(1)} \right\rangle_{\mathbb{L}^2(B_1)} \right|^2.
    \end{equation*}
    By keeping the dominant term of $\mathrm{T}_{4, 1}$ and using the fact that $h \, < \, 2$, we get the following approximation 
    \begin{equation*}
    \mathrm{T}_{4, 1} \, \simeq \, \frac{\left|\left\langle \tilde{E}_1^{Inc}, e_{n_{0}, B_{1}}^{(1)}\right\rangle_{\mathbb{L}^2(B_1)} \right|^2}{\left|1 \, - \, k^2 \, a^2 \, \eta_1 \,  \lambda_{n_{0}}^{(1)}(B_{1})\right|^2} \, \overset{(\ref{choice-k-1st-regime-Prop26})}{\simeq} \, a^{-2h} \, \left|\left\langle \tilde{E}_1^{Inc}, e_{n_{0}, B_{1}}^{(1)}\right\rangle_{\mathbb{L}^2(B_1)} \right|^2.
    \end{equation*}
    Besides, we know that 
\begin{equation*}
\resizebox{.99\hsize}{!}{$
    \left\langle \tilde{E}_1^{Inc}, e_{n, B_{1}}^{(1)} \right\rangle_{\mathbb{L}^2(B_1)} \overset{\eqref{pre-cond}}{=}\left\langle \tilde{E}_1^{Inc}, Curl\left( \phi_{n, B_{1}}\right) \right\rangle_{\mathbb{L}^2(B_1)} \, = \, \left\langle Curl\left(\tilde{E}_1^{Inc}\right), \phi_{n, B_{1}}\right\rangle_{\mathbb{L}^2(B_1)} \, = \, i \, k \, a \, \left\langle \tilde{H}_1^{Inc}, \phi_{n, B_{1}} \right\rangle_{\mathbb{L}^2(B_1)}$},
\end{equation*}
Hence, 
\begin{equation}\label{P11-es1}
    \mathrm{T}_{4, 1} \, \simeq  \, a^{-2h} \, \left\vert  i \, k \, a \, \left\langle \tilde{H}_1^{Inc}, \phi_{n, B_{1}} \right\rangle_{\mathbb{L}^2(B_1)} \right\vert^{2} \, = \, \mathcal{O}\left( a^{2-2h} \right).
\end{equation}
\item[]
\item Estimation of 
\begin{equation*}
    \mathrm{T}_{4, 2} \, := \, \sum_n \frac{\left| \left\langle \eta_2 \left( \tilde{I_2^0}\right), e_n^{(1)}\right\rangle_{\mathbb{L}^2(B_1)} \right|^{2}}{\left|1-k^2 a^2 \eta_1 \lambda_n^{(1)}(B_{1})\right|^2}.
\end{equation*}
By scaling the term $I_{2}^{0}(\cdot)$, see $(\ref{def-I-0-2})$ for its definition, using the scale of $\eta_{2}$, see $(\ref{def-eta})$, and the relation given by $(\ref{choice-k-1st-regime-Prop26})$, we derive the following estimation 
\begin{eqnarray*}
    \mathrm{T}_{4, 2} \, &\lesssim& \, a^{6-2h} \,  \left\lVert \int_{B_2} \underset{y}{\nabla} \underset{y}{\nabla}\Phi_0(\cdot, y) \cdot \overset{3}{\PP}\left(\tilde{E}_2\right)(y)\,dy\right\rVert_{\mathbb{L}^2(B_1)}^2\notag\\
    &\leq & \, a^{6-2h} \, \left\lVert \int_{\mathbb{R}^3} \underset{y}{\nabla} \underset{y}{\nabla} \Phi_{0}(\cdot, y) \cdot \overset{3}{\PP}\left(\tilde{E}_2\right)(y) \, \chi_{B_2 \cup B_1}(y)\, dy \right\rVert_{\mathbb{L}^2(\mathbb{R}^3)}^2, 
\end{eqnarray*}
where $\chi_{B_2 \cup B_1}(\cdot)$ is the characteristic function related to the domain $B_2 \cup B_1$. Now, by using the Calder\'{o}n-Zygmund inequality, we deduce   
\begin{equation}\label{P11-es2}
    \mathrm{T}_{4, 2} \, \lesssim  \, a^{6-2h} \,  \left\lVert\overset{3}{\PP}\left(\tilde{E}_2\right) \, \chi_{B_2\cup B_1}\right\rVert_{\mathbb{L}^2(\mathbb{R}^3)}^2 \, = \, a^{6-2h} \, \left\lVert \overset{3}{\PP}\left(\tilde{E}_2\right)\right\rVert_{\mathbb{L}^2(B_2)}^2.
\end{equation}
\item[]
\item Estimation of 
\begin{equation*}
    \mathrm{T}_{4, 3} \, := \, \sum_n\left|\left\langle \eta_2 \left(\tilde{I_2^0}\right), R_{n,1}\right\rangle_{\mathbb{L}^2(B_1)} \right|^2 \, \underset{(\ref{def-eta})}{\overset{(\ref{sum-Rn1})}{\lesssim}} \, a^{4-4h} \, \left|\left\langle \tilde{I_2^0}, e_{n_{0}, B_{1}}^{(1)}\right\rangle_{\mathbb{L}^2(B_1)} \right|^2. 
\end{equation*}
By scaling the term $I_{2}^{0}(\cdot)$, see $(\ref{def-I-0-2})$, we derive the following estimation
\begin{equation}\label{P11-es3}
    \mathrm{T}_{4, 3} \,  \lesssim  \, a^{10-4h} \left\lVert \int_{B_2} \underset{y}{\nabla} \underset{y}{\nabla} \Phi_0(\cdot,y) \cdot \overset{3}{\PP}\left(\tilde{E}_2\right)(y)\,dy\right\rVert_{\mathbb{L}^2(B_1)}^2\lesssim a^{10-4h} \left\lVert \overset{3}{\PP}\left(\tilde{E}_2\right)\right\rVert_{\mathbb{L}^2{(B_2)}}^2,
\end{equation}
following the similar argument to \eqref{P11-es2}.
\item[]
\item Estimation of $\mathrm{T}_{4,4}$. \\
A straightforward computation, using $(\ref{TermT4})$, and the fact that 
\begin{equation*}
    \nabla \nabla (\Phi_k \, - \, \Phi_0)(\cdot, \cdot) \; \sim \; \Phi_0(\cdot, \cdot),
 \end{equation*}
gives us 
\begin{equation}\label{P11-es-4}
    \left| \mathrm{T}_{4,4} \right| \, \lesssim \, a^{10-2h} \, d^{-6} \, \left\lVert \overset{1}{\PP}\left(\tilde{E}_2\right)\right\rVert_{\mathbb{L}^2(B_2)}^2+a^{12-2h} d^{-8} \left\lVert \overset{1}{\PP}\left(\tilde{E}_2\right)\right\rVert_{\mathbb{L}^2(B_2)}^2 + a^{8-2h} d^{-4}\left\lVert \overset{3}{\PP}\left(\tilde{E}_2\right)\right\rVert_{\mathbb{L}^2(B_2)}^2.
\end{equation}
\end{enumerate}
By gathering \eqref{P11-es1},  \eqref{P11-es2}, \eqref{P11-es3} and \eqref{P11-es-4}, we can obtain that
\begin{equation}\label{es-P11}
\left\lVert\overset{1}{\PP}\left(\tilde{E}_1\right)\right\rVert_{\mathbb{L}^2(B_1)}^2 \lesssim  a^{2-2h} + a^{10-2h} d^{-6} \left\lVert\overset{1}{\PP}\left(\tilde{E}_2\right)\right\rVert_{\mathbb{L}^2(B_2)}^2+\left(a^{6-2h}+ a^{8-2h} d^{-4}\right)\left\lVert\overset{3}{\PP}\left(\tilde{E}_2\right)\right\rVert_{\mathbb{L}^2(B_2)}^2.
\end{equation}   
	\item[] 
	\item Estimation of $ \left\Vert  \overset{1}{\mathbb{P}}\left(\tilde{E}_{2} \right) \right\Vert^{2}_{\mathbb{L}^2(B_1)}$. \\
        By repeating the same calculations steps as $(\ref{LS-es})-(\ref{es-ls1})$, we can derive the following equation,
	\begin{eqnarray*}\label{Eq0451}
        \nonumber
		\overset{1}{\PP}\left(E_2\right)(x) \,
  &-& \, k^2 \, \eta_{1} \, \TT_{k, 2}^{-1}\int_{D_{1}} \Phi_k(x, y) \, \overset{1}{\PP}\left(E_1\right)(y)\,dy \\ &=& \, \TT_{k, 2}^{-1}E_2^{Inc}(x) \, - \, \overset{3}{\PP}\left(E_2\right)(x) \, + \, \eta_1 \, \TT_{k, 2}^{-1} \left(-\nabla M^k +k^2 N^k\right) \left(\overset{3}{\PP}\left(E_1\right)\right)(x), \quad x \in D_{2}.
	\end{eqnarray*}
        Hence, by similar expansions,
	\begin{eqnarray}\label{LS-P12-mid1}
		\overset{1}{\PP}\left( E_2 \right)(x) \,
  &-& \, \eta_1 k^2 \TT_{k, 2}^{-1} \underset{y}{\nabla}(\Phi_k I)(z_2, z_1) \cdot \int_{D_1}\mathcal{P}(y, z_1)\cdot\overset{1}{\PP}\left(E_1\right)(y)\,dy \notag\\
		&-& \eta_1 k^2 \TT_{k, 2}^{-1}\int_{D_{1}} \int_0^1 (1-t)(y-z_1)^\perp \cdot \underset{y}{\nabla} \underset{y}{\nabla} \Phi_{k}(z_2, z_1+(y-z_1))\cdot(y-z_1)\,dt\cdot \overset{1}{\PP}\left(E_1 \right)(y)\,dy\notag\\
		&+&\eta_1 k^2 \TT_{k, 2}^{-1}\int_{D_{1}} \int_0^1 \underset{y}{\nabla} \underset{y}{\nabla} \Phi_{k}(z_2+t(\cdot - z_2), z_1)\cdot (\cdot - z_2)\,dt\cdot \mathcal{P}(y, z_1)\cdot \overset{1}{\PP}\left(E_1 \right)(y)\,dy\notag\\
  &-& \eta_1 \, k^2 \, \TT_{k, 2}^{-1}\int_{D_1}\int_0^1 \int_0^1 (1-t) (y-z_1)^\perp \cdot \underset{x}{\nabla}(\underset{y}{\nabla} \underset{y}{\nabla} \Phi_k)(z_2+s(\cdot-z_2), z_1+t(y-z_1)) \notag\\
&&\qquad\qquad\qquad\qquad\,\cdot (y-z_1)\,dt\cdot\mathcal{P}(\cdot, z_2)\,ds\cdot\overset{1}{\PP}(E_1)(y)\,dy \notag\\ \nonumber
		&=& \TT_{k, 2}^{-1}E^{Inc}_2(x)-\overset{3}{\PP}\left(E_2 \right)(x) \, + \, \eta_1 \boldsymbol{T}_{k, 2}^{-1} \left(I_1^0\right) \\ \nonumber &+& \, \eta_1\boldsymbol{T}_{k, 2}^{-1}\underset{y}{\nabla} \underset{y}{\nabla} \left( \Phi_k-\Phi_0\right)(z_2, z_1) \cdot \int_{D_1}\overset{3}{\PP}\left(E_1\right)(y)\,dy\notag\\
  &+& \eta_1\boldsymbol{T}_{k, 2}^{-1} \int_{D_1}\int_0^1 \underset{x}{\nabla} \left( \underset{y}{\nabla} \underset{y}{\nabla} \left(\Phi_k-\Phi_0\right)\right)(z_2+t(\cdot -z_2), z_1)\cdot\mathcal{P}(\cdot, z_2)\,dt\cdot\overset{3}{\PP}\left(E_1\right)(y)\,dy  \notag\\
  &+&\eta_1\boldsymbol{T}_{k, 2}^{-1} \int_{D_1}\int_0^1 \underset{y}{\nabla} \left( \underset{y}{\nabla} \underset{y}{\nabla} \left(\Phi_k-\Phi_0\right)\right)(z_2, z_1+t(y-z_1))\cdot\mathcal{P}(y, z_1)\,dt\cdot\overset{3}{\PP}\left(E_1\right)(y)\,dy \notag\\
  &+& k^2 \eta_1 \boldsymbol{T}_{k, 2}^{-1} \Phi_k (z_2, z_1)\int_{D_1}\overset{3}{\PP}\left(E_1\right)(y)\,dy\notag\\
  &+ & k^2 \eta_1 \boldsymbol{T}_{k, 2}^{-1} \int_{D_1}\int_0^1 \underset{x}{\nabla}(\Phi_k I)(z_2+t(\cdot-z_2), z_1) \cdot \mathcal{P}(\cdot, z_2)\,dt\cdot \overset{3}{\PP}(E_1)(y)\,dy \notag\\
  &+ & k^2 \eta_1 \boldsymbol{T}_{k, 2}^{-1} \int_{D_1}\int_0^1 \underset{y}{\nabla}(\Phi_k I)(z_2, z_1+t(y-z_1)) \cdot \mathcal{P}(y, z_1)\,dt\cdot \overset{3}{\PP}(E_1)(y)\,dy, 
	\end{eqnarray}
where 
 \begin{equation}\label{def-I10}
     I_1^0(x) \, := \, \int_{D_1}\underset{y}{\nabla} \underset{y}{\nabla} \Phi_0(x, y) \cdot \overset{3}{\PP}(E_1)(y)\,dy \, = \, - \, \nabla M_{D_{1}}\left( \overset{3}{\PP}(E_1)\right)(x), \qquad x \in D_{2}.
 \end{equation} 
	Similar to \eqref{Tka1}, there holds
	\begin{equation}\label{Tka2}
		\TT_{-ka, 2}^{-1}(e_n^{(1)}) \, = \, \frac{e_n^{(1)}}{\left( 1 \, - \, k^2 \, a^2 \, \eta_2 \, \lambda_{n}^{(1)}(B_{2}) \right)} \, + \, \widetilde{R_{n, 1}}, 
	\end{equation}
        where
	\begin{equation}\label{Eq1140}
		\widetilde{R_{n, 1}}:=\frac{k^2 a^2 \eta_2}{4\pi\left(1- k^2 a^2 \eta_2\lambda_{n}^{(1)}(B_{2})\right)}\sum_{\ell \geq 1} \frac{(- i k a)^{\ell+1}}{(\ell+1)!}\TT_{-ka, 2}^{-1}\int_{B_2}\lvert \cdot-y \rvert^\ell e_{n, B_{2}}^{(1)}(y)\,dy.
	\end{equation}
	Then by scaling \eqref{LS-P12-mid1} from $D_{m}$ to $B_{m}$, with $m = 1, 2$, taking the $\mathbb{L}^2(B_2)$-inner product with $e_{n, B_{2}}^{(1)}(\cdot)$, together with \eqref{Tka2}, and summing up the modulus square of the obtained equation with respect to the index $n$, we can derive that 
	\begin{eqnarray}\label{es-P12-1}
    \nonumber
		\left\lVert\overset{1}{\PP}\left(\tilde{E}_2 \right)\right\rVert^2_{\mathbb{L}^2(B_2)} \, & \lesssim & \, \sum_n \frac{\left| \left\langle \tilde{E}_2^{Inc}, e_{n, B_{2}}^{(1)}\right\rangle_{\mathbb{L}^2(B_2)} \right|^2}{\left|1- k^2 a^2 \eta_2\lambda_{n}^{(1)}(B_{2})\right|^2} \, + \, \sum_n \left| \left\langle \tilde{E}^{Inc}_2, \widetilde{R_{n, 1}}\right\rangle_{\mathbb{L}^2(B_2)}\right|^2
              \\ \nonumber &+& |\eta_1|^2 \sum_n \left| \left\langle \tilde{I_1^0}, e_{n, B_{2}}^{(1)}\right\rangle_{\mathbb{L}^2(B_2)} \right|^2
  + |\eta_1|^2 \sum_n \left| \left\langle \tilde{I_1^0}, \widetilde{R_{n,1}}\right\rangle_{\mathbb{L}^2(B_2)}\right|^2 \\ &+& Err\left(\overset{1}{\PP}\left(\tilde{E}_1\right)\right)+Err\left(\overset{3}{\PP}\left(\tilde{E}_1\right)\right),
	\end{eqnarray} 
	where 
	\begin{eqnarray}\label{Err-P11-1}
    \nonumber
	&&	Err\left(\overset{1}{\PP}\left(\tilde{E}_1 \right)\right) \\
   &:=& \, |\eta_1|^2 a^8 \sum_n \left| \left\langle \underset{y}{\nabla}(\Phi_k I)(z_2, z_1) \cdot \int_{B_1}\mathcal{P}(y, 0)\cdot\overset{1}{\PP}\left(\tilde{E_1}\right)(y)\,dy, \widetilde{R_{n, 1}} \right\rangle_{\mathbb{L}^2(B_2)}\right|^2\notag\\
		&+& |\eta_1|^2 a^{10} \sum_n \left|\left\langle \int_{B_1}\int_0^1 (1-t) y^\perp \cdot \underset{y}{\nabla} \underset{y}{\nabla}\Phi_{k}(z_2, z_1+tay)\cdot y\,dt\cdot\overset{1}{\PP}\left(\tilde{E}_1 \right)(y)\,dy, \widetilde{R_{n, 1}} \right\rangle_{\mathbb{L}^2(B_2)} \right|^2\notag\\
		&+& \sum_{n}  \frac{|\eta_1|^2 a^{10}}{\left| 1- k^2 a^2 \eta_2 \lambda_{n}^{(1)}(B_{2}) \right|^2} \resizebox{.7\hsize}{!}{$ \left| \left\langle \int_{B_1}\int_0^1 \underset{y}{\nabla} \underset{y}{\nabla} \Phi_{k}(z_2+ta\cdot, z_1)\cdot \cdot \,dt\cdot \mathcal{P}(y, 0) \cdot\overset{1}{\PP}\left(\tilde{E}_1\right)(y)\,dy, e_{n, B_{2}}^{(1)} \right\rangle_{\mathbb{L}^2(B_2)}\right|^2$}\notag\\
		&+& |\eta_1|^2 a^{10}\sum_n \left| \left\langle \int_{B_1}\int_0^1\underset{y}{\nabla}\underset{y}{\nabla}\Phi_{k}(z_2+ta\cdot, z_1)\cdot \cdot \,dt\cdot \mathcal{P}(y, 0)\cdot\overset{1}{\PP}\left(\tilde{E}_1\right)(y)\,dy, \widetilde{R_{n, 1}} \right\rangle_{\mathbb{L}^2(B_2)}\right|^2\notag\\
  &+& \sum_n\frac{|\eta_1|^2 a^{12}}{\left| 1- k^2 a^2 \eta_2 \lambda_{n}^{(1)}(B_{2}) \right|^2}  \Bigg|\Big\langle 
\int_{B_1}\int_0^1 \int_0^1 (1-t)y^\perp \cdot \underset{x}{\nabla}(\underset{y}{\nabla}\underset{y}{\nabla} \Phi_k)(z_2+sa\cdot, z_1+tay)\notag\\
&&\qquad\qquad\qquad\qquad\qquad \cdot y\,dt \cdot  \mathcal{P}(\cdot, 0)\,ds\cdot\overset{1}{\PP}\left(\tilde{E}_1\right)(y)\,dy, e_{n, B_{2}}^{(1)} \Big\rangle_{\mathbb{L}^2(B_2)}\Bigg|^2\notag\\ \nonumber
&+& |\eta_1|^2 a^{12} \sum_n\Big|\Big\langle \int_{B_1}\int_0^1\int_0^1 (1-t) y^\perp \cdot \underset{x}{\nabla}(\underset{y}{\nabla} \underset{y}{\nabla}\Phi_k)(z_2+sa\cdot, z_1+tay)\cdot y\,dt \\ &&\qquad\qquad\qquad\qquad\qquad \cdot\mathcal{P}(\cdot, 0)\,ds\cdot\overset{1}{\PP}\left(\tilde{E}_1\right)(y)\,dy, \widetilde{R_{n, 1}} \Big\rangle_{\mathbb{L}^2(B_2)}\Big|^2
	\end{eqnarray}
	and  
	\begin{eqnarray}\label{Err-P31-1}
    \nonumber
		&& Err\left(\overset{3}{\PP}\left(\tilde{E}_1 \right)\right)
  \\ &:=& \, |\eta_1|^2 a^6 \sum_n \left|\left\langle 
\Phi_k(z_2, z_1)\int_{B_1}\overset{3}{\PP}\left( \tilde{E}_1\right)(y)\,dy, \widetilde{R_{n ,1}} \right\rangle_{\mathbb{L}^2(B_2)} \right|^2 \notag\\
  &+& |\eta_1|^2 a^6 \sum_n \left|\left\langle \underset{y}{\nabla} \underset{y}{\nabla}\left( \Phi_k-\Phi_0\right)(z_2, z_1) \cdot \int_{B_1} \overset{3}{\PP}\left(\tilde{E}_1\right)(y)\,dy, \widetilde{R_{n, 1}}\right\rangle_{\mathbb{L}^2(B_2)}\right|^2 \notag\\
  &+& \sum_n \frac{|\eta_1|^2 a^8}{\left|1-k^2 a^2 \eta_2 \lambda_n^{(1)}(B_2)\right|^2} \resizebox{.7\hsize}{!}{$  \left|\left\langle \int_{B_1}\int_0^1 \underset{x}{\nabla}\left(\underset{y}{\nabla}\underset{y}{\nabla}\left(\Phi_k-\Phi_0\right)\right)(z_2+ta\cdot, z_1)\cdot\mathcal{P}(\cdot, 0)\,dt\cdot\overset{3}{\PP}\left(\tilde{E}_1\right)(y)\,dy, e_{n, B_{2}}^{(1)} \right\rangle_{\mathbb{L}^2(B_2)}\right|^2$}\notag\\
  &+& |\eta_1|^2 a^8 \sum_n \left|\left\langle \int_{B_1}\int_0^1 \underset{x}{\nabla}\left(\underset{y}{\nabla} \underset{y}{\nabla} \left(\Phi_k-\Phi_0\right)\right)(z_2+ta\cdot, z_1)\cdot\mathcal{P}(\cdot, 0)\,dt\cdot\overset{3}{\PP}\left(\tilde{E}_1\right)(y)\,dy, \widetilde{R_{n, 1}} \right\rangle_{\mathbb{L}^2(B_2)}\right|^2\notag\\
  &+& |\eta_1|^2 a^8 \sum_n \left|\left\langle \int_{B_1}\int_0^1 \underset{y}{\nabla}\left(\underset{y}{\nabla}\underset{y}{\nabla}\left(\Phi_k-\Phi_0\right)\right)(z_2, z_1+tay)\cdot\mathcal{P}(y, 0)\,dt\cdot\overset{3}{\PP}\left(\tilde{E}_1\right)(y)\,dy, \widetilde{R_{n, 1}} \right\rangle_{\mathbb{L}^2(B_2)}\right|^2\notag\\
  &+& \sum_n \frac{|\eta_1|^2 a^8}{\left|1-k^2 a^2 \eta_2 \lambda_n^{(1)}(B_2)\right|^2}\left|\left\langle \int_{B_1}\int_0^1 \underset{x}{\nabla}(\Phi_k I)(z_2+ta\cdot, z_1)\cdot\mathcal{P}(\cdot, 0)\,dt\cdot\overset{3}{\PP}\left(\tilde{E}_1\right)(y)\,dy, e_{n, B_{2}}^{(1)}\right\rangle_{\mathbb{L}^2(B_2)}\right|^2\notag\\
    &+& |\eta_1|^2 a^8\sum_n \left|\left\langle \int_{B_1}\int_0^1 \underset{x}{\nabla}(\Phi_k I)(z_2+ta\cdot, z_1)\cdot\mathcal{P}(\cdot, 0)\,dt\cdot\overset{3}{\PP}\left(\tilde{E}_1\right)(y)\,dy, \widetilde{R_{n ,1}}\right\rangle_{\mathbb{L}^2(B_2)}\right|^2\notag\\
    && \qquad \qquad \qquad + \, |\eta_1|^2 a^8\sum_n \left|\left\langle \int_{B_1}\int_0^1 \underset{y}{\nabla}(\Phi_k I)(z_2, z_1+tay)\cdot\mathcal{P}(y, 0)\,dt\cdot\overset{3}{\PP}\left(\tilde{E}_1\right)(y)\,dy, \widetilde{R_{n ,1}}\right\rangle_{\mathbb{L}^2(B_2)}\right|^2.
  	\end{eqnarray} 
        To estimate the terms given by $\underset{n}{\sum} \left\vert \left\langle \cdots; \widetilde{R_{n, 1}}\right\rangle_{\mathbb{L}^2(B_2)} \right\vert$, by following similar arguments to that deriving \eqref{sum-Rn1}, and keeping the dominant part of $(\ref{Eq1140})$, we can derive that for any vector field $F(\cdot)$, there holds
	\begin{eqnarray*}
		\sum_n\left|\left\langle F, \widetilde{R_{n, 1}} \right\rangle_{\mathbb{L}^2(B_2)} \right|^2 \, & \lesssim & \, a^8 \, \left\lVert \int_{B_2} \left\vert \cdot - x \right\vert \, \TT_{ka, 2}^{-1}(F)(x)\,dx\right\rVert_{\mathbb{L}^2(B_2)}^2 \, \lesssim \, a^8 \left\lVert \TT_{ka, 2}^{-1}\left(\overset{1}{\PP}(F)\right)\right\rVert_{\mathbb{L}^2(B_2)}^2,
        \end{eqnarray*}
        which, by using $(\ref{Tka2})$, gives us 
        \begin{equation}\label{Eq1314}
		\sum_n\left| \left\langle F, \widetilde{R_{n, 1}} \right\rangle_{\mathbb{L}^2(B_2)} \right|^2 \, \lesssim \,  a^8 \, \sum_n \frac{\left\vert \left\langle F, e_{n, B_{2}}^{(1)} \right\rangle_{\mathbb{L}^2(B_2)}\right\vert^{2}}{\left\vert 1 \, - \, k^2 \, a^2 \, \eta_2 \, \lambda_{n}^{(1)}(B_{2}) \right\vert^2}  \, \overset{(\ref{def-eta})}{=} \, \mathcal{O}\left( a^8 \, \left\lVert F\right\rVert_{\mathbb{L}^2(B_2)}^2\right).
	\end{equation}
In particular, if $F(\cdot)$ is a constant vector, or its first projection, implies that 
\begin{equation*}
\underset{n}{\sum} \left| \left\langle F, \widetilde{R_{n, 1}}\right\rangle_{\mathbb{L}^2(B_2)}\right|^2 \, = \, 0.
\end{equation*}
We are now in a position to estimate the R.H.S. of \eqref{es-P12-1} as follows.
\begin{enumerate}
    \item[]
    \item Estimation of
    \begin{equation*} 
    \mathrm{S}_{1} \, := \, |\eta_1|^2 \sum_n \left|\left\langle \tilde{I_1^0}, e_{n, B_{2}}^{(1)}\right\rangle_{\mathbb{L}^2(B_2)}\right|^2 \; \; \overset{(\ref{def-eta})}{\lesssim} \;\; a^{-4} \, \sum_n \left|\left\langle \tilde{I_1^0}, e_{n, B_{2}}^{(1)}\right\rangle_{\mathbb{L}^2(B_2)}\right|^2.
    \end{equation*}
    By scaling the $I_1^0(\cdot)$'s expression, see $(\ref{def-I10})$, and using the Calder\'{o}n-Zygmund inequality, we obtain 
\begin{equation}\label{P12-es1}
    \mathrm{S}_{1} \,  \lesssim  \,  a^2 \, \left\lVert \int_{B_1}\underset{y}{\nabla} \underset{y}{\nabla}\Phi_0(\cdot, y) \cdot \overset{3}{\PP}\left(\tilde{E}_1\right)(y)\,dy\right\rVert_{\mathbb{L}^2(B_2)}^2\lesssim a^2 \left\lVert \overset{3}{\PP}\left(\tilde{E}_1\right)\right\rVert_{\mathbb{L}^2(B_1)}^2.
\end{equation}
    \item[]
    \item Estimation of 
     \begin{equation*}
     \mathrm{S}_{2} \; := \; |\eta_1|^2 \, \sum_n \left|\left\langle \tilde{I_1^0}, \widetilde{R_{n, 1}}\right\rangle_{\mathbb{L}^2(B_1)}\right|^2. 
     \end{equation*}
From \eqref{Eq1314} and by the use of the Calder\'{o}n-Zygmund inequality, we deduce
\begin{equation}\label{P12-es2}
    \mathrm{S}_{2} \; \lesssim \; a^{10} \; \left\lVert \int_{B_1}\underset{y}{\nabla} \underset{y}{\nabla}\Phi_0(\cdot, y) \cdot \overset{3}{\PP}\left(\tilde{E}_1\right)(y)\,dy\right\rVert_{\mathbb{L}^2(B_2)}^2\lesssim a^{10} \left\lVert\overset{3}{\PP}\left(\tilde{E}_1\right)\right\rVert_{\mathbb{L}^2(B_1)}^2
\end{equation} 
    \item[]
    \item Estimation of $Err\left(\overset{1}{\PP}\left(\tilde{E}_1\right)\right)$. \\  
By using $(\ref{Eq1314})$ and $(\ref{def-eta})$, we can further simplify \eqref{Err-P11-1} as,
	\begin{eqnarray}\label{Err-P11-2}
    \nonumber
		&& Err\left(\overset{1}{\PP}\left(\tilde{E}_1 \right)\right) \\ & \lesssim & a^6 \, \left\lVert \int_{B_1} \int_0^1 \underset{y}{\nabla} \underset{y}{\nabla}\Phi_{k}(z_2+ta\cdot, z_1)\cdot \cdot \,dt\cdot\mathcal{P}(y, 0)\cdot\overset{1}{\PP}\left(\tilde{E}_1 \right)(y)\,dy\right\rVert_{\mathbb{L}^2(B_2)}^2\notag\\
		&+&  a^{14} \, \left\lVert \int_{B_1}\int_0^1 \underset{y}{\nabla} \underset{y}{\nabla}\Phi_{k}(z_2+ta\cdot, z_1)\cdot \cdot \,dt\cdot \mathcal{P}(y, 0)\cdot\overset{1}{\PP}\left(\tilde{E}_1 \right)(y)\,dy \right\rVert_{\mathbb{L}^2(B_2)}^2\notag\\
  &+& a^8 \, \left\lVert \int_{B_1}\int_0^1\int_0^1 (1-t) y^\perp \cdot  \underset{x}{\nabla}(\underset{y}{\nabla}\underset{y}{\nabla} \Phi_k(z_2+sa\cdot, z_1+tay)\cdot y\,dt\cdot\mathcal{P}(\cdot, 0)\,ds\cdot\overset{1}{\PP}\left(\tilde{E}_1\right)(y)\,dy\right\rVert^2_{\mathbb{L}^2(B_2)}\notag\\
  &+& a^{16} \left\lVert \int_{B_1}\int_0^1\int_0^1 (1-t) y^\perp \cdot \underset{x}{\nabla}(\underset{y}{\nabla} \underset{y}{\nabla}\Phi_k(z_2+sa\cdot, z_1+tay)\cdot y\,dt\cdot\mathcal{P}(\cdot, 0)\,ds\cdot\overset{1}{\PP}\left(\tilde{E}_1\right)(y)\,dy\right\rVert^2_{\mathbb{L}^2(B_2)}\notag\\
		& & \qquad = \, \mathcal{O}\left( a^6 \, d^{-6} \, \left\lVert\overset{1}{\PP}\left(\tilde{E}_1 \right)\right\rVert_{\mathbb{L}^2(B_1)}^2\right).
	\end{eqnarray}
    \item[]
    \item Estimate for $Err\left(\overset{3}{\PP}\left(\tilde{E}_1\right)\right)$. \\
    By using $(\ref{Eq1314})$ and $(\ref{def-eta})$, for $h<1$, we can further simplify \eqref{Err-P31-1} as,
	\begin{eqnarray}\label{Err-P31-2}
		Err\left(\overset{3}{\PP}\left(\tilde{E}_1\right)\right)&\lesssim& 
   a^4 \left\lVert \int_{B_1}\int_0^1 \underset{x}{\nabla}\left( \underset{y}{\nabla}\underset{y}{\nabla}\left(\Phi_k-\Phi_0\right)\right)(z_2+ta\cdot, z_1)\cdot\mathcal{P}(\cdot, 0)\,dt\cdot\overset{3}{\PP}\left(\tilde{E}_1\right)(y)\,dy\right\rVert_{\mathbb{L}^2(B_2)}^2\notag\\
  &+& a^{12} \left\lVert \int_{B_1}\int_0^1 \underset{x}{\nabla}\left( \underset{y}{\nabla}\underset{y}{\nabla}\left(\Phi_k-\Phi_0\right)\right)(z_2+ta\cdot, z_1)\cdot\mathcal{P}(\cdot, 0)\,dt\cdot\overset{3}{\PP}\left(\tilde{E}_1\right)(y)\,dy\right\rVert_{\mathbb{L}^2(B_2)}^2\notag\\
  &+& a^4 \left\lVert \int_{B_1}\int_0^1 \underset{x}{\nabla}(\Phi_k I)(z_2+ta\cdot, z_1)\cdot\mathcal{P}(\cdot, 0)\,dt\cdot\overset{3}{\PP}\left(\tilde{E}_1\right)(y)\,dy\right\rVert_{\mathbb{L}^2(B_2)}^2\notag\\
  &+& a^{12} \left\lVert \int_{B_1}\int_0^1 \underset{x}{\nabla}(\Phi_k I)(z_2+ta\cdot, z_1)\cdot\mathcal{P}(\cdot, 0)\,dt\cdot\overset{3}{\PP}\left(\tilde{E}_1\right)(y)\,dy\right\rVert_{\mathbb{L}^2(B_2)}^2\notag\\
  &\lesssim&  a^4 d^{-4} \left\lVert \overset{3}{\PP}\left(\tilde{E}_1\right)\right\rVert_{\mathbb{L}^2(B_1)}^2.
	\end{eqnarray}
\end{enumerate} 
 By returning to $(\ref{es-P12-1})$, using \eqref{P12-es1}, \eqref{P12-es2}, \eqref{Err-P11-2} and \eqref{Err-P31-2}, we deduce that
	\begin{equation}\label{es-P12}
		\left\lVert \overset{1}{\PP}\left(\tilde{E}_2 \right)\right\rVert_{\mathbb{L}^2(B_2)}^2 \; \lesssim \; a^2 \; + \; a^6 \; d^{-6} \; \left\lVert \overset{1}{\PP}\left(\tilde{E}_1 \right)\right\rVert_{\mathbb{L}^2(B_1)}^2 \; + \; (a^2 \;+\; a^4 \; d^{-4} ) \; \left\lVert\overset{3}{\PP}\left(\tilde{E}_1\right)\right\rVert_{\mathbb{L}^2(B_1)}^2.
	\end{equation}
	\item Estimation of $ \left\Vert  \overset{3}{\mathbb{P}}\left(\tilde{E}_{1} \right) \right\Vert^{2}_{\mathbb{L}^2(B_1)}$. \\
 By using $(\ref{grad-M-1st})$, we rewrite $(\ref{*add3})$ as 
	\begin{eqnarray}\label{LS-P31-1}
        \nonumber
		\left(I \, + \, \eta_1 \, \nabla M^{k}_{D_{1}} \right)\left( \overset{3}{\PP}\left(E_1 \right)\right)(x) \, &=& \, E_{1}^{Inc}(x) \, - \, \overset{1}{\PP}\left( E_1 \right)(x) \, + \, k^2 \, \eta_2 \, N^{k}_{D_{2}}\left(\overset{1}{\PP}\left(E_2\right)\right)(x) \\ \nonumber &+& \, \eta_2 \, \left( -\nabla M^k+ k^2 N^k\right)\left(\overset{3}{\PP}\left(E_2\right)\right)(x) \\ &+& k^2 \eta_1 N^{k}_{D_{1}}\left(\overset{1}{\PP}\left(E_1\right)\right)(x) \, + \, k^2 \, \eta_1 \, N^{k}_{D_{1}} \left(\overset{3}{\PP}\left(E_1 \right)\right)(x), \quad x \in D_{1}.
	\end{eqnarray}
	For the third term on the R.H.S of \eqref{LS-P31-1}, by using Taylor expansions and the relation $(\ref{RefNeeded1})$, we have 
 \begin{eqnarray}\label{expan-phi1}
     N^{k}_{D_{2}}\left(\overset{1}{\PP}\left(E_2\right)\right)(x) \,
     & = & \, \underset{x}{\nabla}(\Phi_k I)(z_1, z_2) \cdot \int_{D_2}\mathcal{P}(y, z_2)\cdot\overset{1}{\PP}(E_2)(y)\,dy\notag\\
     & + & \int_{D_2}\int_0^1 (1-t) (y-z_2)^\perp \cdot \underset{y}{\nabla}\underset{y}{\nabla}  \Phi_k(z_1, z_2+t(y-z_2))\cdot (y-z_2)\,dt\cdot\overset{1}{\PP}(E_2)(y)\,dy\notag\\
     & - & \int_{D_2}\int_0^1 \underset{y}{\nabla}\underset{y}{\nabla} \Phi_k (z_1+ t(x-z_1), z_2)\cdot (y-z_2)\cdot (x-z_1)\,dt\cdot\overset{1}{\PP}(E_2)(y)\,dy\notag\\ \nonumber
     & + & \int_{D_2}\int_0^1 \int_0^1 (1-t) (y-z_2)^\perp \cdot \underset{x}{\nabla}(\underset{y}{\nabla} \underset{y}{\nabla} \Phi_k)(z_1+s(\cdot-z_1), z_2+t(y-z_2))\cdot (y-z_2)\,dt \\ && \qquad \qquad \qquad \cdot (\cdot - z_1)\,ds\cdot \overset{1}{\PP}(E_2)(y)\,dy.
 \end{eqnarray} 
	Regarding the fourth term on the R.H.S of \eqref{LS-P31-1}, similar to \eqref{LS-P11-taylor}, by Taylor expansion, we have 
	\begin{eqnarray}\label{expan-upsilon1}
    \nonumber
    &&
 \eta_2 \, \left(-\nabla M^k + k^2 N^k\right)\left(\overset{3}{\PP}(E_2)\right)(x) \\ \nonumber
 &=& \eta_2 \, I_2^0(x) \, + \, \eta_2 \, \underset{y}{\nabla} \underset{y}{\nabla}\left(\Phi_k-\Phi_0\right)(z_1, z_2) \cdot \int_{D_2}\overset{3}{\PP}(E_2)(y)\,dy \\ &+& \eta_2 k^2 \Phi_k (z_1, z_2)\int_{D_2}\overset{3}{\PP}(E_2)(y)\,dy\notag\\
 &+& \eta_2 \int_{D_2}\int_0^1 \underset{x}{\nabla}\left(\underset{y}{\nabla} \underset{y}{\nabla}\left(\Phi_k-\Phi_0\right)\right)(z_1+t(\cdot-z_1), z_2)\cdot\mathcal{P}(\cdot, z_1)\,dt\cdot \overset{3}{\PP}(E_2)(y)\,dy\notag\\
 &+&\eta_2 \int_{D_2}\int_0^1 \underset{y}{\nabla}\left(\underset{y}{\nabla}\underset{y}{\nabla}\left(\Phi_k-\Phi_0\right)\right)(z_1, z_2+t(y-z_2))\cdot\mathcal{P}(y, z_2)\,dt\cdot \overset{3}{\PP}(E_2)(y)\,dy\notag\\
 &+& \eta_2 k^2 \int_{D_2}\int_0^1 \underset{x}{\nabla}(\Phi_k I)(z_1+t(\cdot-z_1), z_2)\cdot\mathcal{P}(\cdot, z_1)\,dt\cdot\overset{3}{\PP}(E_2)(y)\,dy\notag\\
 &+& \eta_2 k^2 \int_{D_2}\int_0^1 \underset{y}{\nabla}(\Phi_k I)(z_1, z_2+t(y-z_2))\cdot\mathcal{P}(y, z_2)\,dt\cdot\overset{3}{\PP}(E_2)(y)\,dy
	\end{eqnarray} 
 where $I_2^0(\cdot)$ is defined by \eqref{def-I-0-2}. By plugging \eqref{expan-phi1} and \eqref{expan-upsilon1} into \eqref{LS-P31-1}, scaling from $D_{m}$  to $B_{m}$, with $m = 1, 2$, and taking the $\mathbb{L}^2(B_1)$-inner product with respect to $e_{n, B_{1}}^{(3)}(\cdot)$, we obtain
	\begin{equation}\label{eq-LS-P31}
		\left\langle \overset{3}{\PP}\left(\tilde{E}_1\right),\, e_{n, B_{1}}^{(3)} \right\rangle_{\mathbb{L}^2(B_1)}\, = \, \frac{ \left\langle \tilde{E}^{Inc}_1, e_{n, B_{1}}^{(3)}\right\rangle_{\mathbb{L}^2(B_1)} \, + \,\eta_2 \left\langle 
\tilde{I_2^0}, e_{n, B_{1}}^{(3)} \right\rangle_{\mathbb{L}^2(B_1)} \, + \, Error_n^1 }{\left(1 \, + \, \eta_1 \, \lambda_{n}^{(3)}(B_{1}) \right)},
	\end{equation} 
	where, by using $(\ref{expansion-gradMk})$ and $(\ref{condition-on-k})$, the expression of $Error_n^1$ is given by 
 \begin{eqnarray}\label{Error-n1}
 \nonumber
	&&	Error_n^1 
  \\ \nonumber &:=& - \, \frac{1 \, \mp \, c_0 \, a^h}{2 \, \lambda_{n_{0}}^{(1)}(B_{1})}\left\langle N\left(\overset{3}{\PP}\left(\tilde{E}_1 \right)\right), e_{n, B_{1}}^{(3)} \right\rangle_{\mathbb{L}^2(B_1)} \, - \, \eta_1\frac{i k^3 a^3}{12\pi}\left\langle \int_{B_1}\overset{3}{\PP}\left(\tilde{E}_1 \right)(y)\,dy, e_{n, B_{1}}^{(3)} \right\rangle_{\mathbb{L}^2(B_1)} \\ \nonumber &+& k^2 \,  \eta_1 \, a^2 \, \left\langle N^{ka}\left(\overset{3}{\PP}\left( \tilde{E}_1 \right)\right), e_{n, B_{1}}^{(3)} \right\rangle_{\mathbb{L}^2(B_1)} \\ \nonumber &+&  \frac{1\mp c_0 a^h}{2\lambda_{n_{0}}^{(1)}(B_{1})}\left\langle \int_{B_1} \Phi_{0}(\cdot, y)\frac{A(\cdot, y) \cdot \overset{3}{\PP}\left(\tilde{E}_1\right)(y)}{\lvert \cdot-y\rvert^2}\,dy, e_{n, B_{1}}^{(3)} \right\rangle_{\mathbb{L}^2(B_1)} \\ &+& \frac{\eta_1}{4\pi}\sum_{\ell\geq 3} ( i k a)^{\ell+1}\left\langle \int_{B_1}\frac{\nabla \nabla \left(\lvert \cdot - y \rvert^\ell\right)}{(\ell+1)!} \cdot \overset{3}{\PP}\left(\tilde{E}_1\right)(y)\,dy, e_{n, B_{1}}^{(3)} \right\rangle_{\mathbb{L}^2(B_1)} \notag\\
		&+& \frac{1\mp c_0 a^h}{4\pi \lambda_{n_{0}}^{(1)}(B_{1})} \sum_{\ell \geq 1} (i k a)^{\ell+1}\left\langle \int_{B_1}\frac{\lvert \cdot - y \rvert^\ell}{(\ell+1)!} \overset{1}{\PP}\left(\tilde{E}_1 \right)(y)\,dy, e_{n, B_{1}}^{(3)} \right\rangle_{\mathbb{L}^2(B_1)} \notag \\
        &+& k^2 \eta_2 a^4 \left\langle 
\underset{y}{\nabla}(\Phi_k I)(z_1, z_2) \cdot \int_{B_2} \mathcal{P}(y, 0)\cdot\overset{1}{\PP}\left( \tilde{E}_2 \right)(y)\,dy, e_{n, B_{1}}^{(3)} \right\rangle_{\mathbb{L}^2(B_1)} \notag\\
&+& k^2 \, \eta_2 \, a^5 \, \left\langle \int_{B_2}\int_0^1 (1-t) y^\perp \cdot \underset{y}{\nabla} \underset{y}{\nabla} \Phi_k (z_1, z_2+tay)\cdot y\,dt\cdot\overset{1}{\PP}\left(\tilde{E}_2\right)(y)\,dy, e_{n, B_{1}}^{(3)} \right\rangle_{\mathbb{L}^2(B_1)}\notag\\
&-& k^2 \, \eta_2 \, a^5 \, \left\langle \int_{B_2}\int_0^1 \underset{y}{\nabla} \underset{y}{\nabla} \Phi_{k}(z_1+ta\cdot, z_2)\cdot y \cdot \cdot \,dt\cdot\overset{1}{\PP}\left(\tilde{E}_2\right)(y)\,dy, e_{n, B_{1}}^{(3)}\right\rangle_{\mathbb{L}^2(B_1)} \notag\\
  &+& k^2 \, \eta_2 \, a^6 \, \left\langle 
\int_{B_2}\int_0^1 \int_0^1 (1-t)y^\perp \cdot \underset{x}{\nabla}(\underset{y}{\nabla} \underset{y}{\nabla} \Phi_k(z_1+sa\cdot, z_2+tay)\cdot y\,dt \cdot \cdot \,ds\cdot\overset{1}{\PP}\left(\tilde{E}_2\right)(y)\,dy, e_{n, B_{1}}^{(3)} \right\rangle_{\mathbb{L}^2(B_1)} \notag\\ \nonumber
&+& \eta_2 a^3 \left\langle \underset{y}{\nabla} \underset{y}{\nabla}\left(\Phi_k-\Phi_0\right)(z_1, z_2) \cdot \int_{B_2}\overset{3}{\PP}\left(\tilde{E}_2\right)(y)\,dy, e_{n, B_{1}}^{(3)}\right\rangle_{\mathbb{L}^2(B_1)} \\ \nonumber &+& \, \eta_2 k^2 a^3 \left\langle \Phi_k(z_1, z_2)\int_{B_2} \overset{3}{\PP}\left(\tilde{E}_2\right)(y)\,dy, e_{n, B_{1}}^{(3)}\right\rangle_{\mathbb{L}^2(B_1)}\notag\\
&+& \eta_2 \, a^4 \, \left\langle \int_{B_2} \int_0^1 \underset{x}{\nabla}\left(\underset{y}{\nabla} \underset{y}{\nabla}\left(\Phi_k-\Phi_0\right)\right)(z_1+ta\cdot, z_2)\cdot\mathcal{P}(\cdot, 0)\,dt \cdot\overset{3}{\PP}\left(\tilde{E}_2\right)(y)\,dy, e_{n, B_{1}}^{(3)}\right\rangle_{\mathbb{L}^2(B_1)} \notag\\
&+& \eta_2 a^4 \left\langle \int_{B_2} \int_0^1 \underset{y}{\nabla}\left(\underset{y}{\nabla}\underset{y}{\nabla}\left(\Phi_k-\Phi_0\right)\right)(z_1, z_2+tay)\cdot\mathcal{P}(y, 0)\,dt \cdot\overset{3}{\PP}\left(\tilde{E}_2\right)(y)\,dy, e_{n, B_{1}}^{(3)}\right\rangle_{\mathbb{L}^2(B_1)} \notag\\
&+&  \eta_2 k^2 a^4 \left\langle \int_{B_2}\int_0^1 \underset{x}{\nabla}(\Phi_k I)(z_1+ta\cdot, z_2)\cdot\mathcal{P}(\cdot, 0)\,dt\cdot\overset{3}{\PP}\left(\tilde{E}_2\right)(y)\,dy, e_{n, B_{1}}^{(3)}\right\rangle_{\mathbb{L}^2(B_1)} \notag\\
&& \qquad \qquad \, + \, \eta_2 k^2 a^4 \left\langle \int_{B_2}\int_0^1 \underset{y}{\nabla}(\Phi_k I)(z_1, z_2+tay)\cdot\mathcal{P}(y, 0)\,dt\cdot\overset{3}{\PP}\left(\tilde{E}_2\right)(y)\,dy, e_{n, B_{1}}^{(3)}\right\rangle_{\mathbb{L}^2(B_1)}.
	\end{eqnarray}
	Now, by taking the square modulus on the both sides of \eqref{eq-LS-P31} and summing up with respect to the index $n$, we can obtain that 
	\begin{equation}\label{es-P31-1}
	\left\lVert \overset{3}{\PP}\left(\tilde{E}_1\right)\right\rVert_{\mathbb{L}^2(B_1)}^2 \lesssim a^{4} \, \left\lVert \tilde{E}_1^{Inc}\right\rVert_{\mathbb{L}^2(B_1)}^2 \, + \, a^4 \, \sum_n \left|\left\langle \tilde{I_2^0}, e_{n, B_{1}}^{(3)}\right\rangle_{\mathbb{L}^2(B_1)}\right|^2 \, + \, a^{4} \, \sum_n\left| Error_n^1\right|^2.
	\end{equation}
It is direct to derive from \eqref{def-I-0-2} that
\begin{equation}\label{P31-es1}
    \sum_n \left|\left\langle \tilde{I_2^0}, e_{n, B_{1}}^{(3)}\right\rangle\right|^2 \, 
     \lesssim \, a^6 \left\lVert \int_{B_2} \underset{y}{\nabla} \underset{y}{\nabla}\Phi_0(\cdot, y) \cdot \overset{3}{\PP}\left(\tilde{E}_2\right)(y)\,dy\right\rVert_{\mathbb{L}^2(B_1)}\lesssim a^6 \left\lVert \overset{3}{\PP}\left(\tilde{E}_2\right)\right\rVert_{\mathbb{L}^2(B_2)}
\end{equation}
In order to estimate $\underset{n}{\sum} \left\vert Error_n^1 \right\vert^{2}$, we return to $(\ref{Error-n1})$. With the help of the continuity of the Newtonian operator, by keeping the dominant terms, we can obtain that
        \begin{equation}\label{es-Error-n1}
            \sum_n\left| Error_n^{1}\right|^2 \, \lesssim \, \left\lVert\overset{3}{\PP}\left(\tilde{E}_1 \right)\right\rVert_{\mathbb{L}^2(B_1)}^2+a^4 \left\lVert\overset{1}{\PP}\left(\tilde{E}_1 \right)\right\rVert_{\mathbb{L}^2(B_1)}^2 \, + \, a^{8} \, d^{-4} \left\lVert\overset{1}{\PP}\left(\tilde{E}_2 \right)\right\rVert_{\mathbb{L}^2(B_2)}^2 \, + \, a^6 \, d^{-2} \left\lVert\overset{3}{\PP}\left(\tilde{E}_2 \right)\right\rVert_{\mathbb{L}^2(B_2)}^2. 
        \end{equation}
        Hence, by plugging \eqref{P31-es1} and \eqref{es-Error-n1} into $(\ref{es-P31-1})$, and using the fact that $d \, \sim \, a^{t}$, with $t \, < \, 1$, we deduce
	\begin{equation}\label{es-P31}
		\left\lVert\overset{3}{\PP}\left(\tilde{E}_1 \right)\right\rVert_{\mathbb{L}^2(B_1)}^2 \, \lesssim \, a^4 \, + \, a^{10} \, d^{-2} \, \left\lVert \overset{3}{\PP}\left(\tilde{E}_2\right)\right\rVert_{\mathbb{L}^2(B_2)}^2 \, + \, a^8 \, \left\lVert\overset{1}{\PP}\left(\tilde{E}_1\right)\right\rVert_{\mathbb{L}^2(B_1)}^2 \, + \, a^{12} \, d^{-4} \, \left\lVert \overset{1}{\PP}\left(\tilde{E}_2\right)\right\rVert_{\mathbb{L}^2(B_2)}^2.
	\end{equation}
	\item Estimation of $ \left\Vert  \overset{3}{\mathbb{P}}\left(\tilde{E}_{2} \right) \right\Vert^{2}_{\mathbb{L}^2(B_2)}$. \\
 For $x\in D_2$, by recalling the L.S.E \eqref{LS-es} and using $(\ref{grad-M-1st})$, we get
	\begin{eqnarray*}
		\left(I \, + \, \eta_2 \, \nabla M^{k}_{D_{2}}\right)\left(\overset{3}{\PP}\left(E_2\right)\right)(x) \, &=& \, E_2^{Inc}(x) \, - \, \overset{1}{\PP}\left(E_2\right)(x) \, + \, \eta_1 \left(-\nabla M^k + k^2 N^k\right)\left(\overset{3}{\PP}(E_1)\right)(x) \\ &+& k^2 \, \eta_1 \, \int_{D_{1}}\Phi_{k}(x, y) \, \overset{1}{\PP}\left(E_1\right)(y)\,dy\notag\\
		&+& k^2 \eta_2 N^{k}_{D_{2}}\left(\overset{1}{\PP}\left(E_2\right)\right)(x) \, + \, k^2 \, \eta_2 \,  N^{k}_{D_{2}}\left(\overset{3}{\PP}\left(E_2\right)\right)(x), \quad x \in D_{2}.
	\end{eqnarray*}
	Then, following the similar argument to \eqref{eq-LS-P31}, scaling from $D_{m}$ to $B_{m}$, with $m=1,2$, and taking the $\mathbb{L}^2(B_2)$-inner product with respect to $e_{n, B_{2}}^{(3)}(\cdot)$, we deduce
	\begin{equation}\label{P32-product}
		\left\langle \overset{3}{\PP}\left(\tilde{E}_2\right) , e_{n, B_{2}}^{(3)} \right\rangle_{\mathbb{L}^2(B_2)} \; = \; \frac{\left\langle \tilde{E}_2^{Inc}, e_{n, B_{2}}^{(3)} \right\rangle_{\mathbb{L}^2(B_2)} \, + \, \eta_1 \, \left\langle \tilde{I_1^0}, e_{n, B_{2}}^{(3)}\right\rangle_{\mathbb{L}^2(B_2)}
  \, + \, Error_n^2 }{\left( 1 \, + \, \eta_2 \, \lambda_{n}^{(3)}(B_{2})\right)},
	\end{equation}
	where 
	\begin{eqnarray*}
    \nonumber
    && 
	Error_n^2 \\ \nonumber &:=& \, - \, \eta_2 \, \left\langle \left(\nabla M^{ka}_{B_{2}} \, - \, \nabla M_{B_{2}} \right)\left(\overset{3}{\PP}\left(\tilde{E}_2\right)\right), e_{n, B_{2}}^{(3)} \right\rangle_{\mathbb{L}^2(B_2)} \, + \, k^2 \, a^2 \,  \eta_2\left\langle N^{ka}_{B_{2}}\left(\overset{1}{\PP}\left(\tilde{E}_2 \right)\right), e_{n, B_{2}}^{(3)} \right\rangle_{\mathbb{L}^2(B_2)} \\ &+& \,  k^2 a^2 \eta_2 \, \left\langle N^{ka}_{B_{2}}\left(\overset{3}{\PP}\left( \tilde{E}_2 \right)\right), e_{n, B_{2}}^{(3)} \right\rangle_{\mathbb{L}^2(B_2)} \\
 &+& k^2 \, \eta_1\, a^4\, \left\langle 
 \underset{y}{\nabla}(\Phi_k I)(z_2, z_1) \cdot \int_{B_1} \mathcal{P}(y, 0)\cdot \overset{1}{\PP}\left( \tilde{E}_1 \right)(y)\,dy, e_{n, B_{2}}^{(3)}\right\rangle_{\mathbb{L}^2(B_2)} \notag\\
 &+& k^2\, \eta_1\, a^5\, \left\langle \int_{B_1}\int_0^1 (1-t) \, y^\perp \cdot \underset{y}{\nabla} \underset{y}{\nabla} \Phi_k (z_2, z_1+tay)\cdot y\,dt\cdot\overset{1}{\PP}\left( \tilde{E}_1 \right)(y)\,dy, e_{n, B_{2}}^{(3)} \right\rangle_{\mathbb{L}^2(B_2)} \notag\\
 &-& k^2\, \eta_1\, a^5\, \left\langle \int_{B_1}\int_0^1 \underset{y}{\nabla} \underset{y}{\nabla} \Phi_k (z_2+ta\cdot, z_1) \cdot y\cdot \cdot \,dt\cdot\overset{1}{\PP}\left( \tilde{E}_1\right)(y)\,dy, e_{n, B_{2}}^{(3)}\right\rangle_{\mathbb{L}^2(B_2)} \notag\\
 &+& k^2\, \eta_1\, a^6\, \left\langle \int_{B_1}\int_0^1\int_0^1 (1-t)y^\perp \cdot \underset{x}{\nabla}(\underset{y}{\nabla} \underset{y}{\nabla} \Phi_k)(z_2+sa\cdot, z_1+tay)\cdot y\,dt\cdot \cdot \,ds\cdot\overset{1}{\PP}\left(\tilde{E}_1\right)(y)\,dy, e_{n, B_{2}}^{(3)}\right\rangle_{\mathbb{L}^2(B_2)} \notag\\ \nonumber
 &+&  \eta_1 a^3 \left\langle \underset{y}{\nabla} \underset{y}{\nabla}(\Phi_k-\Phi_0)(z_2, z_1)\cdot\int_{B_1}\overset{3}{\PP}\left(\tilde{E}_1\right)(y)\,dy, e_{n, B_{2}}^{(3)}\right\rangle_{\mathbb{L}^2(B_2)} \\ &+& k^2 \eta_1 a^3 \left\langle\Phi_k(z_2, z_1)\int_{B_1} \overset{3}{\PP}\left(\tilde{E}_1\right)(y)\,dy, e_{n, B_{2}}^{(3)} \right\rangle_{\mathbb{L}^2(B_2)} \notag\\
 &+& \eta_1 a^4 \left\langle \int_{B_1}\int_0^1 \underset{x}{\nabla}\left(\underset{y}{\nabla}\underset{y}{\nabla}(\Phi_k-\Phi_0)\right)(z_2+ta\cdot, z_1)\cdot\mathcal{P}(\cdot, 0)\,dt\cdot\overset{3}{\PP}\left(\tilde{E}_1\right)(y)\,dy, e_{n, B_{2}}^{(3)}\right\rangle_{\mathbb{L}^2(B_2)} \notag\\
 &+&  \eta_1 a^4 \left\langle \int_{B_1}\int_0^1 \underset{y}{\nabla}\left(\underset{y}{\nabla}\underset{y}{\nabla}(\Phi_k-\Phi_0)\right)(z_2, z_1+tay)\cdot\mathcal{P}(y, 0)\,dt\cdot\overset{3}{\PP}\left(\tilde{E}_1\right)(y)\,dy, e_{n, B_{2}}^{(3)}\right\rangle_{\mathbb{L}^2(B_2)} \notag\\
 &+& k^2 \eta_1 a^4 \left\langle \int_{B_1}\int_0^1 \underset{x}{\nabla}(\Phi_k I)(z_2+ta\cdot, z_1)\cdot\mathcal{P}(x, 0)\,dt\cdot\overset{3}{\PP}\left(\tilde{E}_1\right)(y)\,dy, e_{n, B_{2}}^{(3)}\right\rangle_{\mathbb{L}^2(B_2)} \notag\\
 &+& k^2 \eta_1 a^4 \left\langle \int_{B_1}\int_0^1 \underset{y}{\nabla}(\Phi_k I)(z_2, z_1+tay)\cdot\mathcal{P}(y, 0)\,dt\cdot\overset{3}{\PP}\left(\tilde{E}_1\right)(y)\,dy, e_{n, B_{2}}^{(3)}\right\rangle_{\mathbb{L}^2(B_2)}.
	\end{eqnarray*}
	By taking the square modulus of \eqref{P32-product},  summing up with respect to the index $n$, and using $(\ref{def-eta})$ and $(\ref{condition-on-k})$, we have
	\begin{equation*}\label{es-P32-1}
		\left\lVert\overset{3}{\PP}\left(\tilde{E}_2 \right)\right\rVert_{\mathbb{L}^2(B_2)}^2 \, \lesssim \, a^{-2h} \; \left(  \left\Vert  \tilde{E}_2^{Inc}  \right\Vert^{2}_{\mathbb{L}^2(B_2)} \, + \, |\eta_1|^2\sum_n\left|\left\langle \tilde{I_1^0}, e_{n, B_{2}}^{(3)}\right\rangle_{\mathbb{L}^2(B_2)}\right|^2 \, + \, \sum_n\left|  Error_n^2  \right|^2 \right).
	\end{equation*}
Similar to \eqref{P31-es1}, we can know that
\begin{equation}\label{P32-es1}
    |\eta_1|^2 \sum_n\left|\left\langle \tilde{I_1^0}, e_{n, B_{2}}^{(3)}\right\rangle_{\mathbb{L}^2(B_2)} \right|^2 \lesssim a^2\left\lVert \overset{3}{\PP}\left(\tilde{E}_1\right)\right\rVert_{\mathbb{L}^2(B_1)}^2.
\end{equation} 
To evaluate $\underset{n}{\sum} \, \left| Error_n^2 \right|^2$, we use both the expansions given by $(\ref{expansion-gradMk})$ and $(\ref{expansion-Nk})$, and the continuity of the Newtonian operator, to derive 
    \begin{eqnarray}\label{P32-es2}
    \nonumber
        \sum_{n} \, \left\vert Error_n^2 \right\vert^{2} \, & \lesssim & \, a^4 \left\lVert \overset{3}{\PP}\left(\tilde{E}_2\right)\right\rVert_{\mathbb{L}^2(B_2)}^2+ a^8\left\lVert \overset{1}{\PP}\left(\tilde{E}_2\right)\right\rVert_{\mathbb{L}^2(B_2)}^2 \\ &+& \, a^2 \, d^{-2} \, \left\lVert \overset{3}{\PP}\left(\tilde{E}_1\right)\right\rVert_{\mathbb{L}^2(B_1)}^2 \, + \, a^4 \, d^{-4} \, \left\lVert \overset{1}{\PP}\left(\tilde{E}_1\right)\right\rVert_{\mathbb{L}^2(B_1)}^2.
    \end{eqnarray}
        Hence, combining with \eqref{P32-es1} and \eqref{P32-es2}, using the fact that $h \in (0,2)$ and $t \in (0,1)$, we obtain
		\begin{equation}\label{es-P32}
		\left\lVert \overset{3}{\PP}\left(\tilde{E}_2 \right)\right\rVert_{\mathbb{L}^2(B_2)}^2\lesssim a^{-2h} \, + \, a^{2-2h} \, d^{-2} \left\lVert\overset{3}{\PP}\left(\tilde{E}_1 \right)\right\rVert_{\mathbb{L}^2(B_1)}^2  \, + \, a^{4-2h} \, d^{-4} \, \left\lVert\overset{1}{\PP}\left(\tilde{E}_1 \right)\right\rVert_{\mathbb{L}^2(B_1)}^2 \, +\, a^{8-2h} \, \left\lVert \overset{1}{\PP}\left(\tilde{E}_2\right)\right\rVert_{\mathbb{L}^(B_2)}^2.
	\end{equation}
\end{enumerate}
Recall the estimates obtained in \eqref{es-P11}, \eqref{es-P12}, \eqref{es-P31} and \eqref{es-P32}. By injecting \eqref{es-P32} and \eqref{es-P12} into \eqref{es-P11}, and using the fact that $t < 1$ and $h < 1$, we obtain 
\begin{equation}\label{es-P11P31}
     \left\lVert \overset{1}{\PP}\left(\tilde{E}_1\right)\right\rVert_{\mathbb{L}^2(B_1)}^2 \lesssim a^{2-2h} + \left(a^{8-4h} d^{-2}+ a^{10-4h} d^{-6}\right)\left\lVert \overset{3}{\PP}\left(\tilde{E}_1\right)\right\rVert_{\mathbb{L}^2(B_1)}^2.
\end{equation}
 Similarly, by substituting \eqref{es-P12} and \eqref{es-P32} into \eqref{es-P31},  we also have 
	\begin{equation}\label{es-P31P11}
		\left\lVert\overset{3}{\PP}\left(\tilde{E}_1\right)\right\rVert_{\mathbb{L}^2(B_1)}^2 \, \lesssim \, a^4 \, + \,  \left(a^{14-2h} \, d^{-6} \, + \, a^8\right)\left\lVert\overset{1}{\PP}\left(\tilde{E}_1\right)\right\rVert_{\mathbb{L}^2(B_1)}^2.
	\end{equation} 
 Together with \eqref{es-P11P31} and \eqref{es-P31P11}, for $h<1$, it yields that
 \begin{equation}\label{es-P11P31-final}
     \left\lVert\overset{1}{\PP}\left(\tilde{E}_1 \right)\right\rVert_{\mathbb{L}^2(B_1)}^2\lesssim a^{2-2h}\quad \mbox{and}\quad\left\lVert\overset{3}{\PP}\left(\tilde{E}_1 \right)\right\rVert_{\mathbb{L}^2(B_1)}^2\lesssim a^4.
 \end{equation}
 Then, by taking the estimates \eqref{es-P11P31-final} into \eqref{es-P12} and \eqref{es-P32}, we can respectively derive that
 \begin{equation*}
     \left\lVert\overset{1}{\PP}\left(\tilde{E}_2 \right)\right\rVert_{\mathbb{L}^2(B_1)}^2\lesssim a^2 + a^{8-2h} d^{-6}\quad \mbox{and}\quad \left\lVert\overset{3}{\PP}\left(\tilde{E}_2 \right)\right\rVert_{\mathbb{L}^2(B_1)}^2\lesssim a^{-2h}.
 \end{equation*} 
   This concludes the proof of Proposition \ref{lem-es-multi}.
	 \subsection{Proof of Proposition \ref{prop-scoeff}.}{(Estimation of the scattering coefficients).}\label{subsec:scattering parameter} The proof of the estimates in $(\ref{*add0})$ regarding $W_{1}$ can be directly seen in \cite[Proposition 2.4]{CGS}. Our main goal is to derive estimates $(\ref{prop-es-W})$ and $(\ref{es-V12})$ for $W_2$ and $V_1, V_2$.
	 
	 \begin{enumerate}
	 	\item[] 
            \item Estimation of $\left\Vert  \overset{j}{\mathbb{P}}\left(\tilde{W}_{2} \right) \right\Vert^{2}_{\mathbb{L}^2(B_2)}$, for $j=1, 2, 3$. \\ 
	 	By recalling the definition of $W_2$, given by $(\ref{DefW2})$, after scaling from $D_{2}$ to $B_{2}$,  we obtain
        \begin{equation}\label{ZM-Eq-1}
            \left(I + \eta_2 \, \nabla M^{-k \, a}_{B_{2}} \, - \, k^{2} \, a^{2} \, \eta_2 \, N^{-k \, a}_{B_{2}} \right)\left( \tilde{W}_2 \right)(x) \, = \, a \,  \overset{1}{\PP}\left(\mathcal{P}(x,0)\right), \quad x \in B_{2}.
        \end{equation}
        Now, by taking the inner product with respect to $e_{n, B_{2}}^{(2)}(\cdot)$, we obtain 
        \begin{equation*}
            \left\langle  \tilde{W}_2, \left(I + \eta_2 \, \nabla M^{k \, a}_{B_{2}} \, - \, k^{2} \, a^{2} \, \eta_2 \, N^{k \, a}_{B_{2}} \right)\left(e_{n, B_{2}}^{(2)}\right) \right\rangle_{\mathbb{L}^{2}(B_{2})} \, = \, 0, 
        \end{equation*}
        which, by using $(\ref{grad-M-2nd})$, can be reduced to 
        \begin{equation*}
            \left(1 \, + \, \eta_{2} \right) \; \langle  \tilde{W}_2, e_{n, B_{2}}^{(2)} \rangle_{\mathbb{L}^{2}(B_{2})} \, = \, 0.  
        \end{equation*}
        Thus, 
        \begin{equation}\label{ZM-Eq-2}
            \overset{2}{\PP}\left(\tilde{W}_2\right) \; = \; 0. 
        \end{equation}
       This implies, by gathering $(\ref{ZM-Eq-1}), 
 (\ref{ZM-Eq-2})$, and using $(\ref{L2-decomposition})$,
	 	\begin{equation}\label{LS-W2-1}
                    \left(I + \eta_2 \, \nabla M^{-k \, a}_{B_{2}} \, - \, k^{2} \, a^{2} \, \eta_2 \, N^{-k \, a}_{B_{2}} \right)\left(\overset{1}{\PP}\left(\tilde{W}_2\right) \, + \, \overset{3}{\PP}\left(\tilde{W}_2\right)\right)(x) \, = \, a \, \overset{1}{\PP}\left(\mathcal{P}(x, 0)\right), \quad x\in B_2.
	 	\end{equation}
        Now, by taking the inner product with respect to $e_{n, B_{2}}^{(1)}(\cdot)$ and using $(\ref{grad-M-1st})$, we obtain from \eqref{LS-W2-1} that
        \begin{equation*}
                    \left\langle \left(I + \eta_2 \, \nabla M^{-k \, a}_{B_{2}} \, - \, k^{2} \, a^{2} \, \eta_2 \, N^{-k \, a}_{B_{2}} \right)\left( \overset{1}{\PP}\left(\tilde{W}_2\right) \, + \, \overset{3}{\PP}\left(\tilde{W}_2\right)\right), e_{n, B_{2}}^{(1)} \right\rangle_{\mathbb{L}^{2}(B_{2})} \, = \, a \, \langle \mathcal{P}(\cdot, 0), e_{n, B_{2}}^{(1)}\rangle_{\mathbb{L}^{2}(B_{2})}.
	 	\end{equation*}
        With the adjoint operator of $\left(I + \eta_2 \, \nabla M^{k \, a}_{B_{2}} \, - \, k^{2} \, a^{2} \, \eta_2 \, N^{k \, a}_{B_{2}} \right)$ and using $(\ref{grad-M-1st})$ and $(\ref{L2-decomposition})$, we derive from the above equation the following relation
        \begin{equation}\label{Equa1036}
            \left\langle  \overset{1}{\PP}\left(\tilde{W}_2\right), e_{n, B_{2}}^{(1)} \right\rangle_{\mathbb{L}^{2}(B_{2})} \,= \, a \, \langle \mathcal{P}(\cdot, 0), e_{n, B_{2}}^{(1)}\rangle_{\mathbb{L}^{2}(B_{2})} \, + \, k^{2} \, a^{2} \, \eta_2 \,            \left\langle N^{- \, k \, a}_{B_{2}}\left( \overset{1}{\PP}\left(\tilde{W}_2\right) \, + \, \overset{3}{\PP}\left(\tilde{W}_2\right)\right), e_{n, B_{2}}^{(1)} \right\rangle_{\mathbb{L}^{2}(B_{2})}.
	 	\end{equation}
        By taking the square modulus on the both sides of \eqref{Equa1036} and using $(\ref{def-eta})$, we get  \begin{eqnarray*}
        \nonumber
            \left\vert \left\langle  \overset{1}{\PP}\left(\tilde{W}_2\right), e_{n, B_{2}}^{(1)} \right\rangle_{\mathbb{L}^{2}(B_{2})} \right\vert^{2} \, & \lesssim &  \, a^{2} \, \left\vert \langle \mathcal{P}(\cdot, 0), e_{n, B_{2}}^{(1)}\rangle_{\mathbb{L}^{2}(B_{2})} \right\vert^{2} \\ &+& \, a^{4} \, \left\vert           \left\langle N^{- \, k \, a}_{B_{2}}\left( \overset{1}{\PP}\left(\tilde{W}_2\right) \, + \, \overset{3}{\PP}\left(\tilde{W}_2\right)\right), e_{n, B_{2}}^{(1)} \right\rangle_{\mathbb{L}^{2}(B_{2})} \right\vert^{2}.
	 	\end{eqnarray*}
        Now, by summing up with respect to the index $n$ and using the fact that \\ $\left\Vert N^{- \, k \, a}_{B_{2}} \right\Vert_{\mathcal{L}\left(\mathbb{L}^{2}(B_{2});\mathbb{L}^{2}(B_{2})\right)} \, = \, \mathcal{O}\left( 1 \right)$, we obtain 
	
	 	\begin{equation}\label{es-P1W2-P3}
	 		\left\lVert\overset{1}{\PP}\left(\tilde{W}_2\right)\right\rVert_{\mathbb{L}^2(B_2)}^2 \, \lesssim \, a^2 \, + \, a^4 \,  \left\lVert\overset{3}{\PP}\left(\tilde{W}_2\right)\right\rVert_{\mathbb{L}^2(B_2)}^2.
	 	\end{equation}
	 \medskip	
       \newline

	 	Finally, by taking the inner product with respect to $e_{n, B_{2}}^{(3)}(\cdot)$, in $(\ref{LS-W2-1})$, and using $(\ref{L2-decomposition})$ and $(\ref{grad-M-1st})$, we obtain 
        \begin{eqnarray*}
            \left\langle \overset{3}{\PP}\left(\tilde{W}_2\right), e_{n, B_{2}}^{(3)} \right\rangle_{\mathbb{L}^{2}(B_{2})} \, &=& \, \eta_{2} \, \frac{\left\langle \left[ - \left( \nabla M^{ - \, k \, a}_{B_{2}} \, - \,  \nabla M_{B_{2}} \right) \, + \, k^{2} \, a^{2} \,  N^{- \, k \, a}_{B_{2}}\right]\left(  \overset{3}{\PP}\left(\tilde{W}_2\right)\right), e_{n, B_{2}}^{(3)} \right\rangle_{\mathbb{L}^{2}(B_{2})}}{\left( 1 \, + \, \eta_2 \, \lambda_{n}^{(3)}(B_{2}) \right)} \\
            &+& \eta_2 k^2 a^2 \frac{\left\langle N_{B_2}\left(\overset{1}{\PP}(\tilde{W}_2)\right), e_{n, B_{2}}^{(3)}\right\rangle_{\mathbb{L}^2(B_2)}}{\left(1+\eta_2 \lambda_n^{(3)}(B_2)\right)} \\ &+&
            \, \eta_{2} \, k^{2} \, a^{2} \, \frac{ \left\langle \left( N^{- \, k \, a}_{B_{2}} \, - \, N_{B_{2}} \right)\left( \overset{1}{\PP}\left(\tilde{W}_2\right) \right), e_{n, B_{2}}^{(3)} \right\rangle_{\mathbb{L}^{2}(B_{2})}}{\left( 1 \, + \, \eta_2 \, \lambda_{n}^{(3)}(B_{2}) \right)},
        \end{eqnarray*}
        where we know that
        \begin{eqnarray*}
            \left\langle N_{B_2}\left(\overset{1}{\PP}(\tilde{W}_2)\right), e_{n, B_{2}}^{(3)}\right\rangle_{\mathbb{L}^{2}(B_{2})} \, &=& \, \sum_n \left\langle \overset{1}{\PP}\left(\tilde{W}_2\right), e_{n, B_{2}}^{(1)}\right\rangle_{\mathbb{L}^{2}(B_{2})} \; \left\langle N_{B_2}(e_{n,B_{2}}^{(1)}), e_{n, B_{2}}^{(3)}\right\rangle_{\mathbb{L}^{2}(B_{2})} \\ &=& \sum \lambda_n^{(1)}(B_2) \left\langle \overset{1}{\PP}\left(\tilde{W}_2\right), e_{n,B_{2}}^{(1)}\right\rangle_{\mathbb{L}^{2}(B_{2})} \, \left\langle e_{n,B_{2}}^{(1)}, e_{n,B_{2}}^{(3)}\right\rangle_{\mathbb{L}^{2}(B_{2})} \, = \, 0.
        \end{eqnarray*}
        Thanks to $(\ref{expansion-gradMk}), (\ref{expansion-Nk})$ and $(\ref{RefNeeded1})$, we can approximate the R.H.S of the above equation by 
        \begin{eqnarray*}
            \left\langle \overset{3}{\PP}\left(\tilde{W}_2\right), e_{n,B_{2}}^{(3)} \right\rangle_{\mathbb{L}^{2}(B_{2})} \, &\simeq& \, \, \eta_{2} \, \frac{k^{2} \, a^{2}}{2} \, \frac{\left\langle N_{B_{2}}\left(  \overset{3}{\PP}\left(\tilde{W}_2\right)\right) , e_{n,B_{2}}^{(3)} \right\rangle_{\mathbb{L}^{2}(B_{2})}}{\left( 1 \, + \, \eta_2 \, \lambda_{n}^{(3)}(B_{2}) \right)} \\ &-& \, \eta_{2}  \, \frac{k^{4} \, a^4}{8 \, \pi} \, \frac{ \left\langle \int_{B_{2}}\left\vert \cdot - y \, \right\vert \, \overset{1}{\PP}\left(\tilde{W}_2\right)(y) \, dy, e_{n,B_{2}}^{(3)} \right\rangle_{\mathbb{L}^{2}(B_{2})}}{\left( 1 \, + \, \eta_2 \, \lambda_{n}^{(3)}(B_{2}) \right)}.
        \end{eqnarray*}
        Now, by taking the square modulus, summing up with respect to the index $n$, and using $(\ref{def-eta})$, we obtain 
        \begin{eqnarray*}
            \sum_{n} \left\vert \left\langle \overset{3}{\PP}\left(\tilde{W}_2\right), e_{n,B_{2}}^{(3)} \right\rangle_{\mathbb{L}^{2}(B_{2})} \right\vert^{2} \, & \lesssim & \,  a^{4}  \, \sum_{n} \frac{\left\vert \left\langle N_{B_{2}}\left(  \overset{3}{\PP}\left(\tilde{W}_2\right)\right), e_{n,B_{2}}^{(3)} \right\rangle_{\mathbb{L}^{2}(B_{2})} \right\vert^{2}}{\left\vert 1 \, + \, \eta_2 \, \lambda_{n}^{(3)}(B_{2}) \right\vert^{2}} \\ &+& \, a^{8} \, \sum_{n} \frac{\left\vert \left\langle \int_{B_{2}}\left\vert \cdot - y \, \right\vert \, \overset{1}{\PP}\left(\tilde{W}_2\right)(y) \, dy, e_{n,B_{2}}^{(3)} \right\rangle_{\mathbb{L}^{2}(B_{2})} \right\vert^{2}}{\left\vert 1 \, + \, \eta_2 \, \lambda_{n}^{(3)}(B_{2}) \right\vert^{2}}.
        \end{eqnarray*}
        Besides, knowing $(\ref{condition-on-k})$ and using the fact that $\left\Vert N \right\Vert_{\mathcal{L}\left(\mathbb{L}^{2}(B_{2}); \mathbb{L}^{2}(B_{2})\right)} \, = \, \mathcal{O}\left( 1 \right)$, we deduce  
	 	\begin{equation}\label{es-P3W2-P1}
	 		\left\lVert\overset{3}{\PP}\left(\tilde{W}_2\right)\right\rVert_{\mathbb{L}^2(B_2)}^2 \; \lesssim \; a^{8-2h} \, \left\lVert\overset{1}{\PP}\left(\tilde{W}_2\right)\right\rVert_{\mathbb{L}^2(B_2)}^2. 
	 	\end{equation}
	 	Finally, by gathering $(\ref{es-P1W2-P3})$ and $(\ref{es-P3W2-P1})$, we deduce
	 	\begin{equation}\notag
	 		\left\lVert\overset{1}{\PP}\left(\tilde{W}_2\right)\right\rVert_{\mathbb{L}^2(B_2)}=\O\left( a\right) \quad \text{and} \quad \left\lVert\overset{3}{\PP}\left(\tilde{W}_2\right)\right\rVert_{\mathbb{L}^2(B_2)}=\O(a^{5-h}).
	 	\end{equation}
            \item[] 
	 	\item Estimation of $\left\lVert \tilde{V}_{m} \right\rVert_{\mathbb{L}^2(B_{m})}$, for $m=1 , 2$. \\
	 	By recalling the definition of $V_m$, for $m=1, 2$,  
            see for instance $(\ref{def-Vm})$, using the operator notation given by $(\ref{def-Tkm})$, and scaling from $D_{m}$ to $B_{m}$, we obtain
	 		\begin{equation}\label{HA0754}
	 		\tilde{V}_{m}(x) \; = \; \TT_{-ka, m}^{-1}(I)(x), \quad x \in B_{m}, \;\; \text{for} \;\; m=1, 2.
	 	\end{equation}
   Next, we project $\tilde{V}_{1}$ onto the three sub-spaces appearing in $(\ref{L2-decomposition})$. 
	 	\begin{enumerate}
                    \item[]  
	 		\item We project $\tilde{V}_{1}$ onto 
                     $\mathbb{H}_{0}(\div=0)$. \\
	 		We have,
		 		\begin{equation}\notag
	 			0 \overset{(\ref{RefNeeded1})}{=} \left\langle I, e_{n,B_{1}}^{(1)} \right\rangle_{\mathbb{L}^{2}(B_{1})} \, \overset{(\ref{HA0754})}{=} \, \left\langle \TT_{-ka, 1}(\tilde{V}_1), e_{n,B_{1}}^{(1)} \right\rangle_{\mathbb{L}^{2}(B_{1})} \, = \, \left\langle \tilde{V}_{1}, \TT_{ka, 1}(e_{n,B_{1}}^{(1)}) \right\rangle_{\mathbb{L}^{2}(B_{1})},
	 		\end{equation}
	 		with
	 		\begin{equation}\notag
	 			\TT_{ka, 1}(e_{n,B_{1}}^{(1)}) \, = \, \left(I+\eta_1\nabla M^{ka}- k^2 a^2 \eta_1 N^{ka}\right)(e_{n,B_{1}}^{(1)}) \, \overset{(\ref{grad-M-1st})}{=} \, \left(I-k^2 a^2 \eta_1 N^{ka}\right)(e_{n,B_{1}}^{(1)}),
	 		\end{equation}
	 		then, by using $(\ref{expansion-Nk})$, we can get that
	 		\begin{equation}\label{V1-en1}
	 			\left\langle \tilde{V}_1, e_{n,B_{1}}^{(1)} \right\rangle_{\mathbb{L}^{2}(B_{1})} \; = \; \frac{k^2 a^2 \eta_1}{4\pi\left(1 \,- \, k^2 \, a^2 \, \eta_1 \, \lambda_{n}^{(1)}(B_{1})\right)}\sum_{\ell \geq 1}\frac{( i k a)^{\ell+1}}{(\ell+1)!} \,  \left\langle \tilde{V}_{1}, \int_{B_1}\lvert \cdot -y\rvert^\ell e_{n,B_{1}}^{(1)}(y)\,dy \right\rangle_{\mathbb{L}^{2}(B_{1})}.
	 		\end{equation}
	 		\item[]
	 		\item We project $\tilde{V}_1$ onto $\mathbb{H}_0(Curl=0)$.\\
	 		We have
                    \begin{eqnarray*}\notag
	 			0  \overset{(\ref{RefNeeded1})}{=}  \left\langle I, e_{n,B_{1}}^{(2)} \right\rangle_{\mathbb{L}^{2}(B_{1})} \, & \overset{(\ref{HA0754})}{=} & \, \left\langle \TT_{-ka, 1}(\tilde{V}_1), e_{n,B_{1}}^{(2)} \right\rangle_{\mathbb{L}^{2}(B_{1})} \\ &=& \, \left\langle \tilde{V}_{1}, \TT_{ka, 1}(e_{n,B_{1}}^{(2)}) \right\rangle_{\mathbb{L}^{2}(B_{1})} \, \overset{(\ref{grad-M-2nd})}{\underset{(\ref{def-Tkm})}{=}} \, \left(1 \, + \, \eta_1 \, \right) \, \left\langle \tilde{V}_1, e_{n,B_{1}}^{(2)} \right\rangle_{\mathbb{L}^{2}(B_{1})}, 
	 		\end{eqnarray*}
	 		which yields
	 		\begin{equation}\label{V1-en2}
	 			\left\langle \tilde{V}_1, e_{n,B_{1}}^{(2)} \right\rangle_{\mathbb{L}^{2}(B_{1})} \, = \, 0.
	 		\end{equation}
	 		\item[] 
	 		\item We project $\tilde{V}_1$ onto $\nabla \mathcal{H}armonic$. \\
                    As previously stated, we have
	 		\begin{eqnarray*}\notag
	 			\left\langle I, e_{n,B_{1}}^{(3)} \right\rangle_{\mathbb{L}^{2}(B_{1})} \, &=& \, \left\langle \tilde{V}_{1}, \TT_{ka, 1}\left(e_{n,B_{1}}^{(3)}\right)\right\rangle_{\mathbb{L}^{2}(B_{1})} \\ &\overset{(\ref{def-Tkm})}{=} & \, \left(1 \, + \, \eta_1\lambda_{n}^{(3)}(B_{1})\right) \, \left\langle \tilde{V}_{1}, e_{n,B_{1}}^{(3)} \right\rangle_{\mathbb{L}^{2}(B_{1})} \\
                   &+& \, \eta_1 \, \left\langle \tilde{V}_{1}, \left(\nabla M^{ka}-\nabla M\right)\left(e_{n,B_{1}}^{(3)}\right) \, - \, k^2 \, a^2 \,  N^{ka}\left(e_{n,B_{1}}^{(3)}\right)\right\rangle_{\mathbb{L}^{2}(B_{1})}.
	 		\end{eqnarray*}
	 		Then, 
        \begin{equation}\label{V1-en3}
        \left\langle \tilde{V}_1, e_{n,B_{1}}^{(3)} \right\rangle_{\mathbb{L}^{2}(B_{1})} \, = \,\frac{1}{\left(1 \, + \, \eta_1 \, \lambda_{n}^{(3)}(B_{1})\right)} \, \left\langle I, e_{n,B_{1}}^{(3)}\right\rangle_{\mathbb{L}^{2}(B_{1})} \, + \, \frac{\eta_1}{\left( 1 \, +\, \eta_1 \, \lambda_{n}^{(3)}(B_{1}) \right)}\left\langle \tilde{V}_1, \mathcal{S}^{ka}\left(e_{n,B_{1}}^{(3)}\right)\right\rangle_{\mathbb{L}^{2}(B_{1})}, 
         \end{equation}
	 		where 
	 		\begin{eqnarray}\label{def-Ska}
	 \mathcal{S}^{ka}\left(e_{n,B_{1}}^{(3)}\right)&:=&\frac{ k^2 a^2 }{2}N\left(e_{n,B_{1}}^{(3)}\right)+\frac{k^2 a^2}{2}\int_{B_1}\Phi_{0}(\cdot, y)\frac{A(\cdot, y)\cdot e_{n,B_{1}}^{(3)}(y)}{\lVert x-y\rVert^2}\,dy+\frac{i k^3 a^3}{6\pi}\int_{B_1} e_{n,B_{1}}^{(3)}(y)\,dy\notag\\
	 			&+& \frac{1}{4\pi}\sum_{\ell\geq 3} (i k a)^{\ell+1}\int_{B_1}\frac{\nabla \nabla (| \cdot -y|^\ell) \cdot e_{n,B_{1}}^{(3)}(y)}{(\ell+1)!}\,dy \\ \nonumber &+& \frac{k^2 a^2}{4\pi}\sum_{\ell \geq 1}(i k a)^{\ell+1}\int_{B_1}\frac{| \cdot - y |^\ell}{(\ell+1)!}e_{n,B_{1}}^{(3)}(y)\,dy.
	 		\end{eqnarray}
	 	\end{enumerate}	
	 	By combining with \eqref{V1-en1}, \eqref{V1-en2} and \eqref{V1-en3}, we can obtain that
	 	\begin{eqnarray*}
	 	\left\lVert \tilde{V}_1\right\rVert_{\mathbb{L}^2(B_1)}^2 \, &=& \, \sum_n \left|\left\langle \tilde{V}_1, e_{n,B_{1}}^{(1)} \right\rangle_{\mathbb{L}^2(B_1)} \right|^2 \, + \, \sum_n \left| \left\langle \tilde{V}_1, e_{n,B_{1}}^{(3)} \right\rangle_{\mathbb{L}^2(B_1)} \right|^2\notag\\
	 	&\lesssim& \, \sum_{n} \left| \frac{1}{\left( 1 \, -\, k^2 a^2 \eta_1\lambda_{n}^{(1)}(B_{1}) \right)}\sum_{\ell \geq 1} \frac{(i k a)^{\ell+1}}{(\ell+1)!} \, \left\langle \tilde{V}_1, \int_{B_1}| \cdot - y|^\ell e_{n,B_{1}}^{(1)}(y)\,dy \right\rangle_{\mathbb{L}^{2}(B_{1})} \right|^2\notag\\
	 	&+&\sum_{n} \frac{\left| \left\langle I, e_{n,B_{1}}^{(3)} \right\rangle_{\mathbb{L}^{2}(B_{1})} \right|^2}{\left|1 \, + \, \eta_{1} \, \lambda_{n}^{(3)}(B_{1}) \right|^2} \,  + \, |\eta_1|^2 \, \sum_n\frac{\left|\left\langle \tilde{V}_1, \mathcal{S}^{ka}\left(e_{n,B_{1}}^{(3)}\right) \right\rangle_{\mathbb{L}^{2}(B_{1})} \right|^2}{\left|1 \, + \, \eta_1 \, \lambda_{n}^{(3)}(B_{1})\right|^2}, 
        \end{eqnarray*}
        which, by using $(\ref{def-eta})$ and $(\ref{condition-on-k})$, can be reduced to 
        \begin{equation*}
            \left\lVert \tilde{V}_1\right\rVert_{\mathbb{L}^2(B_1)}^2 \,   \lesssim  \,  a^{4-2h} \, \left\Vert \tilde{V}_{1} \right\Vert^{2}_{\mathbb{L}^{2}(B_{1})} \, + \,  a^4 \,  \left\Vert I \right\Vert^{2}_{\mathbb{L}^{2}(B_{1})} \, + \, \left\Vert \mathcal{S}^{-ka}\left(\tilde{V}_1\right) \right\Vert_{\mathbb{L}^{2}(B_{1})}^{2}.
        \end{equation*}
        The estimation of the last term on the R.H.S, can be done by keeping the dominant term in $(\ref{def-Ska})$. More precisely, we have
        \begin{equation}\label{EstSka}
            \left\Vert \mathcal{S}^{-ka}\left(\tilde{V}_1\right) \right\Vert_{\mathbb{L}^{2}(B_{1})} \, \lesssim \, a^{2} \, \left\Vert N\left(\tilde{V}_1\right) \right\Vert_{\mathbb{L}^{2}(B_{1})} \, = \, \mathcal{O}\left( a^{2} \, \left\Vert \tilde{V}_{1} \right\Vert_{\mathbb{L}^{2}(B_{1})} \right),
        \end{equation}
        where the last estimation is a consequence of the continuity of the Newtonian operator. Hence, under the condition $h \, < \, 2$, we obtain 
            \begin{equation}\label{es-V1} 
            \left\lVert \tilde{V}_1 \right\rVert_{\mathbb{L}^2(B_1)} \; \lesssim \; a^2.
	 	\end{equation}
       \medskip
       \newline 
        To evaluate $\left\lVert \tilde{V}_2\right\rVert_{\mathbb{L}^2(B_2)}$, we recall $(\ref{HA0754})$ and we use similar arguments as those used for deriving $(\ref{es-V1})$, to obtain
	 	\begin{eqnarray}\label{V2-project}
	 		\left\langle \tilde{V}_2, e_{n,B_{2}}^{(1)} \right\rangle_{\mathbb{L}^2(B_2)} \, &=& \, \frac{ k^2 a^2 \eta_2}{4\pi \left(1- k^2 a^2 \eta_2\lambda_{n}^{(1)}(B_{2})\right)}\sum_{\ell \geq 1}\frac{(i k a)^{\ell+1}}{(\ell+1)!} \left\langle \tilde{V}_2, \int_{B_2} \lvert \cdot - y \rvert^\ell  e_{n,B_{2}}^{(1)}(y)\,dy \right\rangle_{\mathbb{L}^2(B_2)}, \notag\\
	 		\left\langle \tilde{V}_2, e_{n,B_{2}}^{(2)} \right\rangle_{\mathbb{L}^2(B_2)} \, &=& \, 0,\notag\\
	 	&&	\left\langle \tilde{V}_2, e_{n,B_{2}}^{(3)} \right\rangle_{\mathbb{L}^2(B_2)} \, = \,  \frac{\left\langle I, e_{n,B_{2}}^{(3)} \right\rangle_{\mathbb{L}^2(B_2)} \, + \, \eta_2 \, \left\langle \tilde{V}_2, \mathcal{S}^{ka}\left(e_{n,B_{2}}^{(3)}\right) \right\rangle_{\mathbb{L}^2(B_2)}}{\left( 1 \, + \, \eta_2 \, \lambda_{n}^{(3)}(B_{2})\right)},
	 	\end{eqnarray}
	 	with $\mathcal{S}^{ka}\left(e_{n,B_{2}}^{(3)}\right)$ being given by \eqref{def-Ska}. Thus, using $(\ref{def-eta}), (\ref{V2-project})$ and $(\ref{condition-on-k})$, we deduce 
            \begin{equation*}
                \left\lVert \tilde{V}_2\right\rVert_{\mathbb{L}^2(B_2)}^2 \, \lesssim \, a^{8} \, \left\lVert \tilde{V}_2\right\rVert_{\mathbb{L}^2(B_2)}^2 \, + \, a^{-2h} \, \left\lVert I \right\rVert_{\mathbb{L}^2(B_2)}^2 \, + \, a^{-2h} \, \lVert \mathcal{S}^{-ka}\left(\tilde{V}_2\right) \rVert_{\mathbb{L}^2(B_2)}^2, 
            \end{equation*}
            which, by using $(\ref{EstSka})$ and knowing that $h \, < \, 2$, can gives us 
            \begin{equation*}
                \left\lVert \tilde{V}_2\right\rVert_{\mathbb{L}^2(B_2)} \, = \, \mathcal{O}\left( a^{-h} \right).
            \end{equation*}
	 	\end{enumerate}
	 	            This ends the proof of Proposition \ref{prop-scoeff}.
	 	\section{Derivation of the linear algebraic system} \label{sec-proof-la}
	 	In this section, we shall rigorously present the derivation of the linear algebraic system associated with our main theorem by proving Proposition \ref{prop-la-1}, in $D_1$ and $D_2$, respectively, as follows. \\ \newline 
	 	 To investigate the linear algebraic system in $D_1$, we multiply each side of $(\ref{*add3})$ by $\eta_1$, and we take the inverse of the operator $\TT_{k ,1}$, to get
	 		\begin{equation}\label{LS-D1-2}
	 			\eta_1 \, E_1(x) \, -  \, k^2 \, \eta_1 \, \eta_2 \, \TT_{k ,1}^{-1}\int_{D_{2}}\Upsilon_k(x, y) \cdot E_2(y) \, dy \, = \, \eta_1 \, \TT_{k ,1}^{-1}E_1^{Inc}(x), \quad x \in D_{1},
	 		\end{equation}
            where $\Upsilon_k(x, y)$ is the Dyadic Green's kernel given by \eqref{dyadicG}.
	 		\begin{enumerate}
                    \item The algebraic system with respect to $\overset{1}{\PP}\left( E_1 \right)$. \\
	 		By multiplying by $\mathcal{P}(x, z_1)$ on the both sides of \eqref{LS-D1-2},  integrating over $D_1$,
            taking the adjoint operator of $\TT_{-k, 1}^{-1}$ and using $(\ref{DefW1})$, we obtain
	 		\begin{equation*}
	 			\eta_1 \, \int_{D_{1}}\mathcal{P}(x, z_1) \cdot E_1(x) \,dx \, - \, k^2 \, \eta_1 \, \eta_2 \, \int_{D_{1}}W_1(x) \cdot \int_{D_{2}}\Upsilon_k(x, y) \cdot E_2(y)\,dy\,dx \, = \, \eta_1 \, \int_{D_{1}}W_1(x) \cdot E_1^{Inc}(x)\,dx.
	 		\end{equation*}
            The above equation suggest the coming one,
	 		\begin{eqnarray}\label{LS-D1-3}
	 			\eta_1 \, \int_{D_{1}}\mathcal{P}(x, z_1) \cdot \overset{1}{\PP}\left( E_1 \right)(x)\,dx & \, - \, & \, k^2\, \eta_1 \, \eta_2  \,  \int_{D_{1}} W_1(x) \cdot \int_{D_{2}} \Phi_{k}(x, y) \, \overset{1}{\PP}\left(E_2\right)(y)\,dy\,dx \nonumber \\ &=& \, \eta_1 \, \int_{D_{1}}W_1(x) \cdot E_1^{Inc}(x)\,dx  \notag\\
	 			&+& k^2 \, \eta_1 \eta_2 \int_{D_{1}}W_1(x) \cdot \int_{D_{2}}\Upsilon_k(x, y) \cdot \overset{3}{\PP}\left(E_2\right)(y)\,dy\,dx \, - \, J_{1}(z_{1}),
	 		\end{eqnarray}
	 		where we have used $(\ref{grad-M-1st})$, and we have denoted by $J_{1}(z_{1})$ the following term
	 		\begin{equation}\notag
	 			J_{1}(z_{1}) \, := \, \eta_1 \, \int_{D_{1}}\mathcal{P}(x, z_1) \cdot \overset{3}{\PP}\left(E_1\right)(x)\,dx,
	 		\end{equation}
	 		which can be estimated, using $(\ref{def-eta})$, as 
	 		\begin{equation}\label{EstJ1z1}
	 			\left\vert J_{1}(z_{1}) \right\vert \; \lesssim \; \left\vert \eta_{1} \right\vert \, \left\lVert\mathcal{P}(\cdot, z_1)\right\rVert_{\mathbb{L}^2(D_1)} \, \left\lVert \overset{3}{\PP}\left(E_1 \right)\right\rVert_{\mathbb{L}^2(D_1)} \, = \, \mathcal{O}\left( a^2 \; \left\lVert\overset{3}{\PP}\left(\tilde{E}_1\right)\right\rVert_{\mathbb{L}^2(B_1)} \right).
	 		\end{equation}
	 		By writing 
	 		\begin{equation*}
	 			W_1 \, = \, \overset{1}{\PP}\left(W_1\right) \, + \, \overset{2}{\PP}\left(W_1\right) \, + \, \overset{3}{\PP}\left(W_1\right),
	 		\end{equation*}
                 returning to $(\ref{LS-D1-3})$ and using $(\ref{EstJ1z1})$, we can obtain the following expression
	 		\begin{eqnarray}\label{LS-D1-split}
                    \nonumber
	 			\eta_1 \, \int_{D_{1}}\mathcal{P}(x, z_1) \cdot \overset{1}{\PP}\left(E_1\right)(x)\,dx \, &-& \,k^2\, \eta_1 \, \eta_2  \, \int_{D_{1}}\overset{1}{\PP}\left(W_1\right)(x) \cdot \int_{D_{2}} \Phi_{k}(x, y) \, \overset{1}{\PP}\left(E_2\right)(y)\,dy\,dx \\ &=& \, \eta_1 \, \int_{D_{1}}\overset{1}{\PP}\left(W_1\right)(x) \cdot E_1^{Inc}(x)\,dx\,  \notag\\ 
	 			&+&  k^2\, \eta_1 \, \eta_2  \, \int_{D_{1}}\overset{1}{\PP}\left(W_1\right)(x) \cdot \int_{D_{2}} \Phi_{k}(x, y) \, \overset{3}{\PP}\left(E_2\right)(y)\,dy\,dx \, + \, Error_{1},
	 		\end{eqnarray}
            with 
            \begin{equation}\label{Equa08093}
                Error_{1} \, := \, J_{3} + \, k^2 \, J_{4,2} \, + \, k^2\,J_{4,3} \, + \, k^2\, J_{2,2} \, + \, k^2\, J_{2,3} + \O\left(a^2 \left\lVert \overset{3}{\PP}\left(\tilde{E}_1\right)\right\rVert_{\mathbb{L}^2(B_1)}\right),
            \end{equation}
	 		where we have used the fact that 
                 \begin{eqnarray*}
                  \int_{D_{1}}\overset{2}{\PP}\left(W_1\right)(x) \cdot E_1^{Inc}(x)\, dx \, &=& \, 0, \\
                  \int_{D_{1}}\overset{1}{\PP}\left(W_1\right)(x) \cdot \int_{D_{2}} \underset{y}{\nabla} \, \underset{y}{\nabla} \, \Phi_{k}(x, y) \cdot \overset{3}{\PP}\left(E_2\right)(y)\,dy\,dx \, &=& \, 0,
                 \end{eqnarray*}   
                     and we have denoted by\footnote{The formula of $J_{2,i}$, for $i=2,3$, has been derived by using $(\ref{grad-M-1st})$.}
	 		\begin{eqnarray}\notag
	 			J_{2,i} \, &:=& \, \eta_1 \, \eta_2  \, \left\langle \overset{i}{\PP}\left(W_1\right), N_{D_{2}}^{k}\left(\overset{1}{\PP}\left(E_2\right)\right) \right\rangle_{\mathbb{L}^{2}(D_{1})} ,\quad\mbox{for}\quad i=2, 3.\notag\\
	 			J_3 \, &:=& \, \eta_1 \, \left\langle \overset{3}{\PP}\left(W_1\right),  E_1^{inc}\right\rangle_{\mathbb{L}^{2}(D_{1})}. \notag\\
	 			J_{4, i} \, &:=& \,  \eta_1 \, \eta_2 \, \left\langle \overset{i}{\PP}\left(W_1\right), \left( - \nabla M_{D_{2}}^{k} \, + \, k^{2} \, N_{D_{2}}^{k} \right)\left( \overset{3}{\PP}\left(E_2\right) \right)\right\rangle_{\mathbb{L}^{2}(D_{1})} , \quad\mbox{for}\quad i=2, 3.\notag
	 		\end{eqnarray}
	 	Then, by using Taylor expansions and the formula $(\ref{RefNeeded1})$, we rewrite $J_{2,i}$ as 
   \begin{equation}\label{Equa0932}
       J_{2, i} \, := \, J_{2, i}^{(1)} \, + \, J_{2, i}^{(2)} \, + \, J_{2, i}^{(3)},
   \end{equation}
where 
\begin{eqnarray*}
    J_{2, i}^{(1)}&:=& \eta_1 \eta_2 \int_{D_1} \overset{i}{\PP}(W_1)(x)\cdot\underset{y}{\nabla}(\Phi_k I)(z_1, z_2)\int_{D_2}\mathcal{P}(y, z_2)\, \overset{1}{\PP}(E_2)(y)\,dy\,dx\notag\\
    |J_{2, i}^{(1)}|&\lesssim& \;|\eta_1| \; |\eta_2| \; \left
    \lVert \overset{i}{\PP}(W_1)\right\rVert_{\mathbb{L}^2(D_1)}\cdot\left\lVert\underset{y}{\nabla}(\Phi_k I)(z_1, z_2)\int_{D_2}\mathcal{P}(y, z_2)\, \overset{1}{\PP}(E_2)(y)\,dy\right\rVert_{\mathbb{L}^2(D_1)}\notag\\
    &\lesssim& \;a^5 \; d^{-2}\;  \left\lVert \overset{i}{\PP}(\tilde{W}_1)\right\rVert_{\mathbb{L}^2(B_1)}\left\lVert\overset{1}{\PP}(\tilde{E}_2)(y)\right\rVert_{\mathbb{L}^2(B_2)}
\end{eqnarray*}
   and 
	 			\begin{eqnarray}\label{es-J21-1}
                \nonumber
	 			J_{2, i}^{(2)}&:=& \eta_1 \eta_2 \int_{D_{1}}\overset{i}{\PP}\left(W_1\right)(x) \cdot \Bigg[ \\ \nonumber && \int_{D_{2}}\int_0^1 (1-t) (y-z_2)^\perp\underset{x}{\nabla} \underset{x}{\nabla}\Phi_{k}(z_1, z_2+t(y-z_2)) \cdot (y-z_2)\,dt\cdot \overset{1}{\PP}\left(E_2\right)(y)\,dy \notag\\
                &-&  \int_{D_{2}}\int_0^1 \underset{x}{\nabla} \underset{x}{\nabla}\Phi_{k}(z_1+t(x-z_1),z_2) \cdot (x-z_1)\,dt\cdot (y-z_2)\cdot\overset{1}{\PP}\left(E_2\right)(y)\,dy\Bigg] dx,\notag\\
	 			\left\vert J_{2, i}^{(2)} \right\vert \, 
	 			& \overset{(\ref{def-eta})}{\lesssim} &     \, a^6 \,  d^{-3} \, \left\lVert\overset{i}{\PP}\left(\tilde{W}_1\right)\right\rVert_{\mathbb{L}^2(B_1)} \, \left\lVert\overset{1}{\PP}\left(\tilde{E}_2 \right)\right\rVert_{\mathbb{L}^2(B_2)} ,
	 			\end{eqnarray}
	 			and 
	 			\begin{eqnarray}\label{es-J21-2}
                \nonumber
	 			&&	J_{2, i}^{(3)} \\ \nonumber &:=&\eta_1\eta_2 \int_{D_{1}} \overset{i}{\PP}\left(W_1\right)(x) \cdot \int_{D_{2}}\int_0^1 (1-t)(y-z_2)^\perp \cdot \int_0^1 \underset{x}{\nabla}\left(\underset{y}{\nabla} \underset{y}{\nabla} \Phi_k\right)(z_1+s(x-z_1), z_2+t(y-z_2))\notag\\
	 				&\cdot& \mathcal{P}(x, z_1)\,ds\cdot (y-z_2)\,dt\cdot\overset{1}{\PP}\left(E_2\right)\,dy\,dx.\notag\\ 
	 				\left\vert J_{2, i}^{(3)} \right\vert & \lesssim & \nonumber \left\vert \eta_1 \right\vert \left\vert \eta_2 \right\vert \, a^3 \, \frac{1}{|z_1-z_2|^4} |D_1|^\frac{1}{2} |D_2|^\frac{1}{2} \left\lVert\overset{i}{\PP}\left(W_1\right)\right\rVert_{\mathbb{L}^2(D_1)} \left\lVert\overset{1}{\PP}\left(E_2\right)\right\rVert_{\mathbb{L}^2(D_2)} \\ &\lesssim&  \, a^7 \,  d^{-4} \, \left\lVert\overset{i}{\PP}\left(\tilde{W}_1\right)\right\rVert_{\mathbb{L}^2(B_1)} \left\lVert\overset{1}{\PP}\left(\tilde{E}_2 \right)\right\rVert_{\mathbb{L}^2(B_2)}.
	 			\end{eqnarray}
	 			Hence, by returning to $(\ref{Equa0932})$, using $(\ref{es-J21-1})$ and $(\ref{es-J21-2})$, and the fact that $t<1$, we deduce  
	 			\begin{equation}\label{es-J2i}
	 				|J_{2, i}|\lesssim a^5 d^{-2} \left\lVert\overset{i}{\PP}\left(\tilde{W}_1\right)\right\rVert_{\mathbb{L}^2(B_1)}\left\lVert\overset{1}{\PP}\left(\tilde{E}_2\right)\right\rVert_{\mathbb{L}^2(B_2)}.
	 			\end{equation}
     For the estimation of $J_3$, using $(\ref{def-eta})$ and the smoothness of the incident field, it is obvious that
	 			\begin{equation}\label{es-J3}
	 				|J_3|\lesssim |\eta_1|\left\lVert\overset{3}{\PP}\left(W_1\right)\right\rVert_{\mathbb{L}^2(D_1)}\left\lVert E^{Inc}_1\right\rVert_{\mathbb{L}^2(D_1)} \, = \, \mathcal{O}\left(a \, \left\lVert\overset{3}{\PP}\left(\tilde{W}_1\right)\right\rVert_{\mathbb{L}^2(B_1)} \right).
	 			\end{equation}
	Now, we focus on the estimation of $J_{4,i}$, for $i = 2, 3$. 		
        \begin{enumerate}
            \item[]
            \item Estimation of $J_{4,2}$. By its definition, we have 
            \begin{eqnarray}\label{J42=0}
            \nonumber
                J_{4, 2} \, &:=& \,  \eta_1 \, \eta_2 \, \left\langle \overset{2}{\PP}\left(W_1\right), \left( - \nabla M_{D_{2}}^{k} \, + \, k^{2} \, N_{D_{2}}^{k} \right)\left( \overset{3}{\PP}\left(E_2\right) \right)\right\rangle_{\mathbb{L}^{2}(D_{1})} \\ 
                &=& \,  \eta_1 \, \eta_2 \, \left\langle \left( - \nabla M_{D_{1}}^{-k} \, + \, k^{2} \, N_{D_{1}}^{-k} \right)\left(\overset{2}{\PP}\left(W_1\right)\right), \overset{3}{\PP}\left(E_2\right) \right\rangle_{\mathbb{L}^{2}(D_{2})} \, \overset{(\ref{grad-M-2nd})}{=} \, 0.
            \end{eqnarray}
            \item[]
            \item Estimation of $J_{4,3}$. By its definition, we have 
            \begin{eqnarray*}
                J_{4, 3} \, &:=& \,  \eta_1 \, \eta_2 \, \left\langle \overset{3}{\PP}\left(W_1\right), \left( - \nabla M_{D_{2}}^{k} \, + \, k^{2} \, N_{D_{2}}^{k} \right)\left( \overset{3}{\PP}\left(E_2\right) \right)\right\rangle_{\mathbb{L}^{2}(D_{1})} \\
                & = & \, - \, \eta_1 \, \eta_2 \, \left\langle \overset{3}{\PP}\left(W_1\right),  \nabla M_{D_{2}}\left( \overset{3}{\PP}\left(E_2\right) \right)\right\rangle_{\mathbb{L}^{2}(D_{1})} \\
                & + & \,  \eta_1 \, \eta_2 \, \left\langle \overset{3}{\PP}\left(W_1\right), \left( - \left( \nabla M_{D_{2}}^{k} \, - \, \nabla M_{D_{2}} \right) \, + \, k^{2} \, N_{D_{2}}^{k} \right)\left( \overset{3}{\PP}\left(E_2\right) \right)\right\rangle_{\mathbb{L}^{2}(D_{1})} \\
                & \underset{(\ref{expansion-Nk})}{\overset{(\ref{expansion-gradMk})}{\simeq}}& \, - \, \eta_1 \, \eta_2 \, \left\langle \overset{3}{\PP}\left(W_1\right),  \nabla M_{D_{2}}\left( \overset{3}{\PP}\left(E_2\right) \right)\right\rangle_{\mathbb{L}^{2}(D_{1})} \, +  \,  \eta_1 \, \eta_2 \, \frac{k^{2}}{2} \, \left\langle \overset{3}{\PP}\left(W_1\right), N_{D_{2}}\left( \overset{3}{\PP}\left(E_2\right) \right)\right\rangle_{\mathbb{L}^{2}(D_{1})} \\
            \end{eqnarray*}
            which, by taking the modulus on its both sides and using $(\ref{def-eta})$, gives us
        \begin{equation*} 
             \left\vert J_{4, 3} \right\vert \,  \lesssim  \, a^{-2} \, \left\Vert \overset{3}{\PP}\left(W_1\right) \right\Vert_{\mathbb{L}^{2}(D_{1})} \,  \left\Vert \overset{3}{\PP}\left(E_2\right)  \right\Vert_{\mathbb{L}^{2}(D_{2})} \, \left[ \left\Vert \nabla M \right\Vert_{\mathcal{L}\left( \mathbb{L}^{2}(D_{2}); \mathbb{L}^{2}(D_{1}) \right)} \, + \, \left\Vert  N \right\Vert_{\mathcal{L}\left( \mathbb{L}^{2}(D_{2}); \mathbb{L}^{2}(D_{1}) \right)}  \right], 
        \end{equation*}     
        hence, by knowing that $\left\Vert  N \right\Vert_{\mathcal{L}\left( \mathbb{L}^{2}(D_{2}); \mathbb{L}^{2}(D_{1}) \right)} \; \lesssim \; \left\Vert \nabla M \right\Vert_{\mathcal{L}\left( \mathbb{L}^{2}(D_{2}); \mathbb{L}^{2}(D_{1}) \right)} \, = \, 1$, we deduce 
             \begin{equation}\label{es-J4i}
             \left\vert J_{4, 3} \right\vert \, \lesssim \, a \, \left\lVert\overset{3}{\PP}\left(\tilde{W}_1\right)\right\rVert_{\mathbb{L}^2(B_1)} \left\lVert\overset{3}{\PP}\left(\tilde{E}_2\right)\right\rVert_{\mathbb{L}^2(B_2)}. 
            \end{equation}
        \end{enumerate}
        Then, by gathering $(\ref{es-J2i}), (\ref{es-J3}), (\ref{J42=0}), (\ref{es-J4i})$ and using Proposition \ref{prop-scoeff}, the error term given by $(\ref{Equa08093})$ becomes
	    \begin{equation}\label{Est-Err-1}
         Error_{1} \, = \, \mathcal{O}\left(a^{4}  \right)  \, + \, \mathcal{O}\left( \, a^4 \,  \left\lVert\overset{3}{\PP}\left(\tilde{E}_2\right)\right\rVert_{\mathbb{L}^2(B_2)}\right) \, + \, \mathcal{O}\left( a^8 \, d^{-2} \, \left\lVert\overset{1}{\PP}\left(\tilde{E}_2\right)\right\rVert_{\mathbb{L}^2(B_2)} \right) \, + \, \mathcal{O}\left(a^2 \left\lVert \overset{3}{\PP}\left(\tilde{E}_1\right)\right\rVert_{\mathbb{L}^2(B_1)}\right).
    \end{equation} 			
	 Besides, since $\overset{1}{\PP}\left(W_1\right)\in \mathbb{H}_0(\div=0)$, we have 
	 \begin{equation}\label{P1W1}
	 	\overset{1}{\PP}\left(W_1\right)=Curl (A_1)\quad\mbox{with}\quad \div(A_1)=0, \, \nu\times A_1=0. 
	 \end{equation}
  Then, by using $(\ref{P1W1})$ and the notations given by $(\ref{def-F_j})$, we rewrite $(\ref{LS-D1-split})$ as
	 		\begin{eqnarray}\label{LS-D1-change}
	 			\eta_1 \, \int_{D_{1}}\mathcal{P}(x, z_1)\cdot Curl (F_1)(x)\,dx \, &-& \, k^2 \, \eta_1 \, \eta_2  \, \int_{D_{1}}Curl (A_1)(x) \cdot \int_{D_{2}} \Phi_{k}(x, y) \, Curl (F_2)(y)\,dy\,dx\notag\\ \nonumber
	 			&=& \eta_1\int_{D_{1}}Curl(A_1)(x)\cdot E^{Inc}_1(x)\,dx \\ &+& \, k^2 \, \eta_1 \, \eta_2 \,  \, \int_{D_{1}} Curl(A_1)(x) \cdot \int_{D_{2}} \Phi_{k}(x, y) \; \overset{3}{\PP}\left(E_2\right)(y)\,dy\,dx + Error_1, 
	 		\end{eqnarray}
            with $Error_{1}$ being estimated by $(\ref{Est-Err-1})$. Next, we use integration by parts to rewrite \eqref{P1W1} term by term. 
    \begin{enumerate}
        \item[]
        \item For the first term on the L.H.S there holds, 
	 		\begin{equation}\label{L1}
	 			\int_{D_{1}}\mathcal{P}(x, z_1)\cdot Curl(F_1)(x)\,dx=\int_{D_{1}}Curl(\mathcal{P}(x, z_1))\cdot F_1(x)\,dx=\mathcal{K}\cdot \int_{D_{1}} F_1(x)\,dx,
	 		\end{equation}
	 		where $\mathcal{K}$ is a constant tensor defined by
	 		\begin{equation}\label{def-K}
	 			\mathcal{K}:= \underset{x}{Curl}\left(\mathcal{P}(x, z_1)\right)=\left(\begin{array}{ccc}
	 			0 & 0 & 0 \\ 
	 			0 & 0 & -1 \\ 
	 			0 & 1 & 0 \\ 
	 			0 & 0 & 1 \\ 
	 			0 & 0 & 0 \\ 
	 			1 & 0 & 0 \\ 
	 			0 & -1 & 0 \\ 
	 			1 & 0 & 0 \\ 
	 			0 & 0 & 0
	 			\end{array} 
	 			\right).
	 		\end{equation}
	 		\item[] 
	 		\item For the second term on the L.H.S there holds, 
	 		\begin{eqnarray}\label{L-2}
                    \nonumber
	 			\cdots &:=& \int_{D_{1}}Curl (A_1)(x) \cdot \int_{D_{2}} \Phi_{k}(x, y) \, Curl (F_2)(y)\,dy\,dx \\ \nonumber &=& \, \int_{D_{1}} A_1(x) \cdot \underset{x}{Curl} \left(\int_{D_{2}} \Phi_{k}(x, y) \, Curl\left(F_2\right)(y)\,dy\right)\,dx \\ \nonumber
                      &=& \, \int_{D_{1}} A_1(x) \cdot \underset{x}{Curl} \left(\int_{D_{2}}  \underset{x}{Curl}\left(\Phi_{k}(x, y) \, F_2(y)\right)\,dy\right)\,dx \\
                      &=& k^2\, \int_{D_{1}} A_1(x) \cdot \int_{D_{2}} \Upsilon_k(x, y) \cdot F_2(y) \, dy \, dx. 
	 		\end{eqnarray}
   \item[]
   \item For the first term on the R.H.S there holds,
	 		\begin{equation}\label{R-1}
	 		\int_{D_1}Curl(A_1)(x)\cdot E^{Inc}_1(x)\,dx=\int_{D_{1}}A_1(x)\cdot Curl(E_1^{Inc})(x) \overset{(\ref{EincHinc})}{=} \, i \, k \, \int_{D_{1}} A_1(x) \cdot H_1^{Inc}(x)\,dx.
	 		\end{equation}
   \item[]
   \item For the second term on the R.H.S there holds, 
	 		\begin{eqnarray}\label{R-2}
	 			\int_{D_{1}} Curl(A_1)(x) \cdot \int_{D_{2}} \Phi_{k}(x, y) \, \overset{3}{\PP}\left(E_2 \right)(y)\,dy\,dx&=& \int_{D_1} A_1(x) \cdot \underset{x}{Curl} \left(\int_{D_{2}}\Phi_{k}(x, y) \, \overset{3}{\PP}\left(E_2\right)(y)\,dy\right)\,dx\notag\\
	 			&=&\int_{D_{1}} A_1(x) \cdot \int_{D_{2}}\underset{x}{\nabla}\Phi_{k}(x, y)\times \overset{3}{\PP}\left(E_2\right)(y)\,dy\,dx.
	 		\end{eqnarray}
    \end{enumerate}
	 		Then, using \eqref{L1}, \eqref{L-2},  \eqref{R-1} and \eqref{R-2}, we can rewrite \eqref{LS-D1-change} as 
	 		\begin{eqnarray*}\label{LS-mid1}
	 			 \eta_1 \, \mathcal{K}\cdot\int_{D_{1}} F_1(x)\,dx \, &-& \, k^4 \, \eta_1 \, \eta_2 \,   \int_{D_{1}} A_1(x) \cdot \int_{D_{2}} \Upsilon_k(x, y) \cdot F_2(y)\,dy\,dx\notag\\ \nonumber
	 			&=& i \, \eta_1 \, k \, \int_{D_{1}} A_1(x) \cdot H^{Inc}_1(x)\,dx \\  &+& \, k^2 \, \eta_1 \, \eta_2 \, \int_{D_{1}} A_1(x) \cdot \int_{D_{2}}\underset{x}{\nabla}\Phi_{k}(x, y)\times\overset{3}{\PP}\left(E_2\right)(y)\,dy\,dx+ Error_1.
	 		\end{eqnarray*}
	 		By the use of Taylor expansions for $H_1^{Inc}(\cdot), \underset{x}{\nabla} \Phi_{k}(\cdot, \cdot)$ and $ \Upsilon_{k}(\cdot, \cdot)$, we obtain
	 		\begin{eqnarray}\label{AL-prime1} \nonumber
	 		\eta_1 \, \mathcal{K} \cdot \int_{D_{1}} F_1(x)\,dx \, &-& \, \eta_1 \, \eta_2 \, k^4 \, \int_{D_{1}} A_1(x)\,dx \cdot \Upsilon_k(z_1, z_2) \cdot \int_{D_2}F_2(y)\,dy\notag\\ \nonumber
	 		&=& i \, \eta_1 \, k \, \int_{D_{1}} A_1(x)\,dx \cdot H^{Inc}_1(z_1) \\ \nonumber &+& \, \eta_1 \, \eta_2 \, k^2 \, \int_{D_{1}} A_1(x)\,dx \cdot \underset{x}{\nabla}\Phi_k(z_1, z_2)\times \int_{D_{2}}\overset{3}{\PP}\left(E_2\right)(y)\,dy \\ &+& Error_1 + Error_2,
	 		\end{eqnarray}
	 		where $Error_2$ admits the following estimation
            \begin{eqnarray*}
                Error_{2} \, &=& \, \mathcal{O}\left( a^{5} \, d^{-4} \, \left\Vert \tilde{F_{2}} \right\Vert_{\mathbb{L}^{2}(B_{2})} \, \left\Vert \tilde{A_{1}} \right\Vert_{\mathbb{L}^{2}(B_{1})} \right) \, + \, \mathcal{O}\left( \ a^{2} \, \left\Vert \tilde{A_{1}} \right\Vert_{\mathbb{L}^{2}(B_{1})} \right) \\
                &+& \, \mathcal{O}\left(a^{5} \, d^{-3} \, \left\Vert \overset{3}{\PP}\left(\tilde{E_2}\right) \right\Vert_{\mathbb{L}^{2}(B_{2})} \, \left\Vert \tilde{A_{1}} \right\Vert_{\mathbb{L}^{2}(B_{1})} \right).
            \end{eqnarray*}
            Besides, thanks to Friedrichs Inequality, see \cite[Corollary 3.2]{S-F-Ineq}, we know that 
            \begin{equation}\label{FI}
                \left\Vert \tilde{F_{2}} \right\Vert_{\mathbb{L}^{2}(B_{2})} \; \lesssim \; \left\Vert Curl\left( \tilde{F_{2}} \right) \right\Vert_{\mathbb{L}^{2}(B_{2})} \quad \text{and} \quad  \left\Vert \tilde{A_{1}} \right\Vert_{\mathbb{L}^{2}(B_{1})} \; \lesssim \; \left\Vert Curl\left( \tilde{A_{1}} \right) \right\Vert_{\mathbb{L}^{2}(B_{1})},
            \end{equation}
            which, by scaling back the rotational operator, gives us
            \begin{equation}\label{F2aP1E2}
                \left\Vert \tilde{F_{2}} \right\Vert_{\mathbb{L}^{2}(B_{2})} \;  \lesssim  \; a \, \left\Vert \widetilde{Curl\left( F_{2} \right) }\right\Vert_{\mathbb{L}^{2}(B_{2})} \, \overset{(\ref{def-F_j})}{=} \; a \, \left\Vert \overset{1}{\mathbb{P}}(\widetilde{E}_2)\right\Vert_{\mathbb{L}^{2}(B_{2})},
            \end{equation} 
            and
            \begin{equation*}   
  \left\Vert \tilde{A_{1}} \right\Vert_{\mathbb{L}^{2}(B_{1})} \;  \lesssim  \; a \, \left\Vert \widetilde{Curl\left( A_{1} \right) }\right\Vert_{\mathbb{L}^{2}(B_{1})} \; \overset{(\ref{P1W1})}{=} \; a \; \left\Vert 
	 	\overset{1}{\PP}\left(\tilde{W}_1\right) \right\Vert_{\mathbb{L}^{2}(B_{1})}.
            \end{equation*}
            Hence, we deduce that 
	 		\begin{eqnarray*}
	 			Error_2 \, &=& \, \O\left( a^7 d^{-4}\left\lVert\overset{1}{\PP}\left(\tilde{W}_1\right)\right\rVert_{\mathbb{L}^2(B_1)}\left\lVert\overset{1}{\PP}\left(\tilde{E}_2\right)\right\rVert_{\mathbb{L}^2(B_2)}\right) \, + \, \mathcal{O}\left(  \, a^3 \,  \left\lVert\overset{1}{\PP}\left(\tilde{W}_1\right)\right\rVert_{\mathbb{L}^2(B_1)} \right) \\
	 			&+& \O\left( a^6 d^{-3} \left\lVert\overset{1}{\PP}\left(\tilde{W}_1\right)\right\rVert_{\mathbb{L}^2(B_1)}\left\lVert\overset{3}{\PP}\left(\tilde{E}_2\right)\right\rVert_{\mathbb{L}^2(B_2)}\right),
	 		\end{eqnarray*}
    which, by using Proposition \ref{prop-scoeff}, can be reduced to 
    	 		\begin{equation}\label{Est-Err-2}
	 			Error_2 \, = \, \O\left( a^{8-h} \, d^{-4} \, \left\lVert\overset{1}{\PP}\left(\tilde{E}_2\right)\right\rVert_{\mathbb{L}^2(B_2)}\right) \, + \, \mathcal{O}\left( a^{4-h} \right) 
	 			\, + \, \O\left( a^{7-h} \, d^{-3} \, \left\lVert\overset{3}{\PP}\left(\tilde{E}_2\right)\right\rVert_{\mathbb{L}^2(B_2)}\right).
	 		\end{equation}
In addition, by knowing that 
	 		\begin{equation}\notag
	 			\frac{1}{4}\left(\mathcal{K}^\perp\cdot\mathcal{K}\cdot\mathcal{K}^\perp\right)\cdot\mathcal{K}=I,
	 		\end{equation}
	 		setting  
	 		\begin{equation}\label{def-A1-mu}
	 	 \bm{\mathcal{A}_1^{(1)}}:=\frac{1}{4}\left(\mathcal{K}^\perp\cdot\mathcal{K}\cdot\mathcal{K}^\perp\right)\cdot\int_{D_{1}} A_1(x)\,dx \, \in \, \mathbb{C}^{3\times 3},
	 		\end{equation}
    and using $(\ref{Qj1})$ 
    we rewrite $(\ref{AL-prime1})$, after multiplication of its both sides by $\frac{1}{4}\left(\mathcal{K}^\perp\cdot\mathcal{K}\cdot\mathcal{K}^\perp\right)$, as 
	 		\begin{equation}\label{la-Q1}
	 			Q_{1} \, - \, k^2 \, \eta_1 \, \bm{\mathcal{A}_1^{(1)}} \cdot \left(k^2 \, \Upsilon_{k}(z_1, z_2)\cdot Q_{2} +\underset{x}{\nabla}\Phi_k(z_1, z_2) \times R_{2} \right) \,
	 			= \, i \, k \, \eta_1 \, \bm{\mathcal{A}_1^{(1)}} \cdot H^{Inc}_1 (z_1) \, + \, Error_1 \, + \, Error_2.
	 		\end{equation}
	 		Moreover, following a similar argument to \cite[Appendix A.1, Formula (A.30)]{CGS}, we can know that
	 	\begin{equation}\label{express-A1-mu}
	 	\bm{\mathcal{A}_1^{(1)}} \; = \; \frac{a^{5-h}}{\pm \, c_0} \; {\bf P}_{0, 1}^{(1)} \;  + \; \mathcal{O}\left(a^5\right),
	 		\end{equation}
    where ${\bf P}_{0, 1}^{(1)}$ is the tensor given by 
    \begin{equation}\notag
	 			{\bf P}_{0, 1}^{(1)} \, := \, \int_{B_1}\phi_{n_{0}, B_{1}}(y)\,dy \, \otimes \, \int_{B_1}\phi_{n_{0}, B_{1}}(y)\,dy. 
	 		\end{equation}
    Then, by plugging $(\ref{express-A1-mu})$ into $(\ref{la-Q1})$, we deduce that 
    \begin{equation}\label{Equa1024}
	  Q_{1} \, - \, k^2 \, \eta_1 \, \frac{a^{5-h}}{\pm \, c_0} \; {\bf P}_{0, 1}^{(1)} \cdot \left(k^2 \, \Upsilon_{k}(z_1, z_2)\cdot Q_{2} +\underset{x}{\nabla}\Phi_k(z_1, z_2) \times R_{2} \right) \, = \, i \, k \, \eta_1 \, \frac{a^{5-h}}{\pm \, c_0} \; {\bf P}_{0, 1}^{(1)} \cdot H^{Inc}_1 (z_1) \, + Error_{3}, 
   \end{equation}
   where 
   \begin{equation*}
       Error_{3} \, := \, \mathcal{O}\left(k^{2} \, \eta_{1} \, a^{5} \, \left(k^2 \, \Upsilon_{k}(z_{1},z_{2}) \cdot Q_{2} \, + \, \nabla \Phi_{k}(z_{1},z_{2}) \times R_{2} \right)  \right) \, + \, \mathcal{O}\left(k \, \eta_{1} \, a^{5} \right) \, + \, Error_1 \, + \, Error_2.
   \end{equation*}
   To estimate the above term, we start by taking the modulus on its both sides and we use $(\ref{def-eta}), (\ref{Qj1})$ and $(\ref{F2aP1E2})$, to obtain
      \begin{eqnarray*}
       \left\vert Error_{3} \right\vert \, & \lesssim &    \, a^{7} \, d^{-3} \,  \left\Vert \overset{1}{\PP}\left(\tilde{E}_2\right) \right\Vert_{\mathbb{L}^{2}(B_{2})} \, +  \, a^{6} \, d^{-2} \, \left\Vert \overset{3}{\PP}\left(\tilde{E}_2\right) \right\Vert_{\mathbb{L}^{2}(B_{2})} \, +  \, a^{3} \, + \, \left\vert Error_1 \right\vert \, + \, \left\vert Error_2 \right\vert \\ 
       & \overset{(\ref{Est-Err-1})}{\underset{(\ref{Est-Err-2})}{\lesssim}} & \resizebox{.75\hsize}{!}{$ a^{\min\left(7-3t; 8-h-4t\right)}  \left\Vert \overset{1}{\PP}\left(\tilde{E}_2\right) \right\Vert_{\mathbb{L}^{2}(B_{2})} +  a^{\min(4;7-h-3t)} \, \left\Vert \overset{3}{\PP}\left(\tilde{E}_2\right) \right\Vert_{\mathbb{L}^{2}(B_{2})} + a^{2}  \left\Vert \overset{3}{\PP}\left(\tilde{E}_1\right) \right\Vert_{\mathbb{L}^{2}(B_{2})} +  a^{3} $} 
   \end{eqnarray*}
To finish with the estimation of $Error_{3}$, we use the a-prior estimates presented in Proposition \ref{lem-es-multi}, under the conditions $t \, < \, 1$ and $h \, < \, 1$, we can obtain 
    \begin{equation*}
        Error_{3} \; = \; \mathcal{O}\left(a^{\min(3; 7-2h-3t)} \right).
    \end{equation*} 
    Regarding the estimation of $Error_{3}$ and the scale of $\eta_{1}$, see $(\ref{def-eta})$, the R.H.S of \eqref{Equa1024} is of order $a^{3-h}$ that dominates. This justifies the first equation in \eqref{eq-al-D1}.
      \item[]  		
	 \item The algebraic system with respect to $\overset{3}{\PP}\left(E_1\right)$. \\
      By integrating the equation $(\ref{LS-D1-2})$, over $D_1$, using $(\ref{def-Vm}), (\ref{grad-M-1st})$ and $(\ref{RefNeeded1})$, we obtain 
	 		\begin{eqnarray*}
	 	\eta_1\int_{D_{1}}\overset{3}{\PP}\left(E_1\right)(x)\,dx \, &-& \, k^2 \, \eta_1 \, \eta_2 \, \int_{D_{1}}V_1(x) \cdot \int_{D_{2}}\Upsilon_k(x, y) \cdot \overset{3}{\PP}\left(E_2\right)(y)\,dy\,dx\notag\\
	 	&=&\eta_1 \, \int_{D_{1}}V_1(x) \cdot E_1^{Inc}(x)\,dx \, +  \, k^2 \, \eta_1 \, \eta_2 \, \int_{D_{1}}V_1(x) \cdot \int_{D_{2}}\Phi_{k}(x, y) \, \overset{1}{\PP}\left(E_2\right)(y)\,dy\,dx.
	 		\end{eqnarray*}
	 	Then, by taking the Taylor expansion of $\Upsilon_k(\cdot, \cdot)$ at $(z_1, z_2)$, we obtain
	 		\begin{eqnarray}\label{eq-P31-mid20}
	 			\eta_1 \, \int_{D_{1}}\overset{3}{\PP}\left(E_1\right)(x)\,dx  &-& k^2\, \eta_1 \, \eta_2 \, \int_{D_{1}}V_1(x)\,dx\cdot\Upsilon_k(z_1, z_2) \cdot \int_{D_{2}}\overset{3}{\PP}\left(E_2\right)(y)\,dy\notag\\ \nonumber
	 			&=& \eta_1 \, \int_{D_{1}} V_1(x) \cdot E_1^{Inc}(x)\,dx \, + \, k^2 \, \eta_1 \, \eta_2 \, \int_{D_{1}} V_1(x) \cdot \int_{D_{2}}\Phi_{k}(x, y)\overset{1}{\PP}\left(E_2\right)(y)\,dy\,dx \\ &+& \, k^2 \, J_{6} \, +\, k^2 \, J_{7},
	 		\end{eqnarray}
	 		where
	 		\begin{eqnarray}
	 			J_6 \, &:=& \,  \eta_1\eta_2\int_{D_{1}} V_1(x) \cdot \int_{D_{2}}\int_0^1 \underset{x}{\nabla}(\Upsilon_k)(z_1 + t(x-z_1), y)\cdot\mathcal{P}(x, z_1)\,dt\cdot\overset{3}{\PP}\left(E_2\right)(y)\,dy\,dx,\notag\\
	 			J_7 \, &:=& \, \eta_1\eta_2\int_{D_{1}} V_1(x) \cdot \int_{D_{2}}\int_0^1 \underset{y}{\nabla}(\Upsilon_k)(x, z_2+t(y-z_2))\cdot\mathcal{P}(y, z_2)\,dt\cdot\overset{3}{\PP}\left(E_2\right)(y)\,dy\,dx.\notag
	 		\end{eqnarray}
    Next, we estimate $J_{6}$ and $J_{7}$.
    \begin{enumerate} 
        \item[]
        \item Estimation of $J_{6}$. 
        \begin{eqnarray}\label{es-J6}
	 	\left\vert J_{6} \right\vert \, & \lesssim & \, \left\vert \eta_1 \right\vert \, \left\vert \eta_2 \right\vert \left\lVert V_1\right\rVert_{\mathbb{L}^2(D_1)} \, \left\lVert\int_{D_{2}}\int_0^1 \underset{x}{\nabla}(\Upsilon_k)(z_1 + t(\cdot-z_1), y)\cdot\mathcal{P}(\cdot, z_1)\,dt\cdot\overset{3}{\PP}\left(E_2\right)(y)\,dy\right\rVert_{\mathbb{L}^2(D_1)}\notag\\
       & \overset{(\ref{def-eta})}{\lesssim} & a^5 \, d^{-4} \,  \left\lVert\tilde{V}_1\right\rVert_{\mathbb{L}^2(B_1)} \,  \left\lVert\overset{3}{\PP}\left(\tilde{E}_2\right)\right\rVert_{\mathbb{L}^2(B_2)}.
	 		\end{eqnarray}
        \item[]
        \item Estimation of $J_{7}$. \\
        Similarly, as $J_{7}$'s expression is almost identical to $J_{6}$'s expression, we deduce from $(\ref{es-J6})$, 
	 		\begin{equation}\label{es-J7}
	 			 J_7  \, = \, \mathcal{O}\left( a^5 \, d^{-4} \,  \left\lVert\tilde{V}_1\right\rVert_{\mathbb{L}^2(B_1)} \, \left\lVert\overset{3}{\PP}\left(\tilde{E}_2\right)\right\rVert_{\mathbb{L}^2(B_2)} \right).
	 		\end{equation}
    \end{enumerate}
    In addition to the estimation of $J_{6}$ and $J_{7}$, we need to analyze the second term on the R.H.S of \eqref{eq-P31-mid20}, that we denote by $J_{5}$. For $J_5$, by using the expression \eqref{def-F_j}, there holds
	 		\begin{equation*}\label{J5}
	 			\resizebox{.95\hsize}{!}{$J_{5} \, = \, k^2 \, \eta_1 \, \eta_2 \int_{D_{1}} V_1(x) \cdot \int_{D_{2}}\Phi_{k}(x, y) \, \underset{y}{Curl}\left(F_2\right)(y)\,dy\,dx \, = \,k^2 \,  \eta_1 \, \eta_2 \, \int_{D_{1}}V_1(x) \cdot \int_{D_{2}}\underset{x}{\nabla}\Phi_{k}(x, y)\times F_2(y)\,dy\,dx.$} 
	 		\end{equation*}
	 	 		By further expanding $\nabla \Phi_{k}(\cdot, \cdot)$ at $(z_1, z_2)$, we obtain  
    \begin{equation}\label{J5}
	 			J_{5} \, = \, k^2 \, \eta_1 \, \eta_2 \int_{D_{1}} V_1(x) \, dx \, \cdot \underset{x}{\nabla} \Phi_{k}(z_{1}, z_{2}) \times \int_{D_{2}} \, F_2(y)\,dy\, + \, k^2 \, J_{8} \, + \, k^2 \, J_{9},
	 		\end{equation}
    where 
	 		\begin{eqnarray}
	 			J_8&:=& \eta_1 \eta_2 \int_{D_{1}} V_1(x) \cdot \int_{D_{2}}\int_0^1 \underset{x}{\nabla}\left(\underset{x}{\nabla}\Phi_k\right)\left(z_1 + t(x-z_1), z_2\right)\cdot (x-z_1)\,dt\times F_2(y)\,dy\,dx,\notag\\
	 			J_9&:=& \eta_1 \eta_2 \int_{D_{1}} V_1(x) \cdot \int_{D_{2}}\int_0^1 \underset{y}{\nabla}\left(\underset{x}{\nabla}\Phi_k\right)\left(z_1, z_2+ t(y-z_2)\right)\cdot (y-z_2)\,dt\times F_2(y)\,dy\,dx.\notag
	 		\end{eqnarray}
    Next, we estimate $J_{8}$ and $J_{9}$.
	\begin{enumerate}
	    \item[]
        \item Estimation of $J_{8}$. \\
        \begin{eqnarray}\label{es-J8}
	 			\left\vert J_8 \right\vert & \lesssim & \left\vert \eta_1 \right\vert \, \left\vert \eta_2 \right\vert \left\lVert V_1\right\rVert_{\mathbb{L}^2(D_1)}\, \left\lVert \int_{D_{2}}\int_0^1 \underset{x}{\nabla}\left(\underset{x}{\nabla}\Phi_k\right)\left(z_1 + t(\cdot-z_1), z_2\right)\cdot (\cdot-z_1)\,dt\times F_2(y)\,dy\right\rVert_{\mathbb{L}^2(D_1)}\notag\\ 
	 	  & \overset{(\ref{def-eta})}{\lesssim} & \, a^{2} \, d^{-3} \, \left\lVert V_1\right\rVert_{\mathbb{L}^2(D_1)} \, d^{-3} \, \left\lVert F_2\right\rVert_{\mathbb{L}^2(D_2)} \, = \, \mathcal{O}\left( a^6 \, d^{-3} \, \left\lVert \tilde{V}_1\right\rVert_{\mathbb{L}^2(B_1)} \, \left\lVert\overset{1}{\PP}\left(\tilde{E}_2\right)\right\rVert_{\mathbb{L}^2(B_2)} \right),
	 		\end{eqnarray}
    where the last estimation is justified by the use of Friedrichs Inequality, see $(\ref{FI})$. 
        \item[]
        \item Estimation of $J_{9}$. \\
        Similarly, as $J_{9}$'s expression is almost identical to $J_{8}$'s expression, we deduce, from $(\ref{es-J8})$, 
	 		\begin{equation*}\label{es-J9}
	 		J_9 \, = \, \O\left(a^6 \,  d^{-3} \, \left\lVert \tilde{V}_1\right\rVert_{\mathbb{L}^2(B_1)} \, \left\lVert\overset{1}{\PP}\left(\tilde{E}_2\right)\right\rVert_{\mathbb{L}^2(B_2)}\right).
	 		\end{equation*}
	\end{enumerate} 	
 Hence, the term $J_{5}$, given by $(\ref{J5})$, becomes,  
     \begin{equation}\label{J5=}
	 			J_{5} \, =\, k^2 \, \eta_1 \, \eta_2 \int_{D_{1}} V_1(x) \, dx \, \cdot \underset{x}{\nabla} \Phi_{k}(z_{1}, z_{2}) \times \int_{D_{2}} \, F_2(y)\,dy\, + \, \mathcal{O}\left(a^6 \,  d^{-3} \, \left\lVert \tilde{V}_1\right\rVert_{\mathbb{L}^2(B_1)} \, \left\lVert\overset{1}{\PP}\left(\tilde{E}_2\right)\right\rVert_{\mathbb{L}^2(B_2)}\right).
	 		\end{equation}
    Consequently, by using $(\ref{es-J6}), (\ref{es-J7})$ and $(\ref{J5=})$, the equation $(\ref{eq-P31-mid2})$ takes the following form,  
    \begin{eqnarray}\label{eq-P31-mid2}
    \nonumber
	 \eta_1 \, \int_{D_{1}}\overset{3}{\PP}\left(E_1\right)(x)\,dx \, &-& \, k^2 \, \eta_1 \, \eta_2 \, \bm{\mathcal{A}^{(2)}_1} \cdot \left[ \Upsilon_k(z_1, z_2) \cdot \int_{D_{2}}\overset{3}{\PP}\left(E_2\right)(y)\,dy \, + \, \underset{x}{\nabla} \Phi_{k}(z_{1}, z_{2}) \times \int_{D_{2}} \, F_2(y)\,dy \,\right] \\
        &=&   \, \eta_1 \, \bm{\mathcal{A}^{(2)}_1} \cdot E_1^{Inc}(z_{1}) \, + \, Error^{(2)}_1, 
    \end{eqnarray}
    where, we have used a Taylor expansion for the incident field to obtain,  
    \begin{eqnarray}\label{J10}
    \nonumber
    Error^{(2)}_1 &:=&
         \eta_1\int_{D_{1}}V_1(x) \cdot \int_0^1 \nabla E_1^{Inc}(z_1 + t(x-z_1))\cdot(x-z_1)\,dt\,dx \\ &+& \, \mathcal{O}\left(a^6 \,  d^{-3} \, \left\lVert \tilde{V}_1\right\rVert_{\mathbb{L}^2(B_1)} \, \left\lVert\overset{1}{\PP}\left(\tilde{E}_2\right)\right\rVert_{\mathbb{L}^2(B_2)}\right) \, + \, \mathcal{O}\left( a^5 \, d^{-4} \,  \left\lVert\tilde{V}_1\right\rVert_{\mathbb{L}^2(B_1)} \, \left\lVert\overset{3}{\PP}\left(\tilde{E}_2\right)\right\rVert_{\mathbb{L}^2(B_2)} \right),
    \end{eqnarray}
    and we have denoted by $\bm{\mathcal{A}^{(2)}_1}$ the following notation 
    \begin{equation}\notag
	 	\bm{\mathcal{A}^{(2)}_1} \; := \; \int_{D_{1}} V_1(x)\,dx.
    \end{equation}
    Obviously, the first term on the R.H.S of $(\ref{J10})$, admits the following estimation
    \begin{equation}\label{es-J10}
	 J_{10} \; = \; \eta_1 \, \int_{D_{1}}V_1(x) \cdot \int_0^1 \nabla E_1^{Inc}(z_1 + t(x-z_1))\cdot(x-z_1)\,dt\,dx \; = \; \mathcal{O}\left( 
 \, a^2 \, \left\lVert\tilde{V}_1\right\rVert_{\mathbb{L}^2(B_1)} \right).
	 		\end{equation}

	 		Now, for $\bm{\mathcal{A}^{(2)}_1}$, after scaling from $D_{1}$ to $B_{1}$, the following expansion holds, 
	 	\begin{equation*}
	 		\bm{\mathcal{A}_1^{(2)}} \, = \,  a^3 \int_{B_1} \tilde{V}_1 (x) \,dx\, \overset{(\ref{RefNeeded1})}{=}\, a^3 \sum_n \left\langle \tilde{V}_{1}, e_{n, B_{1}}^{(3)} \right\rangle_{\mathbb{L}^{2}(B_{1})} \otimes \int_{B_1} e_{n, B_{1}}^{(3)}(x)\,dx, 
            \end{equation*}
            which, by using $(\ref{V1-en3})$, becomes,
    \begin{eqnarray*}
	 \bm{\mathcal{A}_1^{(2)}} \, &=& \,  a^3 \, \sum_n \frac{1}{\left( 1 \, + \, \eta_1 \, \lambda_{n}^{(3)}(B_{1})\right)}\int_{B_1} e_{n, B_{1}}^{(3)}(x)\,dx  \otimes\int_{B_1} e_{n, B_{1}}^{(3)}(x)\,dx \\
     &+& \,  a^3 \, \eta_1 \, \sum_n  \frac{}{\left( 1 \, + \, \eta_1 \, \lambda_{n}^{(3)}(B_{1})\right)}\left\langle \tilde{V}_1, \mathcal{S}^{ka}\left(e_{n, B_{1}}^{(3)}\right) \right\rangle_{\mathbb{L}^{2}(B_{1})} \otimes\int_{B_1} e_{n, B_{1}}^{(3)}(x)\,dx,
	 		\end{eqnarray*}
	 		where $\mathcal{S}^{ka}(e_{n,B_{1}}^{(3)})$ is given by \eqref{def-Ska}. Then, thanks to $(\ref{EstSka})$ and $(\ref{es-V1})$, we can estimate the second term on the R.H.S to obtain
	       \begin{equation*}
	        \bm{\mathcal{A}_1^{(2)}} \, \overset{\eqref{es-V12}}{=} \, a^3 \, \sum_n\frac{1}{\left( 1 \, + \, \eta_1 \, \lambda_{n}^{(3)}(B_{1})\right)}\int_{B_1} e_{n,B_{1}}^{(3)}(x)\,dx\otimes \int_{B_1} e_{n,B_{1}}^{(3)}(x)\,dx \, + \, \mathcal{O}\left( k^{2} \, a^{7} \right),
	       \end{equation*}		
	 		and, by multiplying its both sides by $\eta_{1}$ and using $\eta_{1}$'s expression, given by $(\ref{def-eta})$, we obtain    
	 		\begin{eqnarray}\label{Equa1124}
	 	\eta_1 \, \bm{\mathcal{A}_1^{(2)}} \, 
	 			&=& \eta_0 \, a^3 \, \sum_n \frac{1}{\left( \eta_0 \, \lambda_{n}^{(3)}(B_{1}) \, + \, a^2 \right)}\int_{B_1} e_{n,B_{1}}^{(3)}(x)\,dx\otimes \int_{B_1} e_{n,B_{1}}^{(3)}(x)\,dx + \mathcal{O}\left(k^{2} \, a^{5} \right)\notag\\
	 			&=& \, a^3 \, \sum_n \frac{1}{\lambda_{n}^{(3)}(B_{1})}\int_{B_1} e_{n,B_{1}}^{(3)}(x)\,dx\otimes \int_{B_1} e_{n,B_{1}}^{(3)}(x)\,dx\notag\\
	 			&-& a^{3} \, \sum_n \frac{a^{2}}{\lambda_{n}^{(3)}(B_{1}) \, \left(\eta_0 \, \lambda_{n}^{(3)}(B_{1}) \, + \, a^{2} \right)} \, \int_{B_1} e_{n,B_{1}}^{(3)}(x)\,dx\otimes \int_{B_1} e_{n,B_{1}}^{(3)}(x)\,dx +\O\left(k^{2} \, a^{5}\right).
	 		\end{eqnarray}
	 	Clearly, the second term on the R.H.S can be estimated as, 
	 		\begin{eqnarray*}
	 			\left\vert   \sum_n \frac{ a^{3} \, a^{2}}{\lambda_{n}^{(3)}(B_{1}) \, \left(\eta_{0} \, \lambda_{n}^{(3)}(B_{1}) \, + \, a^{2} \right)}\int_{B_1} e_{n,B_{1}}^{(3)}(x)\,dx \otimes \int_{B_1} e_{n,B_{1}}^{(3)}(x)\,dx \right\vert \, &  \lesssim & \, a^5 \, \sum_n \left|\left\langle I , e_{n,B_{1}}^{(3)}\right\rangle_{\mathbb{L}^{2}(B_{1})}\right|^2 \\  &=& \, \mathcal{O}\left(a^5\right).
	 		\end{eqnarray*}
	 	Hence, by plugging the above estimation into $(\ref{Equa1124})$, we deduce 
        \begin{equation}\label{eta1A12=}
	 	\eta_1 \, \bm{\mathcal{A}_1^{(2)}} \, = \,  \, a^3 \, {\bf P}_{0, 1}^{(2)} \, + \, \mathcal{O}\left(a^5\right),
	 		\end{equation}
        where
    	 		\begin{equation}\notag
	 			{\bf P}_{0, 1}^{(2)} \, := \, \sum_n \frac{1}{\lambda_{n}^{(3)}(B_{1})} \, \int_{B_1} e_{n,B_{1}}^{(3)}(x)\,dx \otimes \int_{B_1} e_{n,B_{1}}^{(3)}(x)\,dx.
	 		\end{equation}
    This implies, by returning to $(\ref{eq-P31-mid2})$ and using $(\ref{eta1A12=})$, together with the notations $(\ref{Qj1})$, 
        \begin{equation*}\label{R1-proof}
	 R_{1} \, - \, k^2  \, a^{3} \, {\bf P}_{0, 1}^{(2)} \cdot \left[ \Upsilon_k(z_1, z_2) \cdot R_{2} \, + \, \underset{x}{\nabla} \Phi_{k}(z_{1}, z_{2}) \times Q_{2} \,\right] \, =   \, a^{3} \,  {\bf P}_{0, 1}^{(2)} \cdot E_1^{Inc}(z_{1}) \, + \, Error^{(2)}_{2}, 
    \end{equation*}
    where $Error^{(2)}_{2}$ being given by
    \begin{equation*}
        Error^{(2)}_{2} \, := \, Error^{(2)}_{1} \, + \, \mathcal{O}\left(a^{5} \, \left[ \Upsilon_k(z_1, z_2) \cdot R_{2} \, + \, \underset{x}{\nabla} \Phi_{k}(z_{1}, z_{2}) \times Q_{2} \,\right] \right) \, + \, \mathcal{O}\left(a^{5} \right),
    \end{equation*}
    where $Error^{(2)}_{1}$ is defined by $(\ref{J10})$.
    We need to estimate $Error^{(2)}_{2}$. To do this, by taking the modulus on the both sides we obtain 
    \begin{equation*}
       \left\vert Error^{(2)}_{2} \right\vert \,  \lesssim  \, \left\vert Error^{(2)}_{1} \right\vert \, + \, a^{8} \, \left[ d^{-3} \, \left\Vert \overset{3}{\PP}\left(\tilde{E}_2\right) \right\Vert_{\mathbb{L}^{2}(B_{2})} \,  + \, d^{-2} \, \left\Vert \tilde{F}_2 \right\Vert_{\mathbb{L}^{2}(B_{2})} \,\right]  \, + \,  a^{5},
       \end{equation*}
       hence, by using $(\ref{es-J10}), (\ref{J10}), (\ref{F2aP1E2})$ and the a-priori estimates presented by Proposition \ref{lem-es-multi} and Proposition \ref{prop-scoeff}, we deduce  
       \begin{equation*}
         Error^{(2)}_{2}  \, 
        = \, \mathcal{O}\left(a^4 \right) \, + \, \mathcal{O}\left( a^{7-h} \, d^{-4} \right).
    \end{equation*}
            \end{enumerate}
 	\bigskip
 	To investigate the linear algebraic system in $D_2$, we multiply each side of $(\ref{LS-es})$ by $\eta_2$, and take the inverse of the operator $\TT_{k ,2}$, to get
    
	 	\begin{equation}\label{LS-D2-3}
	 	\eta_2 \, E_2(x) \, - \, k^2\, \eta_2 \, \eta_1 \, \TT_{k, 2}^{-1}\int_{D_{1}}\Upsilon_{k}(x, y) \cdot E_1(y)\,dy \, = \, \eta_2 \, \TT_{k, 2}^{-1}\left(E_2^{Inc}\right)(x), \quad x \in D_{2}.
	 	\end{equation}
	\begin{enumerate}
	    \item[]
        \item The algebraic system with respect to $\overset{1}{\PP}\left(E_2\right)$.

	 	Then, by multiplying the both sides of \eqref{LS-D2-3} by $\overset{1}{\PP}(\mathcal{P}(x, z_2))$, with $x \in D_{2}$, integrating over $D_2$ and using $(\ref{AI1} )$, we get  	 
	 	\begin{equation*}\label{eq-D2-mid2}
	 	\eta_2 \, \int_{D_{2}}\overset{1}{\PP}\left(\mathcal{P}(x, z_2)\right) \cdot E_2(x)\,dx \, - \, k^2 \, \eta_2 \, \eta_1 \, \int_{D_{2}}W_2(x) \cdot \int_{D_{1}}\Upsilon_{k}(x, y) \cdot  E_1(y)\,dy\,dx \, = \, \eta_2 \, \int_{D_{2}}W_2(x) \cdot E_2^{Inc}(x)\,dx,
	 	\end{equation*}
   which can be rewritten as
	 	\begin{eqnarray}\label{LS-D2-P1-main}
	 		\eta_2  \int_{D_{2}}\mathcal{P}(x, z_2) \cdot \overset{1}{\PP}\left(E_2\right)(x) dx &-&  k^2 \eta_2  \eta_1  \int_{D_{2}}\overset{1}{\PP}\left(W_2\right)(x) \cdot \int_{D_{1}} \Phi_{k}(x, y)  \overset{1}{\PP}\left(E_1\right)(y) dy dx \notag\\ \nonumber
	 		&=& \eta_2\int_{D_{2}}\overset{1}{\PP}\left(W_2\right)(x) \cdot E_2^{Inc}(x)dx \\ \nonumber &+&  k^2 \eta_2 \eta_1  \int_{D_{2}}\overset{1}{\PP}\left(W_2\right)(x) \cdot \int_{D_{1}}\Phi_{k}(x, y)  \overset{3}{\PP}\left(E_1\right)(y) dy dx \\ &+&  k^2  L_{1}  +  L_{2}  +  k^2  L_{3},  
	 	\end{eqnarray}
   where we have used $(\ref{grad-M-1st}), (\ref{prop-es-W})$ and the fact that  
	 	\begin{equation}\notag
	 		\int_{D_{2}} \overset{2}{\PP}\left(W_2\right)(x) \cdot E_2^{inc}(x)\,dx \, = \, \int_{D_{2}}\overset{1}{\PP}\left(\mathcal{P}(x, z_2)\right) \cdot \overset{3}{\PP}\left(E_2\right)(x)\,dx \, = \, 0.
	 	\end{equation}
	Besides we have used the following notations,
	 	\begin{eqnarray}\notag
	 		L_{1} \, &:=& \, \eta_2 \, \eta_1 \, \int_{D_{2}}\overset{3}{\PP}\left(W_2\right)(x) \cdot \int_{D_{1}} \Phi_{k}(x, y) \, \overset{1}{\PP}\left(E_1\right)(y)\,dy\,dx\notag\\
	 		L_2 \, &:=& \,  \eta_2 \, \int_{D_{2}}\overset{3}{\PP}\left(W_2\right)(x) \cdot E_2^{Inc}(x)\,dx\notag\\
	 		L_{3} \, &:=& \,  \eta_2 \, \eta_1 \, \int_{D_{2}}\overset{3}{\PP}\left(W_2\right)(x) \cdot \int_{D_{1}}\Upsilon_{k}(x, y) \cdot \overset{3}{\PP}\left(E_1\right)(y)\,dy\,dx.\notag
	 	\end{eqnarray}
   Next, we estimate $L_{1}, L_{2}$ and $L_{3}$. 
   \begin{enumerate}
       \item[]
       \item Estimation of $L_{1}$. \\
	 By Taylor expansion for $\Phi_{k}(x, \cdot)$ and the relation $(\ref{RefNeeded1})$, we obtain 
     \begin{eqnarray}\label{MLEqua0919}
     \nonumber
         L_{1} \,	&=& \, \eta_2 \, \eta_1 \,  \int_{D_{2}}\overset{3}{\PP}\left(W_2\right)(x) \cdot \int_{D_{1}} \int_{0}^{1} \underset{y}{\nabla}\Phi_{k}(x, z_1+t(y-z_{1})) \cdot (y - z_1) \, dt \, \overset{1}{\PP}\left(E_1\right)(y)\,dy\,dx \\
        \left\vert L_{1} \right\vert \, & \overset{(\ref{def-eta})}{\lesssim} & \,  a^5 \, d^{-2} \,  \left\lVert\overset{3}{\PP}\left(\tilde{W}_2\right)\right\rVert_{\mathbb{L}^2(B_2)}\left\lVert\overset{1}{\PP}\left(\tilde{E}_1\right)\right\rVert_{\mathbb{L}^2(B_1)} \, \underset{(\ref{max-P1P3})}{\overset{(\ref{prop-es-W})}{=}} \, \mathcal{O}\left( a^{11-2h} d^{-2}\right).
     \end{eqnarray}
    \item[]
    \item Estimation of $L_{2}$. \\  
    For $L_2$, we directly have
	 	\begin{equation}\label{es-L2}
	 	\left\vert L_{2} \right\vert \, \leq \, \left\vert \eta_2 \right\vert \, \left\lVert\overset{3}{\PP}\left(W_2\right)\right\rVert_{\mathbb{L}^2(D_2)} \, \left\lVert E_2^{Inc}\right\rVert_{\mathbb{L}^2(D_2)} \, \overset{(\ref{def-eta})}{=} \, \mathcal{O}\left( a^3 \, \left\lVert\overset{3}{\PP}\left(\tilde{W}_2\right)\right\rVert_{\mathbb{L}^2(B_2)} \right)  \, \overset{(\ref{prop-es-W})}{=} \, \mathcal{O}\left( a^{8-h} \right).
	 	\end{equation}
   \item[]
   \item Estimation of $L_{3}$. \\ 
       For $L_{3}$, we directly get by using the Calder\'{o}n-Zygmund inequality that  
	 	\begin{equation}\label{es-L3i}
	 	 \left\vert L_{3} \right\vert \,  \lesssim \, a\,  \left\lVert\overset{3}{\PP}\left(\tilde{W}_2\right)\right\rVert_{\mathbb{L}^2(B_2)} \, \left\lVert\overset{3}{\PP}\left(\tilde{E}_1 \right)\right\rVert_{\mathbb{L}^2(B_1)}  \, \underset{(\ref{max-P1P3})}{\overset{(\ref{prop-es-W})}{=}} \, \mathcal{O}\left( a^{8-h} \right).
	 	\end{equation}	
    \end{enumerate}	
	 	Combining $(\ref{MLEqua0919}), (\ref{es-L2})$ and \eqref{es-L3i}, for $h<1$, it allows us to write \eqref{LS-D2-P1-main} as
	 	\begin{eqnarray}\label{mid-D2-1}
	 		\eta_2 \, \int_{D_{2}}\mathcal{P}(x, z_2) \cdot \overset{1}{\PP}\left(E_2\right)(x)\,dx \, &-& k^2\, \eta_2 \, \eta_1 \,  \int_{D_{2}}\overset{1}{\PP}\left(W_2\right)(x) \cdot \int_{D_{1}}\Phi_{k}(x, y) \, \overset{1}{\PP}\left(E_1\right)(y)\,dy\,dx\notag\\ \nonumber
	 		&=& \eta_2\int_{D_{2}}\overset{1}{\PP}\left(W_2\right)(x) \cdot  E_2^{Inc}(x)\,dx \\ &+& k^2\, \eta_2 \, \eta_1 \, \int_{D_{2}}\overset{1}{\PP}\left(W_2\right)(x) \cdot \int_{D_{1}}\Phi_{k}(x, y) \, \overset{3}{\PP}\left(E_1\right)(y)\,dy\,dx \, + \, \mathcal{O}\left( a^{8-h}\right).
	 	\end{eqnarray}
   For the above equation, we need to analyze the terms containing $\overset{1}{\PP}\left(W_2\right)$. To do this, we recall $(\ref{def-F_j})$ that we have  
	 	\begin{equation}\label{W2A2}
	 		\overset{1}{\PP}\left(W_2\right) \, = \, Curl\left( A_2 \right), \quad \mbox{with}\quad \div \left( A_2 \right) \, = \, 0 \quad\mbox{and}\quad \nu\times A_2=0. 
	 	\end{equation}
   Then, by using $(\ref{W2A2})$ and $(\ref{def-F_j})$ in $(\ref{mid-D2-1})$, with help of integration by parts, we end up with the following equation, 
	 	\begin{eqnarray}\label{mid3-D2-P1}
	  \eta_2 \, \mathcal{K}  \cdot  \int_{D_{2}}F_2(x)\,dx \, &-& \, k^4 \, \eta_2 \, \eta_1 \, \int_{D_{2}}A_2(x) \cdot \int_{D_{1}}\Upsilon_{k}(x, y) \cdot F_1(y)\,dy\,dx\notag\\  \nonumber
	 		&=& \, i \, k \, \eta_2 \, \int_{D_{2}} A_2(x) \cdot  H^{Inc}_2(x)\,dx \\ &+& \, k^2 \, \eta_2 \, \eta_1 \, \int_{D_{2}}A_2(x) \cdot \int_{D_{1}}\underset{x}{\nabla}\Phi_{k}(x, y)\times\overset{3}{\PP}\left(E_1\right)(y)\,dy + \mathcal{O}\left( a^{ 8-h}\right), 
	 	\end{eqnarray}
   where $\mathcal{K}$ is the matrix given by $(\ref{def-K})$. Now, by expanding $\Upsilon_{k}(\cdot, \cdot)$ and $\nabla\Phi_{k}(\cdot, \cdot)$, at $(z_2, z_1)$, we derive from \eqref{mid3-D2-P1} the following equation
	 	\begin{eqnarray}\label{mid4-D2-P1}
	 	\eta_2 \,	\mathcal{K}\cdot\int_{D_{2}} F_2(x)\,dx &-& \, k^4 \, \eta_1 \, \eta_2 \, \int_{D_{2}} A_2(x)\,dx\cdot\Upsilon_{k}(z_2, z_1)\cdot\int_{D_{1}}F_1(y)\,dy\notag\\ \nonumber
	 		&=& \, i \, k \, \eta_2 \, \int_{D_{2}} A_2(x)\,dx\cdot H^{Inc}_2(z_2) \\ \nonumber &+& \, k^2 \, \eta_1 \, \eta_2 \, \int_{D_{2}}A_2(x)\,dx\cdot\underset{x}{\nabla}\Phi_{k}(z_2, z_1)\times\int_{D_{1}}\overset{3}{\PP}\left(E_1\right)(y)\,dy \\ &+& \, L_{4} \, + \, L_{5} \, + \, L_{6} \, + \, L_{7} \, + \, L_{8} \, + \, \mathcal{O}\left( a^{8-h}\right).  
	 	\end{eqnarray}
   Next, we set and we estimate the terms $L_{4}, \cdots, L_{8}$.
   \begin{enumerate}
       \item[] 
       \item Estimation of $L_{4}$.
	 	\begin{eqnarray*}
	 		L_4 \, &:=& \, k^4 \, \eta_1 \, \eta_2 \, \int_{D_{2}}A_2(x) \cdot \int_{D_{1}}\int_0^1 \underset{x}{\nabla}(\Upsilon_k)(z_2+t(x-z_2), z_1) \cdot \mathcal{P}(x, z_2)\,dt\cdot F_1(y)\,dy\,dx,\notag\\
	 		\left\vert L_4 \right\vert &\lesssim& \left\vert \eta_1 \right\vert \left\vert \eta_2 \right\vert \left\lVert A_2\right\rVert_{\mathbb{L}^2(D_2)}\left\lVert \int_{D_{1}}\int_0^1 \underset{x}{\nabla}(\Upsilon_k)(z_2+t(\cdot-z_2), z_1) \cdot \mathcal{P}(x, z_2)\,dt\cdot F_1(y)\,dy \right\rVert_{\mathbb{L}^2(D_2)}\notag\\
	 		&\lesssim& a^2 d^{-4} \left\lVert A_2\right\rVert_{\mathbb{L}^2(D_2)}\left\lVert F_1\right\rVert_{\mathbb{L}^2(D_1)}, 
	 	\end{eqnarray*}
   which, after scaling from $D_{j}$ to $B_{j}$, with $j = 1, 2$, and using the Friedrichs Inequality, we obtain 
   \begin{equation}\label{EstL4}
      L_{4} \, = \, \mathcal{O}\left( a^7 \, d^{-4} \, \left\lVert\overset{1}{\PP}\left(\tilde{W}_2\right)\right\rVert_{\mathbb{L}^2(B_2)}\left\lVert\overset{1}{\PP}\left(\tilde{E}_1\right)\right\rVert_{\mathbb{L}^2(B_1)}\right).
   \end{equation}
   \item[]
   \item  Estimation of $L_{5}$. Following the same argument, we have
	 	\begin{eqnarray}\label{EstL5}
	 		L_5 \, &:=& \, k^4 \, \eta_1 \, \eta_2 \,  \int_{D_{2}}A_2(x) \cdot \int_{D_{1}}\int_0^1 \underset{y}{\nabla}(\Upsilon_k)(z_2, z_1+t(y-z_1)) \cdot \mathcal{P}(y, z_1)\,dt\cdot F_1(y)\,dy\,dx,\notag\\
	 		|L_5|&\lesssim& a^7 d^{-4}\left\lVert\overset{1}{\PP}\left(\tilde{W}_2\right)\right\rVert_{\mathbb{L}^2(B_2)}\left\lVert\overset{1}{\PP}\left(\tilde{E}_1\right)\right\rVert_{\mathbb{L}^2(B_1)}.
	 	\end{eqnarray}
   \item[]
   \item Estimation of $J_{6}$.
	 	\begin{eqnarray}\label{EstL6}
	 		L_6&:=& i \, k \, \eta_2 \,  \int_{D_{2}}A_2(x) \cdot \int_0^1 \nabla H^{Inc}_2(z_2+t(x-z_2)) \cdot (x-z_2)\,dt\,dx,\notag\\
	 		|L_6|&\lesssim& \left\vert \eta_2 \right\vert \, \left\lVert A_2\right\rVert_{\mathbb{L}^2(D_2)}\left\lVert \int_0^1 \nabla H^{Inc}_2(z_2+t(\cdot - z_2)) \cdot (\cdot - z_2)\,dt\right\rVert_{\mathbb{L}^2(D_2)}\notag\\
	 		& \underset{(\ref{def-eta})}{
    \overset{(\ref{EincHinc})}{\lesssim}} &  a^4  \, \left\lVert \tilde{A}_2\right\rVert_{\mathbb{L}^2(B_2)} \; = \; \mathcal{O}\left( a^5 \,  \left\lVert\overset{1}{\PP}\left(\tilde{W}_2\right)\right\rVert_{\mathbb{L}^2(B_2)}\right),
	 	\end{eqnarray}
   where the last estimation is justified by the use of Friedrichs Inequality. 
   \item[]
   \item Estimation of $J_{7}$. 
	 	\begin{eqnarray}\label{EstL7}
	 		L_7 \, &:=& \, k^2 \,  \eta_1 \, \eta_2 \, \int_{D_{2}}A_2(x) \cdot \int_{D_{1}}\int_0^1 \underset{x}{\nabla}(\underset{x}{\nabla} \Phi_{k})(z_2+t(x-z_2), z_1) \cdot (x-z_2)\,dt\times\overset{3}{\PP}\left(E_1\right)(y)\,dy\,dx,\notag\\
	 		&& \qquad \overset{(\ref{def-eta})}{\lesssim} a^2 d^{-3} \left\lVert A_2\right\rVert_{\mathbb{L}^2(D_2)}\left\lVert\overset{3}{\PP}\left(E_1\right)\right\rVert_{\mathbb{L}^2(D_1)}\lesssim a^6 d^{-3} \left\lVert\overset{1}{\PP}\left(\tilde{W}_2\right)\right\rVert_{\mathbb{L}^2(B_2)}\left\lVert\overset{3}{\PP}\left(\tilde{E}_1\right)\right\rVert_{\mathbb{L}^2(B_1)}. 
	 	\end{eqnarray}
   \item[]
   \item Estimation of $L_{8}$.
   \begin{equation*}
       L_8 \, := \, k^2 \, \eta_1 \, \eta_2 \, \int_{D_{2}}A_2(x) \cdot \int_{D_{1}}\int_0^1 \underset{y}{\nabla}(\underset{x}{\nabla} \Phi_{k})(z_2, z_1+t(y-z_1)) \cdot (y-z_1)\,dt\times\overset{3}{\PP}\left(E_1\right)(y)\,dy\,dx.
   \end{equation*}
   Observe that $L_{8}$'s expression is almost identical to $L_{7}$'s expression. Hence, from the above estimation, we deduce that
	 	\begin{equation}\label{EstL8}
	 		L_8 \; = \; \mathcal{O}\left(a^6 \, d^{-3} \,  \left\lVert\overset{1}{\PP}\left(\tilde{W}_2\right)\right\rVert_{\mathbb{L}^2(B_2)}\left\lVert\overset{3}{\PP}\left(\tilde{E}_1\right)\right\rVert_{\mathbb{L}^2(B_1)} \right).	 	\end{equation}
   \end{enumerate}
   Finally, by gathering $(\ref{EstL4}), (\ref{EstL5}), (\ref{EstL6}), (\ref{EstL7})$ and $(\ref{EstL8})$, we deduce that with \eqref{d-a},
   \begin{equation}\label{EstL4+...+L8}
       L_{4} + \cdots + L_{8} \, \underset{(\ref{max-P1P3})}{\overset{(\ref{prop-es-W})}{=}} \, \mathcal{O}\left( a^{\min(9-h-4t;  6)}\right).
   \end{equation}
    By returning to $(\ref{mid4-D2-P1})$, using $(\ref{EstL4+...+L8})$ and $(\ref{Qj1})$, we deduce 
	 	\begin{eqnarray}\label{mid5-D2-P1}
   \nonumber
	 		\mathcal{K} \cdot Q_{2} \, &-& \,  k^4 \, \eta_2 \,  \int_{D_{2}}A_2(x)\,dx \cdot \Upsilon_k(z_2, z_1) \cdot Q_{1} \, - \,  k^2 \, \eta_2 \, \int_{D_{2}}A_2 (x)\,dx \cdot \underset{x}{\nabla}\Phi_{k}(z_2, z_1)\times R_{1}\\
	 		&=& \, i \, k \,  \eta_2 \, \int_{D_{2}}A_2 (x)\,dx \cdot H_2^{Inc}(z_2)  \, + \, \mathcal{O}\left(a^{\min(9-h-4t; 6)}\right).
	 	\end{eqnarray}
	 	Similarly to \eqref{def-A1-mu}, by denoting
	 	\begin{equation}\label{def-A21}
	 		\bm{\mathcal{A}_2^{(1)}}:=\frac{1}{4}\left(\mathcal{K}^\perp\cdot\mathcal{K}\cdot\mathcal{K}^\perp\right)\cdot\int_{D_{2}}A_2(x)\,dx,
	 	\end{equation}
        and multiplying the both sides of
	 	\eqref{mid5-D2-P1} by $\frac{1}{4}\left(\mathcal{K}^\perp\cdot\mathcal{K}\cdot\mathcal{K}^\perp\right)$, we obtain
	 	\begin{equation}\label{mid6}
	 	Q_{2}  -  k^2   \eta_2  \bm{\mathcal{A}_2^{(1)}} \cdot \left[ k^2    \Upsilon_{k}(z_2, z_1) \cdot Q_{1}  +  \underset{x}{\nabla}\Phi_{k}(z_2, z_1) \times R_{1} \right] 
	 		=  i  k  \eta_2  \bm{\mathcal{A}_2^{(1)}} \cdot H^{Inc}_2(z_2)  +  \mathcal{O}\left( a^{\min(9-h-4t; 6)}\right).
	 	\end{equation}
	 	
	 	Now, we need to evaluate the tensor $\bm{\mathcal{A}_2^{(1)}}$ given by $(\ref{def-A21})$. Indeed, by scaling from $D_{2}$ to $B_{2}$ and using the fact that $I \, = -\, Curl\left(  \mathcal{Q}(x) \right)$, where $\mathcal{Q}(x)$ is the matrix defined by
   	\begin{equation}\label{DefQ(x)}
	 	\mathcal{Q}(x) \, := \, \begin{pmatrix}
	 		0 & x_3 & 0 \\ 
	 		0 & 0 & x_1 \\ 
	 		x_2 & 0 & 0
	 		\end{pmatrix},
	\end{equation}
   we obtain
        \begin{equation}\notag
	 		\int_{D_{2}} A_2(x)\,dx \; = -\; a^3 \, \int_{B_2} \tilde{A}_2(x)\cdot Curl \mathcal{Q}(x)\,dx,
	 	\end{equation}
	 	which, by integration by parts and the fact that $Curl\left( \tilde{A}_{2} \right) \, = \, a \, \widetilde{Curl\left( A_{2} \right)}$, implies 
	 	\begin{eqnarray*}
	 		\int_{D_{2}}A_2(x)\,dx \, &=& -\, a^4 \, \int_{B_2} \widetilde{Curl A_2}(x)\cdot \mathcal{Q}(x)\,dx \, \overset{(\ref{W2A2})}{=} -\, a^4 \,  \int_{B_2} \overset{1}{\PP}\left(\tilde{W}_2\right)(x)\cdot \mathcal{Q}(x)\,dx\notag\\
	 		&=& - a^4 \, \sum_n \left\langle \tilde{W}_2, e_{n,B_{2}}^{(1)} \right\rangle_{\mathbb{L}^{2}(B_{2})} \otimes \left(\int_{B_2}e_{n,B_{2}}^{(1)}(x) \cdot \mathcal{Q}(x)\,dx\right)^{Tr},
            \end{eqnarray*}
            By keeping the dominant term of $\left\langle \tilde{W}_2, e_{n,B_{2}}^{(1)} \right\rangle_{\mathbb{L}^{2}(B_{2})}$, see for instance $(\ref{Equa1036})$, and using $(\ref{pre-cond})$, we obtain
            \begin{equation}\label{mid7} 
	 	\int_{D_{2}}A_2(x)\,dx \, =  \, a^5 \, \sum_n \left\langle \mathcal{P}(\cdot, 0), e_{n,B_{2}}^{(1)}\right\rangle_{\mathbb{L}^{2}(B_{2})} \, \otimes \left(\int_{B_2} \phi_{n,B_{2}}(x)\,dx\right)^{Tr}\, + \, \O(a^7),
	 	\end{equation}
   due to the fact that
   \begin{eqnarray*}
       &&a^4 \sum_n \left| k^2 a^2 \eta_2 \left\langle N_{B_2}^{-ka}\left(\overset{1}{\PP}(\tilde{W}_2)+\overset{3}{\PP}(\tilde{W}_2)\right), e_{n,B_{2}}^{(3)}\right\rangle_{\mathbb{L}^{2}(B_{2})} \otimes \int_{B_2} e_{n,B_{2}}^{(1)}(x)\cdot\mathcal{Q}(x)\,dx
       \right|\notag\\
       &\lesssim& a^6 \left(\left\lVert\overset{1}{\PP}(\tilde{W}_2)\right\rVert_{\mathbb{L}^2(B_2)}+\left\lVert\overset{3}{\PP}(\tilde{W}_2)\right\rVert_{\mathbb{L}^2(B_2)}\right)\overset{\eqref{prop-es-W}}{\lesssim} a^7.
   \end{eqnarray*}
	 	Since there holds
	 	\begin{eqnarray*}
	 	\left\langle \mathcal{P}(\cdot, 0), e_{n,B_{2}}^{(1)} \right\rangle_{\mathbb{L}^{2}(B_{2})} \, & \overset{\eqref{pre-cond}}{=} & \, \left\langle \mathcal{P}(\cdot, 0), Curl\left(\phi_{n,B_{2}}\right) \right\rangle_{\mathbb{L}^{2}(B_{2})} \\ &=& \, \left\langle Curl\left(\mathcal{P}(\cdot, 0)\right), \phi_{n,B_{2}}\right\rangle_{\mathbb{L}^{2}(B_{2})} \, \overset{\eqref{def-K}}{=} \, \left\langle \mathcal{K}, \phi_{n,B_{2}} \right\rangle_{\mathbb{L}^{2}(B_{2})},
	 	\end{eqnarray*}
	 	then, \eqref{mid7} indicates,
	 	\begin{equation*}\label{Eqau0640}
	 		\int_{D_{2}}A_2(x)\,dx \, = \, a^5 \, \sum_n  \, \left\langle \mathcal{K}, \phi_{n,B_{2}} \right\rangle_{\mathbb{L}^{2}(B_{2})} \otimes \left(\int_{B_2}\phi_{n,B_{2}}(x)\,dx\right)^{Tr}\,+\, \O(a^7).
	 	\end{equation*}
	 	Hence, $(\ref{def-A21})$ takes the following form, 
	 	\begin{equation}\label{es-A21}
	 \bm{\mathcal{A}_2^{(1)}} \, = \, a^5 \, {\bf P}_{0, 2}^{(1)}\,+\, \O(a^7),
	 	\end{equation}
   where the tensor ${\bf P}_{0, 2}^{(1)}$ is given by 
   \begin{equation*}
      {\bf P}_{0, 2}^{(1)} \, := \,  \sum_n  \, \int_{B_{2}} \phi_{n,B_{2}}(x) \, dx \, \otimes \int_{B_{2}} \phi_{n,B_{2}}(x) \, dx.
   \end{equation*}
   Then, by plugging $(\ref{es-A21})$ into $(\ref{mid6})$, we get 
   \begin{eqnarray}\label{Equa0748}
   \nonumber
	 	Q_{2} \, &-& \, k^2 \,  \eta_2 \,  a^5 \, {\bf P}_{0, 2}^{(1)}   \cdot \left[ k^2 \,  \Upsilon_{k}(z_2, z_1) \cdot Q_{1} \, + \, \underset{x}{\nabla}\Phi_{k}(z_2, z_1) \times R_{1} \right] 
	 		= \, i \, k \, \eta_2 \,  a^5 \, {\bf P}_{0, 2}^{(1)}  \cdot H^{Inc}_2(z_2) \\ &+&   \, \mathcal{O}\left(k^2 \, a^{7} \, \left[ k^2 \, \Upsilon_{k}(z_2, z_1) \cdot Q_{1} \, + \, \underset{x}{\nabla}\Phi_{k}(z_2, z_1) \times R_{1} \right] \right) \,  + \, \mathcal{O}\left(a^{\min(9-h-4t;  6)}\right).
	 	\end{eqnarray}
   We need to estimate the second term on the R.H.S of the above equation.  
   \begin{eqnarray*}
       \left\vert \cdots \right\vert \, &:=& \, \left\vert k^2 \, a^{7} \, \left[ k^2 \,  \Upsilon_{k}(z_2, z_1) \cdot Q_{1} \, + \, \underset{x}{\nabla}\Phi_{k}(z_2, z_1) \times R_{1} \right] \right\vert \\
       & \lesssim &  \, a^{7} \, \left[   d^{-3} \, \left\vert Q_{1} \right\vert \, + \, d^{-2} \, \left\vert R_{1} \right\vert \right]  \,
        \underset{(\ref{def-eta})}{\overset{(\ref{Qj1})}{\lesssim}}  \, a^{8} \, \left[   d^{-3} \, \left\Vert \tilde{F}_{1} \right\Vert_{\mathbb{L}^{2}(B_{1})} \, + \, d^{-2} \, \left\Vert \overset{3}{\PP}\left(\tilde{E}_1\right) \right\Vert_{\mathbb{L}^{2}(B_{1})} \right], 
   \end{eqnarray*}
   which, by using Friedrichs Inequality, gives us
   \begin{equation*}
       \left\vert \cdots \right\vert \, \lesssim \, \, a^{8} \, \left[   d^{-3} \, a \, \left\Vert \overset{1}{\PP}\left(\tilde{E}_1\right) \right\Vert_{\mathbb{L}^{2}(B_{1})} \, + \, d^{-2} \, \left\Vert \overset{3}{\PP}\left(\tilde{E}_1\right) \right\Vert_{\mathbb{L}^{2}(B_{1})} \right] \, \overset{(\ref{max-P1P3})}{=} \, \mathcal{O}\left( a^{10-h-3t}\right). 
   \end{equation*}
   Then, by plugging the above estimation into $(\ref{Equa0748})$, we obtain 
      \begin{eqnarray*}
   \nonumber
	 	Q_{2} \, &-& \, k^2 \,  \eta_2 \,  a^5 \, {\bf P}_{0, 2}^{(1)}   \cdot \left[ k^2 \, \Upsilon_{k}(z_2, z_1) \cdot Q_{1} \, + \, \underset{x}{\nabla}\Phi_{k}(z_2, z_1) \times R_{1} \right] \\
	 		&=& \, i \, k \, \eta_2 \,  a^5 \, {\bf P}_{0, 2}^{(1)}  \cdot H^{Inc}_2(z_2) + \, \mathcal{O}\left( a^{\min(9-h-4t; 6)}\right).
	 	\end{eqnarray*}
The R.H.S of the above equation makes sens under the condition 
  \begin{equation*}\label{condition-t,h}
      4 \, - \, h \, - \, 4t \, > \, 0.
  \end{equation*}
	
	 	\item The algebraic system with respect to $\overset{3}{\PP}\left(E_2\right)$.  \\
   By integrating $(\ref{LS-D2-3})$ over $D_2$ and using the definition of 
   $V_2$, given by $(\ref{def-Vm})$, we deduce that
	 	\begin{eqnarray*}\label{LS-P32-2}
	 		\eta_2 \, \int_{D_{2}}\overset{3}{\PP}\left(E_2\right)(x)\,dx \, &-& k^2 \, \eta_2 \, \eta_1 \, \int_{D_{2}} V_2(x) \cdot \int_{D_{1}} \Upsilon_k(x, y) \cdot \overset{3}{\PP}\left(E_1\right)(y)\,dy\,dx\notag\\
	 		&=&\eta_2 \, \int_{D_2}V_2(x) \cdot E_2^{Inc}(x)\,dx \, + \, k^2 \, \eta_1 \, \eta_2 \,\int_{D_2}V_2(x)\int_{D_{1}}\underset{x}{\nabla}\Phi_{k}(x, y)\times F_1(y)\,dy\,dx,
	 	\end{eqnarray*}
	 	where, on the R.H.S, we have used the fact that  
	 	\begin{eqnarray*}\label{mid1-P32}
   \nonumber
	 	\int_{D_{1}}\Upsilon_{k}(x, y) \cdot \overset{1}{\PP}\left(E_1\right)(y)\,dy \, \overset{(\ref{grad-M-1st})}{=} \,  \int_{D_{1}}\Phi_{k}(x, y)\overset{1}{\PP}\left(E_1\right)(y)\,dy \, & 
	 \overset{(\ref{def-F_j})}{=} & \int_{D_{1}}\Phi_{k}(x, y) \, Curl\left(F_1\right)(y)\,dy \\ &=& \int_{D_{1}}\underset{x}{\nabla}\Phi_{k}(x, y)\times F_1(y)\,dy.
	 	\end{eqnarray*}
   Hence, by taking the Taylor expansion of $\Upsilon_{k}(\cdot, \cdot)$ and $\Phi_{k}(\cdot, \cdot)$, at $(z_2, z_1)$, we have 
	 	\begin{eqnarray}\label{LS-P32-expan}
	 		\eta_2 \, \int_{D_{2}}\overset{3}{\PP}\left(E_2\right)(x)\,dx \, &-& \, k^2 \, \eta_2 \, \eta_1 \,  \int_{D_{2}}V_2(x)\,dx\cdot\Upsilon_{k}(z_2, z_1) \cdot \int_{D_{1}}\overset{3}{\PP}\left(E_1\right)(y)\,dy\notag\\
	 		&=& \eta_2\int_{D_{2}}V_2(x)\,dx\cdot E_2^{Inc} (z_2) \, + \,  k^2 \, \eta_1\eta_2\int_{D_{2}}V_2(x)\,dx\cdot\underset{x}{\nabla}\Phi_{k}(z_2, z_1)\times\int_{D_{1}} F_1(y)\,dy\notag\\
            &+& L_{9} + L_{10} + L_{11} + L_{12} + L_{13},
	 	\end{eqnarray}
   where $L_{9}, \cdots, L_{13}$ are defined and evaluated as follows.
   \begin{enumerate}
       \item[]
       \item Estimation of $L_{9}$. 
       \begin{eqnarray}\label{Est-L9}
       \nonumber
           	L_9 \, &:=& \, k^2 \,  \eta_2 \, \eta_1 \, \int_{D_{2}} V_2(x) \cdot \int_{D_{1}} \int_0^1 \underset{x}{\nabla} (\Upsilon_k)(z_2+t(x-z_2), z_1) \cdot \mathcal{P}(x, z_2)\,dt\cdot\overset{3}{\PP}\left(E_1\right)(y)\,dy\,dx \\
                 L_9  \, & \overset{(\ref{def-eta})}{=} & \, \mathcal{O}\left( a^5 d^{-4} \left\lVert\tilde{V}_2\right\rVert_{\mathbb{L}^2(B_2)}\left\lVert\overset{3}{\PP}\left(\tilde{E}_1\right)\right\rVert_{\mathbb{L}^2(B_1)} \right). 
       \end{eqnarray}
       \item[]
       \item Estimation of $L_{10}$. 
	 	\begin{equation*}
		L_{10} \, := \, k^2 \, \eta_2 \, \eta_1 \, \int_{D_{2}} V_2(x) \cdot \int_{D_{1}} \int_0^1 \underset{y}{\nabla} (\Upsilon_k)(z_2, z_1+t(y-z_1)) \cdot \mathcal{P}(y, z_1)\,dt\cdot\overset{3}{\PP}\left(E_1\right)(y)\,dy\,dx.
	 	\end{equation*}
	 	It is easy to see that $L_{10}$'s expression is almost identical to $L_{9}$'s expression. Hence, from $(\ref{Est-L9})$, we get  
        \begin{equation}\label{Est-L10}
            L_{10}  \,=\, \mathcal{O}\left( a^5 d^{-4} \left\lVert\tilde{V}_2\right\rVert_{\mathbb{L}^2(B_2)}\left\lVert\overset{3}{\PP}\left(\tilde{E}_1\right)\right\rVert_{\mathbb{L}^2(B_1)} \right).
        \end{equation}
  \item Estimation of $L_{11}$. 
  \begin{equation}\label{Est-L11}
      L_{11} \, := \, \eta_2\int_{D_{2}}V_2(x)\int_0^1 \underset{x}{\nabla} E_2^{Inc}(z_2+t(x-z_2)) \cdot (x-z_2)\,dt\,dx 
            \underset{(\ref{def-eta})}{\overset{(\ref{EincHinc})}{=}}  \mathcal{O}\left( a^4 \, \left\lVert \tilde{V}_2\right\rVert_{\mathbb{L}^2(B_2)} \right).
  \end{equation}
  \item[]
  \item Estimation of $L_{12}$. 
  \begin{eqnarray}\label{Est-L12}
  \nonumber
      L_{12} \, &:=& \, k^2 \, \eta_1 \, \eta_2 \,  \int_{D_{2}}V_2(x) \cdot \int_{D_{1}}\int_0^1 \underset{x}{\nabla}\left(\underset{x}{\nabla}\Phi_{k}\right)(z_2 + t(x-z_2), z_1) \cdot (x-z_2)\,dt \times F_1(y)\,dy\,dx \\
      & \overset{(\ref{def-eta})}{\lesssim} & \, a^{5} \, d^{-3} \, \left\Vert \tilde{V}_{2} \right\Vert_{\mathbb{L}^{2}(B_{2})} \, \left\Vert \tilde{F}_{1} \right\Vert_{\mathbb{L}^{2}(B_{1})} \, = \, \mathcal{O}\left(  a^6 \, d^{-3} \, \left\lVert \tilde{V}_2\right\rVert_{\mathbb{L}^2(B_2)}\left\lVert\overset{1}{\PP}\left(\tilde{E}_1\right)\right\rVert_{\mathbb{L}^2(B_1)}\right),  
  \end{eqnarray}
  where the last estimation is a consequence of the Friederichs Inequality. 
  \item[]
  \item Estimation of $L_{13}$.
  \begin{equation*}
	 L_{13} \, := \, k^2 \, \eta_1 \, \eta_2 \,  \int_{D_{2}}V_2(x) \cdot \int_{D_{1}}\int_0^1 \underset{y}{\nabla}\left(\underset{x}{\nabla}\Phi_{k}\right)(z_2, z_1+t(y-z_1)) \cdot (y-z_1)\,dt\times F_1(y)\,dy\,dx.
   \end{equation*}
   Observe that $L_{13}$'s expression is almost identical to $L_{12}$'s expression. Hence, from $(\ref{Est-L12})$, we deduce that 
   \begin{equation}\label{Est-L13}
      L_{13}  \, = \, \mathcal{O}\left( a^6 \, d^{-3} \, \left\lVert \tilde{V}_2\right\rVert_{\mathbb{L}^2(B_2)}\left\lVert\overset{1}{\PP}\left(\tilde{E}_1\right)\right\rVert_{\mathbb{L}^2(B_1)}\right).  
  \end{equation}
   \end{enumerate}
   Finally, by gathering $(\ref{Est-L9}), (\ref{Est-L10}), (\ref{Est-L11}),  (\ref{Est-L12}), (\ref{Est-L13})$, and using $(\ref{es-V12})$ and $(\ref{max-P1P3})$, we deduce 
   \begin{equation}\label{Est-L9+...+L13}
       L_{9} + \cdots + L_{13} \, = \, \mathcal{O}\left( a^{4-h} \right) \, + \, \mathcal{O}\left( a^{7-3t-2h}\right)+ \mathcal{O}\left(a^{7-h-4t}\right). 
   \end{equation}
   Hence, by returning to $(\ref{LS-P32-expan})$, using the estimation $(\ref{Est-L9+...+L13})$ and the notations $(\ref{Qj1})$, with 
	 	\begin{equation}\label{A22V2} 		\bm{\mathcal{A}_2^{(2)}} \, := \, \int_{D_{2}} V_2(x)\,dx,
	 	\end{equation}
   we obtain 
   \begin{equation}\label{Equa1336}
	R_{2} \, - \, k^2 \, \eta_2  \,  \bm{\mathcal{A}_2^{(2)}} \cdot \left[ \Upsilon_{k}(z_2, z_1) \cdot R_{1}  \, + \,\underset{x}{\nabla}\Phi_{k}(z_2, z_1)\times Q_{1} \right] \, = \, \eta_2 \, \bm{\mathcal{A}_2^{(2)}} \cdot E_2^{Inc} (z_2) \, + \, \mathcal{O}\left(a^{\min(4-h; 7-3t-2h; 7-h-4t} \right). 
    \end{equation}        
  We need to investigate $\bm{\mathcal{A}_2^{(2)}}$. To do this, in $(\ref{A22V2})$, by scaling from $D_{2}$ to $B_{2}$, we expand $\tilde{V}_{2}$ into the basis $\left( e_{n,B_{2}}^{(3)} \right)_{n \in \mathbb{N}}$, with $(\ref{V2-project})$ to get
	 	\begin{eqnarray*}
	 		\bm{\mathcal{A}_2^{(2)}} 
    \, &=& \, a^3 \, \sum_n \frac{1}{\left( 1 \, + \, \eta_2 \, \lambda_{n}^{(3)}(B_{2}) \right)} \int_{B_2}e_{n,B_{2}}^{(3)}(x)\,dx  \otimes\int_{B_2}e_{n,B_{2}}^{(3)}(x)\,dx \\
    &+& \, a^3 \, \sum_n \frac{\eta_2}{\left( 1 \, + \, \eta_2 \, \lambda_{n}^{(3)}(B_{2})\right)} \, \left\langle \tilde{V}_2, \mathcal{S}^{ka}\left(e_{n,B_{2}}^{(3)}\right) \right\rangle_{\mathbb{L}^{2}(B_{2})}  \otimes\int_{B_2}e_{n,B_{2}}^{(3)}(x)\,dx,
	 	\end{eqnarray*}
	 	with $\mathcal{S}^{ka}(e_{n,B_{2}}^{(3)})$ given by \eqref{def-Ska}. Next, we estimate the second term on the R.H.S. 
   \begin{eqnarray*}
       \left\vert \cdots \right\vert \; &:=& \; \left\vert a^3 \, \sum_n  \, \frac{\eta_2}{\left( 1 \, + \, \eta_2 \, \lambda_{n}^{(3)}(B_{2})\right)} \, \left\langle \tilde{V}_2, \mathcal{S}^{ka}\left(e_{n,B_{2}}^{(3)}\right) \right\rangle_{\mathbb{L}^{2}(B_{2})}  \otimes\int_{B_2}e_{n,B_{2}}^{(3)}(x)\,dx  \right\vert \\
       &\overset{(\ref{condition-on-k})}{ \lesssim }& \;  a^{3-h} \, \sum_n  \, \left\vert \left\langle \tilde{V}_2, \mathcal{S}^{ka}\left(e_{n,B_{2}}^{(3)}\right) \right\rangle_{\mathbb{L}^{2}(B_{2})} \right\vert \, \left\vert \left\langle I, e_{n,B_{2}}^{(3)}\right\rangle_{\mathbb{L}^{2}(B_{2})}  \right\vert \\ & \lesssim & \, a^{3-h} \; \left\Vert \mathcal{S}^{-ka}\left( \tilde{V}_2 \right) \right\Vert_{\mathbb{L}^{2}(B_{2})} \, \overset{(\ref{EstSka})}{\lesssim} \, a^{5-h} \; \left\Vert \tilde{V}_2  \right\Vert_{\mathbb{L}^{2}(B_{2})} \, \overset{(\ref{es-V12})}{=} \left( a^{5-2h} \right). 
   \end{eqnarray*}
  Consequently, 
	 	\begin{eqnarray*}
	 		\bm{\mathcal{A}_2^{(2)}} \, &=& \, a^3 \, \sum_n \frac{1}{\left( 1 \, + \, \eta_2 \, \lambda_{n}^{(3)}(B_{2})\right)} \, \left(\int_{B_2}e_{n,B_{2}}^{(3)}(x)\,dx\right)\otimes \left(\int_{B_2} e_{n,B_{2}}^{(3)}(x)\,dx\right) \, + \, \O\left(a^{5-2h}\right)\notag\\ \nonumber
	 		&\overset{(\ref{condition-on-k})}{=}& \pm \, \frac{a^{3-h}}{d_0 }\left(\int_{B_2}e_{n_{*},B_{2}}^{(3)}(x)\,dx\right)\otimes \left(\int_{B_2} e_{n_{*},B_{2}}^{(3)}(x)\,dx\right) \\ &+& a^3 \,  \sum_{n\neq n_{*}}\frac{1}{\left( 1 \, + \, \eta_2 \, \lambda_{n}^{(3)}(B_{2})\right)} \left(\int_{B_2}e_{n,B_{2}}^{(3)}(x)\,dx\right)\otimes \left(\int_{B_2} e_{n,B_{2}}^{(3)}(x)\,dx\right)\, + \, \O\left( a^{5-2h}\right).
	 	\end{eqnarray*}
            Obviously, regarding the second term, there holds
	 	\begin{eqnarray*}
	 	\left\vert \cdots \right\vert \, &:=& \,	\left| a^3 \sum_{n\neq n_{*}}\frac{1}{\left( 1 \, + \, \eta_2 \, \lambda_{n}^{(3)}(B_{2})\right)} \, \left(\int_{B_2}e_{n,B_{2}}^{(3)}(x)\,dx\right)\otimes \left(\int_{B_2} e_{n,B_{2}}^{(3)}(x)\,dx\right) \right| \\ & \lesssim & \, a^3 \, \sum_{n\neq n_{*}} \left| \left\langle e_{n,B_{2}}^{(3)}, I \right\rangle_{\mathbb{L}^{2}(B_{2})}\right|^2 \, = \, \mathcal{O}\left( a^3 \right).
	 	\end{eqnarray*}
	 	Hence, the tensor $\bm{\mathcal{A}_2^{(2)}}$ becomes, 
	 	\begin{equation}\label{A22P022}
	 	\bm{\mathcal{A}_2^{(2)}} \, = \, \frac{a^{3-h}}{\pm d_0} \, {\bf P}_{0, 2}^{(2)} \, + \, \mathcal{O}\left( a^{3}\right),\quad\mbox{for}\quad h<1,
	 	\end{equation} 
   where the tensor ${\bf P}_{0, 2}^{(2)}$ is given by 
   \begin{equation}\notag
	 		{\bf P}_{0, 2}^{(2)} \, := \, \int_{B_2}e_{n_{*},B_{2}}^{(3)}(x)\,dx \; \otimes \; \int_{B_2} e_{n_{*},B_{2}}^{(3)}(x)\,dx.
	 	\end{equation}
  Then, by plugging $(\ref{A22P022})$ into $(\ref{Equa1336})$, we deduce that 
  \begin{equation*}\label{R2-proof}
	R_{2} \, - \, k^2 \, \eta_2  \, \frac{a^{3-h}}{\pm d_{0}} \, {\bf P}_{0, 2}^{(2)}  \cdot \left[ \Upsilon_{k}(z_2, z_1) \cdot R_{1}  \, + \,\underset{x}{\nabla}\Phi_{k}(z_2, z_1)\times Q_{1} \right] \, = \, \eta_2 \, \frac{a^{3-h}}{\pm d_{0}} \, {\bf P}_{0, 2}^{(2)} \cdot E_2^{Inc} (z_2) \, + \, \widetilde{Error^{**}},
    \end{equation*} 
    where, by using $(\ref{def-eta}), (\ref{max-P1P3})$ and the Friedrichs Inequality, we have that
    \begin{equation}\notag
         \widetilde{Error^{**}} \, := \, 
             \mathcal{O}\left(a^{\min\left(4-h; 7-3t-2h; 7-h-4t; 6-3t \right)} \right).
    \end{equation}
	 	\end{enumerate}
     The proof of Proposition \ref{prop-la-1} is now complete.
        
 \section{Appendix}\label{Appendix}
\subsection{Lorentz model}
Here we show that the conditions $(\ref{def-eta})$ and $(\ref{condition-on-k})$ can be derived from the Lorentz model by choosing appropriate incident frequency $k$. Indeed, recall the Lorentz model for the relative permittivity that
\begin{equation}\label{Lorentz model}
\epsilon_r=1+\dfrac{k_\mathrm{p}^2}{k_0^2-k^2-ik\xi},
\end{equation}
where $k_\mathrm{p}$ is the plasmonic \mk{frequency}, $k_0$ is the undamped \mk{frequency} resonance of the background and $\xi$ is the damping frequency with $\xi \, \ll \, 1$. In $D_m$, with $m=1,2$, we respectively denote $\xi_{m}$ as the damping frequencies, $k_{0, m}$ as the undamped frequencies, and  $k_{p, m}$ as the according plasmonic frequencies. Let
\begin{equation*}
	k_{0, 1} \, := \, k_0, \quad k_{p,1} \, = \, k_p, \quad \mbox{in} \; D_1,
\end{equation*}
and $k_{0, 2}$, $k_{p, 2}$ such that
\begin{equation}\label{assump2-k}
	k_{0, 2}^{2} \, < \, k^2_{0,1} \, < \, k_{0, 2}^{2} \, + \, k_{p, 2}^{2} \quad \mbox{in} \; D_2.
\end{equation}
\begin{enumerate}
     \item[]
     \item Regarding the dielectric resonance.
If the frequency $k$ is chosen to be real and $k^2$ close to the undamped resonance frequency $k^2_{0,1}$ in the format that
\begin{equation}\label{assumo-k-k0}
k^2 \, - \, k_{0,1}^2 \, = \, - \, \frac{k_{p,1}^{2} \, a^2 \,  \lambda_{n_0}^{(1)}(B_{1}) \, k_{0,1}^2}{\left(1 \mp \Re(c_{0}) \, a^{h} \right) \pm \dfrac{\Im^{2}(c_{0}) \, a^{h}}{\left(1 \mp \Re(c_{0}) \, a^{h} \right)} - k_{p,1}^{2} \, a^{2} \, \lambda_{n_0}^{(1)}(B_{1})}  = \, - \, k_{p,1}^{2} \, a^2 \,  \lambda_{n_0}^{(1)}(B_{1}) \, k_{0,1}^2 \, \left[ 1 \, + \, \mathcal{O}(a^{h}) \right],
\end{equation}
and 
\begin{equation*}
k \; \xi_1 \, = \, \pm \, \frac{\Im(c_{0}) \, a^{2} \, (k^{2} - k^{2}_{0,1})}{\left(1 \mp \Re(c_{0}) \, a^{h} \right)} \, = \, \mp \, \Im(c_{0}) \, a^{4} \, \lambda_{n_0}^{(1)}(B_{1}) \, k_{0,1}^2 \, k_{p,1}^{2} \left[ 1 \, + \, \mathcal{O}(a^{h}) \right],
\end{equation*}
where $\lambda_{n_{0}}^{(1)}(B_{1})$ is an eigenvalue of the Newtonian potential operator $N_{B_{1}}(\cdot )$. Then, there holds, 
\begin{equation}\notag
\Re\Big(\eta_1\Big) \, = \, a^{-2} \, \Big(\lambda_{n_0}^{(1)}(B_{1}) \, k_{0,1}^2 \Big)^{-1}\left(1 \, + \, \mathcal{O}(a^h)\right) \quad \text{and} \quad 
\Im\Big(\eta_1\Big) \, = \, \pm \, \Im\Big( c_{0} \Big) \, \Big( \lambda_{n_0}^{(1)}(B_{1}) \, k_{0,1}^2 \Big)^{-1} + \mathcal{O}\left( a^{4} \right),
\end{equation}
which implies that $D_1$ behaves as a dielectric nano-particle. Furthermore, by choosing $k$ satisfying $(\ref{assumo-k-k0})$, we derive that 
\begin{equation*}
    1 \, - \, k^{2} \, \eta_{1} \, a^{2} \, \lambda_{n_0}^{(1)}(B_{1}) \, = \, \pm \, c_{0} \, a^{h}, 
\end{equation*}
see $(\ref{condition-on-k})$. For more details we refer the readers to \cite[Remark 1.1 \& Remark 1.2]{CGS}.
 \item[]
 \item Regarding the plasmonic resonance. Using the introduced notations, we start by recalling the Lorentz model for the permittivity related to the nano-particle $D_2$, given by $(\ref{Lorentz model})$,
 \begin{equation*}
     \epsilon_{r}^{(2)} \, = \, 1 \, + \, \frac{k_{p,2}^2}{k_{0,2}^2-k^2-i k \xi_2} \,  \overset{(\ref{assumo-k-k0})}{=}  \, 1+\frac{k_{p,2}^2}{k_{0,2}^2-\left(k_{0,1}^2 \, - \, k_{p,1}^{2} \, a^2 \, \lambda_{n_{0}}^{(1)}(B_{1}) \, k_{0,1}^{2} \, \left[ 1 \, + \, \mathcal{O}(a^h) \right]\right) \, - \, i \, k \, \xi_2}. 
 \end{equation*}
 Hence, 
 \begin{equation*}
     Re\left( \epsilon_{r}^{(2)} \right) \, =  \, 1 \, + \, \, \frac{ k_{p,2}^2 \left[ k_{0,2}^2 \, - \, k_{0,1}^2 \, + \, k_{p,1}^{2} \, a^2 \, \lambda_{n_{0}}^{(1)}(B_{1}) \, k_{0,1}^{2} \, \left[ 1 \, + \, \mathcal{O}(a^h) \right] \right]}{\left[ k_{0,2}^2-k_{0,1}^2 \, + \, k_{p,1}^{2} \, a^2 \, \lambda_{n_{0}}^{(1)}(B_{1}) \, k_{0,1}^{2} \, \left[ 1 \, + \, \mathcal{O}(a^h) \right] \right]^{2} \, + \, \left( k \, \xi_2 \right)^{2}}.
 \end{equation*}
Besides, by using  $(\ref{assump2-k})$ and the fact that $\xi_{2} \, \ll \, 1$, we can deduce that 
\begin{equation*}
    Re\left( \epsilon_{r}^{(2)} \right) \, <  \, 0.
\end{equation*}
Then $D_2$ behaves like a plasmonic nano-particle. Furthermore, by choosing $k$ such that 
\begin{equation}\label{Equa0551}
    k^{2} \, = \, k^{2}_{0,2} \, + \, \frac{\lambda_{n_{\star}}^{(3)}(B_{2}) \, k^{2}_{p,2} \, \left( 1 \, \mp \, a^{h} \, Re\left( d_{0} \right) \right)}{\left\vert 1 \, \mp \, a^{h} \,  d_{0}  \right\vert^{2}} , 
\end{equation}
and letting $\xi_{2}$ to be given by 
\begin{equation*}
    \xi_{2} \, = \, \pm \, \frac{a^{h} \, Im\left( d_{0}\right) \, \lambda_{n_{\star}}^{(3)}(B_{2}) \, k_{p,2}^{2}}{\left\vert 1 \, \mp \, a^{h} \,  d_{0}  \right\vert \, \sqrt{k^{2}_{0,2} \, \left\vert 1 \, \mp \, a^{h} \,  d_{0}  \right\vert^{2} \, + \, \lambda_{n_{\star}}^{(3)}(B_{2}) \, k^{2}_{p,2} \, \left\vert 1 \, \mp \, a^{h} \,  d_{0}  \right\vert}},
\end{equation*}
we deduce that 
\begin{equation*}
  1 \, + \, \eta_{2} \, \lambda_{n_{\star}}^{(3)}(B_{2}) \, = \, \pm \, d_{0} \, a^{h},
\end{equation*}
see $(\ref{condition-on-k})$.
\end{enumerate}
\medskip

\noindent According to the analysis above, there is a frequency $k$ that can be considered as a dielectric resonance and a plasmonic resonance under certain conditions. Indeed, by equating $(\ref{assumo-k-k0})$ and $(\ref{Equa0551})$, we obtain
    \begin{equation*}
k_{0,1}^2 \left[ 1 \, - \, \frac{k_{p,1}^{2} \, a^2 \,  \lambda_{n_0}^{(1)}(B_{1})}{\left(1 \mp \Re(c_{0}) \, a^{h} \right) \pm \dfrac{\Im^{2}(c_{0}) \, a^{h}}{\left(1 \mp \Re(c_{0}) \, a^{h} \right)} - k_{p,1}^{2} \, a^{2} \, \lambda_{n_0}^{(1)}(B_{1})}   \right] \, = \, k^{2}_{0,2} \, + \, \frac{\lambda_{n_{\star}}^{(3)}(B_{2}) \, k^{2}_{p,2} \, \left( 1 \, \mp \, a^{h} \, Re\left( d_{0} \right) \right)}{\left\vert 1 \, \mp \, a^{h} \,  d_{0}  \right\vert^{2}},
\end{equation*}
which, by keeping only the dominant terms, gives us the following condition
\begin{equation}\label{Cdt;D=P}
    k_{0,1}^2 \, = \, k_{0,2}^2 \, + \, \lambda_{n_{\star}}^{(3)}(B_{2}) \, k_{p,2}^2,
\end{equation}
establishing a correlation between the Lorentz model parameters associated with the dielectric nano-particle $D_{1}$, i.e. $k_{0,1}$, and the parameters associated with the plasmonic nano-particle $D_{2}$, i.e. $k_{0,2}$ and $k_{p,2}$. Hence, under the condition $(\ref{Cdt;D=P})$, and the smallness assumption on both $\xi_{1}$ and $\xi_{2}$, we deduce that the dielectric resonance equals the plasmonic resonance up to an additive small error term. To put it simply, a hybrid dielectric-plasmonic dimer can have a common resonance.

\subsection{Justification of $(\ref{SL1})-(\ref{SL4})$.}\label{AppRemark}
The calculation has been divided into four parts. 
\begin{enumerate}
    \item[] 
    \item Computation of ${\bf P}_{0, 1}^{(1)}$.
    \begin{equation*}
        {\bf P}_{0, 1}^{(1)} \, := \, \int_{B_{1}} \phi_{n_{0},B_{1}}(x) \, dx \, \otimes \, \int_{B_{1}} \phi_{n_{0},B_{1}}(x) \, dx \, = \, \frac{12}{\pi^{3}} \, I_{3},
    \end{equation*}
    where we have assumed that $B_{1}$ is being the unit ball, i.e., $B_{1} \equiv B(0,1)$, and $n_{0} \, = \, 1$, see \cite[Section 3.1, Formula (3.1)]{CGS-Negative-permeability}.
    \item[]
    \item Computation of ${\bf P}_{0, 2}^{(1)}$.
    \begin{equation*}
            {\bf P}_{0, 2}^{(1)} \, := \, \sum_n \int_{B_2}\phi_{n,B_{2}}(x)\,dx \otimes\int_{B_2} \phi_{n,B_{2}}(x)\, dx, 
    \end{equation*}
    which by using the fact that 
    \begin{equation*}
        \int_{B_2}\phi_{n,B_{2}}(x)\,dx \, = \, - \, \int_{B_{2}} Q(x) \cdot e_{n,B_{2}}^{(1)}(x) \, dx,
    \end{equation*}
    we get 
    \begin{eqnarray*}\label{Equa0720}
    {\bf P}_{0, 2}^{(1)} \, & = &
     \sum_n \, \int_{B_{2}} Q(x) \cdot e_{n,B_{2}}^{(1)}(x) \, dx \,  \otimes \, \int_{B_{2}} Q(x) \cdot e_{n,B_{2}}^{(1)}(x) \, dx \\
    &=& \, \sum_n \, \langle Q, e_{n,B_{2}}^{(1)} \rangle_{\mathbb{L}^{2}(B_{2})} \,  \otimes \, \int_{B_{2}} Q(x) \cdot e_{n,B_{2}}^{(1)}(x) \, dx \\
    &=& \, \int_{B_{2}} Q(x) \cdot \sum_n \, \langle Q, e_{n,B_{2}}^{(1)} \rangle_{\mathbb{L}^{2}(B_{2})} \,  \otimes \, e_{n,B_{2}}^{(1)}(x) \, dx \\ &=& \, \int_{B_{2}} Q(x) \cdot \overset{1}{\mathbb{P}}\left(Q\right)(x) \, dx. 
    \end{eqnarray*} 
    \item[] 
    \item Computation of ${\bf P}_{0, 1}^{(2)}$.
    \begin{eqnarray*}
        {\bf P}_{0, 1}^{(2)} \, &:=& \, \sum_n \frac{1}{ \lambda_{n}^{(3)}(B_{1})} \, \int_{B_1} e_{n,B_{1}}^{(3)}(x)\,dx\otimes  \int_{B_1} e_{n,B_{1}}^{(3)}(x)\,dx \\
        &=& \, \sum_n  \, \left\langle I_{3}, e_{n,B_{1}}^{(3)} \right\rangle_{\mathbb{L}^{2}(B_{1})} \otimes  \int_{B_1} \frac{1}{ \lambda_{n}^{(3)}(B_{1})} \, e_{n,B_{1}}^{(3)}(x)\,dx \\
        &\overset{(\ref{B1EigFct})}{=}& \, \sum_n  \, \left\langle I_{3}, e_{n,B_{1}}^{(3)} \right\rangle_{\mathbb{L}^{2}(B_{1})} \otimes  \int_{B_1} \nabla M_{B_{1}}^{-1}\left( e_{n,B_{1}}^{(3)}\right)(x)\,dx \\
        &=& \, \int_{B_1} \, \sum_n  \, \left\langle I_{3}, e_{n,B_{1}}^{(3)} \right\rangle_{\mathbb{L}^{2}(B_{1})} \otimes   \nabla M_{B_{1}}^{-1}\left( e_{n,B_{1}}^{(3)}\right)(x)\,dx \\
         &=& \, \int_{B_1} \, \nabla M_{B_{1}}^{-1}\left( \sum_n  \, \left\langle I_{3}, e_{n,B_{1}}^{(3)} \right\rangle_{\mathbb{L}^{2}(B_{1})} \otimes    e_{n,B_{1}}^{(3)}\right)(x)\,dx \\
         &=& \, \int_{B_1} \, \nabla M_{B_{1}}^{-1}\left( I_{3} \, \chi_{B_{1}} \right)(x)\,dx.
    \end{eqnarray*}
    Besides, in the case where $B_{1}$ is the unit ball, i.e., $B_{1} \equiv B(0,1)$, we know that 
    \begin{equation*}
        \nabla M_{B_{1}}\left( I_{3} \right) \, = \, \frac{1}{3} \, I_{3}, \quad \text{in} \; B_{1},
    \end{equation*}
    see \cite[Formula (1.17)]{GS}. Hence, 
    \begin{equation*}
        {\bf P}_{0, 1}^{(2)} \, = \, \int_{B_1} \, 3 \,  I_{3} \,dx \, = \, 3 \, \left\vert B_{1} \right\vert \, I_{3} \, = \, 4 \, \pi \, I_{3}. 
    \end{equation*}
    \item[]
        \item Computation of ${\bf P}_{0, 2}^{(2)}$.
        \begin{equation*}
                   {\bf P}_{0, 2}^{(2)} \, := \,  \int_{B_2} e_{n_{\star},B_{2}}^{(3)}(x)\,dx\otimes  \int_{B_2} e_{n_{\star},B_{2}}^{(3)}(x)\,dx.
        \end{equation*}
        By letting $B_{2}$ to be the unit ball, i.e. $B_{2} \, = \, B(0,1)$, and the index $n_{\star} \, = \, 1$, we obtain 
    \begin{equation*}
 {\bf P}_{0, 2}^{(2)} \, = \, \frac{4 \, \pi}{27} \, I_{3}, 
\end{equation*}
see for instance \cite[subsection 4.5.3]{ThesisA.G.}.
    \end{enumerate}


\begin{thebibliography}{10}
\bibitem{AhnDyaRae99}
J.~F.~Ahner, V.~V.~Dyakin, V.~Ya.~Raevskii and R.~Ritter,
\newblock{On series solutions of the magnetostatic integral equation, Zh. Vychisl. Mat. Mat. Fiz, number 4, volume 39, pages 630-637, 1999.}
\bibitem{ammari2018mathematical}
H. Ammari, B. Fitzpatrick, H. Kang, M. Ruiz, S. Yu and H. Zhang,
\newblock{Mathematical and computational methods in photonics and phononics, volume 235, American Mathematical Soc, 2018.}
\bibitem{Ammari-Li-Zou-2}
H. Ammari, B. Li and J. Zou,
\newblock{Mathematical analysis of electromagnetic scattering by dielectric nano-particles with high refractive indices, Transactions of the American Mathematical Society, volume 376, no. 01, 39--90, 2023.}
\bibitem{amrouche1998vector}
C. Amrouche, C. Bernardi, M. Dauge and V. Girault, 
\newblock{Vector potentials in three-dimensional non-smooth domains, Mathematical Methods in the Applied Sciences, 21 (9), 1998.}
\bibitem{baffou2009}
G. Baffou, R. Quidant, and C. Girard,
\newblock{Heat generation in plasmonic nanostructures: Influence of morphology, Applied Physics Letters, vol. 94, num. 15, 2009.}
\bibitem{Bohren-Huffmann}
C.F. Bohren and D.R. Huffmann, 
\newblock{Absorption and Scattering of Light by Small Particles,
Wiley–Interscience, New York, 2004.}
\bibitem{CP}
K. Catchpole and A. Polman, 
\newblock{Plasmonic solar cells, Opt. Express  16, 21793-21800 (2008).}


\bibitem{CMS}
X. Cao, A. Mukherjee and M. Sini.
\newblock{Effective Medium Theory for Heat Generation Using Plasmonics: A Parabolic Transmission Problem Driven by the Maxwell System. arXiv:2411.18091}

\bibitem{CGS} 
X. Cao, A. Ghandriche and M. Sini,
\newblock{The electromagnetic waves generated by a cluster of nano-particles with high refractive indices, Journal of the London Mathematical Society, 2023.}
\bibitem{CGS-Optic}
X. Cao, A. Ghandriche and M. Sini,
\newblock{Optical Inversion Using Plasmonic Contrast Agents.
arXiv preprint:2408.13793 (2024).}


\bibitem{CGS-Negative-permeability}
X. Cao, A. Ghandriche and M. Sini,
\newblock{From all-dieletric nanoresonators to extended quasi-static plasmonic resonators. arXiv:2312.15149}



\bibitem{colton2019inverse}
D. Colton and R. Kress,
\newblock{Inverse acoustic and electromagnetic scattering theory, 93, 2019, Springer Nature.}
\bibitem{Dautry-Lions}
R. Dautray and J. L. Lions,
\newblock{Mathematical Analysis and Numerical Methods for Science and Technology Volume 3 Spectral Theory and Applications.}
\newblock{1st ed. Berlin, Heidelberg: Springer Berlin Heidelberg, 2000.}
\bibitem{Dyakin-Rayevskii}
V.V. Dyakin and V.Ya. Rayevskii,
\newblock{Investigation of an equation of electrophysics, U.S.S.R Computational Mathematics and Mathematical Physics, Volume 30, Number 1, Pages 213-217, 1990.}
\bibitem{FZS}
X. Fan, W. Zheng and D. Singh, 
\newblock{Light scattering and surface plasmons on small spherical particles. Light Sci Appl 3, e179 (2014).}
\bibitem{friedman1980mathematical}
M. J. Friedman,
\newblock{Mathematical study of the nonlinear singular integral magnetic field equation. I. SIAM Journal on Applied Mathematics, 39 (1), 14--20, 1980.}
\bibitem{friedman1981mathematical}
M. J. Friedman,
\newblock{Mathematical study of the nonlinear singular integral magnetic field equation. III. SIAM Journal on Mathematical Analysis, volume 12, number 4, pages 536-540, 1981.}
\bibitem{10.2307/2008286}
M. J. Friedman and J. E. Pasciak,
\newblock{Spectral Properties for the Magnetization Integral Operator, Mathematics of Computation, 43 (168), 447--453, 1984.}
\bibitem{ThesisA.G.}
A. Ghandriche, 
\newblock{Mathematical Analysis of Imaging Modalities Using Bubbles or Nano-particles as Contrast Agents. Johannes Kepler University, Linz, Austria. 2022.}
\bibitem{GS}
A. Ghandriche and M. Sini, 
\newblock{Photo-acoustic inversion using plasmonic contrast agents: The full Maxwell model, Journal of Differential Equations, volume 341, 1--78, 2022.}
\bibitem{GS-OA}
A. Ghandriche and M. Sini, 
\newblock{Simultaneous Reconstruction of Optical and Acoustical Properties in Photoacoustic Imaging Using Plasmonics, SIAM Journal on Applied Mathematics, vol. 83, num. 4, 1738-1765, 2023.}
\bibitem{hao2004electromagnetic}
E. Hao and G. C. Schatz, 
\newblock{Electromagnetic fields around silver nano-particles and dimers, The Journal of chemical physics, volume 120, number 1, pages 357--366, 2004.}
\bibitem{J-J:2016}
S. Jahani and Z. Jacob, 
\newblock{All-dielectric metamaterials, Nat. Nanotechnol. 11 (2016) 23--36.}
\bibitem{K-M-B-K-L:2016}
A.I. Kuznetsov, A.E. Miroshnichenko, M.L. Brongersma, Y.S. Kivshar and B. Luk’yanchuk,
\newblock{Optically resonant dielectric nanostructures, Science, vol. 354, num. 6314, 2016.}
\bibitem{Liu-et-al}
 J. Liu, Z. Meng and J. Zhou.
\newblock{High Electric Field Enhancement Induced by Modal Coupling for a Plasmonic Dimer Array on a Metallic Film.}
\newblock{Photonics 2024, 11, 183. https://doi.org/10.3390/photonics11020183}
\bibitem{maier2007plasmonics}
S. A. Maier,
\newblock{Plasmonics: fundamentals and applications, Springer, 2007.}
\bibitem{Mitrea}
D. Mitrea, M. Mitrea and J. Pipher, 
\newblock{Vector potential theory on nonsmooth domains in $\mathbb{R}^{3}$ and applications to electromagnetic scattering. The Journal of Fourier Analysis and Applications 3, 131--192, 1997.}
\bibitem{N-H-Book}
L. Novotny and B. Hecht,
\newblock{Principles of Nano-Optics. 2nd ed. Cambridge University Press; 2012.}
\bibitem{PD-L-K}
R. Paniagua-Domingueza, B. Luk'yanchuk and A. I.Kuznetsova,
\newblock{Control of scattering by isolated dielectric nanoantennas.
Chapter 3 in 'dielectric Metamaterials Fundamentals, Designs, and Applications' Woodhead Publishing Series in Electronic and Optical Materials, 2020.}
\bibitem{Kiselev-el-al}
A. Kiselev and O. J. F. Martin,
\newblock{Controlling the magnetic and electric responses of dielectric nano-particles via near-field coupling.}
\newblock{Phys. Rev. B 106, 205413 – Published 23 November, 2022.}
\bibitem{Raevskii1994}
V. Ya. Raevskii,
\newblock{Some properties of the operators of potential theory and their application to the investigation of the basic equation of electrostatics and magnetostatics,} 
\newblock{Theoretical and Mathematical Physics, Volume 100, Number 3, pages 1040--1045, 1994.}
\bibitem{S-F-Ineq}
B. Schweizer,
\newblock{On Friedrichs inequality, Helmholtz decomposition, vector potentials, and the div-curl lemma, Springer, 2018.}
\bibitem{TS2018}
D. Tzarouchis and A. Sihvola,
\newblock{Light Scattering by a Dielectric Sphere: Perspectives on the Mie Resonances, Applied Sciences, VOL. 8, NUM. 2, 2018.}
\bibitem{Wang-et-al}
J. Wang, S. Wu,    W. Yang  and  X. Tiana. 
\newblock{Strong anapole–plasmon coupling in dielectric–metallic hybrid nanostructures.}
\newblock{Phys. Chem. Chem. Phys., 26, 23429-23437, 2024.}
\end{thebibliography}
\end{document}